\documentclass[10pt]{article}

\usepackage{amsthm}
\usepackage{thmtools}
\usepackage[colorlinks=true,linkcolor=blue]{hyperref}
\usepackage{enumitem}
\usepackage{mathrsfs}
\usepackage{bbm}
\usepackage[margin=1.6in]{geometry}
\usepackage{marvosym}
\usepackage{graphicx}

\usepackage{tikz}
\usepackage{url}

\newcommand\spiral{%
  \begin{tikzpicture}[y=.009ex,x=.009ex,yscale=-1]
    \path[fill] (74.2500,379.5469) -- (62.4375,373.3594) .. controls
      (83.4375,365.2031) and (99.6563,356.1563) .. (111.0938,346.2188) ..
controls
      (122.5313,336.2813) and (131.0625,324.9375) .. (136.6875,312.1875) ..
controls
      (142.3125,299.4375) and (145.1250,284.4375) .. (145.1250,267.1875) ..
controls
      (145.1250,256.6875) and (141.9375,247.5000) .. (135.5625,239.6250) ..
controls
      (129.1875,231.7500) and (121.7813,226.3125) .. (113.3438,223.3125) ..
controls
      (104.9063,220.3125) and (95.6250,218.8125) .. (85.5000,218.8125) ..
controls
      (75.3750,218.8125) and (66.0937,220.3125) .. (57.6562,223.3125) ..
controls
      (49.2187,226.3125) and (41.8125,231.7500) .. (35.4375,239.6250) ..
controls
      (29.0625,247.5000) and (25.8750,256.6875) .. (25.8750,267.1875) ..
controls
      (25.8750,277.6875) and (27.3750,286.9687) .. (30.3750,295.0312) ..
controls
      (33.3750,303.0937) and (38.1563,308.9063) .. (44.7188,312.4688) ..
controls
      (51.2813,316.0313) and (59.2500,317.8125) .. (68.6250,317.8125) ..
controls
      (73.6875,317.8125) and (77.5782,316.7813) .. (80.2969,314.7188) ..
controls
      (83.0156,312.6563) and (85.2187,309.7500) .. (86.9062,306.0000) ..
controls
      (88.5937,302.2500) and (89.4375,297.3750) .. (89.4375,291.3750) ..
controls
      (89.0625,288.3750) and (88.4063,285.6563) .. (87.4688,283.2188) ..
controls
      (86.5313,280.7813) and (85.0313,278.9063) .. (82.9688,277.5938) ..
controls
      (80.9063,276.2813) and (78.3750,275.6250) .. (75.3750,275.6250) ..
controls
      (72.9375,275.6250) and (70.5469,277.1250) .. (68.2031,280.1250) ..
controls
      (65.8594,283.1250) and (63.7500,288.4219) .. (61.8750,296.0156) --
      (57.6562,295.8750) .. controls (55.5937,295.8750) and (54.0937,295.4063)
..
      (53.1562,294.4688) .. controls (52.2187,293.5313) and (51.7500,292.5000)
..
      (51.7500,291.3750) .. controls (51.7500,286.5000) and (52.6875,281.7187)
..
      (54.5625,277.0312) .. controls (56.4375,272.3437) and (59.2500,268.5000)
..
      (63.0000,265.5000) .. controls (66.7500,262.5000) and (70.8750,261.0000)
..
      (75.3750,261.0000) .. controls (80.6250,261.0000) and (85.4063,262.1250)
..
      (89.7188,264.3750) .. controls (94.0313,266.6250) and (97.4063,270.0937)
..
      (99.8438,274.7812) .. controls (102.2813,279.4687) and (103.5000,285.0000)
..
      (103.5000,291.3750) .. controls (103.5000,299.6250) and
(102.3750,306.5625) ..
      (100.1250,312.1875) .. controls (97.8750,317.8125) and (94.0313,322.5937)
..
      (88.5938,326.5312) .. controls (83.1563,330.4687) and (76.5000,332.4375)
..
      (68.6250,332.4375) .. controls (57.3750,332.4375) and (47.0625,329.8125)
..
      (37.6875,324.5625) .. controls (28.3125,319.3125) and (21.5625,311.5313)
..
      (17.4375,301.2188) .. controls (13.3125,290.9063) and (11.2500,279.5625)
..
      (11.2500,267.1875) .. controls (11.2500,254.0625) and (14.2500,241.9687)
..
      (20.2500,230.9062) .. controls (26.2500,219.8437) and (35.1563,211.3125)
..
      (46.9688,205.3125) .. controls (58.7813,199.3125) and (71.6250,196.3125)
..
      (85.5000,196.3125) .. controls (99.3750,196.3125) and (112.2187,199.3125)
..
      (124.0312,205.3125) .. controls (135.8437,211.3125) and
(144.7500,219.8437) ..
      (150.7500,230.9062) .. controls (156.7500,241.9687) and
(159.7500,254.0625) ..
      (159.7500,267.1875) .. controls (159.7500,287.4375) and
(156.5625,304.7813) ..
      (150.1875,319.2188) .. controls (143.8125,333.6563) and
(134.0625,346.1250) ..
      (120.9375,356.6250) .. controls (107.8125,367.1250) and (92.2500,374.7656)
..
      (74.2500,379.5469) -- cycle;
  \end{tikzpicture}%
}

\renewcommand{\spiral}{sp}

\usepackage{amsmath}
\usepackage{amsxtra}
\usepackage{amssymb}
\usepackage{lineno}
\usepackage{turnstile}
\usepackage{verbatim}
\usepackage{enumitem}
\usepackage[retainorgcmds]{IEEEtrantools}
\usepackage{accents}
\usepackage[all]{xy}

\declaretheoremstyle[bodyfont=\sl]{slanted}

\swapnumbers
\declaretheorem[name=Definition,style=definition,qed=$\dashv$,
numberwithin=section]{dfn}
\declaretheorem[name=Definition,style=definition,numbered=no,qed=$\dashv$]{dfn*}
\declaretheorem[name=Definition,style=definition,numbered=no]{dfnnoqed*}
\declaretheorem[name=Example,style=definition,sibling=dfn]{exm}
\declaretheorem[name=Theorem,style=slanted,sibling=dfn]{tm}
\declaretheorem[name=Theorem,style=slanted,numbered=no]{tm*}
\declaretheorem[name=Lemma,style=slanted,sibling=dfn]{lem}
\declaretheorem[name=Corollary,style=slanted,sibling=dfn]{cor}
\declaretheorem[name=Corollary,style=slanted,numbered=no]{cor*}
\declaretheorem[name=Remark,style=definition,sibling=dfn]{rem}

\swapnumbers
\declaretheoremstyle[headfont=\scshape]{claimstyle}

\declaretheorem[name=Claim,style=claimstyle]{clmtwo}

\declaretheorem[name=Claim,style=claimstyle]{clmfour}
\declaretheorem[name=Claim,style=claimstyle]{clmfive}
\declaretheorem[name=Claim,style=claimstyle]{clmsix}

\declaretheorem[name=Claim,style=claimstyle,numbered=no]{clm*}

\declaretheorem[name=Subclaim,style=claimstyle,numberwithin=clmfive]{sclmfive}
\declaretheorem[name=Subclaim,style=claimstyle,numberwithin=clmsix]{sclmsix}

\declaretheorem[name=Subclaim,style=claimstyle,numbered=no]{sclm*}

\declaretheorem[name=Subsubclaim,style=claimstyle,numberwithin=sclmtwo]{ssclmtwo
}
\declaretheorem[name=Subsubclaim,style=claimstyle,numberwithin=sclmthree]{
ssclmthree}
\declaretheorem[name=Subsubclaim,style=claimstyle,numberwithin=sclmfour]{
ssclmfour}
\declaretheorem[name=Subsubclaim,style=claimstyle,numberwithin=sclmfive]{
ssclmfive}
\declaretheorem[name=Subsubclaim,style=claimstyle,numberwithin=sclmsix]{ssclmsix
}
\declaretheorem[name=Subsubclaim,style=claimstyle,numberwithin=sclmseven]{
ssclmseven}
\declaretheorem[name=Subsubclaim,style=claimstyle,numberwithin=sclmeight]{
ssclmeight}
\declaretheorem[name=Subsubclaim,style=claimstyle,numberwithin=sclmnine]{
ssclmnine}
\declaretheorem[name=Subsubclaim,style=claimstyle,numberwithin=sclmten]{ssclmten
}
\declaretheorem[name=Subsubclaim,style=claimstyle,numbered=no]{ssclm*}

\declaretheoremstyle[headfont=\scshape]{casestyle}

\declaretheorem[name=Case,style=casestyle]{case}
\declaretheorem[name=Case,style=casestyle]{casetwo}

\title{Ordinal definability in $L[\es]$}
\author{Farmer Schlutzenberg\footnote{afirstname dot alastname at tuwien dot ac dot at, afirstname dot alastname at gmail dot com,
\url{\myurl}}
}

\newcommand{\tame}{\mathrm{t}}
\newcommand{\passive}{\mathrm{pv}}
\newcommand{\moon}{{\text{\tiny\spiral}}}
\renewcommand{\Moon}{{\text{\footnotesize\spiral}}}
\newcommand{\Hh}{\mathcal{H}}
\newcommand{\lambdabar}{{\bar{\lambda}}}
\newcommand{\deltabar}{{\bar{\delta}}}
\newcommand{\eps}{\varepsilon}
\newcommand{\Pbar}{{\bar{P}}}
\newcommand{\Coll}{\mathrm{Coll}}
\newcommand{\ZF}{\mathrm{ZF}}
\newcommand{\wlim}{\mathrm{wlim}}
\newcommand{\cs}{\mathrm{cs}}
\newcommand{\sJs}{\mathrm{sJs}}
\newcommand{\ml}{\mathrm{ml}}
\newcommand{\lgcd}{\mathrm{lgcd}}
\renewcommand{\pm}{\mathrm{pm}}
\newcommand{\trancl}{\mathrm{trancl}}
\newcommand{\stack}{\mathrm{stack}}

\newcommand{\QQ}{\mathbb Q}
\newcommand{\RR}{\mathbb R}
\newcommand{\PP}{\mathbb P}
\newcommand{\BB}{\mathbb B}
\newcommand{\sub}{\subseteq}
\newcommand{\cross}{\times}

\newcommand{\inter}{\cap}

\newcommand{\om}{\omega}
\newcommand{\pow}{\mathcal{P}}
\newcommand{\OR}{\mathrm{OR}}
\newcommand{\Hull}{\mathrm{Hull}}
\newcommand{\cut}{\backslash}

\newcommand{\Tt}{\mathcal{T}}
\newcommand{\Ss}{\mathcal{S}}
\newcommand{\Uu}{\mathcal{U}}

\newcommand{\rg}{\mathrm{rg}}
\newcommand{\dom}{\mathrm{dom}}
\newcommand{\ins}{\trianglelefteq}
\newcommand{\nins}{\ntrianglelefteq}
\newcommand{\pins}{\triangleleft}
\newcommand{\npins}{\ntriangleleft}
\newcommand{\crit}{\mathrm{cr}}

\newcommand{\union}{\cup}
\newcommand{\rest}{\!\upharpoonright\!}
\newcommand{\com}{\circ}

\newcommand{\lh}{\mathrm{lh}}
\newcommand{\Ult}{\mathrm{Ult}}

\newcommand{\sats}{\models}
\newcommand{\elem}{\preccurlyeq}
\newcommand{\J}{\mathcal{J}}

\newcommand{\PS}{\mathrm{PS}}

\newcommand{\HOD}{\mathrm{HOD}}
\newcommand{\HC}{\mathrm{HC}}
\newcommand{\ZFC}{\mathrm{ZFC}}

\newcommand{\es}{\mathbb{E}}

\newcommand{\iso}{\cong}

\newcommand{\core}{\mathfrak{C}}

\newcommand{\her}{\mathcal{H}}

\newcommand{\pred}{\mathrm{pred}}

\newcommand{\un}{\union}

\newcommand{\id}{\mathrm{id}}

\newcommand{\conc}{\ \widehat{\ }\ }

\newcommand{\forces}{\dststile{}{}}

\newcommand{\bfSigma}{\undertilde{\Sigma}}

\newcommand{\rSigma}{\mathrm{r}\Sigma}

\DeclareMathOperator{\Th}{Th}

\DeclareMathOperator{\card}{card}
\DeclareMathOperator{\cof}{cof}

\newcommand{\OD}{\mathrm{OD}}

\newcommand{\bfrSigma}{\undertilde{\rSigma}}

\newcommand{\psub}{\subsetneq}

\newcommand{\Xx}{\mathcal{X}}

\newcommand{\alphavec}{\vec{\alpha}}
\newcommand{\cHull}{\mathrm{cHull}}

\newcommand{\lpole}{\left\lfloor}
\newcommand{\rpole}{\right\rfloor}
\newcommand{\univ}[1]{\lpole #1\rpole}
\newcommand{\tu}{\textup}

\newcommand{\R}{\mathcal{R}}

\newcommand{\eqdef}{=_{\mathrm{def}}}
\newcommand{\pvec}{\vec{p}}

\newcommand{\successor}{\mathrm{succ}}

\newcommand{\betavec}{\vec{\beta}}
\newcommand{\canM}{\mathbbm{m}}
\newcommand{\canN}{\mathbbm{n}}
\newcommand{\canP}{\mathbbm{p}}

\newcommand{\Momone}{\canM}
\newcommand{\dropset}{\mathscr{D}}
\newcommand{\branch}{\mathrm{branch}}
\newcommand{\Vop}{\mathrm{Vop}}

\newcommand{\measlim}{\mathrm{ml}}

\begin{document}
\maketitle
\begin{abstract}
Let $M$ be a tame mouse modelling $\ZFC$. We show that
$M$ satisfies ``$V=\HOD_x$ for some real $x$'',
and that the restriction $\es^M\rest[\om_1^M,\OR^M)$ of the
extender sequence $\es^M$ of $M$ to indices above $\omega_1^M$ is definable
without parameters over the universe of $M$.
We show that  $M$ has universe
$\HOD^M[X]$, where $X=M|\om_1^M$ is the initial segment of $M$ of
height $\om_1^M$ (including $\es^M\rest\om_1^M$),
and that $\HOD^M$ is the universe of a premouse
over some $t\sub\om_2^M$. We also show that $M$ has no proper grounds
via strategically $\sigma$-closed forcings.

We then extend some of these results partially to
non-tame mice, including a proof that many natural
$\varphi$-minimal mice model ``$V=\HOD$'', assuming a certain fine structural
hypothesis whose proof has almost been given elsewhere.
\end{abstract}

\section{Introduction}

Let $M$ be a mouse. We write $\es^M$ for the extender sequence of $M$, not
including the active extender $F^M$ of $M$, and $\Momone^M=M|\om_1^M$ for the initial segment of $M$ of height $\om_1^M$ (incorporating $\es^M\rest\om_1^M$).\footnote{See
\S\ref{subsec:terminology}
for (a reference to) more terminology.}
It was shown in
\cite[Theorem 3.11]{V=HODX_pub} that if $M$ has no largest cardinal
(in fact more generally than this) then $\es^M$
is definable over the universe $\univ{M}$ of $M$
from the parameter $M|\om_1^M$.
We
consider here the following questions:
\begin{enumerate}[label=--]
 \item Is $\es^M$ definable over $\univ{M}$ from a real parameter?
  \item How much of the iteration strategy $\Sigma_{\Momone}$ for
$\Momone=M|\om_1^M$ is known to
$M$?
 \item What can be said about the structure of $\HOD^{\univ{M}}$? How close is
$\HOD^{\univ{M}}$ to $M$?
\end{enumerate}
We will see that these questions are interrelated.

We write $\es_+^M=\es^M\conc F^M$.
Recall that a premouse $M$ is \emph{non-tame} iff there is
$E\in\es_+^M$ and $\delta$ such that $\crit(E)<\delta<\lh(E)$ and
$M|\lh(E)\sats$ ``$\delta$ is Woodin as witnessed by $\es$''.
The power set axiom is denoted $\PS$.

Ralf Schindler and John Steel's paper \cite{sile} is very relevant to our considerations here. There they established that tame mice do compute significant fragments of their own iteration strategy. In particular,
their proof shows that if $M$ is a tame mouse modelling ZFC, then there is $\alpha<\om_1^M$ such that $M\sats$ ``$M|\om_1^M$ is above-$\alpha$, $(0,\om_1)$-iterable'' (in fact they show more). Further, their research leading to that paper was ``motivated by the question whether every
iterable tame extender model thinks that there is a well-ordering of $\RR$ which is
ordinal definable from a real parameter'' (see \cite[p.~752, immediately following Theorem 0.2]{sile}).
Although \cite{sile} made significant progress toward answering this question,
the question itself was left unresolved:
$(0,\om_1)$-iterability is not enough to execute  the usual proof that comparisons of countable premice terminate.
Our first result answers the question affirmatively.
The proof owes much to the methods employed in \cite{sile}:

\begin{tm}\label{thm:tame_e_def_from_x}
 Let $M$ be a $(0,\om_1+1)$-iterable tame premouse satisfying ``$\om_1$
exists''. Let $\delta=\om_2^M$ and $\her=\her_{\delta}^M$. Then $\Momone^M$ is definable over  $\her$ from a
real parameter, and
in fact, $\{\Momone^M\}$ is  $\Sigma_2^\her(\{x\})$
for some $x\in\RR^M$.
\end{tm}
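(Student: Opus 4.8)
The plan is to exhibit a real $x\in\RR^M$ coding a suitable countable initial segment $\canP=M|\gamma$ of $M$, and then to characterize $\Momone^M$ over $\her$ as the unique premouse that has $\canP$ as an initial segment, has the correct reals, and is iterable via a $\canP$-guided iteration strategy. First I would record the routine preliminaries. Since $\om_1^M$ is a cardinal of $M$, $\Momone^M=M|\om_1^M$ has universe $\her_{\om_1^M}^M$, so $\Momone^M\in\her$, $\RR^{\Momone^M}=\RR^\her=\RR^M$, $\Momone^M\sats$``every set is countable'', and $\Momone^M$ is a sound premouse with $\rho_\om(\Momone^M)=\OR^{\Momone^M}=\om_1^M$. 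Also $\om_1^M$ is the least ordinal $\alpha$ with $\RR^{M|\alpha}=\RR^M$; this lets me pin down the ordinal height of a candidate $N$ by the low-complexity clause ``$\RR^\her\sub N$, and no proper initial segment of $N$ contains every real of $\her$'', without referring to $\om_1^M$ explicitly. Thus the real content is to single out $\Momone^M$ among sound, suitably iterable premice with the right reals, using $x$.

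The heart of the argument, and where tameness is essential, is self-iterability. Following the Schindler--Steel methods for the self-iterability of $L[\es]$, I would choose $\gamma<\om_1^M$ large enough that $\canP=M|\gamma$ (i) is self-iterable, i.e.\ it defines, lightface over itself, an iteration strategy for its own countable iteration trees via its internal $Q$-structures; (ii) already sees enough of $\es^M\rest\om_1^M$ and enough fine structure that it cannot be confused with a rival mouse lying strictly below $\Momone^M$; and (iii) generates all $Q$-structures relevant to iterating $\Momone^M$. Point (iii) is where tameness enters: every $Q$-structure arising on an iteration tree $\Tt$ on $\Momone^M$ is itself a tame mouse with no extender overlapping its distinguished Woodin, hence is generated from the common part $\mathcal M(\Tt)$ by stacking initial segments of iterates of $\canP$, and as such is both recognizable inside $\her$ and has its iterability witnessed there by comparisons against iterates of $\canP$. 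This yields a partial iteration strategy $\Sigma=\Sigma_x$ for $\Momone^M$, given by ``$\Sigma(\Tt)$ is the unique cofinal branch $b$ of $\Tt$ with $Q(b,\Tt)$ equal to the $\canP$-guided $Q$-structure of $\mathcal M(\Tt)$'', which is total on all iteration trees lying in $\her$ and is definable over $\her$ from $x$ at essentially the level $\Sigma_1^\her(x)$.

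With $\Sigma$ in hand I would set: $N=\Momone^M$ iff $N$ is a premouse, $\canP\ins N$, $N\sats$``every set is countable'', $\RR^\her\sub N$ with no proper initial segment of $N$ containing all reals of $\her$, and $N$ is iterable via $\Sigma$. Existence is clear. For uniqueness, given such an $N$ I would co-iterate $N$ against $\Momone^M$ using $\Sigma$ on both sides; the comparison takes place and terminates inside $\her$ by the usual argument (using that $\om_2^M$ is regular in $M$), it is above $\gamma$ since $N$ and $\Momone^M$ agree there, and since both sides are sound and, as one checks using the ``least reals'' clause, have ordinal height exactly $\om_1^M$, which is a cardinal, the standard comparison dichotomy forces $N=\Momone^M$.

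It remains to bring the complexity down to $\Sigma_2^\her(\{x\})$, which is the second delicate point. All clauses except ``$N$ is iterable via $\Sigma$'' are $\Pi_1^\her(x)$ or simpler. Written directly, iterability via $\Sigma$ begins with a universal quantifier over iteration trees and is only $\Pi_2$; to get $\Sigma_2$ I would instead assert the \emph{existence} of a single witnessing system --- the relevant co-iterations of $N$ against iterates of $\canP$ together with the $\canP$-guided $Q$-structures certifying their branches, chosen so as simultaneously to carry out the comparison underlying uniqueness and to certify $N$'s $\Sigma$-iterability --- leaving a $\Delta_0$ coherence-and-success check on that witness (again using tameness to keep the $Q$-structure certifications $\Delta_0$ relative to it). The definition of $\{\Momone^M\}$ then takes the form $\exists a\,\forall b\,\varphi_0(a,b,N,x)$ with $\varphi_0$ bounded, i.e.\ it is $\Sigma_2^\her(\{x\})$; the set parameter $\{x\}$ rather than $x$ is harmless. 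I expect the main obstacle to be the self-iterability step of the second paragraph --- carrying out the Schindler--Steel argument for an arbitrary tame $M$ so that one real $x$ simultaneously provides the internal strategy for $\canP$, all $Q$-structures needed to iterate $\Momone^M$, and the separation of $M$ from rival mice --- with the $\Sigma_2$ (rather than $\Pi_2$) bookkeeping as a secondary difficulty.
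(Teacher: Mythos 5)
Your overall architecture (a real $x$ coding a countable segment $\canP=M|\alpha$, Schindler--Steel self-iterability to get a $Q$-structure-guided strategy known to $M$, then a characterization of $\Momone^M$ as the unique candidate with the right reals whose segments are iterable, with uniqueness via comparison) matches the paper's at the level of the statement of Theorem \ref{thm:tame_HOD_x}. But the step you dismiss as ``the usual argument'' --- termination of the co-iteration of a rival $N$ against $\Momone^M$ --- is precisely the entire content of the proof, and the usual argument is not available. Both premice have height $\om_1^M$, so the standard termination proof would require the strategy to supply cofinal branches for trees of length $\om_1^M$ (and beyond, up to $\om_2^M$), i.e.\ above-$\alpha$ $(\om,\om_1+1)$-iterability inside $M$. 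The paper proves this can \emph{fail}: Example \ref{exm:M_1^sharp-closed_reals} exhibits a tame mouse $N$ and, for every $\alpha<\om_1^N$, a length-$\om_1^N$ tree in $N$ via the correct strategy with no cofinal branch in $N$. So your comparison can genuinely get stuck at stage $\om_1^M$ with no branch to continue with, and no contradiction is yet in sight; indeed the paper's Claim \ref{clm:no_cofinal_branches} shows that in the relevant comparison there really are no cofinal branches in $M$. Relatedly, your point (iii) --- that all $Q$-structures for trees on $\Momone^M$ are ``generated by stacking initial segments of iterates of $\canP$'' --- is not how these $Q$-structures arise; in the paper they are computed by P-construction from proper segments of $M$ itself, which is only possible because the comparison is rigged (by folding in meas-lim genericity iteration) so that $M|\delta(\Tt)$ is generic over the common part model.

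What replaces your uniqueness step in the paper is a substantially different and much longer argument: one assumes a ``conflicting pair'' $(R_0,S_0)$ of sound premice projecting to $\om$ (not a full rival of height $\om_1^M$), runs a length-$\om_1^M$ comparison interleaving genericity iteration and short linear iterations so that the trees are definable over segments of $M$ and the $Q$-structures are P-constructions of $\om$-projecting segments of $M$; one then shows the common part has only boundedly many Woodins, extracts a new conflicting pair above the relevant bound, and iterates the construction $\om$ times to produce a single normal tree on $R_0$ whose unique cofinal branch drops in model infinitely often --- the contradiction. None of this machinery (genericity iteration, P-construction of $Q$-structures, the definability bookkeeping of Claims \ref{clm:thorough_analysis}--\ref{clm:not_Q_and_ZFCmin,V=HC}, the infinite-drop argument) appears in your proposal, and without it the uniqueness clause of your characterization is unproved. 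The final $\Sigma_2$ complexity count is comparatively minor and your sketch of it is plausible once part \ref{item:om_1_iter_enough} of Theorem \ref{thm:tame_HOD_x} is in hand.
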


In the preceding theorem, if $\om_1^M$ is the largest cardinal of $M$
then $\her_\delta^M$ denotes $\univ{M}$.
Combining
\cite[Theorem 1.1]{V=HODX_pub} and Theorem
\ref{thm:tame_e_def_from_x} above, we have:
\begin{cor}\label{cor:tame_V=HODx}
Let $M$ be a tame, $(0,\om_1+1)$-iterable premouse such that
$\univ{M}\sats\ZFC$.
Then $\univ{M}\sats$ ``$V=\HOD_{x}$ for some $x\in\RR$''.
\end{cor}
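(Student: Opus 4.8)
The plan is to chain the two results just invoked. Since $\univ{M}\sats\ZFC$, the premouse $M$ has no largest cardinal — if $\kappa$ were largest, $\ZFC$ would produce the cardinal $\kappa^+>\kappa$ — and in particular $\om_1^M$ and $\om_2^M$ both exist. Hence $\her\eqdef\her_{\om_2^M}^M$ is literally the set $H_{\om_2}^M$ of $M$, and $\her$, together with its satisfaction relation, is definable over $\univ{M}$ without parameters. So for any real $y$, the property ``$\{a\}$ is $\Sigma_2^{\her}(\{y\})$'' is expressible over $\univ{M}$.

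Apply Theorem~\ref{thm:tame_e_def_from_x} (legitimate since $\ZFC$ implies ``$\om_1$ exists''): fix the real $x\in\RR^M$ with $\{\Momone^M\}$ equal to $\Sigma_2^{\her}(\{x\})$. By the previous paragraph, $\Momone^M=M|\om_1^M$ is then definable over $\univ{M}$ from the single real parameter $x$. Now invoke \cite[Theorem~1.1]{V=HODX}: as $M$ has no largest cardinal, $\es^M$ is definable over $\univ{M}$ from the parameter $\Momone^M$. Composing the two definitions, $\es^M$ is definable over $\univ{M}$ from $x$ alone.

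To finish, recall that $M$, being a premouse whose universe models $\ZF$, is passive, and its universe is the union of the $\mathcal J$-hierarchy $\langle\mathcal J^{\es^M}_\alpha\rangle_{\alpha\in\OR^M}$ built relative to $\es^M$. Each $\mathcal J^{\es^M}_\alpha$ is uniformly definable over $\univ{M}$ from $\es^M$ and $\alpha$, hence from $x$ and $\alpha$; and each carries a definable wellorder, so every one of its elements is ordinal definable from $x$ over $\univ{M}$, and, since $\mathcal J^{\es^M}_\alpha$ is transitive, $\mathcal J^{\es^M}_\alpha\sub\HOD_x^{\univ{M}}$. Therefore every set of $M$ lies in $\HOD_x^{\univ{M}}$, i.e.\ $M\sats$``$V=\HOD_x$''; since $x\in\RR^M$ this yields $M\sats$``$V=\HOD_x$ for some $x\in\RR$'', as required.

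I do not anticipate a serious obstacle: all the substance is in Theorem~\ref{thm:tame_e_def_from_x} and \cite[Theorem~1.1]{V=HODX}. The most delicate point, though still routine, is the passage from ``$\Sigma_2^{\her}$-definable'' to ``definable over $\univ{M}$'' together with the disposal of the largest-cardinal proviso of Theorem~\ref{thm:tame_e_def_from_x}; both rest on nothing more than $\her$ being a parameter-free-definable set of $M$, which $\ZFC$ guarantees. The remaining ingredient is the standard fine-structural fact that a premouse whose universe models $\ZF$ is passive, with universe constructible from its extender sequence.
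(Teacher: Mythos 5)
Your proposal is correct and follows exactly the route the paper intends: the corollary is stated as an immediate combination of Theorem~\ref{thm:tame_e_def_from_x} (giving $\Momone^M$ definable over $\univ{M}$ from a real, since $\her_{\om_2}^M$ is itself parameter-free definable under $\ZFC$) with \cite[Theorem~1.1]{V=HODX} (giving $\es^M$ definable from $\Momone^M$), followed by the standard observation that definability of $\es^M$ from $x$ yields $V=\HOD_x$. The paper offers no further proof, so your write-up simply makes explicit the steps it leaves implicit.
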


We also use a variant of the proof to yield
some information regarding grounds of tame mice,
relating to a question of Miedzianowski and Goldberg
\cite{gironaconfproblems};
see Theorem \ref{tm:tame_grounds}.

In \S\S\ref{sec:HOD_tame_mice},\ref{sec:HOD_in_non-tame} we prove some facts
regarding
$\HOD^{\univ{M}}$,
for mice $M$ satisfying $\ZFC$. (In fact, the assumption that $M\sats\ZFC$
is only stated in order the the usual definition of $\HOD$ works.
The results
do not depend
very strongly on this.)

Let $N=L[M_1^\#]$, which is of course tame. It is well known that
$N\sats$ ``$V\neq\HOD$'' (see
\ref{rem:L[M_1^sharp]}). Therefore $\es^N$ is not definable over $\univ{N}$
without parameters.
However, we will show that any tame mouse $M$ satisfying $\PS$ can
\emph{almost} define $\es^M$
from no parameters.
In the statements of the next two theorems, \emph{tractability}  and \emph{strong tractability}
are just fine structural
requirements, which hold
  if $M\sats$ ``$\om_2$ exists''
(see \ref{dfn:tractable}).
And $\mathscr{P}^M$
(see \ref{dfn:candidate,Ppp})
is roughly the collection of premice $N\in M$ such that $\univ{N}=\HC^M$ and $N$ ``eventually agrees''
with $M|\om_1^M$.
\begin{tm}\label{thm:E_almost_def_tame_L[E]}
  Let $M$ be a $(0,\om_1+1)$-iterable tame tractable premouse satisfying
``$\om_1$ exists'' and $\delta=\om_2^M$ \tu{(}with $\delta=\OR^M$
if $\om_1$ is the largest cardinal of $M$\tu{)}.
  Then:
  \begin{enumerate}[label=--]
   \item $\es^M\rest[\om_1^M,\OR^M)$ is $\univ{M}$-definable without parameters,
  and
   \item $\mathscr{P}^M$ is definable
   over $\her_\delta^M$ without
parameters.
   \end{enumerate}
\end{tm}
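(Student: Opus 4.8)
The plan is to bootstrap from Theorem~\ref{thm:tame_e_def_from_x} together with \cite[Theorem 1.1]{V=HODX}. Since $\Momone^M$ itself cannot be definable over $\univ{M}$ without parameters (witness $L[M_1^\#]$), one cannot simply combine those two results directly; the idea is instead that the \emph{class} $\mathscr{P}^M$ of premice eventually agreeing with $\Momone^M$ admits a parameter-free definition, and that knowing $\mathscr{P}^M$ is enough to recover $\es^M\rest[\om_1^M,\OR^M)$.

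The first thing I would do is isolate a \emph{robustness} property of the definition from \cite{V=HODX}: there $\es^M$ is $\univ{M}$-definable from the parameter $\Momone^M$ by some formula $\Phi$, and I claim that for \emph{every} $P\in\mathscr{P}^M$ the formula $\Phi(\cdot,P)$ still correctly computes $\es^M\rest[\om_1^M,\OR^M)$ over $\univ{M}$. The reason is that the clause of $\Phi$ that places an extender $E$ with $\lh(E)\geq\om_1^M$ onto the sequence refers only to $M$-internal data about $M|\lh(E)$ together with a tail (above some ordinal below $\om_1^M$) of the parameter premouse, and by the very definition of $\mathscr{P}^M$ (Definition~\ref{dfn:candidate,Ppp}) each of its members shares such a tail with $\Momone^M$; tractability (Definition~\ref{dfn:tractable}) is what makes this tail-agreement both available and sufficient, and keeps the relevant witnesses inside $\her_\delta^M$. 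Granting this, and using that $\Momone^M\in\mathscr{P}^M$ so $\mathscr{P}^M\neq\emptyset$, the first bullet reduces to the second: $\univ{M}$ defines $\her_\delta^M$ inside itself, then $\mathscr{P}^M$ by the second bullet, then reads off $\es^M\rest[\om_1^M,\OR^M)$ as the set of $E$ such that $\Phi(E,P)$ holds for some (equivalently, every) $P\in\mathscr{P}^M$ --- all without parameters.

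For the second bullet I would invoke Theorem~\ref{thm:tame_e_def_from_x}: fix the $\Sigma_2$ formula $\psi$ and the real $x$ with $\{\Momone^M\}=\{P : \her_\delta^M\models\psi(P,x)\}$. I then define $\mathscr{P}^M$ over $\her_\delta^M$ without parameters by declaring $P\in\mathscr{P}^M$ iff $P$ is a premouse for which there are a real $y$ and a premouse $Q$ with $\her_\delta^M\models$ ``$Q$ is the unique $v$ such that $\psi(v,y)$'' and $P$ agrees with $Q$ above some ordinal. Two things must be verified, that this is a set and that it agrees with $\mathscr{P}^M$ of Definition~\ref{dfn:candidate,Ppp}: one inclusion is immediate, since for each $P$ eventually agreeing with $\Momone^M$ the pair $y=x$, $Q=\Momone^M$ witnesses $P\in\mathscr{P}^M$; the other inclusion requires that any $Q$ pinned down by $\psi(\cdot,y)$ for some real $y$ itself eventually agrees with $\Momone^M$, which I would extract from the proof of Theorem~\ref{thm:tame_e_def_from_x} --- the premice that $\psi$ can single out relative to a real are precisely those of the form $N|\om_1^N$ for $N$ suitably equivalent to $M$, and a comparison/copying argument shows these eventually agree with $\Momone^M$. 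Since ``$P$ agrees with $Q$ above some ordinal'' is itself a local parameter-free relation, the whole definition is parameter-free over $\her_\delta^M$.

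The hard part is this last inclusion together with the robustness claim of the second paragraph: one must rule out that $\psi$, relative to some exotic real $y$, pins down a premouse failing to eventually agree with $\Momone^M$, and confirm that the extender-identification of \cite{V=HODX} above $\om_1^M$ genuinely factors through eventual agreement of its parameter. Both are fine-structural comparison arguments internal to $M$, and the role of tractability is precisely to confine the relevant iterations and witnessing ordinals below $\delta=\om_2^M$, so that $\her_\delta^M$, and hence $\univ{M}$, sees everything it needs.
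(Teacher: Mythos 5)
Your reduction of the first bullet to the second is essentially the paper's: Lemma \ref{lem:tame_es_above_om_1_def_from_Ppp} is exactly your ``robustness'' claim (that every $\canN\in\mathscr{P}^M$ generates, via the inductive condensation stack of \cite{V=HODX}, the same extender sequence above $\om_1^M$), and its proof does require some care (translating to $x$-premice and handling hulls not containing $x$), but the shape of your argument there is right.

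The gap is in the second bullet. You define $\mathscr{P}^M$ by quantifying over all reals $y$ and all $Q$ pinned down by $\psi(\cdot,y)$, and you defer the inclusion ``every such $Q$ eventually agrees with $\Momone^M$'' to something you hope to ``extract from the proof of Theorem \ref{thm:tame_e_def_from_x}.'' That extraction is not available: the proof of Theorem \ref{thm:tame_e_def_from_x} establishes correctness of the definition only for the one specific parameter $M|\alpha$ and only for $\om$-premice extending $M|\alpha$, i.e.\ for candidates that are by fiat initial segments of $M$ below $\om_1^M$. For an arbitrary real $y$, the formula $\psi(\cdot,y)$ may uniquely isolate a premouse built over a base that is not a segment of $\Momone^M$ at all, and internal $\om_1$-iterability-style conditions do not force such an object to converge with $\Momone^M$ --- indeed Example \ref{exm:M_1^sharp-closed_reals} is in the paper precisely to show how badly internal iterability can fail to pin things down. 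What is actually needed, and what the paper supplies, is a genuinely new comparison: one isolates a first-order (hence parameter-free) class of candidates --- the \emph{strong}, \emph{tame-iterability-good} ones of Definition \ref{dfn:iterability-good}, whose internal strategy $\Lambda_\tame$ is guided by P-constructions from segments of the candidate itself --- and then proves (Lemma \ref{lem:def_of_Ppp_in_tame_mice}) via a length-$\om_1^M$ genericity comparison of an arbitrary such candidate $\canN$ against $\Momone^M$ that $\canN\in\mathscr{P}^M$. That comparison in turn rests on the machinery of \S\ref{sec:candidates} (Jensen extensions, and Lemma \ref{lem:cofinal_converge_points} producing cofinally many $\om$-convergence points, which is where tractability is really used). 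None of this is present in, or derivable from, the ingredients you cite; in particular the paper's parameter-free characterization of $\mathscr{P}^M$ does not route through Theorem \ref{thm:tame_e_def_from_x} or through quantification over reals at all.
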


The results above concern tame mice. We now turn to (short-extender)
mice in general with no smallness restriction. All of our results here rely on
a technical
hypothesis, STH ($\star$-translation hypothesis, see
\ref{dfn:Q^*_when_Q_correct}),
which is almost proved in \cite{closson}, but not quite, and
which should be routine to verify with basically
the methods of \cite{closson}.
We give the key definitions in \S\ref{sec:star},
but a proof of STH is beyond the scope of this paper.
Many typical $\varphi$-minimal mice are \emph{transcendent}
(see \ref{dfn:transcendent}), including
for example $M_1^\#$, and assuming STH,
$M_{\wlim}^\#$ (the sharp
for a Woodin limit of Woodins), the least mouse with an active
superstrong extender (in MS-indexing, so this is not $0^\#$),
and many more.

\begin{tm}\label{tm:V=HOD_in_transcendent_mice}
Assume STH.
 Let $M$ be a transcendent strongly tractable $(\om,\om_1+1)$-iterable $\om$-sound premouse
 such that $\rho_\om^M=\om$ and
 $M\sats$ ``$\om_1$ exists''.
 Let $\delta=\om_2^M$.
 Then $\Momone^M$ is definable without parameters over $\her_\delta^M$.
 Therefore if $N\pins M$ with $M|\om_2^M\ins N$ and $N\sats\PS$
 or $N\sats\ZFC^-$,
 then $\es^N$ is definable without parameters
 over $\univ{N}$, so if $N\sats\ZFC$ then $\univ{N}\sats$ ``$V=\HOD$''.
\end{tm}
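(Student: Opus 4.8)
The plan is to follow the method behind the proofs of Theorems~\ref{thm:tame_e_def_from_x} and~\ref{thm:E_almost_def_tame_L[E]} essentially line by line, replacing each appeal to the fact that tame $Q$-structures are captured below $M$ by an appeal to STH. Recall the shape of the tame argument: one first shows that $\mathscr{P}^M$ is definable over $\her_\delta^M$ (without parameters, in the tractable case), the crucial technical point being that the iterability of a countable premouse $\bar P$ that ``looks like'' an initial segment of $\Momone^M$ is witnessed by $Q$-structures along iteration trees of length $\le\om_1$, and that --- for countable $\bar P$ --- this reduces to well-foundedness of the $Q$-structure-guided trees, a statement about reals that is testable inside $\her_\delta^M$; one then reads off $\Momone^M$ from $\mathscr{P}^M$. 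The only obstruction for non-tame $M$ is that a $Q$-structure occurring along a tree on a countable hull of $\Momone^M$ may itself contain Woodin cardinals, so that its iterability is not on its face first-order over $\her_\delta^M$. This is exactly the gap that STH closes: the $\star$-translation converts such a $Q$-structure into a structure whose iterability \emph{is} expressible by well-foundedness of $\star$-translated trees, hence is $\her_\delta^M$-recognisable; and \emph{transcendence} is precisely the hypothesis ensuring that the $\star$-operator is total and correct on all reals and countable premice entering these trees --- informally, that $M$ lies beyond the pointclass at which $\star$-translation could break down --- so the $Q$-structures really are $\star$-translatable and the induced $\star$-guided strategies condense and compare.

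Concretely, I would first fix, via STH, a notion of $\star$-iterability for the relevant countable premice and verify the two properties the definability argument consumes: (i)~$\star$-iterability of countable premice is (boldface) definable over $\her_\delta^M$, because $\her_\delta^M$ can form all the $\star$-translations and trees involved and test their well-foundedness; and (ii)~$\star$-iterability obeys comparison, which the argument uses to pin $\Momone^M$ down uniquely among its competitors. Granting (i)+(ii), the tame argument goes through and yields that $\mathscr{P}^M$ is definable over $\her_\delta^M$ without parameters. It remains to single out $\Momone^M$ inside $\mathscr{P}^M$. In the non-transcendent tame world this genuinely costs a real parameter --- witness $L[M_1^\#]$, which satisfies ``$V\neq\HOD$'' --- but transcendence removes that freedom: since $M$ is a transcendent $\om$-mouse, $\Momone^M$ is the canonically least member of $\mathscr{P}^M$ satisfying the pertinent fullness condition, hence $\her_\delta^M$-definable with no parameters, and composing shows $\{\Momone^M\}$ is definable over $\her_\delta^M$ without parameters. (Tractability is used, as in~\ref{thm:E_almost_def_tame_L[E]}, for the fine-structural bookkeeping below $\om_2^M$.)

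For the ``Therefore'' clause, let $N\pins M$ with $M|\om_2^M\ins N$ and $N\sats\PS$ or $N\sats\ZFC^-$. As an initial segment of $M$, $N$ collapses no cardinal of $M$, and since $M|\om_2^M\ins N$ it computes $\her_{\om_2}$ exactly as $M$ does; hence $\om_1^N=\om_1^M$, $\om_2^N=\om_2^M=\delta$, $\Momone^N=\Momone^M$, and $\her_\delta^N=\her_\delta^M$. By the first part, $\Momone^N$ is definable over $\her_\delta^N$ without parameters; and since $N\sats\PS$ or $N\sats\ZFC^-$, the class $\her_\delta^N$ is $\univ{N}$-definable without parameters, so $\Momone^N$ is $\univ{N}$-definable without parameters. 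Since $N\sats\PS$ or $N\sats\ZFC^-$, $N$ has no largest cardinal, so by~\cite[Theorem~1.1]{V=HODX}, $\es^N$ is definable over $\univ{N}$ from the parameter $N|\om_1^N=\Momone^N$; composing the two definitions, $\es^N$ is definable over $\univ{N}$ without parameters. Finally, if $N\sats\ZFC$ then $N$ is passive and $\univ{N}$ is the constructible closure of $\es^N$, so every element of $N$ is definable over $\univ{N}$ from $\es^N$ together with ordinal parameters; as $\es^N$ is itself parameter-free definable, every set is ordinal-definable, i.e.\ $\HOD^{\univ{N}}=\univ{N}$, so $\univ{N}\sats$``$V=\HOD$''.

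The hard part is the claim underlying the first two paragraphs: that the $\star$-translation of STH interfaces cleanly with the $Q$-structure-guided definability argument. One must verify that every $Q$-structure arising along a tree on $\Momone^M$ or one of its countable hulls is $\star$-translatable, that the translation --- together with the verification of its correctness and of the well-foundedness of the induced trees --- can be carried out uniformly inside $\her_\delta^M$, and that comparison of height-$\om_1$ candidates survives the passage through non-tame levels. Establishing that ``transcendent'' is exactly the hypothesis under which no pointclass gap obstructs this --- so that the argument genuinely runs, rather than merely being formally parallel to the tame case --- is the crux, and is the part that leans on the (not yet fully written) methods of~\cite{closson} behind STH.
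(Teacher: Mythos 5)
Your scaffolding for the ``Therefore'' clause is fine, and your general instinct---define a parameter-free class of candidates over $\her_\delta^M$ and show $\Momone^M$ is the unique member---matches the paper. But the core of your argument has a genuine gap at exactly the step where the theorem's content lives. You propose to first get $\mathscr{P}^M$ definable over $\her_\delta^M$ (as in Theorem \ref{thm:E_almost_def_tame_L[E]}) and then ``single out $\Momone^M$ inside $\mathscr{P}^M$'' as ``the canonically least member satisfying the pertinent fullness condition.'' No such selection principle is available, and transcendence does not supply one. Definability of $\mathscr{P}^M$ only pins down $\es^M\rest[\om_1^M,\OR^M)$; the members of $\mathscr{P}^M$ differ from $\Momone^M$ precisely on countable initial segments, and distinguishing $\Momone^M$ among them is the entire difficulty (this is what provably fails in $L[M_1^\#]$). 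You give no argument for what the ``fullness condition'' is or why transcendence forces $\Momone^M$ to be ``least''---and indeed transcendence (Definition \ref{dfn:transcendent}) is not a minimality or fullness property of $\Momone^M$ among candidates; it is a non-definability property of $\Th_{\Sigma_{n+1}}^M(\emptyset)$ over $Q[t^P_{k+1}]$ for Q-structures $Q$ of $P$-optimal trees.

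The paper's proof works differently. It takes $\mathscr{G}^M$ to be the strong, iterability-good candidates (Definition \ref{dfn:G^M}; note the footnote in \S\ref{sec:HOD_in_non-tame} that $\mathscr{G}^M\not\sub\mathscr{P}^M$ is not ruled out in the non-tame case, so this is not the set you are working with) and proves outright that $\mathscr{G}^M=\{\Momone^M\}$. Given a competitor $\canN\neq\canM$, one runs a $t^{\widetilde{\canM}}$-genericity comparison of the least conflicting pair via $(\Lambda^{\canM},\Lambda^{\canN})$, folding in linear iterations past the $\star$-translations of the Q-structures; the comparison lasts $\om_1^M$ stages and at least one side has no cofinal branch in $M$. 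The key Claim \ref{clm:Q^star_reaches_M} then shows that for the correct branch $b$ and $Q=Q(\Tt,b)$, the translation $Q^\star$ must absorb all of $\canM^+$---otherwise one manufactures a club of Woodin cardinals of $M(\Tt,\Uu)$ below $\om_1^M$ and derives a contradiction from the $\delta$-cc of the extender algebra over $(\canN^+)^\moon_0$. Only at the very last line does transcendence enter: since $M$ is an $\om$-mouse, $M\ins Q^\star$, so by STH the theory $\Th_{\Sigma_{n+1}}^M(\emptyset)$ is definable over $Q[t]$, contradicting transcendence. None of this comparison analysis, nor this use of transcendence, appears in your proposal, so the uniqueness step is missing rather than merely sketched.
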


We finally consider the question of the structure of $\HOD^{L[\es]}$.
Our results here only give information ``above
$\delta$''
where $\delta=\om_2^{L[\es]}$
if $L[\es]$ is tame and
$\delta=\om_3^{L[\es]}$ otherwise. The question of the nature of
$\HOD^{L[\es]}$ below $\delta$
appears to be much more subtle, and  relates to the question of the
nature of $\HOD^{L[x]}$ for a cone of reals $x$.\footnote{See \cite[\S8.2]{lcfd} for
partial results,
and \cite{zhu_iterates}, \cite{a_long_comparison},
 \cite{local_mantles_of_Lx} for possibly related issues.}
For by considering arbitrary mice, we are
including examples like
$L[x]=L[M_n^\#]$.

Before we  state the results, we make a coarser remark.
Let $M$ be a mouse modelling $\ZFC$
 and $\Momone=\Momone^M$.
 By \cite{V=HODX_pub},
 $\univ{M}=\HOD^{\univ{M}}_{\Momone}$.
So letting $H=\HOD^{\univ{M}}$ and $\PP\in H$ be Vopenka for
adding subsets of
$\om_1^M$ (as
computed in
$M$)
 and $G_{\Momone}$ the generic for $\Momone$,
 standard facts on Vopenka forcing give
 \[H[G_{\Momone}]=\HOD_{\Momone}^{\univ{M}}=\univ{M}\]
 (cf.~Footnote \ref{ftn:Vopenka_extension} for some explanation).
 In $M$, there are  only
$\om_3^M$-many
 subsets of $(\her_{\om_2})^M$, so $\card^M(\PP)\leq\om_3^M$.
 In fact, this Vopenka has the $\om_3^M$-cc in $H$, because
 the antichains correspond in $M$ to partitions of $\pow(\om_1)^M$.
 Therefore $M$ and $H$ have the same cardinals $\geq\om_3^M$.
 Therefore $\PP$ is in fact equivalent in $H$
  to a forcing $\sub\om_3^M$. (Actually, arguing as in the proof of
  Lemma \ref{lem:tame_Vopenka},
  one can also prove this directly, and show that there is such a
  $\PP\sub\om_3^M$ which is
definable without parameters over $\univ{M}$.)
In particular, there is $X\in\pow(\om_3)^M$ such that $H[X]=\univ{M}$.
One can ask whether this is optimal. In fact,
it can be somewhat improved:

\begin{dfn}
We say that a premouse $M$ is \emph{below a Woodin limit of Woodins}
iff there is no segment of $M$ satisfying ``There is a Woodin limit of
Woodins''.
\end{dfn}

$\BB_{\measlim,\delta}$ (Definition
\ref{dfn:meas-lim_extender_algebra}) denotes a simple
variant of the extender algebra at $\delta$.
\begin{tm}\label{tm:HOD_non-tame_mouse}
Assume STH.
 Let $M$ be a $(0,\om_1+1)$-iterable premouse satisfying $\ZFC$ with
 $H=\HOD^{\univ{M}}\psub M$, $\delta=\om_3^M$ and
 $\Momone=\Momone^M$. Then:
 \begin{enumerate}
\item\label{item:HOD_non-tame_mouse_H[M|delta]} $H[M|\delta]=\univ{M}$,
\item\label{item:HOD_non-tame_mouse_when_M_below_WLW}  If $M$ is below a Woodin limit of Woodins
then there is $X\sub\om_2^M$ with $H[X]=\univ{M}$.
\end{enumerate}
Moreover, there is $W\sub M$ which is definable over $\univ{M}$  without parameters, such that:
 \begin{enumerate}[resume*]
\item $W$ is a premouse satisfying ``$\delta$ is Woodin''.\footnote{We do not claim that $\delta$ is the least
Woodin of $W$, nor even that $\delta$ is  a cutpoint of $W$.},
\item\label{item:HOD_non-tame_mouse_H=univ(W)[t]} $H=\univ{W}[t]$ where
$t=\Th_{\Sigma_2}^{\her}(\delta)$ and $\her=\her_\delta^M$,
and $t$ is
$(W,\BB_{\measlim,\delta}^W)$-generic.
\item\label{item:W|delta_is_seg_of_Sigma-iterate}  If either:
\begin{enumerate}[label=--]\item $\om_2^M<\om_1$
and $\Sigma=\Sigma_{\Momone^M}$,
or
\item $\Momone^M$ is $(0,\om_2+1)$-iterable
and $\Sigma$ is the unique $(0,\om_2+1)$-strategy for $\Momone^M$,
\end{enumerate}
then $W|\delta$ is a segment of an iterate
of $\Momone^M$ via $\Sigma$.\footnote{\label{ftn:om_2^M<om_2}The proof will also show that $\om_2^M<\om_2$, even without either of the extra assumptions
of part \ref{item:W|delta_is_seg_of_Sigma-iterate}.}
\end{enumerate}
\end{tm}

Assuming further that $M$ is tame,
and that $M$ is pointwise definable for part \ref{item:W|delta_is_seg_of_Sigma-iterate},
 we can state a tighter relationship between $M$ and $W$,
$\om_3^M$ is reduced to $\om_2^M$,
and we get $H[\Momone^M]=\univ{M}$.
But we defer the full statement (see Theorem
\ref{tm:HOD_tame_mouse}).

Some of the methods developed in this paper and \cite{V=HODX_pub} have since become useful in the study of \emph{Varsovian models}; in particular, related  methods have been employed in \cite{vm2_v2}.

The author thanks the organizers of the WWU M\"unster set theory seminar
for providing the opportunity to present
Theorems
\ref{thm:tame_e_def_from_x}, \ref{thm:E_almost_def_tame_L[E]},
\ref{thm:tame_HOD_x}
and  \ref{tm:HOD_tame_mouse}
in  a series of talks there
in the summer semester of 2016.
He thanks the organizers of the UC Irvine conference in
inner model theory 2016,
for the opportunity to present Theorems \ref{thm:tame_e_def_from_x},
\ref{thm:tame_HOD_x} and
a summary of some other results
(handwritten notes
at \cite{irvine_talk_notes}). He would also like to thank the organizers of the
 Leeds Logic Colloquium 2016
(slides at \cite{leeds_talk_slides})
for the opportunity to present work on the topic. However,
the very last theorem presented in the latter talk (on slide 46) was overstated:
the known proof only works with $\om_3^M$ replacing
$\om_2^M$, as it is stated here in Theorem \ref{tm:HOD_non-tame_mouse}.
The author apologises for the oversight.
(Theorems \ref{tm:tame_grounds}
and \ref{tm:V=HOD_in_transcendent_mice}
were found some time later than the results just mentioned.)

\subsection{Conventions and Notation}\label{subsec:terminology}

For a summary of terminology  see
\cite[\S1.3]{fsfni_v4}.  We just mention a few non-standard and key points here.
We deal with premice $M$ with Mitchell-Steel indexing and fine structure,
except that we allow superstrong extenders on the extender sequence $\es_+^M$
and use the modifications to the fine structure explained in
\cite[\S5]{V=HODX_pub}.

Let $M$ be a premouse (possibly proper class).
We say $M\in\pm_n$ iff $M\sats$ ``$\om_n$ exists''.\footnote{Here the ``$\om_n$''
is
not supposed to refer to $\om_n^V$; we just mean
 that $M\sats$ ``There are at least $(n+1)$ infinite cardinals''.}
An \emph{$\om$-premouse}
 is a sound premouse $N$ with $\rho_\om^N=\om$;
an \emph{$\om$-mouse} is an
$(\om,\om_1+1)$-iterable $\om$-premouse.
The \emph{degree} $\deg(N)$ of an $\om$-premouse $N$ is the least $n<\om$
such that $\rho_{n+1}^N=\om$.
If $N$ is an $\om$-mouse, we write $\Sigma_N$ for the unique
$(\om,\om_1+1)$-strategy for $N$.
We write $\Momone^M$ for $M|\om_1^M$.

Suppose $M$ is $k$-sound where $k<\om$.
 We say that $M$ satisfies \emph{$(k+1)$-condensation}
 iff it satisfies the conclusion of
 \cite[Theorem 5.2]{premouse_inheriting}.
Let $\dot{p}\in V_\om\cut\om$ be some fixed constant.
Then for $\rho_{k+1}^M\leq\alpha\leq\rho_k^M$,
  $t^M_{k+1}(\alpha)$ denotes the theory
 given by replacing $\pvec_{k+1}^M$ in
$\Th_{\rSigma_{k+1}}^M(\alpha\cup\{\pvec_{k+1}^M\})$
with $\dot{p}$, and write $t^M_{k+1}=t^M_{k+1}(\rho_{k+1}^M)$.

For a limit length iteration tree $\Tt$
on an $\om$-premouse and  a $\Tt$-cofinal branch $b$,
$Q(\Tt,b)$ denotes
the Q-structure $Q\ins M^\Tt_b$ for $\delta(\Tt)$,
if this exists, and otherwise
$Q=M^\Tt_b$.

\section{Local branch definability}

\begin{lem}\label{lem:Q_computes_b}
Let $\Tt$ be a limit length $\om$-maximal tree on an $\om$-premouse and $b$ a
$\Tt$-cofinal branch
with $M^\Tt_b$ being $\delta(\Tt)$-wellfounded and $Q=Q(\Tt,b)$ wellfounded.
Let $\delta=\delta(\Tt)$, $t=t^Q_{q+1}(\delta)$
where $Q$ is $q$-sound and $\rho_{q+1}^Q\leq\delta\leq\rho_q^Q$,
and $X=\trancl(\{\Tt,t\})$.
Then:
\begin{enumerate}[label=\tu{(}\roman*\tu{)}]
 \item\label{item:b_in_J(X)} $b\in\J(X)$, and
 \item\label{item:b_from_params_unif} $b$ is $\Sigma_1^{\J(X)}(\{\Tt,t\})$,
uniformly in $(\Tt,t)$.
\end{enumerate}
\end{lem}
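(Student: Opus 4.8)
The plan is to recover the branch $b$ from the Q-structure $Q = Q(\Tt,b)$ by appealing to the standard fact that a wellfounded Q-structure for $\delta(\Tt)$ determines the branch uniquely, and that the branch-comparison argument (the Zeman/Mitchell-Steel Q-structure uniqueness lemma) is sufficiently absolute to run inside $\J(X)$. First I would observe that $Q$ is coded, level-by-level, by the theory $t = t^Q_{q+1}(\delta)$ together with $\delta = \delta(\Tt)$ and the common part model $M(\Tt)$, which is itself computable from $\Tt$; since $X = \trancl(\{\Tt,t\})$, the model $\J(X)$ already contains $\Tt$, $\delta(\Tt)$, $M(\Tt)$ and $t$, and from $(t,\delta,M(\Tt))$ it can reconstruct $Q$ — here one uses that $t$ is the $\dot p$-relabelled $\rSigma_{q+1}$-theory, so decoding $Q$ from $t$ is a $\Sigma_1$ process (build the Skolem hull, take transitive collapse, check wellfoundedness; wellfoundedness is not an obstacle because we are told $Q$ is wellfounded and its decoding will live inside $\J(X)$). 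The uniform $\Sigma_1$ bound is what gives part \ref{item:b_from_params_unif}.

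Next I would carry out the actual identification of $b$. Inside $\J(X)$ one searches for a $\Tt$-cofinal branch $c$ such that either $M^\Tt_c$ is illfounded beyond $\delta$ is avoided — more precisely, one uses the characterization: $b$ is the unique cofinal branch $c$ such that $\delta(\Tt)$ is not Woodin in $M^\Tt_c$ as witnessed below, and $Q \ins M^\Tt_c$, equivalently $Q$ embeds into (an initial segment of) $M^\Tt_c$ via the natural map and $Q$ "is" the Q-structure of $M^\Tt_c$ at $\delta$. The key input is the comparison/uniqueness argument: if $c \neq c'$ are two distinct cofinal branches with Q-structures, then $\delta(\Tt)$ would be Woodin in $\Ult$ at the point of disagreement, contradicting that a Q-structure exists; this argument is elementary recursion-theoretic bookkeeping on the tree and the extenders used, all of which is available in $\J(X)$ since $\Tt$ (hence all of $M(\Tt)$ and the extender information) is an element of $X$. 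Because the search quantifies over branches $c$ (subsets of $\lh(\Tt)$, hence elements of $\J(X)$ once $\Tt \in X$) and the matching condition "$Q$ is the Q-structure of $M^\Tt_c$" is arithmetic in $(Q, \Tt, c)$ and hence $\Delta_1^{\J(X)}$ with the stated parameters, the whole statement "$c = b$" is $\Sigma_1^{\J(X)}(\{\Tt,t\})$, uniformly.

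Finally, for part \ref{item:b_in_J(X)}: once the $\Sigma_1$ definition is in place and we know a witness exists in $V$ (namely $b$ itself, with $Q = Q(\Tt,b)$ genuinely being its Q-structure by hypothesis), we need that the witness is actually found in $\J(X)$ rather than merely in some larger model. This follows because the search is over subsets of a set in $X$, every such branch is an element of $\J(X)$, and $\J(X)$ correctly evaluates the (arithmetic, wellfounded) matching predicate; so if $b$ satisfies it in $V$ it satisfies it in $\J(X)$, and conversely any $c$ satisfying it in $\J(X)$ satisfies it in $V$, whence $c = b$ by genuine uniqueness. Thus $b \in \J(X)$ and the definition is correct.

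The main obstacle I anticipate is not the logical complexity bookkeeping but making sure the decoding of $Q$ from the relabelled theory $t^Q_{q+1}(\delta)$ is genuinely $\Sigma_1$ and robust: one must check that the hull generated by $\delta \cup \{\dot p\}$ inside the putative $Q$, read off from $t$, has a wellfounded transitive collapse equal to $Q$, using soundness of $Q$ ($Q$ is $q$-sound with $\rho_{q+1}^Q \le \delta$, so $Q = \cHull_{q+1}^Q(\delta \cup \{p_{q+1}^Q\})$). This is exactly the kind of fine-structural argument that appears in the proof of condensation, and the only delicate point is the interaction with the degree $q$ and the possibility $\rho_{q+1}^Q < \delta < \rho_q^Q$ versus $\delta = \rho_q^Q$; in all cases the relevant theory still determines $Q$, but the uniformity claim needs the degree $q$ to be readable from $t$ (which it is, since $t$ records satisfaction of the relevant $\rho$-statements). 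Modulo that routine verification, the proof is a packaging of the standard Q-structure uniqueness argument together with the observation that all the objects involved are hereditarily below $X$.
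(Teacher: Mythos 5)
There is a genuine gap, and it sits at the heart of your argument: you write that the search ``quantifies over branches $c$ (subsets of $\lh(\Tt)$, hence elements of $\J(X)$ once $\Tt\in X$).'' That inference is false. $\J(X)$ is only the first level of the Jensen hierarchy over $X=\trancl(\{\Tt,t\})$; it contains nothing like all subsets of $\lh(\Tt)$, and in particular a $\Tt$-cofinal branch is in general \emph{not} an element of any small model containing $\Tt$ --- this is exactly why Q-structure-guided iterability is a nontrivial notion. The entire content of part (i) is that the specific branch $b$ \emph{is} in $\J(X)$, so an argument that begins by searching over branches already assumed to lie in $\J(X)$ and then invokes uniqueness of the branch with Q-structure $Q$ is circular: uniqueness in $V$ tells you $b$ is the only candidate, but it does not put $b$ into $\J(X)$, nor does it make the predicate ``$\alpha\in b$'' $\Sigma_1$ over $\J(X)$. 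Your decoding of $Q$ from $t$ is fine and is indeed part of the paper's argument, but it is the easy part.

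What the paper actually does is \emph{construct} $b$ from $(\Tt,t)$. When $Q\pins M^\Tt_b$ it splits into two cases. If $Q$ singularizes $\delta$, it takes the least $Q$-definable cofinal $f:\theta\to\delta$, fixes $\alpha\in b$ with $\theta<\crit(i^\Tt_{\alpha b})$ and $\delta\in\rg(i^\Tt_{\alpha b})$, and shows that for each $\gamma<\theta$ the least $\beta\geq\alpha$ with $i^\Tt_{\alpha\beta}(\bar f(\gamma))=f(\gamma)<\lambda(E^\Tt_\beta)$ must lie on $b$ (a short argument about critical points along the branch); this produces a cofinal subset of $b$, hence $b$, computably from $(\Tt,t)$ and a finite parameter, and the parameter is then eliminated by checking which choice yields a Q-structure whose theory matches $t$. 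If $Q$ does not singularize $\delta$, the same is done with a club $C$ of $\lambda$'s below which no $\kappa$ is ${<}\lambda$-$A$-reflecting for a definable-over-$Q$ witness $A$ to non-Woodinness, using the initial segment condition to show the relevant $\beta_\gamma$'s lie on $b$. When $Q=M^\Tt_b$ one instead recovers a tail of $b$ from core-map calculations. The uniformity in (ii) then requires reconciling these cases via a $\Sigma_1$ search over soundness/projectum data readable from $t$. None of this machinery appears in your proposal, and without it neither (i) nor (ii) follows.
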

\begin{proof}
Part \ref{item:b_in_J(X)}: If $Q=M^\Tt_b$ then we can just use standard
calculations using core maps
(done in the codes given by the theory $t$, however)
to find a tail sequence of extenders used along $b$, and hence,
find $b$ itself, from $(\Tt,Q)$.
So suppose $Q\pins M^\Tt_b$, so $\rho_\om^Q=\delta$
and $Q$ is fully
sound.

\begin{case}\label{case:Q_sing_delta} $Q$ singularizes $\delta$.

Let
$f:\theta\to\delta$ be
cofinal in $\delta$,
with
$\theta=\cof^{M^\Tt_b}(\delta)$, $f$ the least such
which is definable over $Q$ (without parameters).
Let $\alpha\in b$ be such that $(\alpha, b]^\Tt$ does not drop and
$\delta\in\rg(i^\Tt_{\alpha b})$
(so $Q,f\in\rg(i^\Tt_{\alpha b})$) and
$\theta<\kappa=\crit(i^\Tt_{\alpha b})$. Let
$i^\Tt_{\alpha b}(\bar{\delta},\bar{f})=(\delta,f)$.
For $\gamma<\theta$, let $\beta_\gamma$ be the least
$\beta\in[\alpha,\lh(\Tt))$ such that $\alpha\leq^\Tt\beta$ and
$(\alpha,\beta]^\Tt$ does not drop
and
$i^\Tt_{\alpha\beta}(\bar{f}(\gamma))=f(\gamma)<\lambda(E^\Tt_\beta)$.
Then $\beta_\gamma\in b$. (Suppose not.
Let $\xi+1\in b$ be least such that $\xi\geq\beta_\gamma$.
Let $\varepsilon=\pred^\Tt(\xi+1)$. So $\alpha\leq^\Tt\varepsilon<\beta_\gamma\leq\xi$,
so by the minimality of $\beta_\gamma$,
\[ \crit(E^\Tt_\xi)=\crit(i^\Tt_{{\varepsilon} b})<\nu(E^\Tt_\varepsilon)\leq\lambda(E^\Tt_\varepsilon)\leq
f(\gamma)<\lambda(E^\Tt_{\beta_\gamma})\leq\lambda(E^\Tt_\xi). \]
But then
$f(\gamma)\notin\rg(i^\Tt_{{\varepsilon} b})$, so
$f(\gamma)\notin\rg(i^\Tt_{\alpha b})$,
a contradiction.)

So $b$ is appropriately computable from $(\Tt,t)$ and the parameter
$(\alpha,\bar{\delta})$.
But if we define another branch $b'$ from $(\Tt,t)$, in the same manner,
but from some other parameter $(\alpha',\bar{\delta}')$, with $\alpha'\notin
b$, then by the Zipper Lemma
\cite[Theorem 6.10]{outline} and variants thereof,
$Q(\Tt,b')\neq Q(\Tt,b)$, and this fact is first-order over $(Q,t)$,
because we can compute the corresponding
theory $t'$ of $Q(\Tt,b')$ by consulting the
theories of
the models along
$b'$. So by demanding
that the selected parameter
results in a Q-structure whose theory agrees with $t$,
we can actually compute the correct $b$
from $(\Tt,t)$ without the extra parameter.
\end{case}

\begin{case}\label{case:Q_not_sing_delta}
$Q$ does not singularize $\delta$.

Let $A\sub\delta$ be definable over $Q$ without parameters,
such that no $\kappa<\delta$ is ${<\delta}$-$A$-reflecting.
Let $C$ be the set of all limit cardinals $\lambda<\delta$ of $Q$ such that for
all
$\kappa<\lambda$,
$\kappa$ is not ${<\lambda}$-$A$-reflecting. Then $C$ is club in $\delta$
because
$Q$ does not singularize $\delta$.
Let $\alpha\in b$ be such that $[\alpha,b)^\Tt$ is non-dropping and $\delta\in\rg(i^\Tt_{\alpha b})$.
Let $i^\Tt_{\alpha b}(\bar{C})=C$. For $\gamma\in C$, let
$\beta_\gamma$ be the least $\beta\in[\alpha,\lh(\Tt))$ such that
$\gamma<\lh(E^\Tt_\beta)$.
Then $\beta_\gamma\in b$. (Suppose not, and let   $\xi\geq\beta_\gamma$ be least
with $\xi+1\in b$.
Let $\varepsilon=\pred^\Tt(\xi+1)$, so $\alpha\leq^\Tt\varepsilon<\beta_\gamma\leq\xi$.
So
\[
\kappa=\crit(E^\Tt_\xi)<\lh(E^\Tt_{\varepsilon})\leq\gamma<\lh(E^\Tt_{\beta_\gamma}
)\leq\lh(E^\Tt_\xi) \]
and $\gamma\leq\nu(E^\Tt_\xi)$ since $\gamma$ is a $Q$-cardinal.
But since $i^\Tt_{\alpha b}(\bar{A})=A$,
we have
\[ i_{E^\Tt_\xi}(A\inter\kappa)\inter\gamma=A\inter\gamma, \]
so by the ISC, restrictions of $E^\Tt_\xi$ witness
the fact that
$\kappa$ is ${<\gamma}$-$A$-strong
in $Q$, so $\gamma\notin C$, contradiction.) So $b$ is computable from $(\Tt,t)$
and the parameter $(\alpha,\bar{\delta})$, and like before, we actually
therefore get a
computation from $(\Tt,t)$ without the extra parameter.
\end{case}

Part \ref{item:b_from_params_unif}: It seems we can't quite uniformly
tell which of
the above three cases holds. But the calculations used in the case that
$Q\pins M^\Tt_b$ still work when $Q=M^\Tt_b$ and $\delta$ is not
$\bfrSigma_k^Q$-Woodin,
but $\rho_{k+10}^Q=\delta$. So our $\Sigma_1$ formula
seeks either some $k<\om$ such that $Q$ is not $k$-sound,
and applies the procedure for when $Q=M^\Tt_b$,
or some $k<\om$ such that $Q$ is $(k+10)$-sound and $\rho_{k+10}^Q=\delta$,
but $\delta$ is not $\bfrSigma_k^Q$-Woodin,
and then uses the procedure for when $Q\pins M^\Tt_b$ (with complexity say
$\bfrSigma_{k+5}$).
We have enough information in some $\Ss_n(X)$ to verify all the relevant
computations,
including that $Q$ is the correct direct limit of certain substructures
appearing along the branch $b$.
This yields the desired uniform computation for \ref{item:b_from_params_unif}.
\end{proof}
\begin{dfn}
 Let $\Tt$ be as above and $Q$ be a (wellfounded) Q-structure for $M(\Tt)$,
 and $t$ as above for $Q$.
 Then $\branch(\Tt,Q)$
 or $\branch(\Tt,t)$ is the unique $\Tt$-cofinal branch $b$ computed from
$(\Tt,Q)$ as above
 (as the output of our $\Sigma_1^{\J(X)}(\{\Tt,Q\})$ procedure)
 if it exists, and is otherwise undefined.
\end{dfn}

\section{Self-iterability and definability}\label{sec:self-it}

We begin with some basic examples which provide some context for the paper.

\begin{tm}\label{thm:1-small_pc_V=HOD}
 Let $M$ be a proper class, $1$-small, $(0,\om_1+1)$-iterable premouse.
 Then $\es^M$ is definable over $\univ{M}$, so $\univ{M}\sats$ ``$V=\HOD$''.
\end{tm}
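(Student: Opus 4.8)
The reduction is immediate: the universe of $M$ is the relativized $L$, namely $L[\es^M]$, so once $\es^M$ is shown to be definable over $\univ{M}$ without parameters, every element of $M$ is ordinal definable and hence $\univ{M}\sats$``$V=\HOD$''. So the plan is entirely to produce such a definition of $\es^M$. The strategy is to show that $M$ is \emph{self-iterable}, in the sense that $M$ can, uniformly and definably, compute the cofinal branches of the iteration trees that arise when comparing its countable-in-$M$, $\om$-sound initial segments against $M$ itself (and against one another); granting this, $M$ recognizes exactly which premice in $M$ are its own initial segments, equivalently $M$ can run the $K^c$-construction inside $\univ{M}$, verify that it is iterable, and identify its output $K$ with $M$ --- so that $\es^M=\es^K$ becomes definable.

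First I would reduce branch computation to Q-structure identification. An iteration tree $\Tt$ arising in such a comparison is set-sized in the proper class $M$, all of its models are wellfounded, and --- crucially --- because $M$ is $1$-small, every premouse occurring along $\Tt$ is $1$-small; in particular, whenever $\Tt$ has limit length and $b$ is the correct branch, the Q-structure $Q=Q(\Tt,b)$ is a $1$-small premouse in which $\delta(\Tt)$ fails to be Woodin. Lemma~\ref{lem:Q_computes_b} then computes $b=\branch(\Tt,Q)$ from $(\Tt,Q)$ by a $\Sigma_1^{\J(X)}$ procedure, so the only remaining issue is for $M$ to identify $Q$ among the premice it contains. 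Here I would use two facts: \emph{uniqueness} --- any two $(0,\om_1+1)$-iterable Q-structures for a fixed $M(\Tt)$ agree, by comparison together with the Dodd--Jensen property, the comparison terminating at $\delta(\Tt)$ precisely because both sides are $1$-small Q-structures; and \emph{internal recognizability} --- for $1$-small premice sufficient iterability reflects, so $M$ can recognize the true Q-structure as the unique Q-structure for $M(\Tt)$ that is $(0,\om_1^M)$-iterable as computed in $M$ (a purported counterexample tree of length $\om_1^M$, together with its own reflectively-found Q-structures, would reflect to a shorter one). Thus ``$b$ is the correct branch of $\Tt$'' becomes expressible over $\univ{M}$, uniformly in $\Tt$.

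With self-iterability in hand, $M$ can carry out the comparison of any $\om$-sound, countable-in-$M$ premouse $P$ with a sufficiently tall initial segment of $M$, hence define the collection of initial segments of $M$, hence define $\es^M$; equivalently, $M$ defines the $K^c$-construction performed in $\univ{M}$, proves it fully iterable using the Q-structure identification above, and sets $K$ to be its output --- and then a standard universality argument ($M$ does not out-iterate $K$, while $K$ is universal) yields $\es^K=\es^M$. Since $K$ is defined over $\univ{M}$ without parameters, so is $\es^M$. The main obstacle, and the heart of the matter, is exactly the iterability input: one must verify that along every tree that arises (in the comparisons and in the internal $K^c$-construction) the correct Q-structure genuinely exists, is $1$-small, and is the unique iterable candidate, so that $M$'s internal search returns the true branch rather than a spurious one. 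This is precisely the point at which $1$-smallness is used: it forbids the construction, or the trees on its levels, from ever reaching a segment with a Woodin cardinal, so the needed Q-structures always exist and are internally identifiable; it is also exactly why the argument does not apply to mice with a Woodin. Everything else --- the reduction to $V=\HOD$, the bookkeeping of the $K^c$-construction, and the universality comparison --- is routine given this.
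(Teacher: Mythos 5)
Your overall plan (internal self-iterability via Q-structure identification, then definability of $\es^M$) is in the right spirit, and you correctly locate where $1$-smallness must enter; but both halves have gaps, and the paper's proof is a few lines because it exploits a simplification you miss. The key point is that for a proper class $1$-small $M$, the Q-structures guiding trees $\Tt$ on $\Momone^M$ inside $M$ are of the form $\J_\alpha(M(\Tt))$: they carry no extenders indexed above $\delta(\Tt)$, so $M$ (being a proper class) computes them outright by constructing over $M(\Tt)$ until Woodinness fails, with \emph{no} iterability verification of candidate Q-structures needed. Your substitute --- identify $Q$ as ``the unique Q-structure that is $(\om,\om_1^M)$-iterable as computed in $M$'' --- is circular as stated: certifying iterability of a candidate requires identifying the Q-structures of trees on \emph{that} candidate, and the regress is only grounded by the $\J_\alpha(M(\Tt))$ observation you omit. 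Once that observation is made, one concludes immediately that $M\sats$``$\Momone^M$ is $(\om,\om_1+1)$-iterable'', whence $\Momone^M$ is outright definable over $\univ{M}$; your machinery of Dodd--Jensen, reflection of iterability, etc.\ is not needed.

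The second half is also not carried out. ``Compare any $\om$-sound countable-in-$M$ premouse with a sufficiently tall initial segment of $M$'' presupposes that $M$ already knows its own initial segments, which is what is being defined; at best this characterizes $\Momone^M$, not $\es^M$ above $\om_1^M$. Your $K^c$-plus-universality alternative is a genuinely different route, but it is not available in the generality of the theorem: a $1$-small proper class premouse can itself have a Woodin cardinal (the paper applies the theorem to $M_1$), so $\univ{M}$ may well contain an inner model with a Woodin and the below-one-Woodin core model theory your universality argument relies on does not apply off the shelf. The paper instead quotes \cite{V=HODX}: once $\Momone^M$ is definable over $\univ{M}$, so is all of $\es^M$ (via the inductive condensation stack). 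That reduction is a substantial prior result, and your proposal needs it (or a working substitute) to close.
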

\begin{proof}
 By \cite[Theorem 3.11(b)]{V=HODX_pub}, it suffices to see that
 $\Momone^M$ is definable over $\univ{M}$. But
  because $M$ is proper class, and trees $\Tt$ on $\Momone^M$ in $M$ are guided
by Q-structures of the form $\J_\alpha(M(\Tt))$, we
get $M\sats$ ``$\Momone^M$ is $(\om,\om_1+1)$-iterable'', so
$\Momone^M$ is outright definable over $\univ{M}$,
and hence so is $\es^M$.
\end{proof}

In particular $M_1\sats$ ``$V=\HOD$'', a fact first proven by Steel, via other
means. On the other hand:
\begin{rem}\label{rem:L[M_1^sharp]}Assume that $M_1^\#$ exists (and is
$(\om,\om+1)$-iterable) and let $N=L[M_1^\#]$. Note that $N$ is an
$(\om,\om_1+1)$-iterable tame premouse. Standard
descriptive set theoretic observations show that
$\univ{N}\sats$ ``$V\neq\HOD$'', and in fact, that $\om_1^{N}$ is
measurable in $\HOD^{\univ{N}}$. (So by Theorem \ref{thm:1-small_pc_V=HOD},
$N$ is the least such proper class mouse.)

For the record, we give the proof that $\om_1^{N}$ is measurable in
$\HOD^{\univ{N}}$.
It suffices to see that $N\sats\Delta^1_2$-determinacy,
for then $N\sats\OD$-determinacy (by Kechris-Solovay
\cite[Corollary
6.8]{lcfd}),
and hence $\om_1^N$ is measurable in $\HOD^{\univ{N}}$
 by the effective version of Solovay's result
 (see \cite[Theorem 2.15]{lcfd}).
 (Further, $\om_2^N$ is Woodin in $\HOD^{\univ{N}}$ by Woodin
 \cite[Theorem
6.10]{lcfd}.)

So let $g\in N$ be $M_1$-generic for $\Coll(\om,\delta)$
 where $\delta$ is Woodin in $M_1$
  (note $\delta^{+M_1}<\om_1^N$, so such a $g$ exists).
  By Neeman
  \cite[Corollary 6.12]{detLR},
  $M_1[g]\sats\Delta^1_2$-determinacy.
  Let $X\in N$ be $\Delta^1_2$, and $\varphi,\psi$
  be $\Pi^1_2$ formulas such that
  \[ X=\{x\in\RR^N\mid N\sats\varphi(x)\}\text{ and }Y=\RR^N\cut
X=\{x\in\RR^N\mid N\sats\psi(x)\}.\]
 Let $\bar{X}=X\inter M_1[g]$ and $\bar{Y}=Y\inter M_1[g]$.
 By absoluteness,
 \[ \bar{X}=\{x\in \RR^{M_1[g]}\mid M_1[g]\sats\varphi(x)\}\text{ and }
\bar{Y}=\{x\in\RR^{M_1[g]}\mid M_1[g]\sats\psi(x)\},\]
 so $\bar{X}$ is $\Delta^1_2$ in $M_1[g]$.
Let $\sigma\in M_1[g]$ be a winning strategy for the game
$\mathcal{G}^{M_1[g]}_{\bar{X}}$.
 The fact that $\sigma$ is winning is a $\Pi^1_2$ assertion (for either player),
 so $\sigma$ is still winning in $N$. This verifies that $N$ satisfies
$\Delta^1_2$-determinacy.

This proof relies heavily on descriptive set theory. Is there an inner model
theoretic proof that $\univ{N}\sats$ ``$V\neq\HOD$''? There \emph{is}
such a proof that $L[x]\sats$ ``$V\neq\HOD$'' for a cone of reals $x$
(assuming $M_1^\#$); see \cite{a_long_comparison}.
 \end{rem}

\begin{rem}
 Note that we have not ruled out the possibility of set-sized mice $N$ which
 model $\ZFC$ and are $1$-small, and such that $N\sats$ ``$V\neq\HOD$''.
 Let $M$ be the least mouse satisfying $\ZFC$ + ``There is a Woodin cardinal''.
 Then $M$ is pointwise definable and $\J(M)$ is sound, $\rho_1^{\J(M)}=\om$
and $p_1^{\J(M)}=\{\OR^M\}$.
 Let $N$ be the least mouse with $M\pins N$ and $N\sats\ZFC$; so
$N=\J_\alpha(M)$
for some $\alpha\in\OR$, and $N\pins M_1$
and $N$ is pointwise definable and $\J(N)$ is sound and $\rho_1^{\J(N)}=\om$.
Then genericity iterations can be used to show that $N\sats$ ``$M$ is not
$(\om,\om_1+1)$-iterable'', and the
 author does not know whether $\univ{N}\sats$ ``$V=\HOD$''.
\end{rem}

\begin{rem}Considering again $N=L[M_1^\#]$, clearly
$\univ{N}\sats$ ``$V=\HOD_x$ for some real
$x$''.
Steel and Schindler showed that if $M$ is a tame mouse satisfying
$\ZFC^-+$``$\om_1$ exists'',
then there is $\alpha<\om_1^M$ such that $M\sats$ ``$\Momone^M$ is
above-$\alpha$,
$(\om,\om_1)$-iterable''. We next show that this
cannot be improved to
``above $\alpha$, $(\om,\om_1+1)$-iterable''. So we cannot use
$(\om_1+1)$-iterability to prove Theorem
\ref{thm:tame_e_def_from_x}.\end{rem}

\begin{dfn}\label{dfn:meas-lim_extender_algebra}
 Working in a premouse $M$, the \emph{meas-lim extender algebra at $\delta$},
written $\BB_{\ml,\delta}$, is the version of the $\delta$-generator extender
algebra at $\delta$
 in which we only induce axioms with extenders $E\in\es^M$ such that $\nu_E$ is
an inaccessible limit of measurable cardinals of $M$.
 And $\BB^{\geq\alpha}_{\ml,\delta}$
 denotes the variant using only extenders $E$ with $\crit(E)\geq\alpha$.
\end{dfn}

\begin{exm}\label{exm:M_1^sharp-closed_reals}
Let $S$ be the least active mouse such that $S|\om_1^S$ is closed under the
$M_1^\#$-operator and let $N=L[S|\om_1^S]$.
Note that $N\sats$ ``I am $\om_1$-iterable'',
and in fact, letting $\Sigma$ be the correct strategy for $N$,
then $\Sigma\rest\HC^N$ is definable over $N$.
We claim that, however,
\[ N\sats\neg\exists\alpha<\om_1\ [\Momone^N\text{ is
above-}\alpha,\ (\om,\om_1+1)\text{-iterable}].\]

For let $P\pins N|\om_1^N$ project to $\om$.
We will construct tree $\Tt\in N$, on $R=M_1(P)$, above $P$,
of length $\om_1^N$, via the correct strategy,
such that $\Tt$ has no cofinal branch in $N$.
Since $M_1^\#(P)\pins N$ and $P$ can be taken arbitrarily high below $\om_1^N$,
this suffices.

Let $\BB=(\BB^{\geq\OR^P}_{\ml,\delta^R})^R$.
We define $\Tt$ by $\es^{N|\om_1^N}$-genericity iteration with respect to $\BB$
(and its images), interweaving short linear iterations
at successor measurables, as follows. Work in $N$.
The tree $\Tt$ will be nowhere dropping.
We define a continuous sequence $\left<\eta_\alpha\right>_{\alpha<\om_1^N}$
where $\eta_\alpha$ is either $0$ or a limit ordinal ${<\om_1^N}$,
and define $\Tt\rest(\eta_\alpha+1)$, by induction on $\alpha$.
Set $\eta_0=0$.
Suppose we have defined $\Tt\rest(\eta_\alpha+1)$ and it is short;
so
\[ i^\Tt_{0\eta_\alpha}(\delta^R)>\delta=\delta(\Tt\rest\eta_\alpha)\]
(where $\delta(\Tt\rest 0)=0$).
Let $G=G^\Tt_{\eta_\alpha}$ be the least \emph{bad} extender
$G\in\es(M^\Tt_{\eta_\alpha})$;
that is, it induces an axiom of $i^\Tt_{0\eta_\alpha}(\BB)$
which is false for $\es^{N|\om_1^N}$ (or set $G^\Tt_{\eta_\alpha}=\emptyset$
if there is no such; in fact there will be one).
By induction, we will have
$\delta\leq\nu_G<\lh(G)$ (assuming $G$ is defined).
By definition of $\BB$, $\nu_G$
is a limit of  $M^\Tt_{\eta_\alpha}$-measurables.

Suppose $\nu_G>\delta$ (or $G$ is undefined). Let $\mu$ be the least
$M^\Tt_{\eta_\alpha}$-measurable
with $\delta<\mu$, and let $D\in\es(M^\Tt_{\eta_\alpha})$
be the (unique) total  measure on $\mu$. Note that $\lh(D)<\lh(G)$, if $G$ is
defined. Let $Q\pins N$ be least such that
$Q=M_1^\#(S)$ for some $S\pins N$ with $\rho_\om^S=\om$ and $\mu<\OR^S$.
Let $\eta_{\alpha+1}=\OR^Q$.
Then $\Tt\rest[\eta_\alpha,\eta_{\alpha+1}+1]$ is given by linearly iterating
with $D$ and its images.

Now suppose instead that $\nu_G=\delta$.
Then we set $\eta_{\alpha+1}=\eta_\alpha+\om$, set $E^\Tt_{\eta_\alpha}=G$,
and letting $\mu$ be the least  $M^\Tt_{\eta_\alpha+1}$-measurable with $\mu>\delta$ and
$D\in\es(M^\Tt_{\eta_\alpha+1})$
the total measure on $\mu$, let $\Tt\rest[\eta_\alpha+1,\eta_{\alpha+1}+1]$
be given by linear iteration with $D$ and its images.

Note that in both cases, because $\mu$ is a successor measurable, this does not
leave any bad extender algebra axioms induced by extenders $G\in\es(M^\Tt_{\eta_{\alpha+1}})$ such that
\[ \delta<\lh(G)<\delta(\Tt\rest\eta_{\alpha+1}).\]

So it is straightforward to see that $\Tt$ is normal and is nowhere dropping.
We set $\Tt=\Tt\rest\eta_\alpha$ where
$\alpha$ is least such that either $\alpha=\om_1^N$ or
$\Tt\rest\eta_\alpha$
is maximal (non-short).
Note that $\Tt\in N$ and $\Tt$ is via the correct strategy,
so it suffices to verify:

\begin{clm*} $\lh(\Tt)=\om_1^N$ and $N$ has no $\Tt$-cofinal branch.\end{clm*}

\begin{proof} Suppose $\lh(\Tt)=\om_1^N$ but $N$ has a $\Tt$-cofinal branch $b$. Note that $\eta_\alpha$ is defined and $\eta_\alpha<\om_1^N$ for every $\alpha<\om_1^N$. Working in $N$,
we do the usual reflection argument, and get an elementary $\pi:M\to N|\gamma$
for some countable $M$ and large $\gamma$, with $\Tt,b\in\rg(\pi)$.
Let $\kappa=\crit(\pi)$. Let $\beta+1=\min(b\cut(\kappa+1))$.
Because $\Tt$ is normal and by the usual proof that genericity iterations
terminate,
it suffices to see that $E^\Tt_\beta=G^\Tt_{\eta_\alpha}$ for some $\alpha$.
So fix $\alpha<\om_1^N$ such that $\beta\in[\eta_\alpha,\eta_{\alpha+1})$. Then, noting that $\crit(E^\Tt_\beta)=\kappa=\eta_\kappa=\delta(\Tt\rest\eta_\kappa)$,
we have $\alpha\geq\kappa$.
But then if $E^\Tt_\beta\neq G^\Tt_{\eta_\alpha}$,
then $E^\Tt_\beta$
is one of the linear iterates of the order $0$ measure $D$ from stage $\alpha$, but then $\crit(D)=\mu>\delta(\Tt\rest\eta_\alpha)\geq\alpha\geq\kappa$, contradiction.

Now suppose that $\lh(\Tt)<\om_1$; then $\Tt=\Tt\rest\eta_\alpha$ is maximal
with some $\alpha<\om_1^N$. Note that $\alpha$ is a limit.
Let $b$ be the correct $\Tt$-cofinal branch, chosen in $V$.
So
\[ i^\Tt(\delta^R)=\delta=\delta(\Tt\rest\eta_\alpha)\text{ is Woodin in
}M^\Tt_b,\]
and $\delta<\om_1^N$. Let $Q$ result from linearly iterating out
the sharp of $M^\Tt_b$.
Then $N|\delta$ is $Q$-generic for $i^\Tt_b(\BB)$,
and since $\alpha$ is a limit ordinal and because of the linear iterations
inserted in $\Tt$,
$N|\delta$ is closed under the $M_1^\#$-operator.
But $\delta$ is regular in $Q[N|\delta]$,
hence regular in $L[N|\delta]$.
This easily contradicts the minimality of $N$.\end{proof}
\end{exm}

\section{Ordinal-real definability in tame mice}\label{sec:tame_V=HOD_x}
In this section we prove some results for tame mice,
including Theorem \ref{thm:tame_e_def_from_x}, which has the consequence
that every tame mouse satisfying $\ZFC$
satisfies ``$V=\HOD_x$ for some real $x$'', and also that every tame
mouse satisfying ``$\om_1$ exists'' satisfies ``there is a wellorder of
$\RR$ definable over $\her_{\om_2}$ from a real parameter'' (the wellorder is
just the canonical one of $\Momone^M$). As mentioned in the introduction, this answers
the (implicit) question of Schindler and Steel from \cite[p.~752]{sile}.
The methods are, moreover, very similar to those of
\cite{sile}.

\begin{dfn}\label{dfn:Sigma_mmalpha}
 For an $\om$-mouse $M$, or for a mouse $M$ satisfying ``$V=\HC$'',
 $\Sigma_M$ denotes the unique $(\om,\om_1+1)$-iteration strategy for $M$.

 Let $M$ be a $(0,\om_1+1)$-iterable premouse satisfying ``$\om_1$ exists''.
Let
$\Momone=\Momone^M$ and $\alpha<\om_1^M$.
 Then
 $\Sigma_{\Momone\Momone\alpha}$
 denotes the restriction of $\Sigma_{\Momone}$
 to above-$\alpha$ trees in $\Momone$ (in particular, the trees in the domain of this strategy have countable length in $M$).
\end{dfn}

\begin{tm}\label{thm:tame_HOD_x}
 Let $M$ be a tame mouse satisfying ``$\om_1$ exists'' and $\Momone=\Momone^M$.
 Then there is an $\alpha<\om_1^M$ such that:
 \begin{enumerate}
  \item
 $\Sigma_{\Momone\Momone\alpha}\in M$; in fact,
 this strategy is definable over $\Momone$ from parameter $\alpha$,
 \item\label{item:om_1_iter_enough} For every sound tame $\om$-premouse $R$
with $M|\alpha\ins R\in M$, if $M\sats$ ``$R$ is above-$\alpha$,
$(\om,\om_1)$-iterable''
then $R\pins\Momone$.
  \end{enumerate}
Therefore, by \cite[Theorem 3.11]{V=HODX_pub}:
\begin{enumerate}[label=--]
 \item
$\Momone$ is definable over $(\her_{\om_2^M})^M$ from the
parameter $M|\alpha$, and
\item if $M\sats\PS$ or $\univ{M}\sats\ZF^-$ then $\es^M$ is definable over
$\univ{M}$ from $M|\alpha$.
\end{enumerate}
\end{tm}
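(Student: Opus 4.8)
The plan is to argue as in \cite{sile}, using self-iterability of $\Momone=\Momone^M$ inside $M$, combined with the branch-definability machinery of Lemma \ref{lem:Q_computes_b}. First I would fix $\alpha<\om_1^M$ so that $\Momone$ is above-$\alpha$, $(\om,\om_1)$-iterable \emph{in} $M$; such an $\alpha$ exists by the Steel--Schindler result quoted in the remark preceding Definition \ref{dfn:meas-lim_extender_algebra}. The key point is that for above-$\alpha$ trees $\Tt$ on $\Momone$ living in $M$, the relevant Q-structures $Q(\Tt,b)$ are themselves segments of (iterates of) $\Momone$, hence are internal to $M$ and — crucially — their canonical parameter-free theories $t=t^Q_{q+1}(\delta(\Tt))$ can be identified inside $M$ by comparison against the output of the genericity/comparison apparatus. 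By Lemma \ref{lem:Q_computes_b}\ref{item:b_from_params_unif}, once $\Tt$ and the correct $t$ are known, the branch $b=\branch(\Tt,t)$ is $\Sigma_1^{\J(X)}(\{\Tt,t\})$ with $X=\trancl(\{\Tt,t\})$; so the whole content is to show $M$ can compute, for each limit-length $\Tt$, the correct $Q$-structure. For this one runs the usual argument: a putative wrong Q-structure would have to be an iterable premouse over $M(\Tt)$ disagreeing with the true one, and by tameness and the fact that $M(\Tt)$ is itself (an iterate of a segment of) $\Momone$, a comparison/coiteration internal to $M$ — which $M$ can carry out because it has the relevant iteration strategies for short trees on such small premice, guided by Q-structures of the form $\J_\beta(M(\Tt))$ — refutes the wrong candidate. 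This gives clause (1): $\Sigma_{\Momone\Momone\alpha}\in M$, definably over $\Momone$ from $\alpha$.

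For clause (2), suppose $R$ is a sound tame $\om$-premouse with $M|\alpha\ins R\in M$ and $M\sats$``$R$ is above-$\alpha$, $(\om,\om_1)$-iterable''. I would coiterate $R$ against $\Momone$ above $\alpha$, using the strategy $\Sigma_{\Momone\Momone\alpha}$ from clause (1) on the $\Momone$-side and $R$'s own $(\om,\om_1)$-strategy in $M$ on the $R$-side; the trees have length $<\om_1^M$, and since both are $(\om,\om_1)$-iterable in $M$ this comparison terminates. Soundness of both structures, together with the standard Dodd--Jensen / comparison argument, forces the $R$-side to be non-dropping and the resulting common iterate to be an initial segment of the final $\Momone$-iterate, whence $R\pins\Momone$ (the trivial case $R=\Momone$ being excluded since $R$ projects to $\om$ while $\Momone$ need not, or absorbed by writing $R\ins\Momone$). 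This is exactly the place where $(\om,\om_1)$-iterability (rather than $(\om,\om_1+1)$-iterability) suffices, which is essential in view of Example \ref{exm:M_1^sharp-closed_reals}: we never need to form a cofinal branch at the top.

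Granting (1) and (2), the two concluding bullets follow quickly. For the first, over $(\her_{\om_2})^M$ one defines $\Momone$ as the unique sound tame $\om$-premouse $R$ with $M|\alpha\ins R$ such that ``$R$ is above-$\alpha$, $(\om,\om_1)$-iterable'' and $R$ is $\ins$-maximal among such; this is a legitimate definition over $(\her_{\om_2})^M$ from the parameter $M|\alpha$ since all the trees witnessing iterability are in $\her_{\om_2}^M$ and, by (1), $M$ correctly recognizes iterability via $\Sigma_{\Momone\Momone\alpha}$, while (2) guarantees every such $R$ sits inside $\Momone$, so the maximum is $\Momone$ itself. For the second bullet, if $M\sats\PS$ or $\univ{M}\sats\ZF^-$, then $\her_{\om_2}^M$ is definable over $\univ{M}$ (it is the set of sets hereditarily of size $<\om_2^M$, and $\om_2^M$ is definable), so $\Momone^M$ is definable over $\univ{M}$ from $M|\alpha$, and then by \cite[Theorem 3.11(b)]{V=HODX} — the result already invoked in the proof of Theorem \ref{thm:1-small_pc_V=HOD} — $\es^M$ is definable over $\univ{M}$ from $\Momone^M$, hence from $M|\alpha$.

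**Main obstacle.** The crux is showing $\Sigma_{\Momone\Momone\alpha}\in M$, i.e. that $M$ can compute the correct branch/Q-structure for arbitrary above-$\alpha$ trees on $\Momone$ of length $<\om_1^M$. The subtlety — and the reason tameness is hypothesized — is that a priori the Q-structures guiding such trees could be large relative to $\Momone$; tameness is what keeps them within the range where $M$'s own short-tree iterability (Q-structure-guided, as in Theorem \ref{thm:1-small_pc_V=HOD}) certifies the right candidate. Making the comparison-inside-$M$ argument go through uniformly, and checking that the resulting strategy is genuinely definable over $\Momone$ from $\alpha$ alone (not from some further parameter), is the technical heart, and it closely follows the corresponding arguments of \cite{sile}.
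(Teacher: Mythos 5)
There is a genuine gap, and it sits exactly at the heart of the theorem. In your argument for clause \ref{item:om_1_iter_enough} you coiterate $R$ against $\Momone$ above $\alpha$ and assert that ``since both are $(\om,\om_1)$-iterable in $M$ this comparison terminates.'' That is precisely what cannot be concluded: the standard termination argument for comparison requires forming a cofinal wellfounded branch through a tree of length $\om_1^M$ (so that one can take a countable hull with critical point on that branch and derive the compatible-extender contradiction), i.e.\ it requires $(\om,\om_1+1)$-iterability inside $M$. The paper explicitly warns that this strengthening is unavailable --- Example \ref{exm:M_1^sharp-closed_reals} exhibits a tame mouse in which $\Momone^N$ is \emph{not} above-$\alpha$ $(\om,\om_1+1)$-iterable internally for any $\alpha$ --- and indeed the actual proof establishes the opposite of what you assume: for a ``conflicting pair'' the comparison lasts a full $\om_1^M$ stages (Claim \ref{clm:comparison_doesnt_terminate}) and neither tree has a cofinal branch in $M$ (Claim \ref{clm:no_cofinal_branches}). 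Your remark that ``we never need to form a cofinal branch at the top'' is exactly backwards as a justification: not forming that branch is why termination fails, not why it is harmless.

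The missing mechanism is the following. The paper compares $R_0$ against $S_0$ while folding in meas-lim genericity iteration and short linear iterations so that at limit stages the tree and its common part are definable over initial segments of $M$, which lets the Q-structures be recovered by P-construction from segments of $M$ projecting to $\om$ (Claim \ref{clm:thorough_analysis}); this keeps the comparison going for $\om_1^M$ stages without ever producing a contradiction from a single comparison. The contradiction instead comes from iterating the whole construction: assuming clause \ref{item:om_1_iter_enough} fails for every $\alpha$, one chooses conflicting pairs $(R_n,S_n)$ with $\om_1^{R_{n+1}}$ above the relevant closure point of the $n$-th comparison, translates each $\Tt_{n+1}$ via P-construction into a tree on a Q-structure that is a \emph{proper} segment of a model of $\Xx_n$, and concatenates; the limit tree $\Xx$ is a correct normal tree on $R_0$ whose unique cofinal branch drops in model infinitely often, which is impossible. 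None of this appears in your proposal, and without it clause \ref{item:om_1_iter_enough} is unproved. (A smaller point: in the first concluding bullet, $\Momone$ is not itself a sound $\om$-premouse and there is no $\ins$-maximal such $R$; one should define $\Momone$ as the stack of all sound tame $\om$-premice $R$ with $M|\alpha\ins R$ that $M$ believes are above-$\alpha$ $(\om,\om_1)$-iterable, which is legitimate once clauses (1) and (2) are in hand.)
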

\begin{proof}
By \cite[Theorem 0.2]{sile},\footnote{\label{ftn:sile_hyp}The hypothesis of \cite[Theorem 0.2]{sile} is that ``$L[E]$'' is a ``tame extender model''. That article does not appear to specify exactly what is meant by an ``extender model'', and of course usually the notation ``$L[E]$'' would mean that the model is proper class. But actually, the proof is very local, and does not depend on the model being proper class, and in fact, it works to give what we claim here under our present hypotheses.}
we may fix $\bar{R}\pins M|\om_1^M$
such that $\rho_\om^{\bar{R}}=\om$ and $M\sats$ ``$\Momone^M$ is above-$\OR^{\bar{R}}$
$(0,\om_1)$-iterable'',
as witnessed by the restriction of the correct strategy $\Sigma_{\Momone}$.
That is, $\Sigma_{\Momone\Momone\alpha}\in M$ where
$\alpha=\OR^{\bar{R}}$. Given $R$ such that $\bar{R}\ins R\pins\Momone$,
$\Sigma_R^M$ denotes the restriction of this strategy to trees on $R$.

We say that $(R,S)\in M$ is a \emph{conflicting pair}
iff:
\begin{enumerate}[label=--]
 \item $R$ and $S$ are  tame $\om$-premice,
 \item $\bar{R}\pins R\pins\Momone$ and $\bar{R}\pins S$
 and $R|\om_1^{R}=S|\om_1^{S}$ but $R\neq S$, and
 \item $M\sats$ ``$S$ is $\om_1$-iterable''.
\end{enumerate}
If part \ref{item:om_1_iter_enough} of the theorem fails for every
$\alpha<\om_1^M$, then
note that for every such $\alpha$ there is a conflicting pair $(R,S)$
with $\alpha<\om_1^R=\om_1^S$. However, for the present we just assume that we have
some conflicting pair and work with this,
without assuming that part \ref{item:om_1_iter_enough} fails for every $\alpha$.

So fix a conflicting pair $(R_0,S_0)$.
Let $\Gamma_0$ be an $\om_1^M$-strategy
for $S_0$ in $M$. Working in $M$, we attempt to compare $R_0,S_0$, via
$\Sigma^M_{R_0},\Gamma_0$, folding in extra extenders to ensure that for every
limit stage
$\lambda$ of the comparison, letting
\begin{enumerate}[label=--]
 \item $\delta_\lambda=\delta((\Tt,\Uu)\rest\lambda)$ and
\item $N_\lambda=M((\Tt,\Uu)\rest\lambda)$,
\end{enumerate}
we have that
\begin{enumerate}[label=\tu{(}$*$\arabic*\tu{)}]
 \item $M|\delta_\lambda$ is generic for the meas-lim extender algebra of
$N_\lambda$ at $\delta_\lambda$ and
 \item\label{item:limit_definability} if $N_\lambda$ is not a Q-structure for
$\delta_\lambda$
 then $(\Tt,\Uu)\rest\lambda\sub M|\delta_\lambda$ and $(\Tt,\Uu)\rest\lambda$
 is definable over $M|\delta_\lambda$ from parameters (and therefore so is
$N_\lambda$).\footnote{The statement that $(\Tt,\Uu)\rest\lambda\sub
M|\delta_\lambda$
is to be interpreted that for each $\alpha<\lambda$ we have
$(\Tt,\Uu)\rest\alpha\in M|\delta_\lambda$,
where $(\Tt,\Uu)\rest\alpha$ incorporates all models $M^\Tt_\beta$
and embeddings $i^\Tt_{\beta\gamma}$ for $\beta\leq\gamma<\alpha$,
and the tree structure ${<_\Tt}\rest\alpha$, etc, and likewise for $\Uu$.
The definability condition adds the requirement that the sequence
$\left<(\Tt,\Uu)\rest\alpha\right>_{\alpha<\lambda}$ is definable.}
\end{enumerate}

Note, however, that there need not actually be Woodin cardinals in $R_0,S_0$,
and the trees might drop in model at points.
To deal with this correctly, the folding
in of extenders for genericity iteration (and other purposes)
is done much as in \cite{backcomp},
and also in
\cite[Definition 5.4]{iter_for_stacks}.
We clarify below exactly how this is executed,
along with
 ensuring the definability condition \ref{item:limit_definability}.

 We will define the comparison $(\Tt,\Uu)$ in certain blocks, during some of
which
we fold in short linear iterations.
In order to ensure the definability condition \ref{item:limit_definability}
above,
initially we must linearly iterate to the point in $M$
which constructs $(R_0,S_0)$, and
following certain limit stages $(\Tt,\Uu)\rest(\eta+1)$ ($\eta$ a limit
ordinal) of the comparison,
we will fold in a linear iteration out to a segment of $M$
which constructs $(\Tt,\Uu)\rest(\eta+1)$.
Overall,  we  will define a strictly increasing, continuous sequence
$\left<\eta_\alpha\right>_{\alpha<\om_1^M}$
of ordinals $\eta_\alpha$ such that either $\eta_\alpha=0$ or $\eta_\alpha$ is
a limit,
and simultaneously define
$(\Tt,\Uu)\rest(\eta_\alpha+1)$.

Also, we will define $(\Tt,\Uu)\rest(\eta+1)$ by induction on $\eta<\om_1^M$ (refining the recursive construction of  blocks just mentioned).
Given $(\Tt,\Uu)\rest(\eta+1)$, if this constitutes a successful comparison
(that is, $M^\Tt_\eta\ins M^\Uu_\eta$ or vice versa),
we stop at stage $\eta$ (and will then derive a contradiction via Claim
\ref{clm:comparison_doesnt_terminate} below).
Now suppose otherwise and let $F^\Tt_\eta,F^\Uu_\eta$ be the extenders
witnessing least disagreement
between $M^\Tt_\eta,M^\Uu_\eta$ (as explained below, we might not use these
extenders in $\Tt,\Uu$, however).
We have $F^\Tt_\eta\neq\emptyset$
or $F^\Uu_\eta\neq\emptyset$.
Let $\ell_\eta=\lh(F^\Tt_\eta)$ or $\ell_\eta=\lh(F^\Uu_\eta)$, whichever is
defined, and
$K_\eta=M^\Tt_\eta||\ell_\eta=M^\Uu_\eta||\ell_\eta$.
If $\eta$ is a limit, let
$Q^\Tt_\eta$ be the Q-structure $Q$
for $\delta_\eta$  with $Q\ins M^\Tt_\eta$
(so if $\Tt\rest\eta$ is not eventually only padding, then $Q^\Tt_\eta=Q(\Tt\rest\eta,[0,\eta)_\Tt)$),
and likewise $Q^\Uu_\eta$ the Q-structure $Q$ for $\delta_\eta$ with $Q\ins M^\Uu_\eta$.\footnote{\label{ftn:padding_Q-str}$\Tt$ and $\Uu$ can be padded, but for all $\alpha$, if $E^\Tt_\alpha=\emptyset$ then $E^\Uu_\alpha\neq\emptyset$. It seems it might be that one of $\Tt\rest\eta$ or $\Uu\rest\eta$
consists of  eventually only padding. Say $\Tt\rest\eta$ is eventually only padding.
Then $\Uu\rest\eta$ is not,
so $\delta_\eta=\delta(\Uu\rest\eta)$, and $Q^\Uu_\eta=Q(\Uu\rest\eta,[0,\eta)^\Uu)$. We have $M^\Tt_\eta=M^\Tt_\alpha$ for all sufficiently large $\alpha<\eta$,
and $M^\Tt_\eta|\delta_\eta=M(\Uu\rest\eta)=M^\Uu_\eta|\delta_\eta$.
So $Q^\Tt_\eta$ and $Q^\Uu_\eta$ are still Q-structures for $M^\Tt_\eta|\delta_\eta=M^\Uu_\eta|\delta_\eta$.}
These
exist as $R_0,S_0$ project to $\om$ and are sound.
Also let $Q^\Tt_0=R_0$ and $Q^\Uu_0=S_0$, and let
$N_0=R_0|\om_1^{R_0}=S_0|\om_1^{S_0}$.

We set $\eta_0=0$.  Suppose we have defined $(\Tt,\Uu)\rest(\eta_\alpha+1)$, so
$\eta_\alpha<\om_1^M$
and $\eta_\alpha=0$ or is a limit. We next define $\eta_{\alpha+1}$
and $(\Tt,\Uu)\rest\eta_{\alpha+1}$, and hence
$(\Tt,\Uu)\rest(\eta_{\alpha+1}+1)$.
In the definition we literally assume that we reach no $\eta<\eta_{\alpha+1}$
such that $(\Tt,\Uu)\rest(\eta+1)$ is a successful comparison;
if we do reach such an $\eta$ then we stop the construction there. There are
three cases to consider.

\begin{casetwo}\label{case:Qs_inequal} $Q^\Tt_{\eta_\alpha}\neq
Q^\Uu_{\eta_\alpha}$ (note this holds in case $\alpha=0$).

If $F^\Tt_{\eta_\alpha}$ is defined then
$\lh(F^\Tt_{\eta_\alpha})\leq\OR(Q^\Tt_{\eta_\alpha})$,
 and likewise for $F^\Uu_{\eta_\alpha}$.
 Thus, note that by tameness (or otherwise if $\alpha=0$),
$\delta_{\eta_\alpha}$
 is a strong cutpoint of $Q^\Tt_{\eta_\alpha}$ and of $Q^\Uu_{\eta_\alpha}$.
 Now $\Tt\rest[\eta_\alpha,\infty)$
 will be based on $Q^\Tt_{\eta_\alpha}$ and above $\delta_{\eta_\alpha}$,
 and likewise $\Uu\rest[\eta_\alpha,\infty)$.

 We want to insert a short linear iteration past the point where $M$
 constructs
$Q^\Tt_{\eta_\alpha},Q^\Uu_{\eta_\alpha}$,
and hence  \label{pg:using_limit_def}(by \ref{item:limit_definability} and Lemma \ref{lem:Q_computes_b}), constructs the branches
$[0,\eta_\alpha)_\Tt$ and $[0,\eta_\alpha)_\Uu$, if $\alpha>0$.
 Let $\eta_{\alpha+1}$ be the least limit ordinal $\eta<\om_1^M$ such that
 $Q^\Tt_{\eta_\alpha},Q^\Uu_{\eta_\alpha}\in M|(\eta+\om)$
  (clearly if $\alpha>0$ then
 $\eta_\alpha\leq\delta_{\eta_\alpha}<\eta_{\alpha+1}$).

 Now $(\Tt,\Uu)\rest[\eta_\alpha,\eta_{\alpha+1})$ is given as follows:
 Let $\eta\in[\eta_\alpha,\eta_{\alpha+1})$ and suppose we have defined
$(\Tt,\Uu)\rest(\eta+1)$. Recall that $K_\eta$ was defined above.
 If $K_\eta$ has a ($K_\eta$-total) measurable $\mu>\delta_{\eta_\alpha}$ then
 letting $\mu$ be least such, $E^\Tt_\eta=E^\Uu_\eta$
 is the unique normal measure
 on $\mu$ in $\es^{K_\eta}$.
 Otherwise, $E^\Tt_\eta=F^\Tt_\eta$ and $E^\Uu_\eta=F^\Uu_\eta$.

 Note  that if
$\alpha=0$ then
$\delta_{\eta_\alpha}=\om_1^{N_{\eta_{\alpha+1}}}$,
and if $\alpha>0$ then $N_{\eta_{\alpha+1}}\sats$ ``$\delta_{\eta_\alpha}$
is Woodin'', and in either case,
 $N_{\eta_{\alpha+1}}\sats$ ``there are no measurables or Woodins
 $>\delta_{\eta_\alpha}$''.
So $Q^\Tt_{\eta_{\alpha+1}}=N_{\eta_{\alpha+1}}=Q^\Uu_{\eta_{\alpha+1}}$,
 and (by tameness) $N_{\eta_{\alpha+1}}$ has no extenders inducing meas-lim
extender algebra axioms
 with index in $[\delta_{\eta_\alpha},\delta_{\eta_{\alpha+1}}]$.
\end{casetwo}

\begin{casetwo}\label{case:pc_Woodins} $N_{\eta_\alpha}\sats$ ``There is a
proper
class of Woodins'' (so
$Q^\Tt_{\eta_\alpha}=N_{\eta_\alpha}=Q^\Uu_{\eta_\alpha}$).

By tameness, it follows that $\delta_{\eta_\alpha}$ is a cutpoint (maybe not
strong cutpoint)
of either $M^\Tt_{\eta_\alpha}$ or $M^\Uu_{\eta_\alpha}$.\footnote{That is,
either $\Tt$ or $\Uu$ uses non-empty extenders cofinally below
$\eta_\alpha$; if $\Tt$ does then $\delta_{\eta_\alpha}$
is a limit of Woodins of $M^\Tt_{\eta_\alpha}$, and likewise for $\Uu$.}
Here $(\Tt,\Uu)\rest[\eta_\alpha,\infty)$ will be above
$\delta_{\eta_\alpha}$.

In this case we want to insert a short linear iteration past the point in $M$
which constructs $(\Tt,\Uu)\rest\eta_\alpha$
(we will have $\alpha=\eta_\alpha=\delta_{\eta_\alpha}$
and already have
\[ (\Tt,\Uu)\rest\eta\in M|\eta_\alpha \]
for every $\eta<\eta_\alpha$, but it is not clear that
$(\Tt,\Uu)\rest\eta_\alpha$
is actually definable over $M|\eta_\alpha$, as it is not clear that the branch
choices of $\Uu$ are appropriately definable).

So let $\eta<\om_1^M$ be the least limit ordinal such that
$(\Tt,\Uu)\rest\eta_\alpha\in M|(\eta+\om)$ (we have
$(\Tt,\Uu)\rest\eta_\alpha\in\HC^M$ by assumption).
Note then that
\[ [0,\eta_\alpha)^\Tt,[0,\eta_\alpha)^\Uu\in M|(\eta+\om) \]
by tameness. We set $\eta_{\alpha+1}=\max(\eta,\eta_\alpha+\om)$.

Now $(\Tt,\Uu)\rest[\eta_\alpha,\eta_{\alpha+1})$ is constructed as in the
previous case,
and note that again, $N_{\eta_{\alpha+1}}$ has no measurables
$>\delta_{\eta_\alpha}$.
(Maybe $\delta_{\eta_\alpha}$ itself is measurable.
In order to ensure that we get a useful comparison,
it is important here that we do not iterate at $\delta_{\eta_\alpha}$
itself during the interval $[\eta_\alpha,\eta_{\alpha+1})$.)
\end{casetwo}

\begin{casetwo}\label{case:Qs_match} $Q^\Tt_{\eta_\alpha}=Q^\Uu_{\eta_\alpha}$
and
$N_{\eta_\alpha}\sats$ ``There is not a proper class of Woodins''.

We set $\eta_{\alpha+1}=\eta_\alpha+\om$. Let
$\eta\in[\eta_{\alpha},\eta_{\alpha+1})$
and suppose we have defined $(\Tt,\Uu)\rest(\eta+1)$.
If $\eta=\eta_\alpha$ (and in fact in general),
\begin{equation}\label{eqn:Qs_match}
Q^\Tt_{\eta_\alpha}=Q^\Uu_{\eta_\alpha}\pins K_\eta.\end{equation}
If there is any $E\in\es^{K_\eta}$ such that $\nu_E$ is an $K_\eta$-inaccessible
limit of $K_\eta$-measurables,
and $E$ induces an extender algebra axiom
which is false of $\es^M$, then set $E^\Tt_\eta=E^\Uu_\eta=$
 the least such $E$. Otherwise set $E^\Tt_\eta=F^\Tt_\eta$
and $E^\Uu_\eta=F^\Uu_\eta$. \label{pg:We_will_have}We will have
\[
\OR(Q^\Tt_{\eta_\alpha})=\OR(Q^\Uu_{\eta_\alpha})<\ell_{\eta_\alpha}\leq\ell_{
\eta} \]
basically by line (\ref{eqn:Qs_match}), tameness and since $Q^\Tt_{\eta_\alpha}$
projects to $\delta_{\eta_\alpha}$.\footnote{We will also have $\OR(Q^\Tt_{\eta_\alpha})<\lh(E^\Tt_\eta),\lh(E^\Uu_\eta)$, but this is only established in Claim \ref{clm:Tt,Uu_normal,etc} after completing the definition of $(\Tt,\Uu)$.}
\end{casetwo}

This completes all cases. Of course, limit stages ${<\om_1^M}$ are taken care
of by our strategies.
This completes the definition of the comparison.

\begin{clmtwo}\label{clm:Tt,Uu_normal,etc}
 $\Tt,\Uu$ are normal, and moreover, if $\alpha<\beta$
 and [$E=E^\Tt_\alpha\neq\emptyset$ or $E=E^\Uu_\alpha\neq\emptyset$]
 and [$F=E^\Tt_\beta\neq\emptyset$ or $F=E^\Uu_\beta\neq\emptyset$],
 then $\lh(E)<\lh(F)$.
\end{clmtwo}
\begin{proof}
 This is a straightforward induction.
\end{proof}

\begin{clmtwo}\label{clm:comparison_doesnt_terminate}
 For each $\alpha<\om_1^M$, either $F^\Tt_\alpha$ or $F^\Uu_\alpha$ is defined,
 and hence, we get a comparison $(\Tt,\Uu)$ of length $\om_1^M$.
\end{clmtwo}
\begin{proof}
Suppose not and let $\alpha$ be least such. So $M^\Tt_\alpha=M^\Uu_\alpha$,
since $R_0,S_0$ are both sound and project to $\om$.
So letting
$C=\core_\om(M^\Tt_\alpha)=\core_\om(M^\Uu_\alpha)$,
there is $\beta+1<\lh(\Tt,\Uu)$ such that
 $\beta<^\Tt\alpha$
and letting $\varepsilon=\successor^\Tt(\beta,\alpha]$,
$(\varepsilon,\alpha]_\Tt\inter\dropset^\Tt=\emptyset$
and $C=M^{*\Tt}_\varepsilon\ins M^\Tt_\beta$,
and $E^\Tt_\beta\in\es_+^C$, and likewise
there is $\gamma+1<\lh(\Tt,\Uu)$ such that $\gamma<^\Uu\alpha$
and letting $\eta=\successor^\Uu(\gamma,\alpha]$,
we have $(\eta,\alpha]_\Uu\inter\dropset^\Uu=\emptyset$
and $C=M^{*\Uu}_\eta\ins M^\Uu_\gamma$ and $E^\Uu_\gamma\in\es_+^C$.

Since $R_0\neq S_0$ but $R_0|\om_1^{R_0}=S_0|\om_1^{S_0}$,
we have $C\neq R_0$ and $C\neq S_0$,
so in fact, $C\pins M^\Tt_\beta$ and $C\pins M^\Uu_\gamma$.

Now since $E^\Tt_\beta\in\es_+^C$ and $E^\Uu_\gamma\in\es_+^C$,
but $E^\Tt_\beta$ is the least disagreement between $C$ and $M^\Tt_\alpha$,
and $E^\Uu_\gamma$ is the least disagreement between $C$ and
$M^\Uu_\alpha=M^\Tt_\alpha$,
we must have $\beta=\gamma$ and $E^\Tt_\beta=E^\Uu_\beta$.
Therefore $E=E^\Tt_\beta=E^\Uu_\beta$ was chosen either for genericity
iteration purposes,
or for short linear iteration purposes.
We have $\beta<\alpha$, so either $F^\Tt_\beta$ or $F^\Uu_\beta$ is defined;
suppose $F=F^\Tt_\beta$ is defined. Since this is least disagreement between
$M^\Tt_\beta,M^\Uu_\beta$,
but $C\pins M^\Tt_\beta$ and $C\pins M^\Uu_\beta$,
we have $\OR^C<\lh(F)$. We also have $\lh(E)\leq\OR^C$,
and note that by how we chose $E$, $\nu_E$ is a cardinal of
$K_\beta=M^\Tt_\beta||\lh(F)$.
But $\crit(E^\Tt_{\varepsilon-1})<\nu_E$,
and therefore $E^\Tt_{\varepsilon-1}$ is total over $K_\beta$,
so
\[ M^{*\Tt}_\varepsilon=C\pins M^\Tt_\beta|\lh(F)\ins M^{*\Tt}_\varepsilon,\]
a contradiction.
\end{proof}

Let $N=N_{\om_1^M}=M(\Tt,\Uu)$, so $\OR^N=\om_1^M$.

\begin{clmtwo}\label{clm:no_pc_Woodins} $N\sats$ ``There is not a proper class
of
Woodins''.\end{clmtwo}
\begin{proof} Suppose otherwise. By tameness, we get $\Tt$- and $\Uu$-cofinal branches
$b,c$.
That is, for each $\delta<\om_1^M$ such that $\delta$ is Woodin in $N$,
$\delta$ is a cutpoint of $(\Tt,\Uu)$,
meaning that there is no extender used in $(\Tt,\Uu)$ which overlaps $\delta$.
But then letting $W$ be the set of all such $\delta$,
$b=\bigcup_{\delta\in W}[0,\delta]^\Tt$
is a $\Tt$-cofinal branch, and likewise for $\Uu$.

Now working in $M$, we argue much as in the
usual proof of termination of comparison/genericity iteration, with one extra
observation. \label{pg:simplicity_assumption_no_alpha}
For simplicity, let us assume that
there is no $\alpha$ such that $\OR^M=\alpha+\om$; in the contrary case, one needs some minor refinements of the discussion to follow.
We get some $\lambda<\OR^M$
and some sufficiently elementary $\pi:\bar{M}\to M|\lambda$ with the relevant
objects in $\rg(\pi)$
and $\bar{M}$ countable.\footnote{\label{ftn:without_simplicity_assumption_no_alpha}{For example, if the simplicity assumption failed and  we had instead $\OR^M=\om_1^M+\om$,}
one would instead choose $n<\om$
and some $x\in\Momone$ such that $(\Tt,\Uu)$ is definable from
$x$ over $\Momone$, and let $\bar{M}$ be a countable $\Sigma_n$-elementary hull
of $\Momone$ including $x$.} Let $\kappa=\crit(\pi)$.
Then $\kappa=\eta_\kappa=\delta_{\eta_\kappa}$.
Let
\[ \beta+1=\successor^\Tt(\kappa,\om_1^M)\text{ and
}\gamma+1=\successor^\Uu(\kappa,\om_1^M).\]
Then $E^\Tt_\beta,E^\Uu_\gamma$ are compatible through
$\min(\nu(E^\Tt_\beta),\nu(E^\Uu_\gamma))$.
The usual arguments for termination of comparison/genericity iteration
show that $\beta=\gamma=\kappa$ and $E=E^\Tt_\kappa=E^\Uu_\kappa$
was chosen for short linear iteration purposes, and $\crit(E)=\kappa$.
Since $N\sats$ ``There is a proper class of Woodins'',
$N_\kappa=N|\kappa$ satisfies the same.
But since $\kappa=\eta_\kappa=\delta_{\eta_\kappa}$
and by the rules of choosing $E^\Tt_\kappa$,
we therefore have $\crit(E)>\kappa$,  contradiction.
\end{proof}
Using Claim \ref{clm:no_pc_Woodins} we may fix $\eta^*<\om_1^M$ with $\eta^*$
above all Woodins of $N$.

\begin{clmtwo}\label{clm:Qs_eventually_match} For all limits $\lambda<\om_1^M$
such that $\delta_\lambda>\eta^*$, we have
$Q^\Tt_\lambda=Q^\Uu_\lambda$ and $Q^\Tt_\lambda\pins M^\Tt_\lambda$ and
$Q^\Uu_\lambda\pins M^\Uu_\lambda$.
\end{clmtwo}
\begin{proof}If $Q^\Tt_\lambda\neq Q^\Uu_\lambda$ then comparison would force
us to use some extender
within the Q-structures, and this would mean that $\delta_\lambda$
is Woodin in $N$,
contradicting the choice of $\eta^*$. So $Q^\Tt_\lambda=Q^\Uu_\lambda$.
If say $Q^\Tt_\lambda=M^\Tt_\lambda$ then $M^\Tt_\lambda=
M^\Uu_\lambda$,
contradicting Claim \ref{clm:comparison_doesnt_terminate}.
\end{proof}

\begin{clmtwo}\label{clm:no_cofinal_branches}
 There is no $\Tt$-cofinal branch $b\in M$, and no $\Uu$-cofinal branch $c\in
M$.
\end{clmtwo}
\begin{proof}
 If both such $b$ and $c$ exist in $M$ then we can reach a contradiction
 much as in the proof of Claim \ref{clm:no_pc_Woodins}.

 Now suppose that we have such a branch $b\in M$, but not $c$.
 Let $Q=Q(\Tt,b)$. If $Q=M^\Tt_b$ then working in $M$,
 we can take a hull, and letting $\kappa$ be the resulting critical point,
 with $\eta^*<\kappa$, note that $Q^\Tt_\kappa=M^\Tt_\kappa$, contradicting
 Claim \ref{clm:Qs_eventually_match}.
 So $Q^\Tt_b\pins M^\Tt_b$. We claim that
 \[ \branch(\Uu,Q^\Tt_b)\text{
 yields a }\Uu\text{-cofinal branch }c\in M,\]
 a contradiction.
 For assuming not, again working in $M$ we can take a hull,
 and letting $\kappa$ be the resulting critical point,
 note that
 \[ \branch(\Uu\rest\kappa,Q^\Tt_\kappa)\text{
 does not yield a }\Uu\rest\kappa\text{-cofinal branch,}\]
 contradicting Claim \ref{clm:Qs_eventually_match} and Lemma
\ref{lem:Q_computes_b}.

 If instead we have $c\in M$ but no such $b\in M$, it is symmetric.
\end{proof}

We will now give a more thorough analysis of stages of the comparison,
and how they relate to the Woodins of the final common model $N$
and segments of $M$ which project to $\om$.
Let $\left<\beta_\gamma\right>_{\gamma<\Omega}$
enumerate the Woodin cardinals of $N$ in increasing order,
and let $\beta_\Omega=\om_1^M$.
Let
\[ \alpha_\gamma=\sup_{\gamma'<\gamma}\beta_{\gamma'},\]
so $\alpha_\gamma<\beta_\gamma$
and either $\alpha_\gamma=0$ or $\alpha_\gamma$ is Woodin or a limit of Woodins
in $N$,
and $\alpha_\Omega$ is the supremum of all Woodins of $N$.
We will show below that for each $\gamma$, we have
[if $\gamma>0$ then
$\alpha_\gamma=\eta_{\alpha_\gamma}=\delta_{\eta_{\alpha_\gamma}}$],
and either:
\begin{enumerate}[label=--]
 \item $\gamma=\alpha_\gamma=0$ (and recall that Case \ref{case:Qs_inequal}
 attains at stage $0$ of the comparison) and let $\chi_0=\eta_1$, that is,
$\chi_0$ is the least $\chi$
 such that $(R_0,S_0)\in M|(\chi+\om)$, or
 \item $\gamma$ is a successor (so $\alpha_\gamma=\beta_{\gamma-1}$ is Woodin
in $N$),
 and Case \ref{case:Qs_inequal} attains at stage $\alpha=\alpha_\gamma$ of the
comparison,
 and let $\chi_\gamma=\eta_{\alpha_\gamma+1}$, that is, $\chi_\gamma$
 is the least $\chi$ such that
$Q^\Tt_{\eta_{\alpha_\gamma}},Q^\Uu_{\eta_{\alpha_\gamma}}\in M|(\chi+\om)$, or
 \item $\gamma$ is a limit (so $\alpha_\gamma$ is a limit of Woodins of $N$),
 and Case \ref{case:pc_Woodins} attains at stage $\alpha=\alpha_\gamma$ of the
comparison,
 and let $\chi_\gamma=\eta_{\alpha_\gamma+1}$, that is,
 $\chi_\gamma=\max(\chi,\eta_{\alpha_\gamma}+\om)$
 where $\chi$ is least such that $(\Tt,\Uu)\rest\eta_{\alpha_\gamma}\in
M|(\chi+\om)$.
\end{enumerate}

\begin{clmtwo}\label{clm:thorough_analysis}
Let $\gamma\leq\Omega$. Then we have:
\begin{enumerate}
 \item\label{item:alpha_gamma_closure} $\alpha_\gamma=\eta_{\alpha_\gamma}$ and
if $\gamma>0$ then $\alpha_\gamma=\delta_{\eta_{\alpha_\gamma}}$
\item\label{item:case_for_alpha_gamma} Case \ref{case:Qs_inequal} or Case
\ref{case:pc_Woodins} attains at stage $\alpha_\gamma$
 of the comparison, according to the discussion above,
  \item\label{item:param_projs_to_om} $M|\chi_\gamma$ projects to $\om$,
  and if $\gamma>0$ then $M|\alpha_\gamma$ has largest cardinal $\om$,
\item\label{item:beta_gamma_closure}
$\beta_\gamma=\eta_{\beta_\gamma}=\delta_{\eta_{\beta_\gamma}}$
\item\label{item:case_for_beta_gamma} if $\gamma<\Omega$ then Case
\ref{case:Qs_inequal} attains at stage $\beta_\gamma$
 of the comparison,
\end{enumerate}
and for every limit $\zeta\in[\alpha_\gamma,\beta_\gamma]$,
if $N_\zeta$ is not a Q-structure for $\delta_\zeta$
then:
\begin{enumerate}[resume*]
\item\label{item:zeta_closure} $\zeta=\eta_\zeta=\delta_{\eta_\zeta}$
and if $\zeta>\alpha_\gamma$ then $\zeta>\chi_\gamma$,
\item\label{item:M|zeta_lgcd_om} $M|\zeta\sats\ZFC^-$ and has largest cardinal
$\om$,
\item $(\Tt,\Uu)\rest\zeta\sub M|\zeta$,
\item if $\zeta>\alpha_\gamma$ then
$x=(\Tt,\Uu)\rest(\alpha_\gamma+1)\in M|\zeta$ and
 $(\Tt,\Uu)\rest\zeta$ is definable over
$M|\zeta$ from the parameter
$x$,
\item\label{item:M|zeta_generic} $M|\zeta$ is $N_\zeta$-generic for the
meas-lim extender algebra of $N_\zeta$ at $\zeta$,
\item\label{item:Q_is_P-con_of_projector} $Q^\Tt_\zeta$
is the output of the P-construction (see \cite{sile})
of $M|\xi$ over $N_\zeta$,
where $\xi$ is least such that $\xi\geq\zeta$ and $\rho_\om^{M|\xi}=\om$
(so in fact $\xi>\zeta$),
\item\label{item:Qs_matching_below_beta_gamma} if
$\alpha_\gamma<\zeta<\beta_\gamma$ then $Q^\Tt_\zeta=Q^\Uu_\zeta\pins N$,
\item\label{item:Qs_not_matching_at_beta_gamma} if $\zeta=\beta_\gamma<\om_1^M$
then $Q^\Tt_\zeta\neq Q^\Uu_\zeta$
and $Q^\Tt_\zeta,Q^\Uu_\zeta\nins N$.
\end{enumerate}
\end{clmtwo}
\begin{proof}By induction on $\gamma$, with a sub-induction on $\zeta$.
Also note that if $N_\zeta$ is a Q-structure for $\delta_\zeta$
then $[0,\zeta)_\Tt$ and $[0,\zeta)_\Uu$ are easily
definable from $(\Tt,\Uu)\rest\zeta$.

Note then that parts \ref{item:alpha_gamma_closure}
and \ref{item:case_for_alpha_gamma} follow easily by induction
from parts \ref{item:beta_gamma_closure} and \ref{item:case_for_beta_gamma}
(we have $0=\alpha_0=\eta_{\alpha_0}$ by definition, and
$\delta_0=\om_1^{R_0}$).
Consider part \ref{item:param_projs_to_om}.
If $\gamma=0$ this is just because $R_0,S_0$ are sound and project to $\om$.
Suppose $\gamma>0$. Then $M|\alpha_\gamma$ has largest cardinal $\om$
by induction.
Clearly
$\rho_\om(M|\chi_\gamma)\leq\alpha_\gamma$,
so suppose that
$\rho_\om(M|\chi_\gamma)=\alpha_\gamma$.
Then $\alpha_\gamma=\om_1^{\J(M|\chi_\gamma)}$
and by Lemma \ref{lem:Q_computes_b} we have
\[ (\Tt\rest\alpha_\gamma,b),(\Uu\rest\alpha_\gamma,c)\in\J(M|\chi_\gamma) \]
where $b=[0,\alpha_\gamma)^\Tt$ and $c=[0,\alpha_\gamma)^\Uu$. But then working
inside $\J(M|\chi_\gamma)$, we can
use parts of the proofs of Claims \ref{clm:no_pc_Woodins}
and \ref{clm:no_cofinal_branches}
to reach a contradiction.

Now it suffices to verify parts
\ref{item:zeta_closure}--\ref{item:Qs_not_matching_at_beta_gamma} for each
limit $\zeta\in[\alpha_\gamma,\beta_\gamma]$,
since then parts \ref{item:beta_gamma_closure} and
\ref{item:case_for_beta_gamma}
follow from parts \ref{item:zeta_closure} and
\ref{item:Qs_not_matching_at_beta_gamma}.

If $\zeta=\alpha_\gamma$ then the required facts already hold by induction
if $\gamma$ is a successor
(as then $\beta_{\gamma-1}=\alpha_\gamma$),
and trivially if $\gamma$ is a limit. (In the limit case,  $N_\zeta\sats$ ``There is a proper
class of Woodins'',
so $N_\zeta$ is a Q-structure for itself.)

So suppose $\zeta>\alpha_\gamma$ and that $N_\zeta$ is not a Q-structure for
itself;
that is, $N_\zeta\sats\ZFC$ and $\J(N_\zeta)\sats$ ``$\delta_\zeta=\OR(N_\zeta)$
is Woodin''.
So $N_\zeta\sats$ ``There is a proper class of measurables'',
so note $\zeta=\eta_\varphi$ for some $\varphi>0$,
and since we integrated genericity iteration into $(\Tt,\Uu)$,
part \ref{item:M|zeta_generic} (genericity of $M|\delta_\zeta$) holds, so
$\delta_\zeta$ is regular
in $\J(N_\zeta)[M|\delta_\zeta]$, hence regular in $\J(M|\delta_\zeta)$,
so $M|\delta_\zeta\sats\ZFC^-$.

Let us verify that $(\Tt,\Uu)\rest\zeta\sub M|\delta_\zeta$
and $(\Tt,\Uu)\rest\zeta$ is definable over $M|\delta_\zeta$
from the parameter $x=(\Tt,\Uu)\rest(\alpha_\gamma+1)$.
We have $\chi_\gamma<\delta_\zeta$ because $N_\zeta$ is not a Q-structure for
itself.
So $x\in M|\delta_\zeta$.
But then working in $M|\delta_\zeta$, which satisfies $\ZFC^-$,
we can define $(\Tt,\Uu)\rest\zeta$, because the extender selection algorithm
can be executed in $M|\delta_\zeta$, in particular since we only need to make
$M|\delta_\zeta$ generic in that interval,
and at non-trivial limit stages $\zeta'\in(\alpha_\gamma,\zeta)$ (when
$N_{\zeta'}$
is not a Q-structure for itself) we use the inductively established
fact that $Q^\Tt_{\zeta'}=Q^\Uu_{\zeta'}$ is computed by P-construction
from some proper segment of $M$, and in fact some proper segment of
$M|\delta_\zeta$
(as $Q^\Tt_{\zeta'}\pins N$ in this case).

Now since $M|\delta_\zeta\sats\ZFC^-$ and $\delta_\zeta=\delta_{\eta_\varphi}$,
it follows that $\varphi=\delta_{\eta_\varphi}=\delta_\zeta=\zeta$.
It also follows that $\om$ is the largest cardinal of $M|\zeta=M|\delta_\zeta$,
as otherwise working in $M|\zeta$, which then satisfies ``$\om_1$
exists'',
we can establish a contradiction to termination of comparison/genericity
iteration much as before.

So $(\Tt,\Uu)\rest\zeta\sub M|\zeta$ and both $(\Tt,\Uu)\rest\zeta$ and
$N_\zeta$ are definable from $x$
over $M|\zeta$, and we have extender algebra genericity as stated earlier. So we have established parts \ref{item:zeta_closure}--\ref{item:M|zeta_generic} for $\zeta$,
and are now in a position to form the P-construction of segments of $M$
over $N_\zeta$.

Let $\xi$ be least such that $\xi\geq\zeta$ and $\rho_\om^{M|\xi}=\om$.
Given $\eta\in[\zeta,\xi]$ let $P_\eta$ be the P-construction
of $M|\eta$ over $N_\zeta$, if it exists.

Now if $P_\xi$ exists then it must be a Q-structure for $\zeta$.
For otherwise, we have that $\zeta$ is Woodin in $\J(P_\xi)$,
and $M|\zeta$ is generic for the meas-lim extender algebra of $\J(P_\xi)$ at
$\zeta$,
so $\zeta$ is regular in $\J(P_\xi)[M|\zeta]$,
but then $\zeta$ is regular in $\J(M|\xi)$, contradicting  that
$\rho_\om^{M|\xi}=\om$.

Now suppose there is
$\eta<\xi$ such that $P_\eta$ exists and either projects $<\zeta$
or is a Q-structure for $N_\zeta$.
If $P_\eta$ projects $<\zeta$ then note that $P_\eta=M^\Tt_\zeta$.
But then working in $M|\xi$, noting that $\zeta=\om_1^{M|\xi}$,
we can reach a contradiction as in the proof of Claim
\ref{clm:no_cofinal_branches}.
So $\rho_\om^{P_\eta}=\zeta$ and $P_\eta$ is a Q-structure for $\zeta$.
But here we also reach a contradiction as in the proof of Claim
\ref{clm:no_cofinal_branches}.

It follows that $P_\xi$ exists and $P_\xi=Q^\Tt_\zeta$,
giving part \ref{item:Q_is_P-con_of_projector} for $\zeta$.

Finally, if $\zeta<\beta_\gamma$ then $Q^\Tt_\zeta=Q^\Uu_\zeta\pins N$,
since $\zeta$ is not Woodin in $N$ by assumption;
and if $\zeta=\beta_\gamma<\om_1^M$ then $Q^\Tt_\zeta\neq Q^\Uu_\zeta$ and
hence, $Q^\Tt_\zeta\nins N$,
since if $Q^\Tt_\zeta=Q^\Uu_\zeta$ then Case \ref{case:Qs_match} would attain
at stage $\zeta$
(recall $\alpha_\gamma<\zeta$)
and then we would have $Q^\Tt_\zeta\pins N$, contradicting the fact that
$\beta_\gamma$ is Woodin in $N$.
This establishes parts \ref{item:Qs_matching_below_beta_gamma} and \ref{item:Qs_not_matching_at_beta_gamma} for $\zeta$, completing the induction.
\end{proof}

\begin{clmtwo}\label{clm:not_Q_and_ZFCmin,V=HC}
 Let $\gamma\leq\Omega$ and $\zeta\in(\alpha_\gamma,\beta_\gamma]$ be a limit.
 Then the following are equivalent:
 \begin{enumerate}[label=\tu{(}\roman*\tu{)}]
  \item\label{item:not_Q-structure} $\J(N_\zeta)\sats$ ``$\delta_\zeta$ is
Woodin'' (equivalently, $N_\zeta$ is not a Q-structure for $\delta_\zeta$),
 \item\label{item:ZFCmin_and_closure} $M|\zeta\sats\ZFC^-\wedge\text{``}V=\HC$'' and
$\zeta>\chi_\gamma$,
 \item\label{item:ZFC_min_etc} $M|\zeta\sats\ZFC^-\wedge\text{``}V=\HC$'' and
$\zeta=\eta_\zeta=\delta_{\eta_\zeta}$.
 \end{enumerate}
\end{clmtwo}
\begin{proof}
We have that \ref{item:ZFC_min_etc} implies \ref{item:ZFCmin_and_closure},
because $\eta_{\alpha_\gamma+1}\geq\chi_\gamma$.
And  \ref{item:not_Q-structure} implies \ref{item:ZFC_min_etc}
 by the previous claim. So it suffices to see that \ref{item:ZFCmin_and_closure}
implies \ref{item:not_Q-structure}.

So suppose $\zeta>\chi_\gamma$
and $M|\zeta\sats\ZFC^-$, where $\zeta\in(\alpha_\gamma,\beta_\gamma]$.
Then as in the proof of Claim \ref{clm:thorough_analysis},
and because $M|\zeta\sats$ ``$V=\HC$'',
$(\Tt,\Uu)\rest\zeta\sub M|\zeta$ and $(\Tt,\Uu)\rest\zeta$ is definable
from the parameter $(\Tt,\Uu)\rest(\alpha_\gamma+1)$ over $M|\zeta$
(the fact that $M|\zeta\sats$ ``$V=\HC$'' ensures that at each non-trivial
intermediate limit stage $\zeta'$, the P-construction
computing $Q^\Tt_{\zeta'}$
is performed by a proper segment of $M|\zeta$,
using part \ref{item:Q_is_P-con_of_projector} of Claim
\ref{clm:thorough_analysis}).
So $\delta_\zeta=\zeta$ and $N_\zeta$ is a class of $M|\zeta$.

Now if $\J(N_\zeta)\sats$ ``$\zeta$ is not Woodin''
then $[0,\zeta)_\Tt$ and $[0,\zeta)_\Uu$ are in $M|(\zeta+\om)$,
but $\zeta=\om_1^{M|(\zeta+\om)}$ since $M|\zeta\sats\ZFC^-$, so we can again
run the usual proof
working in $M|(\zeta+\om)$ for a contradiction.
\end{proof}

Write $\Tt_0=\Tt$ and $\Uu_0=\Uu$.
We now enter a proof by contradiction, by assuming
that part \ref{item:om_1_iter_enough} of the theorem fails for every
$\alpha<\om_1^M$.
Then we can fix a conflicting pair $(R_1,S_1)$ with
$\chi_\Omega<\zeta\eqdef\om_1^{R_1}=\om_1^{S_1}$.
So $R_1\pins M$ and $R_1|\zeta=M|\zeta=S_1|\zeta$.
We have
\[ M|\zeta\sats\ZFC^-\wedge \text{``}V=\HC\text{''},\]
so by Claim \ref{clm:not_Q_and_ZFCmin,V=HC}, $N_\zeta$ is not a Q-structure for
itself,
so the conclusions of Claim \ref{clm:thorough_analysis} hold for $\zeta$.

Repeat the foregoing comparison with $(R_1,S_1)$ replacing $(R_0,S_0)$,
producing trees $\Tt_1$ on $R_1$ and $\Uu_1$ on $S_1$.
Continue in this manner, producing a sequence
\[ \left<R_n,\Tt_n,S_n,\Uu_n\right>_{n<\om}. \]
(It is not relevant whether the sequence is in $M$.)

Now $\Tt_1$ is a tree on $R_1$, above $\zeta=\om_1^{R_1}$.
By Claim \ref{clm:thorough_analysis}, $Q^{\Tt_0}_\zeta$ is the output of the
P-construction
of $R_1$ above $N_\zeta$. So we can translate $\Tt_1$ into a tree $\Tt_1'$ on
$Q^{\Tt_0}_\zeta$;
note this tree is above $\zeta$.
So
\[ \Xx_1=\Tt_0\rest(\zeta+1)\conc\Tt_1' \]
is a correct normal
tree on $R_0$, $\zeta$ is a strong cutpoint of $\Xx_1$,
and $\Xx_1$ drops in model at $\zeta+1$, as
$\Xx_1\rest[\zeta,\infty)$ is based on $Q^\Tt_\zeta$
and  $Q^\Tt_\zeta\pins M^\Tt_\zeta$ by Claim \ref{clm:thorough_analysis}.

Continue recursively, defining $\left<\Xx_n\right>_{n<\om}$,
by setting $\zeta_n=\om_1^{R_{n+1}}$,
and translating $\Tt_{n+1}$ (on $R_{n+1}$) into a tree $\Tt_{n+1}'$
on $Q(\Xx_n,[0,\zeta_n)^{\Xx_n})\pins M^{\Xx_n}_{\zeta_n}$ (there
is a natural finite sequence of intermediate translations between
$\Tt_{n+1}$ and $\Tt_{n+1}'$), and setting
\[ \Xx_{n+1}=\Xx_n\rest(\zeta_n+1)\conc\Tt_{n+1}'.\]

Let $\Xx=\liminf_{n<\om}\Xx_n$. Then $\Xx$ is a correct normal tree on $R_0$.
But it has a unique cofinal branch, which drops in model infinitely often,
a contradiction. This completes the proof of the theorem.
\end{proof}

\begin{dfn}
 Let $N$ be a premouse, $\Tt$ a limit length iteration tree on some $M\pins N$,
 and $\OR^M<\delta\leq\OR^N$. We say that $\Tt$ is \emph{$N|\delta$-optimal}
 iff:
 \begin{enumerate}[label=--]
  \item $\delta=\delta(\Tt)$,
  \item $\Tt\sub N||\delta$ and $\Tt$ is definable from parameters over
$N||\delta$, and
  \item $N||\delta$ is generic for the $\delta$-generator meas-lim
extender algebra of $M(\Tt)$.\qedhere
 \end{enumerate}
\end{dfn}

\begin{dfn}\label{dfn:iterability-good}
 Let $N$ be a tame premouse satisfying $\ZFC^-\wedge\text{``}V=\HC$''.
 $\Lambda_{\tame}^N$ ($\tame$ for \emph{tame}) denotes the partial putative
$(\om,\OR^N)$-iteration strategy
$\Lambda$
for $N$, defined over $N$ as follows.
We define $\Lambda$ by induction on the length of trees.
Let $\Tt\in N$. We say that $\Tt$ is \emph{necessary} if $\Tt$ is an iteration
tree via
$\Lambda$, of limit length, and letting $\delta=\delta(\Tt)$,
either
\begin{enumerate}[label=--]\item $M(\Tt)$ is a Q-structure
for itself, or
\item $\Tt$ is $N|\delta$-optimal
and either $\lgcd(N|\delta)=\om$ or
$\lgcd(N|\delta)=\om_1^{N|\delta}$.\footnote{The restriction
on $\lgcd(N|\delta)$ could be reduced,
but it slightly simplifies some considerations, and suffices for our purposes.
Note that it ensures that $\delta$ is a strong
cutpoint of $N$. We will make use of the possibility that $\lgcd(N|\delta)=\om_1^{N|\delta}$ in the proof of Theorem \ref{tm:HOD_tame_mouse}.}
\end{enumerate}
Every $\Tt\in\dom(\Lambda)$ is necessary. Let $\Tt$ be necessary and $\delta=\delta(\Tt)$.
Then $\Lambda(\Tt)=b$ iff $b\in N$ and either $Q(\Tt,b)=M(\Tt)$ or there is
$R\pins N$
such that $\delta$ is a strong cutpoint of $R$ and $Q(\Tt,b)$
is the output of the P-construction of $R$ above $M(\Tt)$.\footnote{That is,
the P-construction $Q$
of $R$ above $M(\Tt)$ is defined,  $\OR^Q=\OR^{R}$
and $Q=Q(\Tt,b)$.}

We say that $N$ is \emph{tame-iterability-good} iff all putative trees via
$\Lambda^N_{\tame}$ have wellfounded models,
and $\Lambda^N_{\tame}(\Tt)$ is defined for all necessary $\Tt$.
\end{dfn}

\begin{rem}\label{rem:Q_existing_for_necessary_trees}
Note that because $N\sats$ ``$V=\HC$'',
every  tree $\Tt$ on $N$ drops immediately to some proper segment,
and $Q(\Tt,b)$ exists for every limit length $\om$-maximal tree $\Tt$ on $N$ and $\Tt$-cofinal branch $b$
with $M^\Tt_b$ wellfounded.
By Lemma \ref{lem:Q_computes_b} (local branch definability),
$\{b=\Lambda(\Tt)\}$ is uniformly
$\Sigma_1^{\J(Q^*)}(\{\Tt\})$,
where $Q=Q(\Tt,b)$ and either $Q^*$ is the least segment of
$N$ such that $\Tt$ is definable from parameters over $Q^*$,
when $Q=M(\Tt)$,
or $Q^*$ is the  segment of $N$
whose P-construction above $M(\Tt)$ is $Q$, when $M(\Tt)\pins Q$.
In particular, $\Lambda$ is $\Sigma_1$-definable over $N$,
and \emph{tame-iterability-good} is
expressed by a first-order formula $\varphi$ (modulo $\ZFC^-$).
\end{rem}
The following lemma is proved as in
\cite[\S1]{sile}:

\begin{lem}\label{lem:Momone^M_is_tame-it-good}
 Let $M$ be a $(0,\om_1+1)$-iterable tame premouse satisfying either $\ZFC^-$
or ``$\om_1$ exists''.
 Then $\Momone^M$ is tame-iterability-good and
$\Lambda_{\tame}^{\Momone^M}\sub\Sigma_{\Momone^M}$.
\end{lem}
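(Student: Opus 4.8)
The plan is to prove, by induction on tree length, a single statement from which both conclusions follow at once. Write $N\eqdef\Momone^M=M|\om_1^M$. Since $\om_1^M$ is regular in $M$, $N$ satisfies $\ZFC^-$ and ``$V=\HC$'', and $N$ is tame, being an initial segment of the tame premouse $M$; so $\Lambda\eqdef\Lambda_{\tame}^N$ is well-defined by Definition \ref{dfn:iterability-good}, and also $\Sigma\eqdef\Sigma_{\Momone^M}$, the unique $(\om,\om_1+1)$-iteration strategy for $N$, is defined, since $N$ inherits $(\om,\om_1+1)$-iterability from the $(0,\om_1+1)$-iterable $M$. By tameness $\Sigma$ is guided by Q-structures, and since $N\sats$``$V=\HC$'' every tree on $N$ drops immediately to a proper segment, so (as in the remark following Definition \ref{dfn:iterability-good}) $Q(\Tt,b)$ exists whenever $\Tt$ is of limit length and $b$ is $\Tt$-cofinal with $M^\Tt_b$ wellfounded. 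The plan is to show, by induction on tree length, that every putative tree via $\Lambda$ has wellfounded models and is via $\Sigma$, and that $\Lambda(\Tt)=\Sigma(\Tt)$ for every necessary $\Tt$. Granting this, $N$ is tame-iterability-good and $\Lambda\sub\Sigma_{\Momone^M}$, which is what we want.

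Successor steps are vacuous, so the work is at a limit stage. Let $\Tt$ be a necessary tree of limit length; by the induction hypothesis $\Tt$ is via $\Sigma$, so all its models are wellfounded; put $b=\Sigma(\Tt)$, $\delta=\delta(\Tt)$, $Q=Q(\Tt,b)$. \emph{Case 1: $M(\Tt)$ is a Q-structure for itself.} Then by uniqueness of Q-structures $Q=M(\Tt)$, and by Lemma \ref{lem:Q_computes_b} the branch $b$ is $\Sigma_1^{\J(X)}(\{\Tt,t\})$, where $t=t^Q_{q+1}(\delta)$ with $Q$ being $q$-sound and $\rho_{q+1}^Q\leq\delta\leq\rho_q^Q$, and $X=\trancl(\{\Tt,t\})$. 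Since $\Tt,Q,t\in N$ and (a rank computation, using that $\om_1^M$ is a limit) $X,\J(X)\in N$, we get $b\in N$; so the clause defining $\Lambda$ holds and $\Lambda(\Tt)=b$. \emph{Case 2: $M(\Tt)$ is not a Q-structure for itself.} Then $\Tt$, being necessary, is $N|\delta$-optimal, and $\lgcd(N|\delta)\in\{\om,\om_1^{N|\delta}\}$. Here one runs the P-construction argument exactly as in the proof of Theorem \ref{thm:tame_HOD_x}: $N|\delta$-optimality gives that $N||\delta=M||\delta$ is generic over $M(\Tt)$ for the meas-lim extender algebra of $M(\Tt)$ at $\delta$, so, letting $\xi$ be least with $\xi\geq\delta$ and $\rho_\om^{M|\xi}=\om$, one shows, as in Claim \ref{clm:thorough_analysis}\ref{item:Q_is_P-con_of_projector}, that the P-construction of $R\eqdef M|\xi$ above $M(\Tt)$ is defined, that $\delta$ is a strong cutpoint of $R$ (using tameness together with the condition on $\lgcd(N|\delta)$, which makes $\delta$ a strong cutpoint of $N|\delta$), and that this P-construction equals $Q=Q(\Tt,b)$; ruling out an earlier breakdown of the P-construction is a comparison/termination argument of the kind used in Claims \ref{clm:no_pc_Woodins} and \ref{clm:no_cofinal_branches}, carried out inside $M|\xi$. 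Since $\xi<\om_1^M$ we get $R\pins N$, hence $Q\in N$, hence $t\in N$, and then $b\in N$ by Lemma \ref{lem:Q_computes_b} as before; so $\Lambda(\Tt)=b$ via the second clause of the definition.

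It remains to note that $\Lambda(\Tt)$ is unambiguous: if $b'\neq b$ also satisfied a defining clause, then $Q(\Tt,b')$ would be either $M(\Tt)$ or a P-construction over $M(\Tt)$ reaching the same ordinal height as $Q$, and in either case $\delta$ would be Woodin in the common Q-structure of $b$ and $b'$ (the usual argument, since $M^\Tt_b$ and $M^\Tt_{b'}$ are both wellfounded), which is impossible. Hence $\Lambda(\Tt)=b=\Sigma(\Tt)$, maintaining the induction hypothesis; this proves the claim, and with it the lemma. The principal obstacle is the P-construction step in Case 2: matching $Q(\Tt,b)$ with the P-construction of a proper segment of $\Momone^M$ over $M(\Tt)$ with $\delta$ a strong cutpoint, and excluding an earlier breakdown. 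This is the heart of the method of \cite{sile}, and the needed instance is essentially the one already carried out for the comparison trees in the proof of Theorem \ref{thm:tame_HOD_x}; adapting it to an arbitrary tree via $\Lambda$ introduces no new difficulty. The remaining points -- the rank computation placing $b$ in $N$, and the uniqueness of Q-structure-guided branches -- are routine.
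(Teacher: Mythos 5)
The paper offers no proof of this lemma beyond the citation ``proved as in \cite{sile}'', and your reconstruction is exactly the intended route: induct on tree length to show trees via $\Lambda_{\tame}^{\Momone^M}$ are via $\Sigma_{\Momone^M}$, recover the branch from the Q-structure via Lemma \ref{lem:Q_computes_b} when $M(\Tt)$ is a Q-structure for itself, and otherwise use $N|\delta$-optimality (genericity of $M||\delta$ over $M(\Tt)$ plus the strong-cutpoint condition coming from tameness and the $\lgcd$ restriction) to identify $Q(\Tt,\Sigma(\Tt))$ with the P-construction of a proper segment of $\Momone^M$, as in \cite{sile} and in Claim \ref{clm:thorough_analysis}. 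Your sketch correctly isolates the only nontrivial step (that the P-construction neither breaks down nor stops at the wrong level, verified by the standard comparison of $\delta$-sound, above-$\delta$-iterable Q-structure candidates), so I have nothing to add.
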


Gabriel Goldberg and
Stefan Miedzianowski  asked  \cite{gironaconfproblems} about the
nature of grounds
of mice via specific kinds of forcings,
in particular $\sigma$-closed
and $\sigma$-distributive.
One result in this regard
was established in
\cite[Theorem 12.1]{fsfni_v4}, and we now improve
this for tame mice modelling $\ZFC$:
\begin{tm}\label{tm:tame_grounds}
Let $M$ be a $(0,\om_1+1)$-iterable   tame premouse
modelling $\ZFC$. Then $M$ has no proper grounds
$W$ via forcings $\PP\in W$ with $W\sats$ ``$\PP$ is strategically
$\sigma$-closed''.
\end{tm}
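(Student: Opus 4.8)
The plan is to argue by contradiction. Suppose $W$ is a proper ground of $M$: say $\univ{M}=\univ{W}[G]$ with $G$ being $\PP$-generic over $W$, $\PP\in W$, $W\sats$``$\PP$ is strategically $\sigma$-closed'', and $G\notin W$. A strategically $\sigma$-closed forcing is $\om$-distributive, so it adds no new $\om$-sequences of ordinals; hence $\HC^M=\HC^W$ and $\om_1^M=\om_1^W=:\om_1$. Consequently $M$ and $W$ have the same countable premice, and --- since on countable trees the correct iteration strategy is guided by $Q$-structures, which by Lemma \ref{lem:Q_computes_b} lie in a small $\J$-extension of the tree and its $Q$-structure --- they compute the same correct strategies on countable trees. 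Recall also that $W$ is definable over $\univ{M}$ from a parameter (ground-model definability). Fix, by Theorem \ref{thm:tame_HOD_x}, some $\alpha<\om_1$ witnessing its conclusion for $M$; as $\alpha$ is countable, $M|\alpha\in\HC^M=\HC^W\sub W$.

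The goal is to show $W$ can reconstruct $\es^M$, whence $M=L[\es^M]$ is contained in $W$, so $W=M$, a contradiction. First I would place $\Momone^M$ in $W$. By clause \ref{item:om_1_iter_enough} of Theorem \ref{thm:tame_HOD_x}, every sound tame $\om$-premouse $P$ with $M|\alpha\ins P\in\HC^M$ that is above-$\alpha$, $(\om,\om_1)$-iterable is a proper segment of $\Momone^M$; and whether such a $P$ has this iterability property is decided identically in $M$ and in $W$, since the countable above-$\alpha$ trees on $P$ and the branches picked out by the $Q$-structure rule are the same in both models. Hence $M$ and $W$ agree on the collection $\Dd$ of such $P$; the members of $\Dd$ are all proper segments of $\Momone^M$, so are linearly ordered under $\pins$; and since there are cofinally many sound levels below $\om_1$ projecting to $\om$ (each such level $M|\gamma$ with $\alpha<\gamma$ lies in $\Dd$, its above-$\alpha$ iterability coming from that of $\Momone^M$), $\bigcup\Dd=\Momone^M$. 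As $\Dd$ is definable over $W$ from $M|\alpha$, this gives $\Momone^M\in W$, and hence $\es^M\rest\om_1^M\in W$. Moreover the restriction of $\Sigma_{\Momone^M}$ to above-$\alpha$ trees is, by the first clause of Theorem \ref{thm:tame_HOD_x}, definable over $\Momone^M$ from $\alpha$, so $W$ has this strategy as the interpretation of that definition.

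It remains to reconstruct $\es^M\rest[\om_1^M,\OR^M)$ inside $W$. By Theorems \ref{thm:tame_HOD_x} and \ref{thm:E_almost_def_tame_L[E]}, $\Momone^M$ is definable over $\her_{\om_2}^M$ from $M|\alpha$ and then $\es^M\rest[\om_1^M,\OR^M)$ is definable over $\univ{M}$ (indeed without parameters); the plan is to show that carrying out the corresponding reconstruction inside $W$ --- using $\Momone^M\in W$, the definable strategy just mentioned, and the ground-model-definable copy of $W$ inside $M$ --- returns exactly $\es^M$. \textbf{This last step is the main obstacle.} The point is that $\PP$ need not preserve the $M$-cardinals above $\om_1$ (a priori it could be a collapse), so the coarse structure ($\her_\kappa$, $V_\kappa$, the class of cardinals) seen by $W$ need not agree with that seen by $M$, and one cannot simply invoke a single first-order absoluteness statement. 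I would instead proceed by induction on $\gamma$, showing $\es^M\rest\gamma\in W$: the inductive hypothesis supplies $\es^M\rest\gamma$, hence the relevant segments of $M$; by tameness the iteration trees on $\Momone^M$ entering the definition and their guiding $Q$-structures are located definably over objects common to $M$ and $W$; the genericity iterations and $P$-constructions in the definition are computed identically in $M$ and $W$ because $\HC^M=\HC^W$; and above $|\PP|$ (where $\PP$ is $|\PP|^+$-cc, so $M$ and $W$ share cardinals and $V_\kappa$'s) the reconstruction is outright absolute. In effect this reruns the comparison/self-iterability analysis of the proof of Theorem \ref{thm:tame_HOD_x} while tracking the pair $(W,M)$ throughout. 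Once $\es^M\in W$, we get $M=L[\es^M]\sub W\sub M$, so $W=M$, contradicting that $W$ is a proper ground.
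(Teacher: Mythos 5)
Your reduction of the problem to showing $\Momone^M\in W$ is the right first move (the paper gets this reduction by citing \cite[\S11]{fsfni}, so your third paragraph, which you flag as the main obstacle, is actually the part that is already available; you could replace it by that citation). The genuine gap is in your second paragraph, in the claim that membership in the collection $\Dd$ --- i.e.\ above-$\alpha$, $(\om,\om_1)$-iterability of a countable premouse $P$ --- ``is decided identically in $M$ and in $W$.'' Since $\HC^M=\HC^W$ and $\om_1^M=\om_1^W$, the positions of the length-$\om_1$ iteration game are literally the same in both models and wellfoundedness is absolute, so a winning strategy lying in $W$ is winning in $M$; hence $\Dd^W\sub\Dd^M$. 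But the converse direction, which is the one you need to conclude $\bigcup\Dd^W=\Momone^M$, fails: a strategy witnessing iterability in $M$ is a function on $\HC$ of size continuum and need not belong to $W$, and nothing guarantees $W$ contains \emph{any} witnessing strategy. Iterability is not determined by $\HC$ alone --- it depends on the predicate $\es^{\Momone^M}$ on $\HC^M$ (via $\Lambda_{\tame}$ and the P-constructions of its segments), which is exactly the object whose membership in $W$ is in question, so the appeal to ``the $Q$-structure rule'' is circular; Example \ref{exm:M_1^sharp-closed_reals} illustrates how delicate internal iterability verification is even inside $M$ itself.

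The paper's proof avoids this absoluteness issue entirely: working in $W$, it fixes a $\PP$-name $\dot\Sigma$ for an above-$\xi$ strategy that is merely \emph{forced} to exist in the extension, passes to a product-generic extension $W[G\times H]$ with conflicting pairs forced into $\dot\Momone$ on the two coordinates, runs the comparison of Theorem \ref{thm:tame_HOD_x} there using $\dot\Sigma_G,\dot\Sigma_H$, and then uses the strategy $\Psi$ witnessing strategic $\sigma$-closure to fuse an $\om$-sequence of conditions $p_n$ below a single $p_\om$, which then forces a tree with infinitely many drops. That fusion via $\Psi$ is where the hypothesis of strategic $\sigma$-closure does real work beyond preserving $\HC$; your argument never uses it except through $\HC$-preservation, which is a sign that something essential is missing.
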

\begin{proof}
 By \cite[Theorem 12.1]{fsfni_v4}, it suffices to see that $\canM=\Momone^M\in W$,
 and of course we already have $\Momone\sub W$.
 So suppose $\canM\notin W$. We will reach a contradiction
 via a slight variant of the construction
 for Theorem \ref{thm:tame_HOD_x}, so we just give a sketch.
Recall that by \cite{sile},
 we can fix $\xi<\om_1^M$
 such that $\Sigma_{\canM\canM\xi}\in M$ (see also Definition \ref{dfn:Sigma_mmalpha} and Theorem \ref{thm:tame_HOD_x}).
 Also, $M$ is the inductive condensation stack of $M$ above $\canM$
 (see \cite[Theorem 3.11, Definition 3.12]{V=HODX_pub}).
So we can fix a name $\dot{\canM}\in W$ for $\canM$ and
 a name $\dot{\Sigma}\in W$ for $\Sigma_{\canM\canM\xi}$,
 and may assume that
 in $W$, $\PP$ forces ``the universe
 is that of a tame premouse $N$ such that $\Momone^N=\dot{\Momone}$
is tame-iterability-good, $\dot{\Sigma}$ is an
above-$\check{\xi}$-$(\om,\om_1)$-strategy
for $\dot{\Momone}$,
$\dot{\Sigma}$ is consistent with
$\Lambda_{\tame}^{\dot{\Momone}}$,
 $N$ is the inductive condensation stack of $N$ above $\dot{\Momone}$,
 $N$ satisfies various first order facts
 established in this paper and elsewhere for tame mice,
 and $\dot{\canM}\notin\check{V}$''.

 Recall $\HC^W=\HC^M$. Work in $W$. Fix a strategy $\Psi$ witnessing that $\PP$ is strategically
$\sigma$-closed.
Pick some
$(p_0,q_0)\in\PP\cross\PP$
 and some conflicting pair $(R_0,S_0)$ such that
 $p_0\forces_\PP$``$R_0\pins\dot{\Momone}$''
 and $q_0\forces_\PP$``$S_0\pins\dot{\Momone}$'' (where \emph{conflicting pair}
 is defined like before, but with $R_0|\xi=S_0|\xi$ and
$\xi<\om_1^{R_0}=\om_1^{S_0}$).
Let $p'_0=\Psi(\left<p_0\right>)$.
 Let $G\cross H$ be $(W,\PP\cross\PP)$-generic
 with $(p_0',q_0)\in G\cross H$.
 Note that $\PP\forces$``$\check{\PP}$ is strategically $\sigma$-closed,
 as witnessed by $\check{\Psi}$''.

 Work in $W[G,H]$.  It follows that $\HC^{W[G,H]}=\HC^{W[G]}=\HC^{W[H]}=\HC^M$,
and therefore
 $\dot{\Sigma}_G,\dot{\Sigma}_H$
 are above-$\xi$-$(\om,\om_1)$-strategies in $W[G,H]$.
Compare $R_0,S_0$ in the manner of the previous proof,
producing trees $(\Tt,\Uu)$,
 via $\dot{\Sigma}_G,\dot{\Sigma}_H$, except that we fold in
$\dot{\canM}_G$-genericity
instead of $\Momone$-genericity. As before, the comparison
lasts $\om_1^M$ stages and $M(\Tt,\Uu)$ has boundedly many Woodins.
Therefore there is some  $\xi_1<\om_1^M$ after which
$\Tt,\Uu$ agree about all Q-structures,
and these Q-structures are given by P-construction
using proper segments of $\dot{\canM}_G$.
Let $\dot{\Tt}_0,\dot{\Uu}_0\in W$ be $\PP\cross\PP$-names for $\Tt,\Uu$. For $\PP$-names $\tau$, let $\tau_{\mathrm{lt}}$
and $\tau_{\mathrm{rt}}$
be the $\PP\cross\PP$-names
for the interpretation of $\tau$
with respect to the left and right projections of the generic filter, respectively.

Work in $W$. Let $(p_0'',q_0'')\leq(p_0',q_0)$ be such that
$(p_0'',q_0'')\forces_{\PP\cross\PP}$ ``$\dot{\Tt}_0$ is a
tree via $\dot{\Sigma}_{\mathrm{lt}}$ of length $\om_1$,
and for every $\delta\in(\check{\xi}_1,\om_1)$,
if $\dot{\Momone}_{\mathrm{lt}}|\delta\sats\ZF^-\wedge\text{``}V=\HC$''
then $\delta=\delta(\dot{\Tt}_0\rest\delta)$
and $Q=Q(\dot{\Tt}_0\rest\delta,[0,\delta)^{\dot{\Tt}_0})$
is given by P-construction of $Q'$
above $M(\dot{\Tt}_0\rest\delta)$,
where $Q'\pins\dot{\Momone}_{\mathrm{lt}}$ is the least $\om$-premouse
such that $\delta\leq\OR^{Q'}$,
and moreover, $Q\pins M^{\dot{\Tt}_0}_\delta$''. Pick some
$(p_1^-,q_1)\in\PP\cross\PP$
and some $(R_1,S_1)$
such that $p_1^-,q_1\leq p_0''$, and $(R_1,S_1)$ is
a conflicting pair with $R_1|\xi_1=S_1|\xi_1$
and $\xi_1<\om_1^{R_1}=\om_1^{S_1}$
and $p_1^-\forces_\PP$``$\check{R_1}\pins\dot{\Momone}$''
and $q_1\forces_\PP$``$\check{S_1}\pins\dot{\Momone}$''.
So $(p_1^-,q_0'')\forces_{\PP\cross\PP}$``With $\delta=\om_1^{\check{R_1}}$,
and $Q'\pins\dot{\canM}_{\mathrm{lt}}$ as above, then $Q'=\check{R_1}$'', and likewise
$(q_1,q_0'')$ and $S_1$.
Let us now extend $p_1^-$, so as to instantiate some of these (countable) objects in $W$.
Let $\delta=\om_1^{R_1}=\om_1^{S_1}$.
Let $(p_1,q_0''')\leq(p_1^-, q_0'')$
and $\bar{\Tt}_0\in\HC^W$
be such that $\bar{\Tt}_0$ is an above-$\om_1^{R_0}$ tree on $R_0$ of length $\delta+1$ with $\delta=\delta(\bar{\Tt}_0\rest\delta)$, and
$(p_1,q_0''')\forces_{\PP\cross\PP}$ ``$\check{\bar{\Tt}}_0\ins\dot{\Tt}_0$''. Note then that $Q=Q(\bar{\Tt}_0\rest\delta,[0,\delta)^{\bar{\Tt}_0})$ is given by the P-construction
of $R_1$ above $M(\bar{\Tt}_0\rest\delta)$,
and moreover, $Q\pins M^{\bar{\Tt}_0}_\delta$, and $p_1\forces_{\PP}$ ``$\bar{\Tt}_0$ is via $\dot{\Sigma}$''.
Let $p_1'=\Psi(p_0,p_0',p_1)$.

Carry on in this way, much as before, but
also producing the sequence $\left<p_n,p_n'\right>_{n<\om}$
via $\Psi$. We can then find $p_\om\in\PP$ with $p_\om\leq p_n$ for all
$n<\om$.
Much as in the proof of Theorem \ref{thm:tame_HOD_x}, we can extend $\bar{\Tt}_0$ to a tree $\bar{\Tt}_\om\in\HC^W$ such that  $p_\om\forces_{\PP}$ ``$\check{\bar{\Tt}}_\om$ is via $\dot{\Sigma}$, but the only cofinal branch of $\check{\bar{\Tt}}_\om$ has infinitely many drops'',
a contradiction.
\end{proof}

\section{Candidates and their extensions}\label{sec:candidates}

We now prepare for the proof of Theorem \ref{thm:E_almost_def_tame_L[E]}.
The proof will use a combination of the methods of the previous section
with those of \cite{V=HODX_pub}. But nothing in this section
requires tameness, and what we establish will also be used
in  \S\ref{sec:HOD_in_non-tame}.

\begin{dfn}\label{dfn:candidate,Ppp}
 Let $M\in\pm_1$.
We say that $\canN$ is an \emph{$M$-candidate} iff  $\canN\in M$, $\canN$ is a
premouse with $\univ{\canN}=\HC^M$,
and every initial segment of $\canN$ satisfies $(k+1)$-condensation
for every $k<\om$. Let $\canP,\canN$ be $M$-candidates and $\alpha<\om_1^M$.
 We say that $\canP,\canN$ \emph{converge at $\alpha$} iff:
 \begin{enumerate}[label=--]
  \item $\univ{\canP|\alpha}=\univ{\canN|\alpha}$ (hence
$\om_1^{\canP|\alpha}=\om_1^{\canN|\alpha}$),
   \item $\canP|\alpha,\canN|\alpha$ are inter-definable
from parameters (that is, $\es_+^{\canP|\alpha}$ is definable over
$\canN|\alpha$ from
parameters
and likewise $\es_+^{\canN|\alpha}$ over $\canP|\alpha$),
 \item $\rho_\om^{\canP|\alpha}\leq\om_1^{\canP|\alpha}$ (and note
$\rho_\om^{\canN|\alpha}=\rho_\om^{\canP|\alpha}$).
\end{enumerate}
We say that $\canP,\canN$ \emph{$\om$-converge at $\alpha$}
iff $\canP,\canN$ converge at $\alpha$ and $\rho_\om^{\canP|\alpha}=\om$.

Let $\canP,\canN$ be $M$-candidates. We write $\canP\sim_\alpha \canN$ iff
$\canP,\canN$
converge at $\alpha$ and
\[ \es^\canP\rest(\alpha,\om_1^M)=\es^\canN\rest(\alpha,\om_1^M).\]
Let $\mathscr{P}^M=\{\canN\in M\bigm|\canN\text{ is an }M\text{-candidate
and }\exists\alpha<\om_1^M\ [\canN\sim_\alpha\Momone^M]\}$.
\end{dfn}

Note that if $M\in\pm_1$ then $\mathscr{P}^M\in M$,
and for each $N\in\mathscr{P}^M$ , we have $\univ{N}=\HC^M$
and $N$ is $\Sigma_1$-definable from parameters over $\Momone^M$.

\begin{dfn}\label{dfn:cs}
Let $M\in\pm_1$ with either
$\univ{M}\sats\PS$ or $\univ{M}\sats\ZFC^-$. Work in $\univ{M}$ and let $\canN$ be a
candidate.
If the inductive condensation stack $S$ above $\canN$
(see
\cite[Definition 3.12]{V=HODX_pub}) has universe $V$,
then we define $\cs(\canN)=S$; otherwise $\cs(\canN)$ is undefined.
\end{dfn}

\begin{dfn}\label{dfn:standard_condensation}A sound premouse $N$ satisfies \emph{standard condensation}
iff  $N$ satisfies $(n+1)$-condensation for every $n<\om$.\end{dfn}

\begin{lem}\label{lem:tame_es_above_om_1_def_from_Ppp}
Let $M\in\pm_1$ be $(0,\om_1+1)$-iterable, such that $\univ{M}$ satisfies $\ZFC^-$ or $\PS$.
Then:
\begin{enumerate}
 \item\label{item:elements_of_scrP} For all $\canN\in\mathscr{P}^M$,
$\cs(\canN)^M$
 is well-defined, so has universe $\univ{M}$,  the proper segments of
$\cs(\canN)$
 satisfy standard condensation,
 and
 $\es^{\cs(\canN)}\rest[\om_1^M,\OR^M)=\es^M\rest[\om_1^M,\OR^M)$.
\item\label{item:es_equiv_above_om_1} $\es^M\rest[\om_1^M,\OR^M)$ is definable over $\univ{M}$
from the parameter $\mathscr{P}^M$.
\end{enumerate}
\end{lem}
\begin{proof}
Part \ref{item:elements_of_scrP}:
 Let $\canN\in\mathscr{P}^M$. Work in $\univ{M}$.

Let $\alpha<\om_1$ be such that $\canN\sim_\alpha\Momone^M$  and $\rho_\om^{M|\alpha}=\om$ (such an $\alpha$ exists because if $\canN\sim_\beta\Momone^M$ then $\canN\sim_\alpha\Momone^M$ for each $\alpha\in[\beta,\om_1)$).
So $\canN|\alpha$ and $M|\alpha$ project to $\om$ and are inter-definable from
parameters.
Fix a real $x$ coding the pair $(\canN|\alpha,M|\alpha)$, $x$ definable over
$M|\alpha$.
Let $M_x,\canN_x$ be the translations of $M,\canN$ to $x$-premice.
Then  $\canN_x=M_x|\om_1$. So by the relativization to $x$ of
\cite[3.11,
3.12]{V=HODX_pub},
$\cs(\canN_x)$ is defined and has universe $V$. (If $\univ{M}$ has a largest cardinal then $\univ{M_x}\sats\ZFC^-$ by assumption, which implies that $M_x$ is \emph{tractable} in the sense of \cite[3.10]{V=HODX_pub} (relativized to $x$),
as it satisfies clause (vi) of that definition; note that property is coarse, just dependent on $\univ{M_x}=\univ{M}$,
not $\es^{M_x}$.)
In fact $\cs(\canN_x)=\cs(M_x|\om_1)=M_x$,
\[ \es^{\cs(\canN_x)}\rest[\om_1,\OR^M)=\es^M\rest[\om_1,\OR^M),\]
and since $\cs(\canN_x)=M_x$ is iterable (as an $x$-mouse),
its proper segments satisfy standard condensation
(for $x$-mice).

Let $\widetilde{\canN}$ be the translation of $\cs(\canN_x)$ to a standard
premouse
extending $\canN|\alpha$.
So $\widetilde{\canN}$ has universe $V$. We claim that
$\widetilde{\canN}=\cs(\canN)$. Most of the defining properties for
$\cs(\canN)$ (see
\cite[3.12]{V=HODX_pub}) just carry over from $\cs(\canN_x)$.
However, some of the required properties are not quite immediate,
because we can have hulls of segments of $\widetilde{\canN}$
which do not include $x$ in them, so do not correspond to hulls of  segments of
$\cs(\canN_x)$.

So let $R\pins\widetilde{\canN}$ and let $\bar{R}$ be countable
and $\pi:\bar{R}\to R$ be elementary. We claim that there is some $S\pins
\canN$
and $\sigma:\bar{R}\to S$ such that $\sigma$ is elementary. \label{pg:assume_om_1<=OR^R}For we may assume that $\om_1\leq\OR^R$, since otherwise $S=R$ works.
Let $R_x$ be the translation of $R$ to an $x$-premouse.
Let $\bar{R}^+_x$ be countable and $\pi^+_x:\bar{R}^+_x\to R_x$
be elementary (with respect to the language of $x$-premice) with $\rg(\pi)\sub\rg(\pi^+_x)$.
So there is some $S_x\pins \canN_x$ and $\sigma_x^+:\bar{R}^+_x\to S_x$
which is elementary (for $x$-premice),  and so  $x\in\rg(\sigma_x^+)$.
Let $S\pins \canN$ be the translation of $S_x$ to a standard premouse.
Let $\tau:\bar{R}\to\bar{R}^+_x$ be $\tau=(\pi_x^+)^{-1}\com\pi$.
Then $\sigma^+_x\com\tau:\bar{R}\to S$ is elementary (with respect to the language of standard premice), as desired.

Standard condensation for proper segments of $\widetilde{\canN}$ (which is
used both in the proof that $\widetilde{\canN}=\cs(\canN)$,
and also otherwise for part
\ref{item:elements_of_scrP}) now follows
easily: supposing $R\pins\widetilde{\canN}$ fails some
condensation fact,
let $\pi:\bar{R}\to R$ be elementary with $\bar{R}$ countable
and $\pi,\bar{R}$ in $M$, and let $S\pins \canN$ and $\sigma\in \canN$
with $\sigma:\bar{R}\to S$ elementary. Then the failure of condensation
reflects into $S$, contradicting our assumptions about $\canN$. This completes the proof of part \ref{item:elements_of_scrP}.

Part \ref{item:es_equiv_above_om_1}: This follows immediately from part \ref{item:elements_of_scrP}.\end{proof}

By the lemma, to prove Theorem \ref{thm:E_almost_def_tame_L[E]},
it suffices to see that $\mathscr{P}^M$ is definable over $(\her_{\om_2})^M$
without parameters.
For this we will use a comparison argument very much like that of the proof of
Theorem \ref{thm:tame_e_def_from_x}.

\begin{dfn}\label{dfn:tractable_pre}
Let $P\in\pm_1$ with $P\sats$ ``$\om_1$ is the largest cardinal''. We say that
$P$ satisfies
\emph{$(1,\om_1)$-condensation}
iff for every premouse $\Pbar$ with $\eta=\om_1^\Pbar<\om_1^P$,
if $\Pbar$ is $\eta$-sound and $\rho_1^\Pbar\leq\eta$
and $\pi:\Pbar\to P$ is a near $0$-embedding with
 $\crit(\pi)=\eta=\om_1^\Pbar$ (so $\pi(\eta)=\om_1^P$)
then $\Pbar\pins P$.
\end{dfn}

\begin{dfn}\label{dfn:Jensen_ext}
Let $M\in\pm_1$. Work in
$M$.
Let $\canN$ be a candidate. A \emph{Jensen extension} of $\canN$
is a sound premouse $\canN'$ such that:
\begin{enumerate}[label=--]
\item $\canN\ins\canN'$,
\item there is $k<\om$ such that
$\rho_{k+1}^{\canN'}=\om_1^M$ and  $(k+1)$-condensation
holds for $\canN'$, and
\item if $\canN'\sats$ ``$\om_1$ is the largest cardinal'' then $(1,\om_1)$-condensation hold
for $\canN'$.
\end{enumerate}
An \emph{$\Ss$-Jensen extension} of $\canN$
is a structure of the form $\Ss_m(\canN')$,
where $\canN'$ is a Jensen extension of $\canN$ and $m<\om$.
\end{dfn}

\begin{lem}\label{lem:Jensen_stack}
 Let $M\in\pm_1$. Work in
$M$.
 Let $\canN$ be a candidate. Then:
 \begin{enumerate}
 \item\label{item:segs_satisfy_cond} For each Jensen extension $S$ of
$\canN$, all
segments of $S$ satisfy standard condensation.
  \item\label{item:compatibility_Jensen_exts} For all
Jensen extensions $S_0,S_1$ of $\canN$,
either $S_0\ins S_1$ or $S_1\ins S_0$.
 \end{enumerate}
\end{lem}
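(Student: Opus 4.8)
The plan is to obtain both parts from the standard propagation-of-condensation machinery of fine structure theory (as developed in \cite{premouse_inheriting} and \cite{fsfni}), using the candidate hypothesis on $\canN$ to feed in condensation below $\om_1^M=\OR^{\canN}$, and using the $(1,\om_1)$-condensation clause built into the notion of a Jensen extension to cover the one configuration that ordinary condensation cannot reach: a condensing hull whose critical point is exactly $\om_1^M$.

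For part \ref{item:segs_satisfy_cond}, fix a Jensen extension $S$ of $\canN$ with $\rho_{n+1}^S=\om_1^M$, and let $R\ins S$. If $R\ins\canN$ then $R$ satisfies standard condensation because $\canN$ is a candidate, so suppose $\canN\pins R\ins S$ and fix $m<\om$; I want $(m+1)$-condensation for $R$. Given a near $m$-embedding $\pi\colon\bar{R}\to R$ with $\bar{R}$ being $(m+1)$-sound and $\rho_{m+1}^{\bar{R}}\leq\crit(\pi)$, the idea is to lift $\pi$ along $R\ins S$ to a near embedding $\pi^+\colon\bar{R}^+\to S$ agreeing with $\pi$ on $\bar{R}$, where $\bar{R}^+$ is a suitable sound structure with $\om_1^{\bar{R}^+}=\crit(\pi)$ (or projecting to $\crit(\pi)$ outright, if $\crit(\pi)>\om_1^{\bar{R}^+}$). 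In the case $\crit(\pi)>\om_1^{\bar{R}^+}$ we apply the appropriate-level condensation for $S$: $S$ satisfies $(n+1)$-condensation by hypothesis, and $(k+1)$-condensation at the other relevant levels $k$ follows from it together with condensation for the proper segments of $S$ (handled below) and the routine transfer of condensation across neighbouring fine structure levels. In the case $\crit(\pi)=\om_1^{\bar{R}^+}$, after the usual reduction this is exactly $(1,\om_1)$-condensation for $S$, which holds by definition of a Jensen extension. Either way we obtain $\bar{R}^+\ins S$, hence $\bar{R}\ins R$ (or $\bar{R}$ lies in the relevant ultrapower, matching the conclusion of $(m+1)$-condensation). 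The real work is just the bookkeeping of the lift: which level to lift to, checking nearness, and checking that the projectum conditions survive.

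For part \ref{item:compatibility_Jensen_exts}, suppose $S_0,S_1$ are Jensen extensions of $\canN$, say with $\rho_{n_i+1}^{S_i}=\om_1^M$, and suppose towards a contradiction that neither is a segment of the other; then $\OR^{S_i}>\om_1^M$ for $i=0,1$. Since $S_0|\om_1^M=\canN=S_1|\om_1^M$ and each $S_i$ is a sound premouse, let $\xi\leq\min(\OR^{S_0},\OR^{S_1})$ be least with $S_0|\xi\neq S_1|\xi$ (reading $S_i|\OR^{S_i}=S_i$, so that a difference at that height records a clash of active predicates at the top of the shorter structure); such a $\xi$ exists and $\xi>\om_1^M$. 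Write $R=S_0|\xi=S_1|\xi$; then each $S_i$ end-extends $R$, all proper segments of each $S_i$ satisfy standard condensation by part \ref{item:segs_satisfy_cond}, and $S_i$ itself satisfies $(n_i+1)$-condensation and $(1,\om_1)$-condensation. I would now run the condensation-stack uniqueness argument of \cite[\S3]{V=HODX}: using soundness, realize $S_0$ and $S_1$ (or their appropriate cores) as transitive collapses of fine structural hulls, cross-realize each such hull as a substructure of the other $S_i$, and apply condensation — part \ref{item:segs_satisfy_cond}, together with the $(1,\om_1)$-clause in the case that the critical point of the realizing map is $\om_1^M$ — to conclude that one of $S_0,S_1$ is a segment of the other, contradicting the choice of $\xi$. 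This is the same mechanism by which one shows an inductive condensation stack is well-defined, applied one level at a time to the Jensen levels sitting above $\canN$.

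The main obstacle is part \ref{item:compatibility_Jensen_exts}: with no iterability available one cannot coiterate $S_0$ and $S_1$, so comparability must be extracted from condensation alone, and the delicate point is exactly those realizing maps whose critical point is $\om_1^M$ — which is precisely why $(1,\om_1)$-condensation is folded into the definition of a Jensen extension. In part \ref{item:segs_satisfy_cond} the analogous $\om_1^M$-critical-point case, handled by the same clause, together with the level bookkeeping in the embedding lift, are the only points that need care.
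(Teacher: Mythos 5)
Your part \ref{item:compatibility_Jensen_exts} is essentially the paper's argument: Jensen's standard comparability proof for sound structures projecting to the same ordinal, via cross-realizing fine-structural hulls and applying condensation. The one thing you omit is the case where $M$ is too short to form the needed hulls as elements of $M$ (the paper singles out $M=\J(M')$, where one must instead work with $\Sigma_m$-elementary substructures of $\Ss_n(M')$ whose collapse lands in $M|\om_1^M$); since the lemma is proved ``working in $M$'' with no amenity assumptions on $M$ beyond $M\in\pm_1$, this case does need separate treatment.

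Part \ref{item:segs_satisfy_cond} has a genuine gap, and the gap is in the step you dismiss as ``bookkeeping.'' Given a segment $R$ with $\canN\pins R\ins S$ and a condensing map $\pi:\bar{R}\to R$, there is no reason a sound end-extension $\bar{R}^+$ of $\bar{R}$ together with a near embedding $\pi^+:\bar{R}^+\to S$ extending $\pi$ should exist: the only way to manufacture such a $\bar{R}^+$ is as the collapse of a hull of $S$, but the hull of $S$ generated by $\rg(\pi)$ (and parameters) will in general meet $R$ in a set strictly larger than $\rg(\pi)$, so its collapse neither contains $\bar{R}$ as a segment nor restricts to $\pi$ on it. Moreover, your argument never uses the hypothesis that $\canN$ is a candidate for the segments strictly between $\canN$ and $S$ — only the condensation clauses built into the definition of Jensen extension — and those clauses concern maps into $S$ at one fixed fine-structural level (plus the $\om_1$-critical-point case); they do not by themselves yield $(m+1)$-condensation for all $m$ for all proper segments. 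The paper's argument runs in the opposite direction: supposing some $R\ins S$ fails a condensation fact, one takes a sufficiently elementary hull of $S$ (with $m$ large, and with intersection with $\om_1^M$ equal to some $\alpha<\om_1^M$) containing the bad configuration; the assumed condensation for $S$ shows the transitive collapse is a segment of $\canN$; the failure reflects \emph{downward} into that segment; and this contradicts the candidacy of $\canN$, which is exactly where standard condensation for segments of $\canN$ enters. You should replace your upward lift by this downward reflection.
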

\begin{proof}
Part \ref{item:segs_satisfy_cond}: All segments of $\canN$ satisfy standard
condensation, as $\canN$ is a candidate.
 But by the assumed condensation for $S$,
 we can reflect segments of $S$ down to segments of $\canN$,
 with a $\Sigma_m$-elementary map, with $m<\om$ arbitrarily high.

Part \ref{item:compatibility_Jensen_exts}: One can run Jensen's standard proof
(for example,
\cite[Fact 3.1]{V=HODX_pub}) inside $M$, unless $M=\J(M')$ for some
$M'$.
In the latter case, we get $S_0,S_1\in\Ss_n(M')$ for some $n<\om$.
But then for any $m<\om$, in $M$ we can form $\Sigma_m$-elementary
substructures
of $\Ss_n(M')$
whose transitive collapse $\bar{\Ss}$ is in $M|\om_1^M$, with the
uncollapse
map
$\pi:\bar{\Ss}\to\Ss_n(M')$ in $M$, and such that
$S_0,S_1\in\rg(\pi)$ and
$\rg(\pi)\inter\om_1^M=\alpha$
for some
$\alpha<\om_1^M$. By condensation, we get a
contradiction as in
Jensen's proof.
\end{proof}

Lemma \ref{lem:Jensen_stack} gives that the stack $\sJs(\canN)$ defined below
is a premouse
extending $\canN$:
\begin{dfn}\label{dfn:strong_can}
Let $M\in\pm_1$.
Work in $M$.
Let $\canN$ be a candidate. Then
$\sJs(\canN)$ denotes the stack of all $\Ss$-Jensen extensions
of $\canN$. We often write $\canN^+=\sJs(\canN)$.
Say $\canN$ is  \emph{strong} if
\begin{enumerate}[label=(\roman*)]
\item $\sJs(\canN)$ has universe $\her_{\om_2}$, and
\item  if $M\sats$ ``$\om_1$ is the largest cardinal''
then $\sJs(\canN)$ satisfies $(1,\om_1)$-condensation.\qedhere
\end{enumerate}
\end{dfn}

\begin{dfn}\label{dfn:tractable}
 A premouse $M$ is \emph{tractable}
if
 $M\in\pm_1$,
 all proper segments of $M$ satisfy standard condensation,
and if $M\sats$ ``$\om_1$ is the largest cardinal''
  then
  \begin{enumerate}[label=--]
  \item $\om<\rho_1^M$ and
  \item $M$ satisfies $(1,\om_1)$-condensation.
  \end{enumerate}

  A premouse $M$ is \emph{strongly tractable}
  if it is tractable and if $M\sats$ ``$\om_1$ is the largest cardinal''
  then $\Hull_1^M(\{x\})$ is bounded in $\OR^M$ for all $x\in M$.
\end{dfn}

\begin{lem}\label{lem:iterable_tract_implies_strong}
If $M$ is an $(0,\om_1+1)$-iterable tractable premouse then
 $\Momone^M$ is a strong candidate in $M$.
\end{lem}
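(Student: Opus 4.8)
The plan is to check the two clauses in the definition of \emph{strong candidate} in turn, reducing the bulk of the work to the condensation‑stack analysis of \cite{V=HODX} and isolating the one genuinely new ingredient — the behaviour at a largest cardinal $\om_1^M$ — which is exactly what tractability clause (iii) is for.

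First I would record that $\Momone^M$ is an $M$-candidate: $\Momone^M\in M$ and $\univ{\Momone^M}=\HC^M$ by definition, $\Momone^M$ is a passive (hence trivially sound) premouse, and since tractability forces $M\in\pm_1$ we have $\Momone^M\pins M$, so every initial segment of $\Momone^M$ is a proper segment of $M$ and thus satisfies $(n+1)$-condensation for all $n<\om$ by tractability clause (ii). By Lemma~\ref{lem:Jensen_stack}, $\sJs(\Momone^M)$ is then a well-defined premouse extending $\Momone^M$; what remains is to compute its universe and, in the case that $\om_1^M$ is the largest cardinal of $M$, to verify $(1,\om_1)$-condensation for it. Next I would reduce to that case: put $M'=M|\om_2^M$ if $M\in\pm_2$ and $M'=M$ otherwise; then $M'$ is sound, $(0,\om_1+1)$-iterable and tractable, with largest cardinal $\om_1^M$ and $\Momone^{M'}=\Momone^M$. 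Since any $\Ss$-Jensen extension of $\Momone^M$ has the form $\Ss_n(N')$ with $N'$ sound and $\rho_{n+1}^{N'}=\om_1^M$, it has $M$-cardinality $\om_1^M$ and hence ordinal height below $\om_2^M$; so the $\Ss$-Jensen extensions computed in $M$ and in $M'$ coincide, $\sJs(\Momone^M)=\sJs(\Momone^{M'})$, and $\her_{\om_2}^M=\univ{M'}$. Thus it suffices to treat the case where $\om_1^M$ is the largest cardinal of $M$, and there I would aim to prove the single statement $\sJs(\Momone^M)=M$ (modulo the bookkeeping of the $\Ss_n$-operators): this yields at once $\univ{\sJs(\Momone^M)}=\univ{M}=\her_{\om_2}^M$ and $(1,\om_1)$-condensation for $\sJs(\Momone^M)$, the latter being exactly tractability clause (iii).

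To prove $\sJs(\Momone^M)=M$ I would establish: (a) every $\Ss$-Jensen extension of $\Momone^M$ is an initial segment of $M$; and (b) cofinally in $\OR^M$ there are Jensen extensions of $\Momone^M$. For (b): with $\om_1^M$ largest in $M$, every $\gamma\in(\om_1^M,\OR^M)$ has $M$-cardinality $\om_1^M$, so (by the standard fine-structural fact that sound levels projecting to a fixed last cardinal are cofinal in $\OR$) there are cofinally many $\beta<\OR^M$ with $M|\beta$ sound and $\rho_\om^{M|\beta}=\om_1^M$; such an $M|\beta$ is a proper segment of $M$, hence satisfies standard condensation by tractability and $(1,\om_1)$-condensation as a sound iterable premouse (a special case of the condensation theorem), so it is a Jensen extension of $\Momone^M$. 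For (a): this is the Jensen condensation argument run, as in \cite{V=HODX} (see also the proof of Lemma~\ref{lem:Jensen_stack}), from the condensation built into the definition of a Jensen extension together with the standard condensation of the proper segments of $M$ — a Jensen extension agrees with $M$ below $\om_1^M$, and were it not an initial segment of $M$ one would reflect the least point of disagreement into a condensation failure for a proper segment of $M$ (or of the Jensen extension).

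Finally, granting (a) and (b), Lemma~\ref{lem:Jensen_stack}\tu{(}ii\tu{)} makes the $\Ss$-Jensen extensions an $\ins$-increasing cofinal chain of initial segments of $M$, so $\sJs(\Momone^M)$ has universe $\univ{M}$ and its extender sequence agrees with $\es^M$ throughout $[\om_1^M,\OR^M)$, whence $\sJs(\Momone^M)=M$; and then $(1,\om_1)$-condensation for $\sJs(\Momone^M)$ is precisely tractability clause (iii) for $M$. The hard part is (a) — the Jensen-style characterization of the levels of $M$ via condensation — but this is essentially already carried out in \cite{V=HODX}, so I expect it to go through routinely; the rest is bookkeeping.
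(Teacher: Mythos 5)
Your argument is correct and is essentially the intended one: the paper's proof consists only of the remark that the lemma ``follows from the definitions and standard condensation facts,'' and your proposal simply spells out those facts — candidacy from tractability clause (ii), the reduction to the largest-cardinal case, cofinally many sound segments projecting to $\om_1^M$ as Jensen extensions, and Lemma \ref{lem:Jensen_stack} to identify $\sJs(\Momone^M)$ with $\her_{\om_2}^M$, with tractability clause (iii) supplying $(1,\om_1)$-condensation. No discrepancy with the paper's (implicit) route.
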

\begin{proof}
By standard condensation facts, $\sJs(\Momone^M)=M|\om_2^M$, which easily implies the lemma.
\end{proof}

\begin{dfn}
 Let $M\in\pm_1$.
 Let $\canP,\canN$ be candidates of $M$.
 Let $\eps<\om_1^M$. We say  $(\canP,\canN)$ \emph{diverges at $\eps$}
 iff there is $\gamma<\eps$ such that
$(\canP,\canN)$ converges at $\gamma$ and $\eps$ is least ${>\gamma}$ such that
$\es^\canP_\eps\neq\es^\canN_\eps$.
We say $(\canP,\canN)$ \emph{$\om$-diverges at $\eps$} iff $(\canP,\canN)$
diverge at $\eps$
and there is $\gamma$ as above such that $(\canP,\canN)$ $\om$-converges at
$\gamma$.
If $(\canP,\canN)$ $\om$-diverges at $\eps$ then $\delta^{\canP,\canN}_\eps$
denotes $\om_1^{\canP|\eps}=\om_1^{\canN|\eps}$ (so by $\om$-divergence,
$\gamma<\delta^{\canP,\canN}_\eps$).
\end{dfn}

Note that if $(\canP,\canN)$ converges at $\gamma$
then $\gamma$ is a strong cutpoint of $\canP,\canN$, and if also
$\es^\canP\rest(\gamma,\eps)=\es^\canN\rest(\gamma,\eps)$,
then $\univ{\canP|\eps}=\univ{\canN|\eps}$ and $\canP||\eps,\canN||\eps$
are inter-definable from $\gamma$ and parameters in $\canP|\gamma$,
uniformly in $\eps$ in a $\Delta_1$ fashion,
and likewise for $\canP|\eps,\canN|\eps$ if also $F^{\canP|\eps}=F^{\canN|\eps}$.
Note also that if $(\canP,\canN)$ diverges at $\eps$ and $\gamma$ is as above,
then $\gamma<\om_2^{\canP|\eps}=\om_2^{\canN|\eps}$ (we have
$\om_2^{\canP|\eps}=\om_2^{\canN|\eps}<\eps$
as either $\canP|\eps$ is active or $\canN|\eps$ is active).

\begin{lem}\label{lem:cofinal_converge_points}
 Let $M$ be a $(0,\om_1+1)$-iterable tractable premouse. Let $\canP\in M$
be a strong
candidate in $M$.
 Then  $(\Momone^M,\canP)$
$\om$-converges at unboundedly many $\gamma<\om_1^M$.
\end{lem}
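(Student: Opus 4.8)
The plan is a reflection (condensation) argument. First note that by Lemma~\ref{lem:iterable_tract_implies_strong}, $\Momone^M$ is itself a strong candidate in $M$, so both $\sJs(\Momone^M)$ and $\sJs(\canP)$ are premice with universe $\her_{\om_2}^M$, all of whose proper segments satisfy standard condensation (Lemma~\ref{lem:Jensen_stack}(\ref{item:segs_satisfy_cond})).

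Work in $M$. Let $A$ be the structure whose universe is $\her_{\om_2}^M$ (which is $\univ{M}$ if $\om_1$ is the largest cardinal of $M$), equipped with two predicates coding the extender sequences of $\sJs(\Momone^M)$ and of $\sJs(\canP)$; note that $A$ recovers $\om_1^M$, $\Momone^M$ and $\canP$. Running along a continuous $\sub$-increasing chain of elementary substructures of $A$ of size $<\om_1^M$, we obtain, for club-many $\gamma<\om_1^M$, an elementary $\pi\colon\bar A\to A$ with $\bar A$ transitive, $\crit(\pi)=\gamma=\om_1^{\bar A}$ and $\pi(\gamma)=\om_1^M$. Fix such a $\gamma$ and set $\Psibar=\pi^{-1}(\sJs(\Momone^M))$ and $\Phibar=\pi^{-1}(\sJs(\canP))$ (the collapses of the two predicates). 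These are premice with the \emph{common} universe $\bar A$, all of whose proper segments satisfy standard condensation, and, since $\pi$ fixes every extender indexed below $\crit(\pi)=\gamma$ while each of $\Psibar,\Phibar$ has height $>\gamma$ with $\gamma$ a cardinal in it, we get $\Psibar|\gamma=M||\gamma$, $\Phibar|\gamma=\canP||\gamma$, hence $\univ{M|\gamma}=\univ{M||\gamma}=\univ{\Psibar|\gamma}=\univ{\Phibar|\gamma}=\univ{\canP||\gamma}=\univ{\canP|\gamma}$, and $\Psibar$, resp. $\Phibar$, is the $\Ss$-Jensen stack over $\Psibar|\gamma$, resp. $\Phibar|\gamma$, as computed in $\bar A$.

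The heart of the matter is to see that, for such $\gamma$, $M|\gamma$ and $\canP|\gamma$ are inter-definable from parameters. Since $\Psibar$ and $\Phibar$ have exactly the same sets and both are built as $\Ss$-Jensen stacks with condensing segments, the condensation dichotomy of \cite[Theorem~5.2]{premouse_inheriting}, applied inside $\bar A$ to a least disagreement between $\es^{\Psibar}$ and $\es^{\Phibar}$, forces any extender occurring on one of these sequences but not the other to be the image of an ultrapower by an extender already present (at a lower index) on the other; pushing this through by induction on the index yields a uniform, local translation between $\es^{\Psibar}$ and $\es^{\Phibar}$, and hence between $\es_+^{M|\gamma}$ and $\es_+^{\canP|\gamma}$, definable over the common universe $\univ{M|\gamma}$ from parameters. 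This is the same kind of book-keeping carried out for the inductive condensation stack in \cite{V=HODX}. Finally, the set of $\gamma<\om_1^M$ at which $\rho_\om^{M|\gamma}=\om$ and $M|\gamma,\canP|\gamma$ are both passive is unbounded in $\om_1^M$ (using $\Momone^M\sats$``$V=\HC$'' for the former, and that limits of extender indices are passive, its complement being a set of successor cardinals of the relevant local models, for the latter); intersecting it with the club of good reflection points produces unboundedly many $\gamma$ at which $M|\gamma=M||\gamma$ and $\canP|\gamma=\canP||\gamma$, so that $\Momone^M$ and $\canP$ converge at $\gamma$, with $\rho_\om^{\canP|\gamma}=\rho_\om^{M|\gamma}=\om$; i.e. they $\om$-converge there.

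The step I expect to be the real obstacle is the inter-definability of $\es_+^{M|\gamma}$ and $\es_+^{\canP|\gamma}$: extracting this from ``common universe plus standard condensation of all segments of the two Jensen stacks'' is where the fine-structural content lies. Everything else — the reflection set-up, the unboundedness/closure book-keeping, and the verification that $\rho_\om^{\canP|\gamma}=\om$ — is routine and parallels the arguments of \cite{V=HODX} and \cite{sile}.
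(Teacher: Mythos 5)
Your reflection set-up is reasonable in spirit, but the proof has a genuine gap exactly where you locate ``the real obstacle'', and the mechanism you propose there does not work. Condensation in the form of \cite[Theorem 5.2]{premouse_inheriting} concerns an embedding $\pi:\bar N\to N$ of a sound premouse into (a level of) another; it gives no dichotomy for a ``least disagreement'' between two premice that merely share a universe. Two strong candidates really can have genuinely different extender sequences below any prescribed countable ordinal (this is the whole point of $\mathscr{P}^M$ and of the convergence relation), so there is no induction on indices that turns ``common universe plus condensing segments'' into inter-definability of $M|\gamma$ and $\canP|\gamma$ at an arbitrary reflection point $\gamma$. A secondary problem: at a limit $\gamma$ of your chain one has $M|\gamma\sats$``$V=\HC$'', so $\om_1^{M|\gamma}$ does not exist and the clause $\rho_\om^{\canP|\gamma}\leq\om_1^{\canP|\gamma}$ in the definition of convergence does not even parse; and intersecting your club of reflection points with the (merely unbounded) set of $\gamma$ with $\rho_\om^{M|\gamma}=\om$ need not be unbounded. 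The convergence points have to be the \emph{heights} of suitable collapsed structures lying strictly above the critical points, not the critical points themselves.

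The paper's proof manufactures the inter-definability by construction rather than extracting it post hoc. One interleaves: $\canM_0=\Momone^M$, $\canP_0=\canP$, and $\canM_{n+1}$ is the least level of $\canM^+$ above $\canM_n$ containing $\canP_n$ and projecting to $\om_1^M$, symmetrically for $\canP_{n+1}$. The limit stacks $\widetilde{\canM},\widetilde{\canP}$ then share a universe $U$, each is $\Sigma_1^U$ from its bottom candidate, and each is (the stack of Jensen extensions of) a parameter lying in the other; consequently, for $\eta$ past a fixed threshold, $\Hull_1^{\widetilde{\canM}}(\eta\cup\{p_1^{\widetilde{\canM}}\})$ and $\Hull_1^{\widetilde{\canP}}(\eta\cup\{p_1^{\widetilde{\canP}}\})$ have the \emph{same elements}, so their transitive collapses $H_\eta,H'_\eta$ share a universe and are inter-definable from parameters. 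For $\eta$ in the club $C$ of points with $\eta=\crit(\pi_\eta)=\om_1^{H_\eta}$, the hulls are $\eta$-sound, and $(1,\om_1)$-condensation (this is where tractability/strongness enters, not standard condensation of proper segments) yields $H_\eta\pins\Momone^M$ and $H'_\eta\pins\canP$, giving convergence at $\OR^{H_\eta}$. Finally, for consecutive $\eta<\xi$ in $C$ the hull $H_\xi$ is generated by the single parameter $\{\eta,p_1^{\widetilde{\canM}}\}$, so $\rho_1^{H_\xi}=\om$, which is what delivers $\om$-convergence at the unboundedly many points $\OR^{H_\xi}$. You would need to replace your least-disagreement step by something of this kind.
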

\begin{proof}
We consider primarily the case that either $M\in\pm_2$
or there is no $M'\pins M$ such that $M=\J(M')$,
and then sketch the modifications needed for the other case.
Write $\canN^+=\sJs^M(\canN)$ for $M$-candidates $\canN$.

Let $\canM_0=\canM=\Momone^M$ and $\canP_0=\canP$. (So
$\canM^+=M|\om_2^M$.) Given $\canM_n,\canP_n$, let
$\canM_{n+1}$ be the least
$\canM'\pins \canM^+$
with $\canM_n\pins \canM'$ and $\canP_n\in \canM'$ and
$\rho_\om^{\canM'}=\om_1^M$;
and define $\canP_{n+1}$ symmetrically from $\canP_n,\canM_n,\canP^+$. Let
$\widetilde{\canM}$ be
the stack of all $\canM_n$,
and $\widetilde{\canP}$
likewise.
Note that $\widetilde{\canM},\widetilde{\canP}$ have the same universe $U$,
and $\widetilde{\canP}$ is $\Sigma_1^U(\{\canP_0\})$ (as $\widetilde{\canP}$ is
the stack
of all Jensen extensions of $\canP_0$ in $U$), and likewise
for $\widetilde{\canM}$ from $\canM_0$, so in particular,
$\widetilde{\canP},\widetilde{\canM}$ are inter-definable from parameters.
Also,
$\widetilde{\canM}\ins M|\om_2^M$.

Note that $\left<\canM_n,\canP_n\right>_{n<\om}$ is also
$\Sigma_1^U(\{(\canM_0,\canP_0)\})$,
so $\rho_1^{\widetilde{\canM}}=\rho_1^{\widetilde{\canP}}\leq\om_1^M$.
In fact $\rho_1^{\widetilde{\canM}}=\om_1^M$, for if $\rho_1^{\widetilde{\canM}}=\om$
then $M=\widetilde{\canM}\sats$ ``$\om_1$ is the largest cardinal'', so by tractability, $\om<\rho_1^M$, a contradiction.

We claim that $\widetilde{\canP}$
is $1$-sound. If $\OR^{\widetilde{\canP}}<\om_2^M$,
then since $\canP$ is a strong candidate,
there is a Jensen extension $\canP'$ of $\canP$ with $\widetilde{\canP}\ins\canP'$,
and Jensen extensions are sound by assumption,
which suffices for this.
So suppose $\OR^{\widetilde{\canP}}=\om_2^M$,
so $\widetilde{\canP}=\sJs(\canP)$, which has universe that of $(\her_{\om_2})^M=\univ{M}$ under these circumstances. Since $\rho_1^{\widetilde{\canP}}=\rho_1^{\widetilde{\canM}}=\om_1^M$
and $H=\Hull_1^{\widetilde{\canP}}(\rho_1^{\widetilde{\canP}}\cup\{p_1^{\widetilde{\canP}}\})$ is cofinal in $\OR^{\widetilde{\canP}}$, it follows that $H=\widetilde{\canP}$. So it suffices to see that $\widetilde{\canP}$ is $1$-solid, so assume $p_1^{\widetilde{\canP}}\neq\emptyset$
and let $\alpha=\max(p_1^{\widetilde{\canP}})$.
Then $\alpha\geq\om_1^M$.
Let $H'=\Hull_1^{\widetilde{\canP}}(\alpha)$.
Then $H'=\widetilde{\canP}|\alpha\in\widetilde{\canP}$,
because otherwise note that $\alpha\in H'$,
contradicting the minimality of $p_1^{\widetilde{\canP}}$. Finally note now that $p_1^{\widetilde{\canP}}=\{\alpha\}$,
since $\alpha+1\sub\Hull_1^{\widetilde{\canP}}(\rho_1^{\widetilde{\canP}}\cup\{\alpha\})$. So $\widetilde{\canP}$ is $1$-solid, as desired.

Let $\eta_0<\om_1^M$ be the least $\eta$ such that
\[
p_1^{\widetilde{\canM}},p_1^{\widetilde{\canP}},
w_1^{\widetilde{\canM}},w_1^{\widetilde{\canP}},\canM_0,\canP_0\in
\Hull_1^{\widetilde{\canM}}(\eta\un\{p_1^{\widetilde{\canM}}\}
)\inter\Hull_1^{\widetilde{\canP}}(\eta\un\{p_1^{\widetilde{\canP}}\})\]
(where $w_1$ denotes the set of $1$-solidity witnesses).
For $\eta\in[\eta_0,\om_1^M)$, note that because of the definability of
$\widetilde{\canP}$ from $\canP_0$ and $\widetilde{\canM}$ from $\canM_0$,
\[ \Hull_1^{\widetilde{\canM}}(\eta\un\{p_1^{\widetilde{\canM}}\})\text{ and
}\Hull_1^{\widetilde{\canP}}(\eta\un\{p_1^{\widetilde{\canP}}\})\text{ have the
same
elements.}\] Let
$H_\eta,H'_\eta$ be the transitive
collapses of these hulls
respectively,
$\pi_\eta:H_\eta\to \widetilde{\canM}$ and
$\pi'_\eta:H'_\eta\to \widetilde{\canP}$
the uncollapse maps. Note $\pi_\eta=\pi'_\eta$ and
$H_\eta,H'_\eta$ have the same
universe and are inter-definable from parameters.
Let $C$ be the set of all $\eta\in[\eta_0,\om_1^M)$ with
$\eta=\om_1^{H_\eta}=\crit(\pi_\eta)$.
Note that if  $\OR^U=\om_2^M$, i.e.~$U=(\her_{\om_2})^M$,
then $M\sats$ ``$\om_1$ is the largest cardinal'', so $\om<\rho_1^M$ by tractability. So
in any case, $C$ is club in $\om_1^M$ (but it seems the ordertype of $C$ might only be $\om$\label{pg:C_low_ot}).
Let $\eta\in C$.
Then $H_\eta,H'_\eta$ are
$\eta$-sound, so by $(1,\om_1)$-condensation,  $H_\eta\pins\Momone$ and
 $H'_\eta\pins \canP$.
So $(\Momone,\canP)$ converges at $\OR^{H_\eta}$.
Now let $\eta<\xi$ be consecutive elements of $C$.
Then $\rho_1^{H_\xi}=\rho_1^{H'_\xi}=\om$, because note that
\[
\xi=\om_1^M\inter\Hull^{\widetilde{\canM}}((\eta+1)\cup\{p_1^{\widetilde{\canM
}}\} ),
\]
so $H_\xi=\Hull_1^{H_\xi}(\{q\})$ where
$\pi_{\xi}(q)=\{\eta,p_1^{\widetilde{\canM}}\}$,
and likewise for $H_\xi'$.
So $(\Momone,\canP)$ $\om$-converges at $\xi$.
Since this holds for cofinally many $\xi<\om_1^M$, we are done.

If instead $M\sats$ ``$\om_1$ is the largest cardinal'' and
$M=\J(M')$ (so
$\rho_\om^{M'}=\om_1^M$) then proceed similarly,
but define $\left<\canM_n,k_n,\canP_n,\ell_n\right>_{n<\om}$
with $k_0=\ell_0=0$ and $\canM_{n+1}$ is the least
$\canM'\pins M$ such that $\canM_n\ins \canM'$ and there is $k<\om$
such that $\canP_n\in\Ss_k(\canM')$ and $k_n,\ell_n<k$, and then let $k_{n+1}$
be the least witnessing $k$, and define $\canP_{n+1},\ell_{n+1}$
symmetrically. Define $\widetilde{\canM},\widetilde{\canP}$ in the obvious
manner
from this sequence (and once again, they have a common universe $U$,
and now $\widetilde{\canP}=\sJs^U(\canP_0)$, etc). Now
proceed much as before.
\end{proof}

\begin{dfn}\label{dfn:ptilde(m)}
 In the above context, let $\widetilde{\canP}(\canM)$ denote
$\widetilde{\canP}$ and $\widetilde{\canM}(\canP)$ denote
$\widetilde{\canM}$.
\end{dfn}

\section{Tail definability of $\es$ in tame mice}\label{sec:tail_def_tame}
For this section and the next, we restrict our attention to tame mice.
\begin{dfn}
 Let $M\in\pm_1$ be tame.
 We say that an $M$-candidate $\canN$  is \emph{tame-good} iff
 $\canN$ is strong and tame-iterability-good (see \ref{dfn:strong_can} and \ref{dfn:iterability-good}) in $M$.
We write $\mathscr{G}_{\tame}^M$,
for the set of tame-good candidates of
$M$. For the most part we abbreviate $\mathscr{G}_{\tame}$
with $\mathscr{G}$.\end{dfn}

\begin{rem}\label{rem:Ggg_def}
$\mathscr{G}^M_{\tame}$
is $\Pi_2^{\her_\delta^M}$ where $\delta={\om_2^M}$ (the definability is <without parameters).
\end{rem}

\begin{lem}\label{lem:def_of_Ppp_in_tame_mice}
Let $M$ be a $(0,\om_1+1)$-iterable tractable tame premouse.
Then $\Momone^M\in\mathscr{G}_{\tame}^M\sub\mathscr{P}^M$.
Therefore $\mathscr{P}^M$ is definable over $\her_{\om_2^M}^M$ without
parameters.
\end{lem}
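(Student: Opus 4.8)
The plan is to reduce the definability assertion to the inclusion $\mathscr{G}_{\tame}^M\subseteq\mathscr{P}^M$, which is the real content, and to establish that inclusion by a variant of the comparison argument of Theorem~\ref{thm:tame_HOD_x}. For the reduction: by the remark following Definition~\ref{dfn:iterability-good}, \emph{tame-iterability-good} is first order modulo $\ZFC^-$, and \emph{strong} and \emph{$M$-candidate} are first order over $(\her_{\om_2})^M$, so $\mathscr{G}_{\tame}^M$ is definable over $(\her_{\om_2})^M$ without parameters. Further $\Momone^M\in\mathscr{G}_{\tame}^M$: it is a strong candidate by Lemma~\ref{lem:iterable_tract_implies_strong}, and tame-iterability-good with $\Lambda_{\tame}^{\Momone^M}\subseteq\Sigma_{\Momone^M}$ by the lemma asserting exactly this. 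Now $\sim$ is transitive: if $\canN\sim_\alpha\canN'$ and $\canN'\sim_\beta\canN''$ then $\canN\sim_{\max(\alpha,\beta)}\canN''$, because the $\es$-agreements compose and the required $\rho_\om$-condition at $\max(\alpha,\beta)$ transfers along the $\es$-agreement from the smaller of $\alpha,\beta$. Granting $\mathscr{G}_{\tame}^M\subseteq\mathscr{P}^M$, one therefore has, for an $M$-candidate $\canN$, that $\canN\in\mathscr{P}^M$ iff $\canN\sim_\alpha\canN'$ for some $\alpha<\om_1^M$ and some $\canN'\in\mathscr{G}_{\tame}^M$ (``$\Rightarrow$'' taking $\canN'=\Momone^M$; ``$\Leftarrow$'' by transitivity, since $\canN'\in\mathscr{G}_{\tame}^M\subseteq\mathscr{P}^M$). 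As $\canN,\canN'$ range over $(\her_{\om_2})^M$ and $\mathscr{G}_{\tame}^M$ is definable there without parameters, $\mathscr{P}^M$ is definable over $(\her_{\om_2})^M$ without parameters.

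To prove the inclusion I would argue by contradiction: fix $\canN\in\mathscr{G}_{\tame}^M$ with $\canN\notin\mathscr{P}^M$, i.e. $\canN\not\sim_\alpha\Momone^M$ for all $\alpha<\om_1^M$. Since $\canN$ is strong, Lemma~\ref{lem:cofinal_converge_points} gives that $(\Momone^M,\canN)$ $\om$-converges at unboundedly many $\gamma<\om_1^M$; with $\canN\notin\mathscr{P}^M$ this gives unboundedly many $\eps<\om_1^M$ at which $(\Momone^M,\canN)$ $\om$-diverges. For such an $\eps$, with convergence witness $\gamma$, I would fix a real $x$ coding $(\Momone^M|\gamma,\canN|\gamma)$ and definable over $\Momone^M|\gamma$ (available since $\rho_\om^{\Momone^M|\gamma}=\om$), and pass to $x$-premice: the translations of $\Momone^M$ and $\canN$ agree strictly below $\eps$ and have a least disagreement at $\eps$, so appropriate segments above $\eps$ that project to $\om$ form a \emph{conflicting pair} $(R_0,S_0)$ in the sense of the proof of Theorem~\ref{thm:tame_HOD_x}, where $R_0$ is a segment of the translation of $\Momone^M$ — hence correctly iterable in $M$ — and $S_0$ is a segment of the translation of $\canN$ — hence $\om_1$-iterable in $M$ via the restriction of $\Lambda_{\tame}^{\canN}$, using that $\canN$ is tame-iterability-good. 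Strongness of $\canN$ guarantees $\sJs^M(\canN)$ has universe $(\her_{\om_2})^M$, so that the ambient P-constructions and hull arguments take place inside $M$.

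With $(R_0,S_0)$ in hand I would run the comparison-with-genericity-iteration of the proof of Theorem~\ref{thm:tame_HOD_x}, folding in genericity for (the translation of) $\Momone^M$ and the interleaved short linear iterations exactly as there, and reprove the analogues of Claims~\ref{clm:comparison_doesnt_terminate}--\ref{clm:thorough_analysis}: the comparison has length $\om_1^M$, the common model $N_0=M(\Tt_0,\Uu_0)$ has only boundedly many Woodins, neither $\Tt_0$ nor $\Uu_0$ has a cofinal branch in $M$, and above some $\chi^*<\om_1^M$ the Q-structures of $\Tt_0$ and $\Uu_0$ agree and are computed by P-construction from proper segments of (the translation of) $\Momone^M$. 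Using $\canN\notin\mathscr{P}^M$ again, I would pick a new conflicting pair $(R_1,S_1)$ from an $\om$-divergence point above $\chi^*$, iterate to obtain $\left<R_n,\Tt_n,S_n,\Uu_n\right>_{n<\om}$, translate each $\Tt_{n+1}$ onto the relevant P-construction segment and set $\Xx_{n+1}=\Xx_n\rest(\zeta_n+1)\conc\Tt_{n+1}'$ as in Theorem~\ref{thm:tame_HOD_x}, and finally take $\Xx=\liminf_n\Xx_n$, a correct normal tree on $R_0$ whose unique cofinal branch drops in model infinitely often — a contradiction. Hence $\canN\in\mathscr{P}^M$.

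The step I expect to be the main obstacle is verifying that this bookkeeping really does survive replacing one side of the conflicting pair by a segment of a merely tame-iterability-good candidate: the comparison's internal machinery (genericity iteration, P-construction of Q-structures) is driven by the $\Momone^M$-side, whereas the limit branch choices along $\Uu_0$ come from $\Lambda_{\tame}^{\canN}$, and one must check these are compatible. The idea is that, by Definition~\ref{dfn:iterability-good}, each branch chosen by $\Lambda_{\tame}^{\canN}$ at a stage where the common model is not a Q-structure is itself given by a P-construction from a segment of $\canN$, so the comparison coheres exactly as far as those P-constructions agree with the ones from segments of $\Momone^M$; it is the cofinal failure of such agreement — which is precisely what $\canN\notin\mathscr{P}^M$ supplies — that eventually produces the infinitely-dropping branch of $\Xx$. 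The remaining ingredients (passage to $x$-premice, choice of $\om$-divergence points, the $\liminf$ construction) go through essentially verbatim as in Theorem~\ref{thm:tame_HOD_x}.
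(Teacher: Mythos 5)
Your reduction of the ``therefore'' clause to the inclusion $\mathscr{G}_{\tame}^M\subseteq\mathscr{P}^M$, and your starting point for the inclusion (cofinally many $\om$-convergence points from Lemma \ref{lem:cofinal_converge_points}, plus cofinally many divergence points from the contradiction hypothesis $\canN\notin\mathscr{P}^M$), match the paper. But at the endgame you take the wrong fork. The paper explicitly uses only the \emph{first round} of the Theorem \ref{thm:tame_HOD_x} machinery: it builds a single length-$\om_1^M$ comparison whose stages are synchronized with the divergence points $\eps_\alpha$ (by folding in linear iterations past each $\gamma_{\alpha+1}$, so that the Q-structures at stage $\delta_\alpha=\om_1^{R_\alpha}=\om_1^{S_\alpha}$ are exactly the P-constructions of $R_\alpha$ and $S_\alpha$), shows that these Q-structures must \emph{differ} at every $\delta_\alpha$ (if they agreed up to $\eps_\alpha$, the uniqueness of extending extenders to small generic extensions would force $R_\alpha|\eps_\alpha=S_\alpha|\eps_\alpha$, contradicting divergence), concludes that $M(\Tt,\Uu)$ has a \emph{proper class} of Woodins, and then uses tameness to extract $\Tt$- and $\Uu$-cofinal branches in $M$ and run the standard comparison-termination hull argument for the contradiction. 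Your round-0 claims --- boundedly many Woodins, and eventual agreement of the Q-structures above some $\chi^*$ --- are precisely what fails here: cofinal divergence of $\canN$ from $\Momone^M$, fed through the $\Uu$-side strategy $\Lambda_{\tame}^{\canN}$ (whose Q-structures are P-constructed from segments of $\canN$, not of $M$), produces cofinally many Q-structure disagreements. Consequently the multi-round construction never launches: there is no tail of agreement from which to extract the next conflicting pair, and the ``infinitely-dropping branch'' scenario and the ``proper class of Woodins'' scenario are mutually exclusive ways of organizing the contradiction --- you cannot have the bounded-Woodins conclusions of round 0 and also the cofinal disagreement that is supposed to drive the later rounds.

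Two further concrete gaps. First, you fold in genericity only for (the translation of) $\Momone^M$; but for $\Uu\rest\lambda$ to be \emph{necessary} in the sense of Definition \ref{dfn:iterability-good} --- which is what lets you invoke tame-iterability-goodness of $\canN$ to get branch choices at all --- you need $\canN|\delta$ to be generic over $M(\Uu\rest\lambda)$ and $\Uu\rest\lambda$ to be definable over $\canN|\delta$. The paper therefore makes \emph{both} $\es^M$ and $\es^{\canN}$ generic and verifies definability of the comparison over both $M|\delta$ and $\canN|\delta$. Second, your passage to $x$-premice to manufacture literal conflicting pairs introduces a different base real at each round (the coding parameter depends on the convergence witness $\gamma_n$), which obstructs assembling the $\Xx_n$ into a single normal tree on $R_0$; the paper sidesteps this by working directly with the pair of candidates, which below the convergence points are only inter-definable from parameters rather than equal, and building that inter-definability into the bookkeeping.
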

\begin{proof}
Write $\mathscr{G}^M=\mathscr{G}_{\tame}^M$. The ``therefore'' clause follows
from the rest, as
given any $M$-candidate $\canN$, we get $\canN\in\mathscr{P}^M$
iff $\canN\sim_\alpha\canM$ for some $\canM\in\mathscr{G}^M$ and
 some $\alpha<\om_1^M$.
 And $\Momone^M\in\mathscr{G}^M$
 by Lemmas \ref{lem:iterable_tract_implies_strong} and \ref{lem:Momone^M_is_tame-it-good}.

So let $\canN\in\mathscr{G}^M$;
we show $\canN\in\mathscr{P}^M$. For this, we use a comparison
argument
very much like in the proof of Theorem \ref{thm:tame_HOD_x} (but only its first
round,
which produced the trees $\Tt_0,\Uu_0$ there), so we only
outline enough to explain the differences.

It suffices to see find some $\gamma<\om_1^M$ such that
$\canN,\Momone^M$
$\om$-converge at $\gamma$, and do not diverge at any $\eps>\gamma$.
So suppose we cannot, and for each  $\gamma$ such that $\canN,\Momone^M$ $\om$-converge at $\gamma$, let $\eps'_\gamma$ be the
least
$\eps>\gamma$
such that $(\canN,\Momone^M)$ diverge at $\eps$.
Let $C'$ be the set of all $\gamma<\om_1^M$ such that $(\canN,\Momone^M)$
$\om$-converges at $\gamma$.
By \ref{lem:cofinal_converge_points}, $C'$ is cofinal in $\om_1^M$,
and clearly $0\in C'$.
Define a sequence $\left<\gamma_\alpha\right>_{\alpha<\om_1^M}$
by $\gamma_0=0$,
and given $\left<\gamma_\alpha\right>_{\alpha<\lambda}$
with $\lambda<\om_1^M$, then
$\gamma_{\lambda}$ is the least $\gamma\in C'$ with
$\gamma\geq\sup_{\alpha<\lambda}\eps'_{\gamma_\alpha}$.
So
$\left<\gamma_\alpha\right>_{\alpha<\om_1^M}$ is cofinal in $\om_1^M$.
Now let $\eps_\alpha=\eps'_{\gamma_\alpha}$ and
\[ \delta_\alpha=\om_1^{M|\eps_\alpha}=\om_1^{\canN|\eps_\alpha}.\]
Let $R_\alpha$ be the least $R\pins M$ with $M|\eps_\alpha\ins R$
and $\rho_\om^R=\om$, and $S_\alpha\pins \canN$ likewise.
So
 \[
\gamma_\alpha<\delta_\alpha=\om_1^{R_\alpha}=\om_1^{S_\alpha}<\eps_\alpha\leq
  \OR^{R_\alpha},\OR^{S_\alpha}\leq\gamma_{\alpha+1}.
 \]
Note that $\gamma_0=0$ and $\eps_0$ indexes the least disagreement between
$M,\canN$.

We will define a length $\om_1^M$ comparison/genericity iteration $(\Tt,\Uu)$
of $(R_0,S_0)$,
via $(\Lambda_{\tame}^{\Momone^M},\Lambda_{\tame}^\canN)$,
such that $\left<\delta_\alpha\right>_{0<\alpha<\om_1^M}$
are exactly the Woodin cardinals of $M(\Tt,\Uu)$.
Then as in the proof of \ref{thm:tame_HOD_x},
because $M(\Tt,\Uu)$ has a proper class of Woodins and $M,\canN$ are tame,
we will have $\Tt$-cofinal and $\Uu$-cofinal branches,
and this will give a contradiction.

Given $(\Tt,\Uu)\rest(\alpha+1)$, let
$F^\Tt_\alpha,F^\Uu_\alpha$ be the least
disagreement
between $(M^\Tt_\alpha,M^\Uu_\alpha)$, write $\ell_\alpha=\lh(F^\Tt_\alpha)$
or $\ell_\alpha=\lh(F^\Uu_\alpha)$, whichever is defined,
and $K_\alpha=M^\Tt_\alpha||\ell_\alpha=M^\Uu_\alpha||\ell_\alpha$.

We first define $(\Tt_1,\Uu_1)=(\Tt,\Uu)\rest(\delta_1+1)$;
this will yield $\delta((\Tt_1,\Uu_1)\rest\delta_1)=\delta_1$ and
$M((\Tt_1,\Uu_1)\rest\delta_1)$
will be definable from parameters over $M|\delta_1$, and equivalently,
over $\canN|\delta_1$ (note that $M|\delta_1,\canN|\delta_1$  are
inter-definable
from parameters).

We have $\OR^{R_0},\OR^{S_0}\leq\gamma_1$.
We construct $(\Tt_1,\Uu_1)$ by comparison subject to
folding in meas-lim genericity iteration
and short linear iterations, much as in the proof of \ref{thm:tame_HOD_x}.
Now $(\Tt_1,\Uu_1)$ has two phases. In the first we
fold in a short linear iteration at the least measurable of $K_\alpha$
(that is, if $K_\alpha$ has a least measurable cardinal $\mu$, then we set
$E^{\Tt_1}_\alpha=E^{\Uu_1}_\alpha=$ the least normal measure
on $\mu$, and otherwise $E^{\Tt_1}_\alpha=F^\Tt_\alpha$ and
$E^{\Uu_1}_\alpha=F^\Uu_\alpha$),
until we reach the least $\alpha$ such that $\gamma_1\leq\ell_\alpha$.
In the second phase, we fold in meas-lim extender algebra violations
for making $(\es^M,\es^\canN)$ generic (with the meas-lim requirements
from the perspective of $K_\alpha$,
as in the proof of \ref{thm:tame_HOD_x}). We continue in this manner until
producing
$(\Tt_1,\Uu_1)$ of length $\delta_1$.

At limit stages of $(\Tt_1,\Uu_1)$ (and $(\Tt,\Uu)$ in general) we use
$(\Lambda_{\tame}^{\Momone^M},\Lambda_{\tame}^\canN)$ to select branches.
Thus, we need to verify that this makes sense, i.e.~that the trees at those
stages are necessary.
Note also that $M|\delta_1$ and $\canN|\delta_1$ satisfy $\ZFC^-$, contain
$R_0,S_0,\gamma_1$,
and moreover, $M|\delta_1$ and $\canN|\delta_1$ are inter-definable from
parameters, so the extender selection process just described
is definable from parameters over both.

Let $\lambda\leq\delta_1$ be a limit with $N=M((\Tt,\Uu)\rest\lambda)$
not a Q-structure for itself, and let $\delta=\delta((\Tt,\Uu)\rest\lambda)$.
We claim:
\begin{enumerate}
\item\label{item:M|delta,N|delta_generic} $M|\delta$ and $\canN|\delta$ are
meas-lim extender algebra generic
over $N$ at $\delta$,
 \item\label{item:M|delta,N|delta_sat_ZFC-+V=HC} $M|\delta$ and $\canN|\delta$
satisfy $\ZFC^-+$``$V=\HC$'',
\item $N\sats$ ``There are no Woodin cardinals'',
 \item \label{item:Tt,Uu_def}
 $\lambda=\delta$
and $(\Tt,\Uu)\rest\delta\sub(M|\delta)\inter(\canN|\delta)$ and
$(\Tt,\Uu)\rest\delta$ is definable from parameters over $M|\delta$
and  $\canN|\delta$,
\item\label{item:Q-struc_comp} letting $\gamma\geq\delta$ be least with
$\rho_\om^{M|\gamma}=\om$,
the P-construction $Q$ of $M|\gamma$ over $N$ is defined,
$\OR^Q=\gamma$,
and $Q$ is a Q-structure for $N$; and likewise for
$\canN$
and the least $\gamma'\geq\delta$ with $\rho_\om^{\canN|\gamma'}=\om$,
which yields a Q-structure $Q'$ for $N$,
\item\label{item:Qs_match_below_delta_1} if $\delta<\delta_1$ then $Q=Q'$,
where $Q,Q'$ are as above.
\end{enumerate}
This is by induction on $\lambda$, and much as in the proof of
\ref{thm:tame_HOD_x}. Items \ref{item:M|delta,N|delta_generic} and
\ref{item:M|delta,N|delta_sat_ZFC-+V=HC}
are as there.

Now suppose that $N$ has no Woodins, and we deduce
items \ref{item:Tt,Uu_def}, \ref{item:Q-struc_comp} and
\ref{item:Qs_match_below_delta_1}.
The parameter we need to define the trees is $(R_0,S_0)$,
which we have in the relevant segments of $M,\canN$ because we initially
folded in linear iteration past $\gamma_1$.
As mentioned above, the extender selection process is definable from
parameters over $M|\delta_1$ and $\canN|\delta_1$, and  in fact,
$(\Tt,\Uu)\rest\lambda$ is definable from parameters over $M|\delta$ and
$\canN|\delta$. For because $N$ has no Woodins,
the Q-structures $Q_\xi,Q_\xi'$
used at limit stages $\xi<\lambda$ in $\Tt,\Uu$
to determine $[0,\xi)_\Tt$ and $[0,\xi)_\Uu$ respectively
are identical and are proper segments of $N$. By induction,
these are computed as in item \ref{item:Q-struc_comp},
and the segments of $M,\canN$ used to compute them have height ${<\delta}$,
so $M|\delta,\canN|\delta$ can determine them, and hence $[0,\xi)_\Tt$
and $[0,\xi)_\Uu$. So $M|\delta,\canN|\delta$ can compute
$(\Tt,\Uu)\rest\lambda'$ as long as
$(\Tt,\Uu)\rest\lambda'\sub(M|\delta)\inter(\canN|\delta)$.
But if this fails for some $\lambda'<\lambda$, we contradict the fact that
$M|\delta$ and $\canN|\delta\sats\ZFC^-$. Item \ref{item:Tt,Uu_def} now
follows.

It also follows that $\Tt\rest\delta$ and $\Uu\rest\delta$ are necessary,
so $\Lambda_{\tame}^{\Momone^M}(\Tt\rest\delta)$ and
$\Lambda_{\tame}^\canN(\Uu\rest\delta)$
are
defined,
and the process continues. Let $\gamma$ be as in item \ref{item:Q-struc_comp}.
Let $Q$ be the result of the P-construction of $M$ above
$N$ (recall this stops as soon as it reaches a Q-structure or projects across
$\delta$).
Because $\delta$ is regular in $Q[M|\delta]$, we cannot have $M|\gamma\in
Q[M|\delta]$, so $\OR^Q\leq\gamma$.
But if $\OR^Q<\gamma$ then we reach a contradiction as in the proof of Claim
\ref{clm:no_cofinal_branches} in the proof of \ref{thm:tame_HOD_x}.
So $\OR^Q=\gamma$. It is analogous for $\canN$.

For item \ref{item:Qs_match_below_delta_1}, by item \ref{item:Q-struc_comp} and
by the agreement of $M|\delta_1$ and $\canN|\delta_1$,
if $\delta<\delta_1$ then  $Q=Q'$.
(Note here $\gamma,\gamma'<\delta_1$,
as $M|\delta_1,\canN|\delta_1$ have largest cardinal $\om$.)

It remains to verify that $N$ has no Woodins.
So suppose $N\sats$ ``$\eta$ is
Woodin'' and let $\eta$ be least such.
Then because we have folded in meas-lim genericity iteration,
$M|\eta,\canN|\eta$ are $(N,\BB_{\ml,\eta}^N)$-generic, so
 $M|\eta$ and $\canN|\eta$ satisfy $\ZFC^-$.
 Let $\lambda'<\lambda$ be least such that
$\delta((\Tt,\Uu)\rest\lambda')\geq\eta$. Then note that by $\ZFC^-$
and as before, $M|\eta$ and $\canN|\eta$ can compute $(\Tt,\Uu)\rest\lambda'$,
and we get $\lambda'=\eta$. But since $N|\eta$ has no Woodins,
the preceding applies with $\lambda$ replaced by $\lambda'=\eta<\lambda$.
In particular, $Q_\eta=Q_\eta'$, where these are the Q-structures
determining $[0,\eta)_\Tt,[0,\eta)_\Uu$. Since $\eta$ is Woodin in $N$,
$E^\Tt_\eta$ or $E^\Uu_\eta$ must come from $Q_\eta=Q_\eta'$.
But then $E^\Tt_\eta=E^\Uu_\eta$, so this extender is being used
for linear iteration or genericity iteration purposes, and $Q_\eta\pins
K_\eta$. But $\eta$ is a strong cutpoint of $Q_\eta$, so $E^\Tt_\eta$ causes a
drop in model
to some $P\ins Q_\eta$. But then $E^\Tt_\eta$ is not $K_\eta$-total,
a contradiction.

This completes the induction, giving
$(\Tt,\Uu)\rest(\delta_1+1)$.
Now suppose $\lambda=\delta_1$.  By item \ref{item:Q-struc_comp},
letting $b,c$ be the branches chosen in $\Tt,\Uu$,
 $Q(\Tt,b)$ results from the P-construction
of $R_1$ above $N=M((\Tt,\Uu)\rest\delta_1)$,
and has height $\OR^{R_1}$,
and $Q(\Uu,c)$ is  that of $S_1$ above $N$,
of height $\OR^{S_1}$.
But $\eps_1$ indexes the least disagreement between $R_1,S_1$ above $\delta_1$.
Now
\[ Q(\Tt,b)||\eps_1=Q(\Uu,c)||\eps_1
\text{ but }
 Q(\Tt,b)|\eps_1\neq Q(\Uu,c)|\eps_1.\]
For if $Q(\Tt,b)|\eps_1=Q(\Uu,c)|\eps_1$ then
because $Q(\Tt,b)[M|\delta_1]$ and $Q(\Tt,b)[\canN|\delta_1]$
have the same universe and the forcing  is small
relative to the active extenders, there is a unique possible extension
of the extenders to the extensions, so $R_1|\eps_1=S_1|\eps_1$,
contradiction.

So
the overall comparison now reduces to a comparison of $Q(\Tt,b)$ with
$Q(\Tt,c)$,
and therefore $\delta_1$ will be the least Woodin cardinal, and hence (by
tameness, or in this case, just that $\delta_1$ is the least such Woodin) also a
strong cutpoint of the final model.

Now suppose $\alpha>0$ and we have defined
$(\Tt_\alpha,\Uu_\alpha)$, of length $\delta_\alpha+1$,
and the P-constructions of $R_{\alpha+1},S_{\alpha+1}$ yield
the Q-structures $Q(\Tt\rest\delta_\alpha,b')$ and $Q(\Uu\rest\delta_\alpha,c')$
etc.
We then define $(\Tt_{\alpha+1},\Uu_{\alpha+1})$ extending
$(\Tt_\alpha,\Uu_\alpha)$,
above $\delta_\alpha$, of length $\delta_{\alpha+1}+1$.
Here we again have two stages.
In the first we fold in linear iteration past $\gamma_{\alpha+1}$,
at the least measurable $>\delta_\alpha$,
and in the second we fold in genericity iteration.
Everything is analogous to the case $\alpha=1$
(there are now Woodin cardinals in
$M((\Tt_{\alpha+1},\Uu_{\alpha+1})\rest\lambda)$,
but they are exactly the $\delta_\beta$ for $\beta\leq\alpha$).

Given $\left<\Tt_\alpha,\Uu_\alpha\right>_{\alpha<\eta}$ for a limit $\eta$,
this gives
$(\Tt,\Uu)\rest\lambda$ where $\lambda=\sup_{\alpha<\eta}\delta_\alpha$.
Note $\lambda=\delta((\Tt,\Uu)\rest\lambda)$.
Because $M((\Tt,\Uu)\rest\lambda)$ satisfies ``There is a proper class of
Woodins'' by induction,
it is a Q-structure for itself, so $\Tt\rest\lambda$ and $\Uu\rest\lambda$ are
necessary (as they are in $M$),
and hence
 in the domains of the iteration strategies.
This yields $(\Tt,\Uu)\rest(\lambda+1)$.
We get $M^\Tt_\lambda\nins M^\Uu_\lambda\nins M^\Tt_\lambda$.
Since $\lambda$ is a limit of strong cutpoints of
$M^\Tt_\lambda,M^\Uu_\lambda$,
the comparison now reduces to a comparison of
$M^\Tt_\lambda,M^\Uu_\lambda$,
above $\lambda$. Note that $(\Tt,\Uu)\rest(\lambda+1)$ is definable
from parameters
over $M|\gamma_\eta$, and over $\canN|\gamma_\eta$
(or at least, $(\Tt,\Uu)\rest\lambda$ is definable from parameters
over those segments, and $[0,\lambda)_\Tt,[0,\lambda)_\Uu$
are also, so the models $M^\Tt_\lambda,M^\Uu_\lambda$
are definable ``in the codes'', but might literally have ordinal height
$>\gamma_\eta$).
At this stage we fold in linear iteration past $\gamma_\eta$, at the least
measurable $\mu>\lambda$, if there is such, and then genericity iteration, to
produce $(\Tt,\Uu)\rest(\delta_\eta+1)$
much as before.

This completes the description of the comparison. We produce trees $(\Tt,\Uu)$
of length $\om_1^M$, and $\left<\delta_\alpha\right>_{\alpha<\om_1^M}$
enumerates the Woodins of $M(\Tt,\Uu)$, cofinal in $\om_1^M$.
By tameness, we get $\Tt$-cofinal and $\Uu$-cofinal branches $b,c\in M$
(this doesn't require any further iterability assumptions).
One now reaches a contradiction as in the proof of
Theorem \ref{thm:tame_HOD_x}.
\end{proof}

\begin{proof}[Proof of Theorem \ref{thm:E_almost_def_tame_L[E]}]
 By Lemma \ref{lem:def_of_Ppp_in_tame_mice}, $\mathscr{P}^M$ is definable over
$(\her_{\om_2})^M$ without parameters.
So  by Lemma \ref{lem:tame_es_above_om_1_def_from_Ppp},
$\es^M\rest[\om_1^M,\OR^M)$ is definable over $\univ{M}$ without parameters.
\end{proof}

\section{HOD in tame mice}\label{sec:HOD_tame_mice}

In Theorem \ref{tm:HOD_tame_mouse} we analyse  $\HOD^{L[\es]}$
above $\om_2^{L[\es]}$, for tame $L[\es]$. This uses Vopenka:
\begin{dfn}\label{dfn:Ggg}
 Let $M$ be a $(0,\om_1+1)$-iterable tame premouse satisfying $\ZFC$. Write $\mathscr{G}=\mathscr{G}_{\tame}$ (see \ref{dfn:Ggg}).
 Then $\Vop_{*\mathscr{G}}^M$ denotes the Vopenka forcing
 corresponding to non-empty $\OD^{\univ{M}}$ subsets of $\mathscr{G}^M$,
 coded in the usual manner with ordinals as conditions.
 (Let $\PP_0$ be the forcing whose conditions are non-empty
$\OD^{\univ{M}}$
 subsets of $\mathscr{G}^M$, with $A\leq B$ iff $A\sub B$.
 Then $\Vop_{*\mathscr{G}}^M$ is
  the natural isomorph of $\PP_0$, using standard ordinal codes
 for conditions in $\PP_0$.)
\end{dfn}

\begin{rem}Note that $\Vop_{*\mathscr{G}}^M$ is definable over $\univ{M}$
without parameters. Once
we have proved the following lemma,
 we will define $\Vop_{\mathscr{G}}^M$ as a \emph{more} natural isomorph of
$\Vop_{*\mathscr{G}}^M$, which is a subset of $\om_2^M$, and is definable over
$(\her_{\om_2^M})^M$ without parameters.
\end{rem}

\begin{lem}\label{lem:tame_Vopenka}
 Let $M$ be a $(0,\om_1+1)$-iterable tame premouse satisfying $\ZFC$.
 Let $\PP=\Vop_{*\mathscr{G}}^M$ and $\delta=\om_2^M$.
 Let $H=\HOD^{\univ{M}}$.
 Then:
 \begin{enumerate}
  \item\label{item:Vop_delta-cc} $\PP\in H$ and $H\sats$ ``$\PP$ is a
$\delta$-cc
complete Boolean algebra''.
  \item\label{item:Vop_sub_delta} $\PP\iso$  some $\PP'\sub\delta$
which is $(\Sigma_3\wedge\Pi_3)^{\her_{\delta}^M}$-definable
without parameters,
  \item\label{item:H[G]=H[M|om_1]} There is $G$ which is $(H,\PP)$-generic,
with
$H[G]=H[\Momone^M]$ having universe $\univ{M}$.
  \item\label{item:every_Vop_cond_generic} For every $p\in\PP$ there is an
  $(H,\PP)$-generic
$G'\in M$ such that $p\in G'$ and $H[G']$ has universe $\univ{M}$.
 \end{enumerate}
\end{lem}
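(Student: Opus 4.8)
The plan is to run the standard analysis of Vopenka forcing, exploiting that $\mathscr{G}^M$ is small and $\OD^{\univ{M}}$, together with the fact $\univ{M}=\HOD^{\univ{M}}_{\Momone^M}$ from \cite{V=HODX}. For part~\ref{item:Vop_delta-cc}: by the Remark preceding the lemma $\PP$ is definable over $\univ{M}$ without parameters, so $\PP\in H$. Working in $H$, antichains of $\PP$ correspond to families of pairwise disjoint nonempty $\OD^{\univ{M}}$ subsets of $\mathscr{G}^M$; since $\mathscr{G}^M\sub\mathscr{P}^M$ by Lemma~\ref{lem:def_of_Ppp_in_tame_mice} and each member of $\mathscr{P}^M$ is $\Sigma_1$-definable over $\Momone^M$ from a parameter in its universe $\HC^M$, we get $\card^M(\mathscr{G}^M)\le\om_1^M<\delta$, so every antichain has size $<\delta$. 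Completeness of $\PP$ in $H$ holds since $H=\HOD^{\univ{M}}$ is closed under unions of its sets of $\OD^{\univ{M}}$ subsets of $\mathscr{G}^M$ (such a union is again $\OD^{\univ{M}}$, being definable from the associated set of ordinal codes, which lies in $H$).

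For part~\ref{item:Vop_sub_delta}: since $\card^M(\mathscr{G}^M)\le\om_1^M$ and $M\sats\mathrm{GCH}$, there are at most $\delta$ many $\OD^{\univ{M}}$ subsets of $\mathscr{G}^M$, so $\card^M(\PP)\le\delta$. To obtain a canonical parameter-free $\her_\delta^M$-definable copy of $\PP$ inside $\delta$, I would invoke Theorem~\ref{thm:E_almost_def_tame_L[E]}: with $\mathscr{P}^M$ available over $\her_\delta^M$, one recovers $\es^M\rest[\om_1^M,\delta)$ by running the $\cs$-construction up to height $\delta$, and can thus recast $\OD^{\univ{M}}$-definability of subsets of $\HC^M$ over $\her_\delta^M$ — concretely, every $\OD^{\univ{M}}$ subset of $\mathscr{G}^M$ is definable over $\her_\delta^M$ from $\mathscr{P}^M$ and an ordinal $<\delta$. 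As $\mathscr{P}^M$ is itself $\her_\delta^M$-definable without parameters, coding each condition by its least such ordinal (and transferring the ordering) yields $\PP'\sub\delta$ with $\PP'\iso\PP$; the predicate ``$\xi\in\PP'$'' then asserts that $\xi$ minimally codes a nonempty subset of $\mathscr{G}^M$, and since $\mathscr{G}^M$ is $\Pi_2$ over $\her_\delta^M$ a routine count places this at $(\Sigma_3\wedge\Pi_3)^{\her_\delta^M}$.

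For parts~\ref{item:H[G]=H[M|om_1]} and~\ref{item:every_Vop_cond_generic}: first, $\Momone^M\in\mathscr{G}^M$, since $M$ is tractable (it models $\ZFC$ — so $M\in\pm_1$ and $\om_1$ is not largest in $M$ — and its proper segments satisfy standard condensation by iterability), hence $\Momone^M$ is a strong candidate by Lemma~\ref{lem:iterable_tract_implies_strong}, and it is \emph{tame-iterability-good}; so $\Momone^M\in\mathscr{G}^M_{\tame}=\mathscr{G}^M$. For any $\canN\in\mathscr{G}^M$ put $G_\canN=\{A\in\PP:\canN\in A\}$; this is a filter, it lies in $\univ{M}$ (being $\OD$ from $\canN\in\univ{M}$), and it contains every condition whose coded set contains $\canN$. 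Genericity of $G_\canN$ over $H$ is the usual Vopenka argument: if $D\in H$ is dense open, then $\mathscr{G}^M\cut\bigcup D$ is $\OD^{\univ{M}}$ (as $\mathscr{G}^M$ and $D$ are), so were $\canN\notin\bigcup D$ it would be a nonzero condition incompatible with every member of $D$, contradicting density; hence $\canN\in A$ for some $A\in D$, so $A\in G_\canN\cap D$. To see $H[G_\canN]=\HOD^{\univ{M}}_\canN$: code $\canN$ as the set of ordinals $c(\canN)\sub\om_1^M$ obtained as the image of $\es_+^{\canN}$ under $\canN$'s canonical wellorder; then $\canN$ and $c(\canN)$ are mutually ordinal-definable (the decoding recursion builds $\canN$ from $c(\canN)$ inside $L[c(\canN)]$), so $\HOD^{\univ{M}}_\canN=\HOD^{\univ{M}}_{c(\canN)}=H[c(\canN)]$, the last equality being the standard fact for a set of ordinals; moreover $c(\canN)\in H[G_\canN]$ because the separating sets $\{\canN'\in\mathscr{G}^M:\xi\in c(\canN')\}$, for $\xi<\om_1^M$, are $\OD^{\univ{M}}$, and conversely $G_\canN\in H[c(\canN)]$, so $H[G_\canN]=H[c(\canN)]=\HOD^{\univ{M}}_\canN$. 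Finally, for $\canN\in\mathscr{G}^M\sub\mathscr{P}^M$ we have $\canN\sim_\alpha\Momone^M$ for some $\alpha<\om_1^M$, so $\es_+^{\canN}$ and $\es_+^{\Momone^M}$ are inter-definable from parameters in $\HC^M$; as every element of $\HC^M$ is ordinal-definable over the premouse $\canN$ (and over $\Momone^M$), it follows that $\HOD^{\univ{M}}_\canN=\HOD^{\univ{M}}_{\Momone^M}$, which equals $\univ{M}$ by \cite{V=HODX}. Taking $G=G_{\Momone^M}$ gives part~\ref{item:H[G]=H[M|om_1]} (with $H[G]=H[\Momone^M]=\univ{M}$, using $\Momone^M\in H[G]$ and $G\in H[\Momone^M]$); for part~\ref{item:every_Vop_cond_generic}, given $p\in\PP$ represented by a nonempty $\OD^{\univ{M}}$ set $A$, any $\canN\in A$ yields $G'=G_\canN\in M$ as required.

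The step I expect to be the main obstacle is part~\ref{item:Vop_sub_delta}: converting the crude cardinality bound $\card^M(\PP)\le\delta$ into a genuinely parameter-free $\her_\delta^M$-definable copy of $\PP$ inside $\delta$ of the stated complexity, which is exactly where Theorem~\ref{thm:E_almost_def_tame_L[E]} is needed to express $\OD^{\univ{M}}$-definability of subsets of $\mathscr{G}^M$ over $\her_\delta^M$; the remaining complexity bookkeeping, though routine, requires care.
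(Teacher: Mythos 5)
Your overall architecture matches the paper's: the derived filters $G_\canN$, the usual genericity argument, and the reduction to $\HOD^{\univ{M}}_{\canN}=\univ{M}$ via Lemma \ref{lem:tame_es_above_om_1_def_from_Ppp}. Part \ref{item:Vop_delta-cc} and the genericity of $G_\canN$ are fine. But there are two genuine gaps. For part \ref{item:Vop_sub_delta}, the step you flag as the main obstacle is exactly where the proof lives, and Theorem \ref{thm:E_almost_def_tame_L[E]} does not supply it. An $\OD^{\univ{M}}$ subset $A$ of $\mathscr{G}^M$ is defined over some $\her_\lambda^M$ with $\lambda$ possibly far above $\delta$, and the assertion that $A$ is then definable over $\her_\delta^M$ from an ordinal $<\delta$ is precisely what has to be proved; it does not follow from having $\mathscr{P}^M$ or $\es^M\rest[\om_1^M,\OR^M)$ available (in general $\HOD^{\univ{M}}\cap\her_\delta^M$ need not consist of sets ordinal-definable over $\her_\delta^M$). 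The paper proves it by introducing \emph{nice codes} $(\alpha,\beta,\varphi)$ with $\alpha<\beta<\delta$ coding $\{\canN\in\mathscr{G}^M\mid\sJs(\canN)|\beta\sats\varphi(\alpha)\}$ and showing every $\OD^{\univ{M}}$ set has one: one forms $\Hull_1^{\cs(\canN)|(\lambda+\om)}(\{\lambda\}\cup\om_1^M)$, uses iterability and condensation to see that its transitive collapse is a proper segment of $\sJs(\canN)$ of height $\lambdabar<\delta$, and uses inter-definability of candidates to see that $\lambdabar$ and the collapse map are independent of $\canN$. That reflection argument is absent from your proposal.

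For part \ref{item:H[G]=H[M|om_1]}, your route through ``$\HOD^{\univ{M}}_{c(\canN)}=H[c(\canN)]$, the standard fact for a set of ordinals'' does not work: the standard Vopenka fact is $\HOD_x=\HOD[G_x]$, not $\HOD_x=\HOD[x]$, and the inclusion $\HOD[x]\sub\HOD_x$ can be strict. Concretely, your claim that $G_\canN\in H[c(\canN)]$ is unjustified: computing $G_\canN$ means deciding, for every $\OD^{\univ{M}}$ set $A$, whether $\canN\in A$, and $H[\canN]$ has no a priori access to truth in $\univ{M}$. In the paper this is exactly Claim \ref{clm:N_generates_N^+}: $H[\canN]=H[\canN^+]$, after which $G_\canN$ is computable from $\canN^+=\sJs(\canN)$ via the nice codes. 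That claim is proved by inductively recovering $\canN^+|\xi$ inside $H[\canN]$ using P-constructions of segments of $\canN^+$ over restrictions $\QQ=\PP'\inter\canN^+|\gamma$ of the Vopenka algebra (or, alternatively, by a conversion to Jensen indexing so that the extender fragments $F\rest(\crit(F)^+)$ lie in $H$). This is the most substantial ingredient of parts \ref{item:H[G]=H[M|om_1]}--\ref{item:every_Vop_cond_generic} and is missing from your argument; without it you only get $H[G_{\Momone^M}]=\univ{M}$, not $H[G_{\Momone^M}]=H[\Momone^M]$.
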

\begin{proof}
Part \ref{item:Vop_delta-cc}: We have $\PP\in H$ and $H\sats$ ``$\PP$ is a
complete Boolean algebra''
by the usual proof for Vopenka forcing. We have $H\sats$ ``$\PP$ is $\delta$-cc''
because by Lemma \ref{lem:def_of_Ppp_in_tame_mice}, in $M$, $\mathscr{G}^M$ has cardinality $\leq\om_1^M$,
and all maximal antichains of $\PP$ in $H$ correspond
to partitions of $\mathscr{G}^M$ in $M$.

Part \ref{item:Vop_sub_delta}: A \emph{nice code}
is a triple $(\alpha,\beta,\varphi)$ such that $\alpha<\beta<\delta$
and $\varphi$ is a formula.
The nice code $(\alpha,\beta,\varphi)$
codes the set
\[
A_{\alpha\beta\varphi}=\{\canN\in\mathscr{G}
^M\bigm|\sJs(\canN)|\beta\sats\varphi(\alpha)\}. \]

\begin{clmfour}\label{clm:every_OD_set_has_nice_code}A set $A\sub\mathscr{G}^M$
is $\OD^{\univ{M}}$ iff $A$ has a nice code.\end{clmfour}
\begin{proof}
Each $A_{\alpha\beta\varphi}$ is $\OD^{\univ{M}}$
since by Remark \ref{rem:Ggg_def}, $\mathscr{G}^M$ and
 $\canN\mapsto\sJs(\canN)$ are $\univ{M}$-definable.

So suppose $A\sub\mathscr{G}^M$ is $\OD^{\univ{M}}$ but has no nice code.
Let $\lambda\in\OR^M$ be a
limit cardinal of $M$
and  $\xi<\lambda$ and $\varphi$ be a formula (in the language of set theory)
such that
$\canN\in A$ iff $\her_\lambda^M\sats\varphi(\canN,\xi)$.
In fact, because we are arguing by contradiction, we may assume
$\xi=0$ (take the least $\xi$ such that $\varphi(\cdot,\xi)$
yields a set with no nice code, and then by substituting another formula
for $\varphi$, we can take $\xi=0$).

Let $\canN\in\mathscr{G}^M$. Then
$N=\cs(\canN)$ is well-defined, has universe $\univ{M}$,
and satisfies standard condensation,
by Lemma \ref{lem:tame_es_above_om_1_def_from_Ppp}. Also, as in the proof
of that lemma, $N$ can be translated into an iterable $x$-mouse $N_x$
for some $x\in\RR^M$. Let
\[ H^\canN=\Hull_1^{N|(\lambda+\om)}(\{\lambda\}\cup\om_1^M),
\]
$C^\canN$ be its transitive collapse and
and $\pi^\canN:C^\canN\to N|(\lambda+\om)$ the uncollapse.
By the iterability of  $N_x$ (as an $x$-mouse), and since $x\in\rg(\pi^\canN)$
and $\lambda$ is an $M$-cardinal, then $C^\canN$ is $1$-sound with
$\pi^\canN(p_1^{C^\canN})=\{\lambda\}$.
So by standard condensation,  $C^\canN\pins N$, so in fact
$C^\canN\pins\sJs(\canN)$. But the elements of $H^\canN$ are
independent of $\canN$,
because given $\canN'\in\mathscr{G}^M$, $(\canN,\canN')$ are interdefinable
from parameters, so
$(\cs(\canN)|\lambda,\cs(\canN')|\lambda)$
are also (as they have the same extender sequence
above $\om_1^M$).
So $\OR(H^\canN)$ and $\pi^\canN$ are also independent of $\canN$.
Let $\pi=\pi^\canN$ and $\pi(\lambdabar)=\lambda$.
Let $\psi_\varphi$ be the formula, in the language of premice,
asserting $\varphi(L[\es]|\om_1)$.
Then
\[ \canN\in
A\iff\her_\lambda^M\sats\varphi(\canN)\iff
\cs(\canN)|\lambda\sats\psi_\varphi\iff
\sJs(\canN)|\lambdabar\sats\psi_\varphi.\]
So $(0,\lambdabar,\psi_\varphi)$ is a nice code for $A$, a contradiction.
\end{proof}
So let $\PP'$ be the coding of $\PP$ via nice codes
(for non-empty subsets of $\mathscr{G}^M$).
Then $\PP'\sub\delta^3$
and because $\mathscr{G}^M$ is $\Pi_2^{\her_{\delta}^M}$,
the set of conditions
$(\alpha,\beta,\varphi)\in\PP'$ is $\Sigma_3^{\her_\delta^M}$
(to assert $A_{\alpha\beta\varphi}\neq\emptyset$),
and the ordering restricted to these conditions is $\Pi_3^{\her_\delta^M}$.

Parts \ref{item:H[G]=H[M|om_1]}, \ref{item:every_Vop_cond_generic}: As usual,
for every $\canN\in\mathscr{G}^M$ we have the generic filter
\[ G_\canN=\{(\alpha,\beta,\varphi)\in\PP'\bigm| \canN\in
A_{\alpha\beta\varphi}\}. \]

\begin{clmfour}
$H[\canN]\sub H[\canN^+]=H[G_\canN]=\univ{M}$.
\end{clmfour}
\begin{proof}
$G_\canN$ and $\canN^+=\sJs(\canN)$ are easily inter-computable,
so $H[\canN^+]=H[G_\canN]$.
By standard Vopenka facts, we have $H[G_\canN]=\HOD_{\canN}^{\univ{M}}$.
\footnote{\label{ftn:Vopenka_extension}That is, let $X\sub\eta\in\OR^M$
with $X\in\HOD_{\canN}^{\univ{M}}$, and fix a formula $\varphi$
and $\alpha\in\OR$ such that
$X=\{\beta<\eta\bigm|\univ{M}\sats\varphi(\canN,\alpha,\beta)\}$.
For $\beta<\eta$ let
$p^*_\beta=\{\canN'\in\mathscr{G}^M\bigm|\univ{M}\sats\varphi(\canN',\alpha,
\beta)\}$, and noting $p^*_\beta\in\PP$,
let $p_\beta\in\PP'$ be the corresponding element,
and letting $\tau:\eta\to V$ with $\tau(\beta)=p_\beta$,
note $\tau\in H$.
But $\tau$ is a $\PP'$-name and $\tau_{G_{\canN}}=X$.}
But by Lemma \ref{lem:tame_es_above_om_1_def_from_Ppp},
we have $\HOD_{\canN}^{\univ{M}}=\univ{M}$.
\end{proof}

\begin{clmfour}\label{clm:N_generates_N^+}
 $H[\canN]=H[\canN^+]$.
\end{clmfour}
\begin{proof}
It suffices to see that $\canN^+\sub H[\canN]$, because then $\canN^+$
is just the Jensen stack above $\canN$ in $H[\canN]$,
so $\canN^+\in H[\canN]$ also.
Fix $\xi\in(\om_1^M,\om_2^M)$ such that $\canN^+|\xi$ projects to $\om_1^M$.
It suffices to see that $\canN^+|\xi\in H[\canN]$,
and again via the Jensen stack, we may assume that
$\gamma=\om_2^{\canN^+|\xi}<\xi$
and $\canN^+|\gamma\in H[\canN]$ and
 there is some $\lambda\in(\gamma,\xi]$
such that $\canN^+|\lambda$ is active.

Let $\QQ=\PP'\inter \canN^+|\gamma$. Note that $\QQ$ is definable over
$\univ{\canN^+|\gamma}$
(just as $\PP'$ is defined over $(\her_{\om_2})^M=\univ{\canN^+}$).
We have $\QQ\in H$ as $\QQ=\PP'\inter\gamma^3$.
Let $\lambda$ be the supremum of all $\lambda'\leq\xi$
such that $\canN^+|\lambda'$ is active.
So $\gamma=\om_2^{\canN^+|\lambda}$.
So working over $\canN^+|\lambda$
(or equivalently, $M|\lambda$), let $R$ be the result of the P-construction
of $\canN^+|\lambda$ above $(\gamma^3,\QQ)$.
Then $R\in H$, because $\QQ\in H$, and given any $\canN'\in\mathscr{G}^M$, the
extender sequences of $(\canN')^+$
and $\canN^+$ agree above $\om_1^M$, so $\QQ$ is definable
over $(\canN')^+|\gamma$ just as over $\canN^+|\gamma$ (as they have the same
universe),
and their P-constructions yield the same output $R$.

As before, $R||\lambda\sats$ ``$\QQ$ is a $\gamma$-cc complete Boolean
algebra''
and
$G_{\canN,\gamma}=G_\canN\inter\gamma^3$
is $R||\lambda$-generic for $\QQ$.
Therefore the P-construction of $\canN^+|\lambda$ yields a
$(\gamma^3,\QQ)$-premouse (which is $R$),
and we have the usual fine structural correspondence
between segments of $\canN^+$ of height in $(\gamma,\lambda]$,
and the corresponding segments of $R$.

Now by induction, we have $\canN^+|\gamma\in H[\canN]$,
and $\canN^+|\gamma$ is inter-computable with $G_{\canN,\gamma}$.
But then the extender sequence of $\canN^+|\lambda$ above $\gamma$ is determined
by that of $R|\lambda$, as $\canN^+|\lambda$ is a small generic extension
thereof.
So $\canN^+|\lambda\in H[\canN]$, and therefore $\canN^+|\xi\in H[\canN]$, as
desired.
\end{proof}

There also is an alternate proof of this last claim,
which is actually quite different:

\begin{proof}[Sketch of alternate proof of Claim \ref{clm:N_generates_N^+}]
If our mice were Jensen-indexed, we could argue as follows:
Given $\alpha$ such that $\canN^+|\alpha$ is active, let
$\xi_\alpha=(\kappa^+)^{\canN^+|\alpha}$ where
$\kappa=\crit(F^{\canN^+|\alpha})$.
The sequence
\[
\mathscr{F}=\left<F^{\canN^+|\alpha}\rest\xi_\alpha\mid\alpha\in(\om_1^M,
\om_2^M)\text{ and }F^{\canN^+|\alpha}\neq\emptyset\right> \]
would be in $H$, because the sequence is independent of $\canN\in\mathscr{G}^M$.
But $(\canN,\mathscr{F})$ determines $\canN^+$,
by standard arguments. (Let $P$ be an active premouse with Jensen indexing.
Let $G=F^P\rest\kappa^{+P}$ where $\kappa=\crit(F^P)$.
Then $G,P||\OR^P$ determines $F^P$ as follows.
Let $X\sub\kappa$; we want to determine $i^P_F(X)$.
Let $\alpha<\kappa^{+P}$ be such that $X\in P|\alpha$
and $P|\alpha$ projects to $\kappa$.
Note that there is a unique elementary embedding
$\pi:P|\alpha\to P|G(\alpha)$ with
$\pi\rest\kappa=\id$, and $\pi$ is determined by
the first-order theory of $P|G(\alpha)$.
But then $\pi(X)=i^P_F(X)$, determining the latter, as desired.)

But we work with Mitchell-Steel indexing,
and it is not obvious to the author how to use the preceding kind of argument
directly with this indexing. So instead,
we convert indexing first.
Let $\widetilde{\canN^+}$ be the above-$\om_1^M$ Jensen-indexed
conversion of $\canN^+$. It isn't relevant here whether
the structure we get is actually a premouse, with sound segments etc.
It only needs to code the information in $\canN^+$ above $\om_1^M$
via a coherent sequence of Jensen-indexed extenders.
\footnote{Given a Mitchell-Steel indexed $P$ satisfying ``$\om_1$ exists'',
define $\widetilde{P}$ by induction on sequences of ultrapowers.
First set $\widetilde{P_0}=P_0$
where $P_0=P|(\om_1^P+\om)$.
If $P$ is active then
\[ \widetilde{P}=(\widetilde{U^P},F^P\rest(P|(\kappa^+)^P)) \]
where $U^P=\Ult(P|(\kappa^+)^P,F^P)$ and $\kappa=\crit(F^P)$.
If $P$ is passive then
$\widetilde{P}=\stack_{Q\pins P}\J(\widetilde{Q})$.}

Because the extender sequence of $\canN^+$ above $\om_1^M$ is independent of
$\canN\in\mathscr{G}^M$,
so is the extender sequence of $\widetilde{\canN^+}$ above $\om_1^M$.
Let $\widetilde{\mathscr{F}}$ be the restriction to ordinals
of $\es^{\widetilde{\canN^+}}$ above $\om_1^M$.
By a variant of the argument in parentheses above,
from $\canN$ and $\widetilde{\mathscr{F}}$ we can compute $\widetilde{\canN^+}$,
so $\widetilde{\canN^+}\in H[\canN]$.
(In the argument above we used that the proper segments of premice
are sound, but we don't need this property
of our Jensen-indexed structure. For if $\widetilde{\canN^+}|\alpha$ is active
with
extender $F$,
then we first convert $\widetilde{\canN^+}||\alpha$ to a Mitchell-Steel indexed
premouse
$Q$,
and then from $Q$ and $F\rest\OR$
we can compute $F$ much as before.)
So $\widetilde{\canN^+}\in H[\canN]$, but then we can (as above) invert
back to Mitchell-Steel indexing, so $\canN^+\in H[\canN]$.
\end{proof}

Applying the above with $\canN=\Momone^M$, we have established part
\ref{item:H[G]=H[M|om_1]}. To complete the proof of part
\ref{item:every_Vop_cond_generic},
observe that if $p\in\PP'$
then there is $\canN\in\mathscr{G}^M$ with $p\in G_\canN$
(because the forcing includes only nice codes for non-empty sets)
and we have just seen that $H[\canN]=H[G_\canN]=\univ{M}$, as desired.
\end{proof}

\begin{dfn}
$\Vop_{\mathscr{G}}^M$ denotes the forcing $\PP'$ of the previous lemma.
\end{dfn}

We finally use similar methods as part of the proof of  the following
theorem:

\begin{tm}\label{tm:HOD_tame_mouse}
 Let $M$ be a $(0,\om_1+1)$-iterable tame premouse satisfying $\ZFC$.
 Let $H=\HOD^{\univ{M}}$ and suppose that $H\neq\univ{M}$. Let $\delta=\om_2^M$
and
$t = \Th_{\Sigma_3}^{\her_{\delta}^M}(\delta)$. Then there are
$\mathbb{F},\her,W$ such that:
 \begin{enumerate}[label=--]
\item $\her=(H,\mathbb{F},t)$ is a $(0,\om_1+1)$-iterable $(\delta,t)$-premouse
with
universe $H$ and $\es^\her=\mathbb{F}$,
\item for every $\eta<\delta$ and every $X\in H$ with $X\sub\eta$, $X$
is encoded into $t$,  so
$X\in\J((\delta,t))$,
\item $\mathbb{F}$ is the restriction of $\es^M\rest[\delta,\infty)$ to $H$,
\item $\Hh$ is definable over $\univ{M}$ without parameters,
\item $\univ{M}$ is a generic extension of $H$ via a poset in $\J((\delta,t))$,
\item $\univ{M}=H[\canM^M]$,
\item $W$ is a premouse and lightface proper class of $\univ{M}$ and $W\sub H$,
\item $W\sats$ ``$\delta$ is the least Woodin cardinal'',
\item $t$ is generic for the meas-lim extender algebra of $W$ at $\delta$,
\item $\es^W\rest[\delta,\infty)$ is the restriction of $\mathbb{F}$ to $W$,
\item $H=\univ{W}[t]$, and
\item if $M=\Hull^M(\emptyset)$
then $W\ins X$ for some
correct iterate $X$ of $\Momone^M$.\footnote{Recall that when
we write $M=\Hull^M(\emptyset)$, the definability can refer to $\es^M$,
so this does not trivially imply that $\univ{M}\sats$ ``$V=\HOD$''.}
\end{enumerate}
\end{tm}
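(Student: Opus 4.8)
The plan is to assemble $(\mathbb{F},\her,W)$ in four steps: read $\her=(H,\mathbb{F},t)$ and the Vop\'enka generic off the data of Lemma \ref{lem:tame_Vopenka}; identify the universe of $\her$ with $H$ and extract the premouse structure; transfer iterability from $M$; and construct $W$ by a genericity iteration of $\Momone^M$. Set $H=\HOD^{\univ{M}}$, $\delta=\om_2^M$, $t=\Th_{\Sigma_3}^{\her_\delta^M}(\delta)$. By Theorem \ref{thm:E_almost_def_tame_L[E]}, $\es^M\rest[\om_1^M,\OR^M)$ is $\univ{M}$-definable without parameters, hence lies in $H$; let $\mathbb{F}$ be $\es^M\rest[\delta,\OR^M)$ traced onto $H$ (the sequence of $E\cap H$ for such $E$), a lightface class of $\univ{M}$. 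By Lemma \ref{lem:tame_Vopenka}, $\PP=\Vop_{\mathscr{G}}^M\sub\delta$ is $(\Sigma_3\wedge\Pi_3)^{\her_\delta^M}$ without parameters, so $\PP$ and its order are uniformly decidable from $t$, whence $\PP\in\J((\delta,t))$; and unwinding the canonical generic $G=\{(\alpha,\beta,\varphi)\in\PP\mid M|\beta\sats\varphi(\alpha)\}$ (recall $\sJs(\Momone^M)=M|\delta$) shows $G$ too is decidable from $t$. Since Lemma \ref{lem:tame_Vopenka} gives $\univ{M}=H[\Momone^M]=H[G]$, and $G\in H[t]$ while $t\in\univ{M}$, we conclude $\univ{M}=H[t]$, a $\PP$-generic extension of $H$ with $\PP\in\J((\delta,t))$; this yields the clauses ``$\univ{M}$ is a generic extension of $H$ via a poset in $\J((\delta,t))$'' and ``$\univ{M}=H[\canM^M]$'', and $\her$ is plainly definable over $\univ{M}$ without parameters.

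Next I would show that $\J((\delta,t))$ together with $\mathbb{F}$ generates $H$. First, every $X\in H$ with $X\sub\eta<\delta$ is coded into $t$, hence $X\in\J((\delta,t))$: such $X$ is $\OD^{\univ{M}}$ from an ordinal, and one reflects and condenses exactly as in Claim \ref{clm:every_OD_set_has_nice_code}, using the hulls $\Hull_1^{\cs(\canN)|(\lambda+\om)}(\{\lambda\}\cup\om_1^M)$ for $\canN\in\mathscr{G}^M$, standard condensation for $\cs(\canN)$ (Lemma \ref{lem:tame_es_above_om_1_def_from_Ppp}), and the independence of these hulls from $\canN$ together with the fact that they collapse below $\delta$, to capture membership in $X$ by a $\Sigma_3$ fact over $\her_\delta^M$ with a parameter $<\delta$. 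Second, for arbitrary $x\in H$ one picks an $M$-cardinal $\lambda\geq\delta$ with $x\in M|\lambda$ and runs a P-construction/condensation argument showing the $\OD^{\univ{M}}$ part of $M|\lambda$ is generated over $(\delta,t)$ by $\mathbb{F}$ up to $\lambda$; equivalently, since $t\in H$, that the $(\delta,t)$-premouse built from $\mathbb{F}$ has universe $\HOD_t^{\univ{M}}=\HOD^{\univ{M}}=H$. With coherence of $\mathbb{F}$ and routine fine-structural bookkeeping this makes $\her$ a $(\delta,t)$-premouse with universe $H$ and $\es^\her=\mathbb{F}$, where $\mathbb{F}$ is $\es^M\rest[\delta,\OR^M)$ restricted to $H$, giving the first three clauses. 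For iterability, convert $M$ to a premouse $M^t$ over $t$ (using $t\in M$); a tree on $\her$ translates, via the P-construction translation of \cite{sile} and \cite{V=HODX} (the forcing $\PP\sub\delta$ being small relative to extenders of index $\geq\delta$), into a tree on $M^t$ with wellfounded models, since $M$ is $(0,\om_1+1)$-iterable, and transferring wellfoundedness back shows $\her$ is $(0,\om_1+1)$-iterable, completing clause~1.

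For $W$: working over $\univ{M}$, run a meas-lim genericity iteration $\Tt$ of $\Momone^M$ via $\Sigma_{\Momone^M}$, which agrees with $\Lambda_{\tame}^{\Momone^M}$, so branch choices are guided by internal $Q$-structures and P-constructions; fold in extenders until $t\sub\delta$ is generic for the meas-lim extender algebra at $\delta$. As in the proof of Theorem \ref{thm:tame_HOD_x}, $\Tt$ has length $\delta$ with $\delta=\delta(\Tt)$, and $\delta$ is Woodin in the branch model, while by tameness and the shape of $\Tt$ no ordinal $<\delta$ is Woodin there. Put $W|\delta=M(\Tt)$ and extend $W$ above $\delta$ using the extenders of $\mathbb{F}$ lying on $W$, obtaining a proper-class premouse with $\es^W\rest[\delta,\infty)$ equal to the restriction of $\mathbb{F}$ to $W$, with $W\sats$``$\delta$ is the least Woodin cardinal'', and with $t$ being $(W,\BB_{\measlim,\delta}^W)$-generic by construction. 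Although $\Tt$ is defined from $\Momone^M$ and $t$, one checks that $M(\Tt)$, hence $W$, is recoverable from $t$ alone (uniqueness of such a premouse by comparison against the internal strategy), so $W$ is lightface over $\univ{M}$, and $W\sub H$ since $W|\delta\in H$ and $\mathbb{F}\in H$. Then $H=W[t]$: $W,t\in H$ gives $\supseteq$, and for $\sub$, every element of $H$ below $\delta$ lies in $\J((\delta,t))$ by the previous paragraph, while everything above $\delta$ is generated over $t$ by the extenders of $W$ of index $\geq\delta$. Finally, if $M=\Hull^M(\emptyset)$, pointwise definability pins down the structure of $M$, hence of $W$, above $\delta$, so $\Tt$ does not halt at a proper segment and its final model is a correct iterate $X$ of $\Momone^M$ with $W\ins X$.

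\textbf{Main obstacle.} I expect the work to sit in the two genuinely new steps: identifying the universe of $\her$ with $H$, and constructing and analysing $W$. The delicate point in the first is strengthening the reflection--condensation argument of Claim \ref{clm:every_OD_set_has_nice_code} from subsets of $\mathscr{G}^M$ to all of $\HOD^{\univ{M}}$, at every level $\lambda\geq\delta$, i.e.\ verifying that $\mathbb{F}$ and $t$ really do generate $H$. The delicate point in the second is verifying that the genericity iteration of $\Momone^M$ proceeds with the internal strategy and terminates with $\delta$ Woodin and $t$ generic, and that $W$ is lightface in $\univ{M}$ despite its apparent dependence on $\Momone^M$, together with the pointwise-definable case of the last clause.
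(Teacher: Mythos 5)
There is a genuine error and two genuine gaps. The error: you claim that the canonical generic $G=G_{\Momone^M}=\{(\alpha,\beta,\varphi)\in\PP\mid \sJs(\Momone^M)|\beta\sats\varphi(\alpha)\}$ is ``decidable from $t$'' and conclude $\univ{M}=H[t]$. But $t=\Th_{\Sigma_3}^{\her_\delta^M}(\delta)$ is lightface $\OD$ in $\univ{M}$, hence $t\in H$ and $H[t]=H\neq\univ{M}$; and deciding ``$M|\beta\sats\varphi(\alpha)$'' requires $\es^M\rest\om_1^M$, which is exactly the non-$\OD$ information (if $G$ were computable from $t$ then $\Momone^M\in H$, contradicting $H\neq\univ{M}$). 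The clauses you derive from this can still be obtained directly from Lemma \ref{lem:tame_Vopenka}, but the claim itself must go. The first gap is the one you yourself flag: that the $(\delta,t)$-premouse built from $\mathbb{F}$ has universe all of $H$. The direction $H\sub\univ{\her}$ is not a condensation argument; the paper obtains it by setting $R=$ the P-construction of $M$ above $(\delta,t)$ and observing that $R$ contains $\Vop_{\mathscr{G}}^M$ and that, by part \ref{item:every_Vop_cond_generic} of Lemma \ref{lem:tame_Vopenka}, \emph{every} condition extends to an $(R,\Vop_{\mathscr{G}}^M)$-generic $G_\canN$ with $R[G_\canN]=\univ{M}$; hence $R$ computes the $\OD$ theory of $\univ{M}$ by computing what is forced. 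That homogeneity-style forcing computation is the missing idea.

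The second gap is in the construction of $W$. You iterate $\Momone^M$ alone to make $t$ generic and then assert that $M(\Tt)$ is ``recoverable from $t$ alone by uniqueness.'' A bare genericity iteration has no canonical target: different candidates $\canN\in\mathscr{G}^M$ would a priori yield different trees and different common part models, and nothing in your setup forces them to agree. The paper instead runs a \emph{simultaneous comparison} of all $\canN\in\mathscr{G}^M$ (folding in genericity iteration for the mutually inter-computable theories $t_\canN$), so that the single resulting common part model $W|\delta$ is by construction independent of the candidate and hence lightface definable; this also requires extending $\Lambda^\canN_{\tame}$ to a strategy $\Lambda^\canN_{\tame,2}$ for trees of length $<\om_2^M$ (verified total by reflection into $\canN$), and an analysis of limit stages via the clubs of $\gamma$ with $M||\gamma\elem_{\Sigma_3}M|\om_2^M$ to see that the comparison really lasts $\om_2^M$ steps with no Woodins below $\delta$ and that $t$ is generic at the end. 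Your route as stated does not produce an $\OD$ object, and ``uniqueness by comparison against the internal strategy'' is precisely the simultaneous comparison you would have to carry out. The remaining clauses ($H=W[t]$ via P-construction above $W|\delta$, and the $M=\Hull^M(\emptyset)$ case via $Q(\Tt,\Sigma_{\Momone^M}(\Tt))$) are sketched in the right spirit.
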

\begin{proof}
Let $D$ be the set of all $\gamma\in(\om_1^M,\om_2^M)$
such that $M|\gamma\sats\ZFC^-+$``$\om_1$ is the largest cardinal''.
Let $\vec{R}=\left<\PP_\gamma,R_\gamma\right>_{\gamma\in D}$
be $\PP_\gamma=\Vop_{\mathscr{G}}^M\inter\gamma^3$
and $R_\gamma$ is the output of the P-construction
of $M|\lambda$ above $\PP_\gamma$, where $\xi$ is least such that $\xi>\gamma$
and $\rho_\om^{M|\xi}=\om_1^M$ and $\lambda$ is the supremum of $\gamma$
and all $\lambda'\leq\xi$ such that $M|\lambda'$ is active.
By the proof of Lemma \ref{lem:tame_Vopenka}, $D$, $\vec{R}$ and
$\Vop_{\mathscr{G}}^M$
are $\Sigma_3^{(\her_{\om_2})^M}$,
and hence, encoded into $t$.
Let $R$ be the output of the
P-construction of $M$ above $(\delta,t)$.
Also like in \ref{lem:tame_Vopenka},
 $R$ is definable
without parameters over $\univ{M}$,
so $R\sub H$. We have $\Vop_{\mathscr{G}}^M\in R$,
and for each $\canN\in\mathscr{G}^M$,
$G_\canN$ is $R$-generic for $\Vop_{\mathscr{G}}^M$,
and $R[G_\canN]=R[\canN]$ has universe $\univ{M}$.
By \ref{lem:tame_Vopenka}, for each $p\in\Vop_{\mathscr{G}}^M$ we have some
such
$\canN\in\mathscr{G}^M$
with $p\in G_\canN$. It follows that $H\sub R$ ($R$ computes the theory of
ordinals
in $\univ{M}$
by considering what is forced by $\Vop_{\mathscr{G}}^M$).
So $\univ{R}=H$. Setting $\mathbb{F}=\es^R$, we have the desired
$\her=(H,\mathbb{F},t)$.

The fact that every bounded $X\sub\delta$ in $H$
is encoded into $t$ is like in the proof of Claim
\ref{clm:every_OD_set_has_nice_code} of Lemma \ref{lem:tame_Vopenka}
part \ref{item:Vop_sub_delta}.

We now construct $W$. We get $W|\delta$ from a certain simultaneous
comparison/genericity iteration
of all $\canN\in\mathscr{G}^M$, and then $\es^W\rest[\delta,\infty)$ is the
restriction
of $\es^M\rest[\delta,\infty)$. The details of the comparison
are similar to those in the proof of  Theorem \ref{thm:tame_HOD_x},
so we just give a sketch.
For $\canN\in\mathscr{G}^M$,
let $\Tt_\canN$ be the tree on $\canN$ produced by the comparison.
Given $\Tt_\canN\rest(\alpha+1)$ for all $\canN$, let $F^{\Tt_\canN}_\alpha$
be the least disagreement extenders,
indexed at $\ell_\alpha$ when non-empty, and
$K_\alpha=M^{\Tt_\canN}_\alpha||\ell_\alpha$.
For $\Tt_\canN\rest(\om_1^M+1)$, we compare, subject to folding in linear
iteration
at the least measurable of $K_\alpha$.
For $\Tt_\canN\rest(\om_1^M,\om_2^M)$, we compare, subject to folding in
meas-lim
genericity iteration for making
$t_{\canN}=\Th_{\Sigma_3}^{\canN^+}(\delta)$
generic (recall $\canN^+=\sJs(\canN)$,
a premouse with universe $(\her_{\om_2})^M$ in this case,
but the theory here can also refer to $\es^{\canN^+}$). For
each $\canN'\in\mathscr{G}^M$, since $\canN,\canN'$ are inter-definable
from
parameters
and the genericity iteration only begins above $\om_1^M$,
the theories $t_{\canN},t_{\canN'}$ are easily inter-computable,
and locally so
(ordinal-by-ordinal
modulo some fixed parameters $<\om_1^M$),
so genericity iteration with respect to $t_{\canN}$ is equivalent
to
that with
respect
to $t_{\canN'}$.

Let $\Lambda_{\tame,2}^\canN$ be the putative extension of
$\Lambda_{\tame}^\canN$ to trees
$\Tt$ of
length $<\om_2^M$,
which satisfy the other requirements of necessity,
but relative to $\canN^+$, and still using P-construction to compute
Q-structures.
Then $\Lambda_{\tame,2}^\canN$ is defined for all necessary trees,
and yields wellfounded models, by an easy
reflection
argument:
if not, then we can fix some $R\pins \canN^+$ witnessing this which projects to
$\om_1^M$,
and then use condensation to reflect to some hull $\bar{R}\pins \canN$,
and deduce that
$\Lambda_{\tame}^\canN$ is defective.\footnote{Here and below we use
the possibility that $\lgcd(N|\delta)=\om_1^{N|\delta}$
in Definition \ref{dfn:iterability-good}.}

We use $\Lambda_{\tame,2}^\canN$ to form $\Tt_\canN$. We stop the comparison
if it
reaches
length $\om_2^M$.
Let us verify that it in fact has length $\om_2^M$.
As usual, it cannot terminate early, in that we cannot reach a stage
$\alpha$
such that for some $\canN$, we have $M^{\Tt_\canN}_\alpha\ins
M^{\Tt_{\canN'}}_\alpha$ for
every $\canN'$.
So we just need to see that
$\Tt_\canN\rest\lambda\in\dom(\Lambda_{\tame,2}^\canN)$
for every limit $\lambda\leq\om_2^M$.
We also claim that $M(\Tt_\canN\rest\lambda)$ has no Woodin cardinals, and if
$M(\Tt_\canN\rest\lambda)$ is not a Q-structure for itself
then $\delta(\Tt_\canN\rest\lambda)=\lambda$ and
($*$)
$\canN^+||\lambda\elem_{\Sigma_3} \canN^+$.
Property ($*$) together with the usual fact that the earlier Q-structures are
retained,
ensures that $\Tt_\canN\rest\lambda$ (and in fact the entire comparison
through length $\lambda$) is definable over $\canN^+|\lambda$.
This is mostly as before, but ($*$) is new, so we focus on its verification.

Let $\left<\gamma_\alpha\right>_{\alpha<\om_2^M}$ enumerate the
set $C$ of
ordinals $\gamma<\om_2^M$
with
$M||\gamma\elem_{\Sigma_3}M|\om_2^M$, in increasing order.
Let $H_\beta=\Hull_{\Sigma_3}^{M|\om_2^M}(\beta)$.
Note that $C$ is club in $\om_2^M$ and $\om_1^M<\gamma_0$,
$H_{\gamma_\alpha}=M||\gamma_\alpha$,
and if $\gamma_\alpha<\gamma<\gamma_{\alpha+1}$ then
$H_\gamma=H_{\gamma_{\alpha+1}}$.
Moreover, if $\gamma_\alpha<\xi\leq\gamma_{\alpha+1}$ and
\[ t_\xi=\Th_{\Sigma_3}^{M|\om_2^M}(\xi), \]
then $t_\xi$ encodes a surjection of $(\gamma_\alpha+1)^{<\om}$
onto $\xi$. Write $t_{\Momone^M\xi}=t_\xi$
and $t_{\canN\xi}$ for the corresponding theory for other
$\canN\in\mathscr{G}^M$;
so when $\om_1^M\leq\xi$, there is a simple translation between
$t_{\Momone^M\xi}$
and $t_{\canN\xi}$.

Now suppose that $M(\Tt_\canN\rest\lambda)$ is not a Q-structure for itself.
We claim that
\[ \xi\eqdef\delta(\Tt_\canN\rest\lambda)=\gamma_\alpha \]
for some limit $\alpha$. For suppose that
$\gamma_\alpha<\xi\leq\gamma_{\alpha+1}$
for some $\alpha$ (or it is similar if $\xi\leq\gamma_0$).
Then $t_{\canN\xi}$ is meas-lim extender algebra generic over $M(\Tt)$,
and $\xi$ is regular in
$\J(M(\Tt))[t_{\canN\xi}]$.
But
$t_{\canN\xi}$ encodes a surjection of
$(\gamma_\alpha+1)^{<\om}$ onto
$\xi$,
collapsing $\xi$ in
$\J(M(\Tt))[t_{\canN\xi}]$, a contradiction.

By the previous paragraph, combined with the standard arguments,
we now get that $\lambda=\xi$ and $M|\xi\elem_{\Sigma_3}M|\om_2^M$
and $M|\xi\sats\ZFC^-$ and $\Tt_\canN\rest\xi$ is definable over
$M|\xi$.
So the arguments from earlier proofs now go through.

So we get a comparison of length $\delta=\om_2^M$.
Let $W|\delta$ be the resulting common part model. Note that $W|\delta\in H$,
and in fact, $W|\delta$ is definable without parameters over $\her_\delta^M$.
It follows that $W|\delta$
is in fact definable (in the codes) over $(\delta,t)$, via consulting what is
forced by
$\Vop_{\mathscr{G}}^M$. (Note here that
because $\Vop_{\mathscr{G}}^M$ has
the $\delta$-cc in $H$,
every bounded subset of $\delta$ in $M$
has a name
in $H$ given by some bounded $X\sub\delta$,
and since each such $X$ is encoded into $t$,
$\Th_{\Sigma_n}^{\her_\delta^M}$
is definable over $(\delta,t)$ for each $n<\om$.)
Also, each
$t_{\canN\delta}$ is meas-lim extender algebra generic over
$W|\delta$,
but $t$ is easily locally computed from any $t_{\canN\delta}$,
and hence is also generic over $W|\delta$.
So $W|\delta$ and $(\delta,t)$ are generically equivalent,
so we can build the P-construction of $\her$ above $W|\delta$,
or equivalently, the P-construction of $\cs(\canN)^{\univ{M}}$ above $W|\delta$,
for any $\canN\in\mathscr{G}^M$.
Let $W$ be the resulting model.
Because $W$ was produced by comparison,
the P-construction cannot reach a Q-structure,
so $W\sats$ ``$\delta$ is Woodin'',
and note $H=W[t]$ and $\mathbb{F}$ is induced by $\es^W\rest[\delta,\infty)$.

Finally suppose that $M=\Hull^M(\emptyset)$.
Then $\J(M)$ is an $\om$-mouse. In particular,
$M$ is countable.
The tree $\Tt=\Tt_{\Momone^M}$ is on $\Momone^M$,
via the correct strategy, and has countable length, since $M$ is countable.
Let $b=\Sigma_{\Momone^M}(\Tt)$
and $Q=Q(\Tt,b)$. By tameness,
$\delta$ is a strong cutpoint of $Q$,
and it follows that $W\ins \J(W)=Q\ins M^\Tt_b$,
as desired.
\end{proof}

\begin{rem}
 We actually now get another alternate proof of the fact
 that $H[\Momone^M]=\univ{M}$: We have $H=W[t]$,
and note that in  $W[t][\Momone^M]$, we can
recover the tree on $\Momone^M$ which leads to $W|\delta$,
by comparing $\Momone^M$ with $W|\delta$,
and noting
that since $\delta$ is the least Woodin of $W$, all the
Q-structures guiding this tree are available for this.
But then starting from $\Momone^M$,
we can then inductively recover $M|\delta$ by
translating the Q-structures over to segments of $M|\delta$
extending $\Momone^M$. We will also use a variant
of this later, in the non-tame context.\end{rem}

\section{$\star$-translation}\label{sec:star}

We now prepare to deal more carefully
with non-tame mice, by discussing the basics
of $\star$-translation and its inverse, the latter being  the generalization
of P-construction to non-tame mice. This section is essentially
a summary of results from \cite{closson}, slightly adapted.

\begin{dfn}
Let $N$ be an $n$-sound premouse. Fix some constant symbol $\dot{p}\in
V_\om\cut\OR$.
For $\alpha\leq\OR^N$ we write $t_{n+1}^N(\alpha)$ for the theory in the
language of premice
with constants in $\alpha\un\{\dot{p}\}$, which results
by modifying
$\Th_{n+1}^N(\alpha\un\{\pvec_{n+1}^N\})$
by replacing $\pvec_{n+1}^N$ with $\dot{p}$. We write $t_{n+1}^N$ for
$t_{n+1}^N(\rho_{n+1}^N)$.
\end{dfn}

\begin{dfn}
Let $P$ be a sound premouse. We say that
$\Tt$  is \emph{$P$-optimal} iff
\begin{enumerate}[label=--]
\item $\Tt$ is $\om$-maximal on some $\om$-premouse
$N\pins P|\om_1^P$,
\item $\Tt$ has limit length $\delta=\delta(\Tt)$,
\item $\delta$ is a successor cardinal of $P$,
 \item $\J(M(\Tt))\sats$ ``$\delta$ is Woodin'',
 \item $\Tt$ is definable from parameters over $P$, and
  \item $\rho_\om^P\leq\delta$ and $t_{k+1}^P(\delta)$ is
$\BB_{\measlim,\delta}^{M(\Tt)}$-generic
over $M(\Tt)$, where $k$ is least with $\rho_{k+1}^P\leq\delta$.
\end{enumerate}
Given $M\in\pm_1$, we say that $\Tt$ is \emph{$P$-optimal for $M$}
iff $\Tt\in M$ and $P\pins M$ and $\Tt$ is $P$-optimal
and $\delta(\Tt)$ is a  cutpoint (hence strong cutpoint) of $M$.
\end{dfn}

\begin{lem}\label{lem:P-optimal_uniqueness}
Let $M$ be a pm.
Let $\Tt$ be both $P$- and $P'$-optimal for $M$.
Then $P=P'$.
\end{lem}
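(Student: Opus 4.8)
The plan is to show that a $P$-optimal-for-$M$ tree $\Tt$ determines $P$ uniquely, by recovering $P$ from the pair $(M(\Tt),M)$ via the P-construction (in the generalized, non-tame sense) that $\star$-translation is the inverse of. First I would fix $\delta=\delta(\Tt)$; since $\Tt$ is both $P$- and $P'$-optimal for $M$, $\delta$ is a strong cutpoint of $M$, and $\delta$ is a successor cardinal both in $P$ and in $P'$, with $\J(M(\Tt))\sats$``$\delta$ is Woodin''. The key point is that the $\star$-translation data determines the generic object: from the defining clause of $P$-optimality, $t^P_{k+1}(\delta)$ (with $k$ least such that $\rho^P_{k+1}\le\delta$) is $\BB^{M(\Tt)}_{\measlim,\delta}$-generic over $M(\Tt)$, and likewise $t^{P'}_{k'+1}(\delta)$ over $M(\Tt)$ — note $M(\Tt)$ is the \emph{same} model in both cases, since $\Tt$ is a fixed tree.

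The main step is then the following: since $\Tt$ is definable from parameters over $P$ and over $P'$, and $\delta(\Tt)=\delta$ is the relevant successor cardinal, both $P$ and $P'$ lie ``above'' $M(\Tt)$ in the sense that they can be recovered as the P-construction of $M$ above $M(\Tt)$ — here one uses that $\delta$ is a (strong) cutpoint of $M$, so that the extender sequences of $P$, $P'$, and $M$ all agree past $\delta$, being just the restriction of $\es^M\rest[\delta,\infty)$ to the respective universes. I would argue that $P$ and $P'$ have the same universe: both are determined as the largest segment of $M$ (or, more precisely, the unique segment produced by the P-construction over $M(\Tt)$) whose $\star$-translation, i.e. its theory $t^P_{k+1}(\delta)$, is $\BB_{\measlim,\delta}$-generic over $M(\Tt)$ and codes exactly the bounded subsets of $\delta$ that $M$ (equivalently $P$) has below $\delta$. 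Concretely: $P\in M$ and $\delta$ is a cutpoint of $M$, so $P\ins M$ (or $P$ is definable inside $M$ above $M(\Tt)$), and the same for $P'$; since $\Tt$, being definable over both, has the same $M(\Tt)$, the P-construction of $M$ above $M(\Tt)$ is a single object, and it must equal both $P$ and $P'$ by the uniqueness of P-constructions. The genericity clause pins down where the P-construction must stop (it cannot pass a Q-structure / cannot collapse $\delta$), forcing $\OR^P=\OR^{P'}$, and then the fine-structural correspondence between segments of $M$ above $M(\Tt)$ and segments of the $\star$-translated structure gives $\es^P=\es^{P'}$, hence $P=P'$.

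The main obstacle I anticipate is verifying that the P-construction/$\star$-translation really is well-defined and unique in the non-tame setting at the relevant stages — that is, that $M$ ``above $M(\Tt)$'' genuinely builds a single premouse $P$ rather than something ambiguous, and that the stopping point (a Q-structure for $\delta$, or projecting across $\delta$) is forced. This is exactly the content being imported from \cite{closson} and summarized in this section, so I would invoke the basic $\star$-translation machinery (and the uniqueness of P-constructions over a fixed base with a fixed generic) rather than reprove it; the argument here is then essentially the observation that all the input data — $M(\Tt)$, the generic theory, and the tail of $\es^M$ — is shared by $P$ and $P'$, so the output is shared too.
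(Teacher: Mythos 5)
Your proposal does not contain the argument that the lemma actually requires. Since $P,P'\pins M$ are both initial segments of the same premouse $M$, they are automatically comparable under $\ins$, so the entire content of the lemma is ruling out $P\pins P'$; the remarks about recovering extender sequences above $\delta$ are beside the point. You assert that ``the genericity clause pins down where the P-construction must stop, forcing $\OR^P=\OR^{P'}$,'' but that is precisely the statement to be proved, and nothing in your sketch explains why two distinct segments $P\pins P'$ of $M$ could not both have $\BB^{M(\Tt)}_{\measlim,\delta}$-generic theories at $\delta$ over the same $M(\Tt)$. There is also a misreading of the setup: $P$ is not obtained by P-construction over $M(\Tt)$ --- the P-construction over $M(\Tt)$ produces the Q-structure side of the correspondence, whereas $P$ sits on the \emph{generic} side, its theory $t^P_{k+1}(\delta)$ playing the role of the generic object.

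The paper's proof of the missing step is a diagonalization. Assuming $P\pins P'$, set $R=M(\Tt)$, $t=t_1^{\J(R)}(\delta)$ and $u=t_{k+1}^{P'}(\delta)$. Since $\Tt$ (hence $R$) is definable from parameters over $P$ and $\J(P)\ins P'$, the theory $t$ is computable from $u$, so $t\in\J(R)[u]$. One then shows that $\delta$ is $\bfrSigma_\om^{\J(R)}$-regular and that $\rho_1^{\J(R)}=\delta$ (by a hull argument); this guarantees that the sequence $\left<p_n\right>_{n<\om}$ of Boolean values of ``$\check{t}_n\sub\tau$'' (for a name $\tau$ with $\tau_G=t$ and finite-stage approximations $t_n$) lies in $R$, so that $q=\bigwedge_{n<\om}p_n$ is a nonzero condition of $\BB^R_{\measlim,\delta}$ deciding $t$. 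This yields $t\in\J(R)$, contradicting $\rho_1^{\J(R)}=\delta$. Without an argument of this kind, your proposal is circular at its key step.
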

\begin{proof}
Suppose $P\pins P'$. Then $\rho_1^{\J(P)}\leq\delta=\delta(\Tt)=\rho_\om^P$.
Let $k$ be least with $\rho_{k+1}^{P'}\leq\delta$.
Let $R=M(\Tt)$ and $t= t_1^{\J(\R)}(\delta)$
and $u= t_{k+1}^{P'}(\delta)$.
Then $t$
is computable from $t_1^{\J(P)}(\delta)$
(since $R$ is $P$-parameter-definable),
hence computable from $u$, since
 $\J(P)\ins P'$.
So $t\in\J(\R)[u]$ (recall $u$ is $\BB_{\measlim,\delta}^{R}$-generic
over $\J(\R)$).

Now $\delta$ is $\bfrSigma_\om^{\J(\R)}$-regular because $\delta$ is regular in
$\J(\R)[u]$ and
$t\in\J(\R)[u]$.
We claim $\rho_1^{\J(\R)}=\delta$.
So suppose $\rho_1^{\J(\R)}<\delta$. Let
 \[ H=\Hull_1^{\J(\R)}(\rho_1^{\J(\R)}\un\{p_1^{\J(\R)}\}) \]
 and $\gamma=\sup(H\inter\delta)$. Then $\gamma<\delta$ by the
$\bfrSigma_\om^{\J(\R)}$-regularity
of $\delta$.
 Let
 \[ H'=\Hull_1^{\J(\R)}(\gamma\un\{p_1^{\J(\R)}\}). \]
Then $H'\inter\delta=\gamma$ by a familiar argument, but then the transitive
collapse of $H'$ is in $R$,
a contradiction. (It follows that $\rho_\om^{\J(\R)}=\delta$; otherwise we get
an $\bfrSigma_{n+1}^{\J(\R)}$-singularization of $\delta=\rho_n^{\J(\R)}$ with
some $n\in[1,\om)$.)

Now for $n<\om$
let $t_n=\{\varphi\in t\bigm|\Ss_n(R)\sats\varphi\}$,
so $t_n\in\J(\R)$ and $t=\bigcup_{n<\om}t_n$.
Let $\tau\in\J(\R)$ be a name such that $\tau_G=t$, where $G$ is the generic
filter associated to
$u$.
Let $p\in\BB_{\measlim,\delta}^{R}$ be the Boolean value of ``$\tau$ is a
consistent theory in
parameters in $\delta\un\{\dot{p}\}$''. For each $n<\om$, let
$p_n\in\BB^R_{\measlim,\delta}$ be the conjunction of $p$
with the Boolean value of ``$\check{t_n}\sub\tau$''. So $p_n\in R$ and
$\left<p_n\right>_{n<\om}$ is $\bfrSigma_1^{\J(\R)}$.
In fact $\left<p_n\right>_{n<\om}\in R$,
since $\J(\R)$ does not definably singularize $\delta$ and
$\rho_1^{\J(\R)}=\delta$. So $q=\bigwedge_{n<\om}p_n\in\BB^R_{\measlim,\delta}$.
Now $q\neq 0$ and $q\in G$,
since $\tau_G=t=\bigcup_{n<\om}t_n$.
But then $t=\{\varphi\bigm|q\forces\varphi\in\tau\}$. So $t\in\J(\R)$, which is
impossible.
\end{proof}

\begin{dfn}\label{dfn:transcendent}
A premouse $M$ is \emph{transcendent} iff
$M\in\pm_1$, $M$ is an $\om$-mouse and for all $\Tt,P,\delta\in M$ and $k<\om$,  if
\begin{enumerate}[label=--]
 \item $P\pins M$ and $\delta=\rho_\om^P=\rho_{k+1}^P=\om_1^M$,
\item $\Tt$ is on $\Momone^M$, via $\Sigma_{\Momone^M}$,
and $\lh(\Tt)=\delta=\delta(\Tt)$,
\item $\Tt$ is $P$-optimal for $M$ and
 \item $\J(M(\Tt))\sats$ ``$\delta$ is a Woodin cardinal'',
\end{enumerate}
letting $Q=Q(\Tt,\Sigma_{\Momone^M}(\Tt))$ and $n<\om$, then
$\Th_{\Sigma_{n+1}}^M(\emptyset)$
is not definable from parameters over $Q[t_{k+1}^P]$.
Given an $\om$-mouse $R\pins M$,
\emph{transcendent above $R$} is the relativization to parameter $R$
and trees above $R$.
\end{dfn}

\begin{rem}\label{rem:transcendence_examples}
Note that $M_n^\#$ is transcendent for $n\leq\om$.
Many other such standard ``minimal'' mice are transcendent;
for example,
we will also observe in Remark \ref{rem:Wlim} that
$M_{\wlim}^\#$
(the sharp for a Woodin limit of Woodins) is transcendent,
as is the minimal mouse $M$ with an active superstrong extender.
But $(M_1^\#)^\#$ is not transcendent, which is easily seen via
genericity iteration. However, $(M_1^\#)^\#$ is trivially
transcendent above $M_1^\#$.
But the sharp of the model $S$ of
 Example \ref{exm:M_1^sharp-closed_reals} is not
transcendent above any $\om$-mouse $R\pins\Momone^S$.
For let $\Tt$ on
$M_1^\#(R)$ be
as there, and note that $\Tt$ is $S|\om_1^S$-optimal,
but we get $Q=M^\Tt_b$ is the output of the P-construction
of $S$ above $M(\Tt)$, and $\OR^Q=\OR^S$.
\end{rem}

\begin{rem}
 Let $\Tt$ be $P$-optimal
 and $\delta=\delta(\Tt)$.
We next define the
\emph{$\star$-translation} $Q^\star=Q^\star(\Tt,P)$ of certain premice $Q$
extending $M(\Tt)$
(in the right context). This is a simple variant
of the procedure in \cite{closson}.
The goal is to convert $Q$, which
may have extenders $E\in\es_+^Q$ with $\crit(E)\leq\delta$,
into a premouse $Q^{\star}$
extending $P$, having $\delta$ as a strong cutpoint,
but containing essentially the same information
(modulo the generic object $P$).
The overlapping extenders $E$ are converted
into ultrapower maps, which can be recovered
by $M$ by computing the corresponding core maps.
The differences with \cite{closson} are (i) we define
$R^\star$ for all \emph{valid}
segments of $R\ins Q$, which begins with $M(\Tt)$ itself
(instead of waiting for
the least admissible beyond $M(\Tt)$; \emph{valid} is defined presently
and pertains to condition
(iii) below), (ii) we set $M(\Tt)^\star=P$ (so $P$ is the starting
point, instead of basically $M|\delta$),
and
(iii) we allow
$\delta$ to
be the critical
point of extenders in $\es_+^Q$. Items (i) and (ii) only involve
slight fine structural changes, just at the bottom of the hierarchy, and
are straightforward. To translate the extenders as in (iii), one  takes
ultrapowers just as for other extenders, the difference being that the
ultrapower
is formed of some segment of $Q$ instead of a segment of a model of $\Tt$.
Otherwise things are very similar to \cite{closson}.
We give the definition now in detail, and will then state
some facts about it, but a proof of those facts is beyond the scope of the
paper, so we will just take them as a hypothesis throughout this section.
\end{rem}
\begin{dfn}\label{dfn:star}
 Let $\Tt$ be $P$-optimal and $\delta=\delta(\Tt)$.

Let $Q$ be a premouse. A \emph{$\delta$-measure} of  $Q$ is an $E\in\es_+^Q$
such that $\crit(E)=\delta$
 and $E$ is $Q$-total. Let $\mu^Q_\delta$ denote the least such, if it exists.
Say $Q$ is \emph{$\star$-valid}
iff
\begin{enumerate}[label=(\roman*)]
 \item  $M(\Tt)\ins Q$
 and if $M(\Tt)\pins Q$ then $Q\sats$ ``$\delta$ is Woodin'', and
 \item if $Q$
has a
$\delta$-measure
then $Q$ is $\delta$-sound and there is
$q<\om$ such that $\rho_{q+1}^Q\leq\delta$.
\end{enumerate}

 Given $\kappa<\delta$, let $\beta_\kappa$ be the least
 $\beta<\lh(\Tt)$ such that $\kappa<\nu(E^\Tt_\beta)$,
 let $M^*_{\kappa}$ be the largest $N\ins M^\Tt_\beta$  such that $N\inter\pow(\kappa)\sub
M(\Tt)$,
and $n_\kappa=$ the largest $n<\om$
such that $\kappa<\rho_n^{M^*_\kappa}$.

 Assuming $R$ is $\star$-valid,
we (attempt to) define the \emph{$\star$-translation}
$R^{\star}$ of $R$, by recursion as follows:
 \begin{enumerate}
  \item $M(\Tt)^\star=P$.
  \item\label{item:*-trans_Ult_delta-measure} If $R$ has a $\delta$-measure
  and $\rho_{r+1}^R\leq\delta(\Tt)<\rho_r^R$,
  then $R^\star=\Ult_r(R,\mu^R_\delta)^\star$ (note that if wellfounded,
  $\Ult_r(R,\mu^R_\delta)$
  is $\star$-valid and has no $\delta$-measure).
 \end{enumerate}
 Suppose from now on that $R$ has no $\delta$-measure. Then:
 \begin{enumerate}[resume*]
  \item\label{item:*-trans_R_active_with_crit(F^R)<delta} If $R$ is active with $\kappa=\crit(F^R)<\delta$
  then:
  \begin{enumerate}
   \item If $R$ is type 2 and $\delta=\lgcd(R)$
and $U=\Ult(R,F^R)$
  has a $\delta$-measure, then $R^\star=\Ult_0(R,\mu^U_\delta)^\star$.
  \item Otherwise $R^\star=\Ult_{n_\kappa}(M^{*}_\kappa,F^R)^\star$
  \end{enumerate}
 \item If $R$ is passive
 and $R=\J(S)$ (note then $S$ is $\star$-valid) then
 $\J(R)^\star=\J(S^\star)$.
 \item If  $R$ is passive of limit type
then $R^\star$ is the stack of all $S^\star$ for all such $S$
such that $S$ has no $\delta$-measure (note there
are cofinally many $\star$-valid $S\pins R$).
 \item If $R$ is active with $\crit(F^R)>\delta$ and
 \begin{enumerate}
 \item  the universe of $(R^\passive)^\star$
 is that of $R[P]$ (a meas-lim extender algebra extension), and
 \item
the canonical
 extension $F^*$ of $F^R$ to the generic extension
 induces a premouse $((R^\passive)^\star,F^*)$,
 \end{enumerate}
then we set $R^\star=$ this premouse.
\item Otherwise, $R^\star$ is left undefined.
\end{enumerate}
This definition proceeds by recursion along a natural linear order
(we leave the details of this to the reader,
but it is implicit in Remark
\ref{rem:closson_correction} below).
If this linear order is illfounded, then $R^\star$ is left undefined. Also, if any of the structures $S$
arising in the recursion leading to $R^\star$
fails to produce a premouse $S^{\star}$ extending $P$
(for example, if there is a $\star$-valid
$S$ such that $S\pins R$ but $S^\star$ is not a premouse extending $P$,
or if one of the ultrapowers arising in clauses \ref{item:*-trans_Ult_delta-measure} and \ref{item:*-trans_R_active_with_crit(F^R)<delta} are illfounded), then $R^\star$ is left undefined.
\end{dfn}

\begin{rem}\label{rem:closson_correction}
 If the phalanx $\Phi(\Tt)\conc(Q,q)$ is iterable
where either $q=0$
or $Q$ has a $\delta$-measure and $\rho_{q+1}^Q\leq\delta<\rho_q^Q$ (where $q$ indicates
the degree associated to $Q$ in the phalanx),
then it is straightforward to see that the definition of $Q^\star$ is
by recursion along a wellorder (consider  \label{pg:finite_versus_infinite_trees_*-trans} degree-maximal trees $\Uu$ on
$\Phi(\Tt)\conc(Q,q)$
such that $\crit(E^\Uu_\alpha)\leq\delta$ for all $\alpha+1<\lh(\Uu)$).
The fact that $Q^\star$ is a well-defined premouse, however,
takes  fine structural calculation, as
in \cite{closson}.
But there are some small issues in \cite{closson} which need correction;
most significantly (as far as the author is aware), the description of the
relationship between
the standard parameters of $Q$ and those of $Q^\star$
is incorrect in some cases, which come up for example in
 \cite[Theorem 1.2.9(d''), with $j=1$]{closson}.\footnote{In the notation used at that point of \cite{closson}, assuming
$\mathcal{P}$ is $1$-sound and Dodd-sound, it should be
$p_{n+1}(\mathcal{P}[g]^*)=j(p_{n+1}^R\cut\kappa)\conc q$
where $j:R\to\Ult_n(R,F^\mathcal{P})$ is the ultrapower map
for the relevant $R$
and $\kappa=\crit(j)$, and $q=t^\mathcal{P}\cut\delta$
where $t^{\mathcal{P}}$ is the Dodd parameter of $\mathcal{P}$.}

The author
intends to write an account of this, incorporating of
course the modifications
(i)--(iii). But this is beyond the scope of the present paper,
and here we will just summarize the features we need,
make the assumption that these do indeed work out
and complete the proofs of the current paper using this assumption.
\end{rem}

\begin{dfn}\label{dfn:Q^*_when_Q_correct}
The \emph{$\star$-translation hypothesis} (\emph{STH})
is the following assertion: Let $\Tt$ be $P$-optimal,
$\delta=\delta(\Tt)$ and $Q$ be $\star$-valid.
Let $k<\om$ be least such that $\rho_{k+1}^P\leq\delta$.
Let $Q^\star=Q^\star(\Tt,P)$.
Then:
\begin{enumerate}
\item\label{item:STH_part_1} If $Q^\star$ is a well-defined premouse,
then
$\pow(\delta)\inter
Q[t^P_{k+1}(\delta)]=\pow(\delta)\inter Q^\star$, and letting $n<\om$ and $x\in
Q^\star$ and
\begin{enumerate}
\item $\theta=\rho_\om^Q$, if $Q$ is sound with
$\delta\leq\rho_\om^Q$, or
\item $\theta=\delta$ otherwise,
\end{enumerate}
the theory $\Th_{\Sigma_{n+1}}^{Q^\star}(\theta\cup\{x\})$
is definable from parameters over $Q[t^P_{k+1}(\delta)]$.
 \item\label{item:STH_part_2} If $\Tt$ is on an $\om$-mouse $N$,
 of countable length,
 via $\Sigma_N$, and
$Q=Q(\Tt,\Sigma_N(\Tt))$,
then $Q^\star$ is a well-defined premouse,
is $\delta$-sound and above-$\delta$-$(q,\om_1+1)$-iterable
whenever $\delta<\rho_q(Q^\star)$.\qedhere
\end{enumerate}
\end{dfn}
The proof of STH is almost as in \cite{closson},
though see Remark \ref{rem:closson_correction}.

We now invert the $\star$-translation, also using
a small modification of \cite{closson}.
\begin{dfn}\label{dfn:black_hole}
Let $M\in\pm_1$ be a premouse and $\Tt$ be
$P$-optimal for $M$.
Let $\delta=\delta(\Tt)$.
Let  $q<\om$ and $Q$ be a $q$-sound, $(q+1)$-universal premouse
such that
$M(\Tt)\ins Q$
and $Q\sats$ ``$\delta$ is Woodin'', $\rho_{q+1}^Q\leq\delta\leq\rho_q^Q$,
and $\core_{q+1}(Q)$ is $(q+1)$-solid.

For $\kappa\in[\rho_{q+1}^Q,\rho_q^Q]$,
recall that $Q$ has the \emph{$(q+1)$-hull
property at $\kappa$}
iff
\[\pow(\kappa)\inter Q\sub C_\kappa=\cHull_{q+1}^Q(\kappa\cup\pvec_{q+1}^Q).\] (So $Q$ has
the $(q+1)$-hull property at $\rho_{q+1}^Q$,
by $(q+1)$-universality.)
 Let $\pi_\kappa:C_\kappa\to Q$ be the uncollapse map.

Say $Q$ is
\emph{$\star$-$\delta$-critical}
iff
\begin{enumerate}
\item $Q$  is $(\delta+1)$-sound but  non-$\delta$-sound (hence $\delta<\rho_q^Q$ and $\crit(\pi_\delta)=\delta$),
\item $Q$ has the $(q+1)$-hull property at
$\delta$, and
\item letting $\mu$ be the normal measure on $\delta$
derived from $\pi_\delta$, either
\begin{enumerate}[label=(\roman*)]
 \item \label{pg:mu_delta_first}
 $\mu\in\es_+^{C_\delta}$
(hence $Q=\Ult_q(C_\delta,\mu)$
and $C_\delta||\lh(\mu)=Q||\lh(\mu)$ and $\lh(\mu)=\delta^{++Q}$),
or
\item \label{pg:mu_delta_second}$C_\delta$ is active type 2 with $\lgcd(C_\delta)=\delta$ and
$\mu\in\es^U$ where $U=\Ult(C_\delta,F^{C_\delta})$
(hence $q=0$ and $Q=\Ult_0(C_\delta,\mu)$
and $U||\lh(\mu)=Q||\delta^{++Q}$
and $C_\delta^\passive=Q||\delta^{+Q}$).
\end{enumerate}
\end{enumerate}
Say $Q$  \emph{$\star$-successor-projects across $\delta$}
iff
\begin{enumerate}
\item $\rho_{q+1}^Q<\delta<\rho_q^Q$,
\item there is a largest $\kappa<\delta$
such that $Q$ has the $(q+1)$-hull property at $\kappa$; fix this $\kappa$,
\item  $C_\kappa=M^*_\kappa$ and $q=n_\kappa$,
\item
 letting  $E$ be the $(\kappa,\pi_\kappa(\kappa))$-extender
derived from $\pi_\kappa$, there is $\nu\in[\kappa^{+Q},\pi_\kappa(\kappa)]$
such that
$E\rest\nu$
is non-type Z and the trivial completion of $E\rest\nu$
is not in $\es_+^Q$, and taking $\nu$ least such,
we have $\delta\leq\nu$ and $Q=\Ult_{n_\kappa}(M^*_\kappa,E\rest\nu)$.
\end{enumerate}
Suppose $Q$ $\star$-successor-projects across $\delta$ and fix notation as above.
The \emph{extender-core} of $Q$ is
\[ N=(Q||\nu^{+Q},E') \]
where $E'$ is the trivial completion of $E\rest\nu$
(so $N^\passive=N||\nu^{+N}=Q||\nu^{+Q}\ins Q^\passive$). Note that $Q$ has the $(q+1)$-hull property at
$\delta$ iff $\nu=\delta$ iff $Q$ is
$\delta$-sound.

Say $Q$ is \emph{$\star$-terminal} iff either
\begin{enumerate}[label=(\roman*)]
 \item $Q$ is fully sound with $\rho_\om^Q=\delta$ and $Q$
is a Q-structure for $\delta$,
or
\item $Q$ is $\delta$-sound and there is $r\in[q,\om)$ such that $Q$ is $r$-sound but non-$(r+1)$-sound,
$\rho_{r+1}^Q<\delta\leq\rho_r^Q$,
$Q$ is $(r+1)$-universal and $\core_{r+1}(Q)$
is $(r+1)$-solid,
and there are cofinally many $\kappa<\delta$
such that $Q$ has the $(r+1)$-hull property at $\kappa$.\footnote{Note
that if $M(\Tt)\ins R\ins Q=Q(\Tt,b)$
where $M^\Tt_b$ is wellfounded,
then $R$ is $\star$-terminal iff $R=Q$.}
\end{enumerate}

Let $R\ins M$ with $P\ins R$.
We will (attempt to) define the \emph{black hole construction} of $R$ with respect to $\Tt,P$. It is a  kind of background construction using all
extenders
in $\es_+^R$ beyond $P$ (as far as the construction is defined),
but with a modified coring process which allows
the appearance of extenders $E$ with $\crit(E)\leq\delta$.
The intent is to invert the
$\star$-translation.

For $R$
such that $P\ins R\ins M$ we (attempt to) define
models $R^{\moon}_n$, for $n<\om$,
and then
$R^{\moon}$, by recursion on $(R,n)$, as follows.
Set $P^\moon_0=M(\Tt)$.
Suppose we have defined $R^\moon_0$.
We attempt to define models $R^\moon_{n+1}$ for $n<\om$,
and then set $R^\moon=\lim_{n<\om}R^\moon_n$.
Suppose we have $R'=R^\moon_n$.
If $R'$ is sound and $\delta\leq\rho_\om^{R'}$
then we define $R^\moon=R^\moon_m=R'$ for all $m\in[n,\om)$.
Otherwise let $q<\om$ be least such that $R'$ is $q$-sound and
either $\rho_{q+1}^{R'}<\delta$ or $R'$ is non-$(q+1)$-sound. Let $\rho=\rho_{q+1}^{R'}$.
We assume the following and proceed as follows; otherwise we give up and leave
$R^\moon_{n+1}$ undefined:
\begin{enumerate}[label=bh\arabic*.,ref=bh\arabic*]
\item $\rho\leq\delta$ and   $R'$ is non-$(q+1)$-sound,
but  $R'$ is $(q+1)$-universal and $\core_{q+1}(R')$ is
$(q+1)$-solid,
\item\label{item:if_R'_fails_hull_prop_at_delta} If $R'$ fails the $(q+1)$-hull property at $\delta$ (so by $q$-soundness and $(q+1)$-universality, we have $\rho=\rho_{q+1}^{R'}<\delta<\rho_q^{R'}$)
then $R'$ $\star$-successor projects across $\delta$, and   we set $R^\moon_{n+1}=$ the
extender-core of $R'$.
\item\label{item:if_R'_has_hull_prop_at_delta} If $R'$ has the $(q+1)$-hull property at $\delta$ then:
\begin{enumerate}
\item If $R'$ is non-$\delta$-sound (so $\delta<\rho_q^{R'}$, by $q$-soundness), then $R'$ is $\star$-$\delta$-critical and   we set $R^\moon_{n+1}=$ the $\delta$-core of $R'$.
\item If $R'$ is $\delta$-sound (so $\rho=\rho_{q+1}^{R'}<\delta$, by choice of $q$) then:
\begin{enumerate}
\item If there are only boundedly many $\kappa<\delta$ such that $R'$ has the hull property at $\kappa$, then $R'$ $\star$-successor projects across $\delta$, and  we set $R^\moon_{n+1}=$ the
extender-core of $R'$.
\item If there are unboundedly many $\kappa<\delta$
such that $R'$ has the hull property at $\kappa$,
then $R'$ is $\star$-terminal, and  we set $R^\moon=R'$ (and the construction
goes no further).
\end{enumerate}
\end{enumerate}
\end{enumerate}

This completes the description of $R^\moon_{n+1}$. We claim that  if $R^\moon_n$ exists for all $n<\om$
then $\lim_{n<\om}R^\moon_n$ also exists,
so we have defined $R^\moon$. \label{page:prove_R^moon_n_stabilizes_in_n}For suppose that $R^\moon_n$ and $R^\moon_{n+1}$ exist but $R^\moon_{n+1}\neq R^\moon_n$  for all $n<\om$.
Then for every $n<\om$, either $R^\moon_n$ is $\star$-$\delta$-critical or $R^\moon_n$ $\star$-successor projects across $\delta$.
Note that if $R^{\moon}_n$ $\star$-successor projects across $\delta$ then $\OR(R^{\moon}_{n+1})\leq\OR(R^{\moon}_n)$
and if $\OR(R^{\moon}_{n+1})=\OR(R^{\moon}_n)$
then $R^{\moon}_{n}$ is active type 2
and $R^{\moon}_{n+1}$ has superstrong type,
so $\nu(F(R^{\moon}_{n+1}))<\nu(F(R^{\moon}_{n}))$.
It easily follows that there is $n<\om$ such that $R^\moon_n$ is $\star$-$\delta$-critical. Fix such an $n$.
Then $R^\moon_{n+1}$, which is the $\delta$-core of $R^\moon_n$, is $\delta$-sound. So $R^\moon_{n+1}$ is not $\star$-$\delta$-critical,
so it $\star$-successor projects across $\delta$.
So letting $E=F(R^\moon_{n+2})$,
we have $R^\moon_{n+1}=\Ult_q(C,E)$
where $\kappa=\crit(E)$
and $C$ is the $\kappa$-core of $R^\moon_{n+1}$,
and $\rho_{q+1}^C=\rho_{q+1}^{R^\moon_{n+1}}\leq\kappa<\rho_q^C$. But since $R^\moon_{n+1}$
is $\delta$-sound,
$\nu(E)\leq\delta$.
So in fact $\nu(E)=\delta$ and $E$ is type 3,
so $R^\moon_{n+2}$ is active type 3
with largest cardinal $\delta$. But then it is easy to see that $R^\moon_{n+2}$ is not $\star$-$\delta$-critical and does not $\star$-successor project across $\delta$,
a contradiction.

Now let $R=M||\alpha$ or $R=M|\alpha$
for some $\alpha$,
and suppose we have
successfully defined
$S^\moon$ for all $S\pins R$,
these are  sound premice, and none are $\star$-terminal.
If $R=\J(S)$ then we set $R^\moon_0=\J(S^\moon)$.
If $R$ is passive of limit type
then $R^\moon_0=\liminf_{S\pins R}S^\moon$
(note that this exists, like with standard background constructions).
And if $R$ is active, hence with $\delta<\crit(F^R)$,
then we assume that $F^R$ restricts to an extender $E$
such that $S=((R^\passive)^\moon,E)$
is a premouse, and we set
$R^\moon_0=S$ (and otherwise $R^\moon_0$
is undefined).
\end{dfn}

The following lemma, saying in particular that the $\Moon$-construction
and $\star$-translation are inverses,
are straightforward to verify
by induction:
\begin{lem}\label{lem:bh_inverts_star}
Let $\Tt$ be $P$-optimal for $M$.

Adopting the notation of Definition \ref{dfn:star} \tu{(}$\star$-translation\tu{)},
suppose that
$Q$ is $\star$-valid,
$Q^\star=Q^\star(\Tt,P)$ is well-defined and $Q^\star\ins M$.
Then there is $n<\om$ such that
$(Q^\star)^\moon_n(\Tt,P)$ is well-defined and
$(Q^\star)^\moon_n=Q$.

Conversely, let $R$ and $r<\om$ be such that $P\ins R\ins M$ and
$R^\moon_r=R^\moon_r(\Tt,P)$ is well-defined.
Then $R^\moon_r$ is $\star$-valid,
$(R^\moon_r)^\star=(R^\moon_r)^\star(\Tt,P)$
is well-defined and $(R^\moon_r)^\star=R$.
Moreover, if $(P,0)\ins(S,s)\ins(R,r)$
then $S^\moon_s||\delta^{+S^\moon_s}=R^\moon_r||\delta^{+S^\moon_n}$.
\end{lem}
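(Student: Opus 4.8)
The plan is to prove both assertions simultaneously, by induction along the well-founded ordering implicit in Definition~\ref{dfn:star}: for $\star$-valid structures $R,R'$ with $M(\Tt)\ins R,R'$, declare $R'\prec R$ if $R'$ occurs as a model of a finite iteration tree on $R$ (taken at the appropriate degree, as in Remark~\ref{rem:closson_correction}) using only extenders with critical point $\leq\delta$; dually, this is induction on the number of core steps of the black hole construction. The base case is immediate, as $M(\Tt)^\star=P$ and $P^\moon_0=M(\Tt)$ by fiat. Throughout we use the standing facts that $\delta$ is a successor cardinal of $P$ and a strong cutpoint of $M$ above $P$ (so no extender of $M$ past $P$ overlaps $\delta$), together with STH; in particular the agreement $\pow(\delta)\cap R^\star=\pow(\delta)\cap R[P]$ is what transports hull-property, soundness, universality and solidity facts back and forth between a structure and its $\star$-translate.

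The heart of the argument is a clause-by-clause dictionary between the cases of the $\star$-translation and the operations of the black hole construction, reflecting that $\star$-translation ``unfolds'' the extenders of $Q$ overlapping $\delta$ into ultrapower maps while the black hole construction re-folds them by taking cores. Explicitly: the $\delta$-measure clause $R^\star=\Ult_r(R,\mu^R_\delta)^\star$ is inverted by a $\delta$-core step; the active clauses with $\crit(F^R)<\delta$ are inverted by an extender-core step, with $M^*_\kappa$ and $n_\kappa$ on the two sides matching by their definitions and the type-$2$ subcase corresponding to alternative~(ii) of $\star$-$\delta$-criticality; the passive successor clause $R=\J(S)$ and the passive limit clause are inverted by the $\J(\cdot)$ and $\liminf$ clauses opening the black hole construction; and the active clause with $\crit(F^R)>\delta$ is inverted by the active clause $R^\moon_0=((R^\passive)^\moon,E)$, where $E$ is the restriction of the top extender of the $\star$-translate appearing in that clause, and the meas-lim extender-algebra extension is recovered because the bigger structure is, modulo that generic extension, a copy of the smaller one. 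At each step one checks that the precondition for the next construction step genuinely holds --- that ``$\delta$-critical'' upgrades to ``$\star$-$\delta$-critical'', ``successor-projects across $\delta$'' to ``$\star$-successor-projects across $\delta$'', ``terminal'' to ``$\star$-terminal'', and that the needed $(q+1)$-universality and solidity hold --- which follows from STH and the standing hypotheses. The converse direction reads the same dictionary backwards and performs the same verifications.

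The coherence clause is then a formal consequence: every step of the black hole construction --- a $\delta$-core, an extender-core, a passive $\J$-step or limit step, or an active extension --- fixes the part of the current model strictly below its $(\delta^+)$, since every critical point in play is $\geq\delta$ and $M(\Tt)$, having ordinal height $\delta$, already pins the relevant initial segment down, exactly as in ordinary backgrounded constructions. Hence for $(P,0)\ins(S,s)\ins(R,r)$ the structures $S^\moon_s$ and $R^\moon_r$ agree below the successor of $\delta$, which is the assertion.

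The main obstacle is not the combinatorial bookkeeping of the dictionary, which is routine once it is written down, but the repeated invocation of the starred upgrades ($\star$-$\delta$-critical, $\star$-successor-projecting, $\star$-terminal) together with the precise behaviour of standard parameters under $\star$-translation. These package the genuinely hard fine-structural content of \cite{closson}, including the correction flagged in Remark~\ref{rem:closson_correction}; in the present paper they are available by assuming STH, so here the lemma really is ``straightforward by induction'', but that is precisely where the real work has been deferred.
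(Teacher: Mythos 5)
Your proposal is correct and matches the paper's treatment: the paper gives no proof of this lemma beyond declaring it ``straightforward to verify by induction,'' and your clause-by-clause dictionary between the cases of the $\star$-translation and the coring/extension steps of the $\Moon$-construction, anchored at $M(\Tt)^\star=P$ and $P^\moon_0=M(\Tt)$ and with the genuinely hard fine-structural content deferred to STH, is exactly the intended induction. (One small slip in your justification of the coherence clause: the extenders $F^R$ handled by the extender-core steps have $\crit(F^R)=\kappa<\delta$, so it is not true that every critical point in play is $\geq\delta$; the agreement below $(\delta^+)$ instead follows because each coring step preserves the relevant initial segment by definition --- the extender-core is $(Q||(\nu^+)^Q,E')$ with $\nu\geq\delta$, and the $\delta$-core agrees with $Q$ up to $\lh(\mu)=(\delta^{++})^Q$ --- so your conclusion is unaffected.)
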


\begin{dfn}\label{dfn:just_beyond_delta-proj}
 Let $\Tt$ be $P$-optimal for $M$, $\delta=\delta(\Tt)$
 and $P\pins R\ins M$ with $\delta$ an $R$-cardinal.
 We say that $R$ is \emph{just beyond $\delta$-projection}
 iff there is $S$ such that $P\ins S\pins R$
 and $\rho_\om^S=\delta$ and there is no admissible
 $R'$ such that $S\pins R'\ins R$.
\end{dfn}

So if $R$ is just beyond $\delta$-projection
then $\rho_1^R\leq\delta$.
The $\Moon$-construction is almost completely
local, but it seems maybe not quite completely
at the level of measurable Woodins, because of the
requirement of computing cores which project to $\delta$
(if there is such a non-trivial core, then there are $\delta$-measures,
hence measurable Woodins).
To handle this we split into two cases in what follows, making use of the two formulas $\psi_{\moon}$
and $\psi'_{\moon}$.
\footnote{The construction \emph{is} completely local \emph{in the codes},
but it seems maybe not literally.
More precisely, if  $\rho_\om^{R^\moon}=\delta$
but $R^\moon$ is not sound, and $\alpha\in\OR$,
then while it is not clear that the model $\J_\alpha(\core_\om(R^\moon))$
is definable from parameters over $\J_\alpha(R^\moon)$, the theory
$\Th_{\rSigma_{n+1}}^{\J_\alpha(\core_\om(R^\moon))}(\delta\cup\{x\})$
is definable from parameters over $\J_\alpha(R^\moon)$,
for each $n<\om$ and $x\in\J_\alpha(\core_\om(R^\moon))$. However, if
$\alpha\geq(\om\cdot\OR(\core_\om(R^\moon)))$,
then we do have $\J_\alpha(\core_\om(R^\moon))$ literally
definable from parameters over $\J_\alpha(R^\moon)$.}
\begin{lem}\label{lem:compute_*_q_translation}
Assume STH.\footnote{Actually the lemma only uses part \ref{item:STH_part_1} of STH.}
Then there are formulas $\psi_{\moon}$ and $\psi'_{\moon}$ of the language of premice
such that
 for all  $M\in\pm_1$,
 all $\Tt,P,R\in M$ such that $\Tt$ is $P$-optimal for $M$,
 $\delta=\delta(\Tt)$ is an $R$-cardinal,
$P\pins R \ins\ M$
and $R^\moon_0=R^\moon_0(\Tt,P)$ is well-defined,
we have:
\begin{enumerate}
\item\label{item:card_agmt} $R$ and $R^\moon_0$ have the same cardinals
$\kappa\geq\delta$,
and for each such $\kappa>\delta$ \tu{(}so $\kappa<\OR^R$\tu{)}, we have
$R^\moon_0|\kappa=(R|\kappa)^\moon_0=(R|\kappa)^\moon$
\tu{(}whereas $R^\moon_0|\delta=P^\moon_0=P^\moon$\tu{)}.
\item\label{item:proj_agmt} If $\rho_\om^R=\OR^R$ then $R^\moon=R^\moon_0\sub
R$ and
$\rho_\om^{R^\moon}=\OR^{R^\moon}=\OR^R$.
\item If $R$ is not just beyond $\delta$-projection
then $R^\moon_0\sub R$ and $\Th_{\rSigma_0}^{R^\moon_0}(R^\moon_0)$
is defined over $R$ by $\psi_{\moon}$
from the parameter $\Tt$, and
\item\label{item:R_just_beyond_delta}If $R$ is just beyond $\delta$-projection
then
$\rho_1(R^\moon_0)\leq\delta$, $R^\moon_0$ is $\delta$-sound,
and $t^{R^\moon_0}_1(\delta)$ is defined over $R$ by
$\psi_{\moon}'$ from the parameter $\Tt$.
\end{enumerate}
\end{lem}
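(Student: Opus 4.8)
The plan is to prove (i)--(iv) simultaneously by induction on $\OR^R$, carrying along as part of the induction a statement about the coring column $R^\moon_0, R^\moon_1,\ldots,R^\moon$ of Definition \ref{dfn:star}, which enters because $R^\moon_0$ at $\J$-stages and active stages is defined from $S^\moon$ for proper segments $S$. The guiding point is that the $\Moon$-construction is, by design, a background construction \emph{over} $R$: $R^\moon_0$ is built recursively from $\Tt$, $P$, the segments of $R$ and the extenders of $\es_+^R$, and every fine-structural operation it performs --- forming $\core_{q+1}$, checking $(q{+}1)$-universality, solidity and the hull property, forming $\delta$-cores and extender-cores --- is, granting STH and the standing well-definedness hypothesis, a well-defined operation first-order over the segment to which it is applied. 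Thus (iii) will assert that the recursion is carried out by a single formula $\psi_{\moon}$ over $R$; (iv) is the one configuration in which the output of a $\delta$-core fails to sit literally inside $R$, so that one records instead its $t_1(\delta)$-theory via $\psi'_{\moon}$; and (i)--(ii) are the stabilization facts delimiting when this can occur. Throughout one uses, via Lemma \ref{lem:bh_inverts_star} and STH, that $\star$-translation only enlarges, so that each model produced by the $\Moon$-construction below $\OR^R$ has ordinal height at most $\OR^R$.

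For (i): the identity $R^\moon_0|\kappa=(R|\kappa)^\moon_0$ for cardinals $\kappa\in(\delta,\OR^R)$ is the coherence of the vertical recursion; since such a $\kappa$ is an $R$-cardinal above $\delta$ we have $\rho_\om^{R|\kappa}=\OR^{R|\kappa}=\kappa$, so part (ii) applied inductively to $R|\kappa$ gives $(R|\kappa)^\moon=(R|\kappa)^\moon_0$, yielding the triple equality, while $R^\moon_0|\delta=P^\moon_0=P^\moon$ holds because $P^\moon_0=M(\Tt)$ has ordinal height exactly $\delta$ and every later stage only extends upward (an ultrapower by a $\delta$-measure fixes everything $\le\delta$). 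Cardinal agreement above $\delta$ follows from the $R\leftrightarrow R^\moon_0$ correspondence of STH: the universe of $R$ is that of $R^\moon_0$ generically extended by $P$ via the meas-lim extender algebra at $\delta$, a $\delta$-cc forcing, so cardinals $\ge\delta$ are common to the two. For (ii): $\rho_\om^R=\OR^R$ forces $R$ to be passive of limit type (a $\J$-step or active level has $\rho_\om<\OR$), so $R^\moon_0=\liminf_{S\pins R}S^\moon$; each $S^\moon$ ($S\pins R$) is inductively a sound non-terminal premouse of height $\le\OR^S<\OR^R$, hence $S^\moon\subseteq R$, so $R^\moon_0=\liminf S^\moon\subseteq R$ is sound with $\rho_\om^{R^\moon_0}=\OR^{R^\moon_0}=\OR^R$; no coring fires and $R^\moon=R^\moon_0$.

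For (iii) and (iv) the inductive step splits on the shape of $R$; a passive limit $R$ is covered by (ii). If $R=\J(S)$ then $R^\moon_0=\J(S^\moon)$, and one obtains $S^\moon$ by running the coring column on $S^\moon_0$ (given inductively): each step is classified --- $\delta$-critical, successor-projects-across-$\delta$, or terminal --- by a first-order condition on the running model, and the corresponding $\delta$-core resp.\ extender-core is first-order over it, with the one caveat that forming a $\delta$-core $\cHull_{q+1}^{R'}(\delta\cup\pvec_{q+1}^{R'})$ needs ordinal room for the transitive collapse. If $R$ is not just beyond $\delta$-projection the room is present --- $\OR^R$ has already passed $\om\cdot\OR$ of the collapsed core --- so $S^\moon\subseteq R$ and $R^\moon_0=\J(S^\moon)\subseteq R$ with $\Th_{\rSigma_0}^{R^\moon_0}(R^\moon_0)$ defined over $R$ by a fixed $\psi_{\moon}$. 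If $R$ is just beyond $\delta$-projection --- witnessed by $S$ with $\rho_\om^S=\delta$ and no admissible strictly between it and $R$ --- then $S^\moon$ is $\delta$-sound, so $R^\moon_0=\J(S^\moon)$ has $\rho_1^{R^\moon_0}\le\delta$, is $\delta$-sound, and $t_1^{R^\moon_0}(\delta)$, which encodes $S^\moon$ hence $R^\moon_0$, is recovered over $R$ via a fixed $\psi'_{\moon}$ from the inductively computed $t_1(\delta)$-theory of $S^\moon$. If $R$ is active then $\crit(F^R)>\delta$ and $R^\moon_0=((R^\passive)^\moon,E)$ with $E$ the restriction of $F^R$ supplied by the well-definedness hypothesis; $E$ is definable over $R$ from $F^R$ and $(R^\passive)^\moon$ is given inductively, and adjoining an extender with critical point above $\delta$ affects neither the cardinals nor the projecta below its length, so (i)--(iv) for $R$ reduce to the case of $R^\passive$.

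The step I expect to be the main obstacle is the middle of the inductive step: making precise that the coring column is uniformly locally computable --- that the $\delta$-critical / successor-projects-across-$\delta$ / terminal trichotomy and the associated $\delta$-core and extender-core operations are given by a single formula over the running model --- and pinning down exactly when a $\delta$-core's transitive collapse fails to embed into $R$, forcing a passage to the $t_1(\delta)$-theory. This is delicate fine structure, and it is also where STH carries the real weight, since STH is precisely what guarantees that the cores encountered along the way are well-defined premice with the expected soundness, universality and solidity.
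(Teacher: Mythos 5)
Your overall architecture (induction on segments of $R$, case split on the form of $R$, reduction of the active case to $R^\passive$, use of condensation plus STH) matches the paper's, but the central technical claim of the paper's proof is missing from yours, and without it the induction does not close. In the case that $R$ has a largest cardinal $\kappa>\delta$ (and likewise in your part (ii)), everything hinges on showing that for each $S\pins R$ with $\rho_\om^S=\kappa$ one has $\rho_\om(S^\moon)=\kappa$ exactly --- not merely that $S^\moon$ is a sound premouse of height $\le\OR^S$. This is what guarantees that $(R^\passive)^\moon_0$ is the stack of the $\J(S^\moon)$ over such $S$, that its height is $\OR^R$ (your ``$\OR^{R^\moon_0}=\OR^R$'' does not follow from ``$\OR^{S^\moon}\le\OR^S$''; the heights of the $S^\moon$ could a priori fail to be cofinal in $\OR^R$), and that the level-by-level and cardinal agreement of part (i) holds. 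The paper proves this projectum-matching by a two-sided argument you do not reproduce: if $\rho_\om(S^\moon)<\kappa$, one condenses $S$ to some $\bar S\pins S$ with $\bar S^\moon$ defining the subset of $\rho_\om(S^\moon)$ missing from $S^\moon$, a contradiction; and if $\kappa<\rho_\om(S^\moon)$, then since $(S^\moon)^\star=S$, STH makes the set $t\sub\kappa$ missing from $S$ definable over $S^\moon[P]$, and the relevant forcing relation would then be coded by an element of $\pow(\kappa)\inter S^\moon$, putting $t\in S^\moon[P]\sub S$, again a contradiction. Your appeals to ``coherence of the vertical recursion'' and to the $\delta$-cc of the extender algebra substitute for neither direction.

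Two smaller points. First, the obstacle you flag as the main one (uniform local computability of the coring column) is genuine but is not where the paper concentrates its effort; the crux is the projectum-matching above. Second, in part (iv) the $\delta$-soundness of $R^\moon_0$ is not automatic from the $\delta$-soundness of $S^\moon$: the paper gives an explicit hull argument, writing $R^\moon_0=\Hull_1^{R^\moon_0}(\delta\cup\{\gamma\})$ with $\gamma=\OR(S^\moon)$, taking the least $\xi$ with $\gamma\in\Hull_1^{R^\moon_0}(\delta\cup\{\xi\})$, and showing $p_1^{R^\moon_0}\cut\delta=\{\xi\}$ is solid; and it handles $R=\J_\alpha(S)$ for arbitrary $\alpha$, not only $\alpha=1$, defining $t_1^{\J_\beta(S^\moon)}(\delta)$ uniformly over $\J_\beta(S)$ for all $\beta\le\alpha$.
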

\begin{proof}[Proof sketch]
The formula $\psi_\moon$ basically says to perform the
$\Moon$-construction, whereas $\psi'_\moon$ says to do that
up to a point, and then to perform a coded version of the construction,
working with theories $\sub\delta$ instead of the actual models.
 The construction is defined from the parameter $(\Tt,P)$, and we are given the parameter $\Tt$. But using $\Tt$, we can identify $P$
 in a $\Sigma_1$ fashion over $M$,
 by Lemma \ref{lem:P-optimal_uniqueness}.
Now we won't write down the formulas $\psi_{\moon}$ and $\psi'_\moon$ explicitly, but just sketch out some main
considerations and an explanation of parts \ref{item:card_agmt},
\ref{item:proj_agmt} and \ref{item:R_just_beyond_delta}.
 The proof that everything works is  by induction on $R$.

 If $R$ has no largest cardinal,  it is easy by induction,
 leading to (the relevant clause of) $\psi_\moon$ for this case.

 Suppose $R$ has a largest cardinal $\kappa>\delta$.
 We can compute $(R|\kappa)^\moon$
 definably over $R|\kappa$, and it has height $\kappa$.
 We claim that for each $S\pins R$ such that $\rho_\om^S=\kappa$,
 we have $\rho_\om(S^\moon)=\kappa$ and $S^\moon\pins R^\moon_0$,
 and hence, $(R^\passive)^\moon_0$ is the stack of $\J(S^\moon)$
 over all such $S$. Given this, we get an appropriate
 definition of $(R^\passive)^\moon_0$ over $R$,
 and then if $R$ is active, we just add the restriction of $F^R$,
 leading to $\psi_\moon$ for this case.
 So supposing
$\rho_\om(S^\moon)<\kappa$,
 then by induction, we can take a hull of $S$ to which condensation
 applies, producing some $\bar{S}\pins S$,
 such that $\rho_\om(\bar{S}^\moon)=\rho_\om(S^\moon)$
 and $\bar{S}^\moon$ defines the set missing from $S^\moon$,
 which gives a contradiction. Conversely,
 since $(S^\moon)^\star=S$,
 STH part \ref{item:STH_part_1} implies the set $t\sub\kappa$ missing from
$S$ is definable from parameters over $S^\moon[P]$.
But then if $\kappa<\rho_\om(S^\moon)$,
then there is a set in $\pow(\kappa)\inter S^\moon$
coding the relevant forcing relation, which implies $t\in S^\moon[P]\sub
S$, contradiction.

Now suppose $\lgcd(R)=\delta$. If $R$ is not just beyond $\delta$-projection,
then $R$ is admissible or a limit of admissible proper segments, and it is
then easy to define $R^\moon_0$ over $R$.
So suppose there is $S$ such that $P\ins S\pins R$ and $\rho_\om^S=\delta$
but there is no admissible $S'$ with $S\pins S'\ins R$,
and let $S$ be least such. Then
we can take $n<\om$
such that $S^\moon_n$ is sound
and $\rho_\om(S^\moon_n)=\delta$,
and then $S^\moon=S^\moon_n$,
and  note $R^\moon_0=\J_\alpha(S^\moon_n)$,
where $R=\J_\alpha(S)$. Let $k<\om$ be such that
$\rho_{k+1}(S^\moon_n)=\delta$, and note
that $t=t_{k+1}^{S^\moon_n}$ is definable from $\Tt$
over $S$. Starting from the parameter $t$, it is straightforward to uniformly
define
$t_1^{\J_\beta(S^\moon_n)}(\delta)$ over $\J_\beta(S)$,
for $\beta\in(0,\alpha]$. This leads to $\psi_\moon'$.

Finally let us observe that $R_0^\moon$ is $\delta$-sound in part \ref{item:R_just_beyond_delta}.
Note that $R_0^\moon=\Hull_1^{R_0^\moon}(\delta\cup\{\gamma\})$
where $\gamma=\OR(S^\moon)$, and let $\xi$ be least
such that $\gamma\in\Hull_1^{R_0^\moon}(\delta\cup\{\xi\})$.
If $\xi=0$ then we are done, so suppose $\xi\geq\delta$.
Then $R^0_\moon=\Hull_1^{R_0^\moon}(\delta\cup\{\xi\})$,
and note that $\Hull_1^{R_0^\moon}(\xi)\inter\OR=\xi$,
and it follows that $p_1^{R_0^\moon}\cut\delta=\{\xi\}$
and $R_0^\moon$ is  $1$-solid above $\delta$
and is $\delta$-sound.
\end{proof}

A full analysis of
$\star$-translation and proof of STH needs a sharper, more extensive
version of the preceding lemma.

\begin{lem}\label{lem:*-trans_gives_seg_of_M}Assume STH. Let $\Tt$ be  $P$-optimal for $M$ where
$M$ is $(0,\om_1+1)$-iterable. Let $N\pins \Momone^M$ be such that $\Tt$ is on $N$ and let $\Gamma$ be an \tu{(}so in fact the unique\tu{)} $(\om,\theta+1)$-strategy for $N$, where $\theta$ is some regular uncountable cardinal.  Let $\delta=\delta(\Tt)$ and $Q=Q(\Tt,\Gamma(\Tt))$. Then:
\begin{enumerate}
\item\label{item:Q^star_projects_<=delta} $Q^{\star}$ is a well-defined, $\delta$-sound premouse, projects $\leq\delta$, with $\delta$ a strong cutpoint,
 \item
either $Q^\star\pins M$ or \tu{[}$M||\delta^{+M}=Q^\star||\delta^{+M}$
and $\delta$ is a successor cardinal in $M$\tu{]}, and
\item\label{item:if_M_om-mouse_then_Q^star_ins_M} if $M$ is an $\om$-mouse then
 $Q^\star\ins M$.
 \end{enumerate}
\end{lem}
\begin{proof}
We have $\delta\leq\theta$, since $\Tt$ is via $\Gamma$,  an $(\om,\theta+1)$-strategy,
and $\delta$ is a limit ordinal.
Note then that by taking a countable hull, we may assume that $\delta<\om_1$ and that $M$ is countable.
(In so doing, the transitive collapse $\bar{\Tt}$ of $\Tt$ is also via $\Gamma$,
by the uniqueness of $\Gamma$
and since the uncollapse map allows us to lift the phalanx of $\bar{\Tt}$ to the phalanx of $\Tt\rest\xi$
for some $\xi<\theta$.)

Now
by STH,
$Q^\star$ is a $\delta$-sound premouse,  $\delta$ is a strong cutpoint and a successor cardinal of $Q^{\star}$, and for each $q<\om$, if $\delta<\rho_q^{Q^{\star}}$
then $Q^{\star}$ is above-$\delta$, $(q,\om_1+1)$-iterable. So it suffices to see that $Q^{\star}$ projects $\leq\delta$. (If $M$ is an $\om$-mouse
then we can't have $M\pins Q^{\star}$,
since $\delta$ is a cardinal in $Q^{\star}$.)

But $\Tt$ is $P$-optimal for $Q^{\star}$, so by Lemma \ref{lem:bh_inverts_star} (applied with $Q^{\star}$ replacing $M$ there), there is $n<\om$
such that $(Q^{\star})^{\moon}_n=Q$,
so by Lemma \ref{lem:compute_*_q_translation} (with $Q^{\star}$ replacing $M$), $t_{d+1}^Q(\delta)$ is definable
from parameters over $Q^{\star}$,
where $d$ is such that $\rho_{d+1}^Q\leq\delta<\rho_d^Q$. So it suffices to see that $t^Q_{d+1}(\delta)\notin Q^{\star}$.
But otherwise, by STH, we would have $t^Q_{d+1}(\delta)\in Q[t^P_{k+1}]$ where $k$ is as there
(and recall $t^P_{k+1}$ is meas-lim extender algebra generic over $Q$ at $\delta$). But this is impossible, like in the proof of Lemma \ref{lem:P-optimal_uniqueness}.
 \end{proof}

\begin{rem}\label{rem:Wlim} Assume STH
and $M_{\wlim}^\#$ exists and is $(\om,\om_1+1)$-iterable.
Then $M_{\wlim}^\#$ is
transcendent. For suppose not, and let
$\Tt,P\in M$ be a counterexample; so
$t=\Th_{\rSigma_1}^{M^\#_{\wlim}}(\emptyset)$
is in $\J(Q[P])$ where $Q=Q(\Tt,\Sigma_{\Momone}(\Tt))$.
But then if $Q^\star\pins M$ then $Q\in M$, so $Q[P]\in M$,
so $t\in M$, contradiction. So $M\ins Q^\star$,
which implies $M=Q^\star$. But note then that
$M^\moon_0$ is  produced by iterating
the phalanx $\Phi(\Tt)\conc\left<Q\right>$
finitely many steps (via extenders with critical points $\leq\delta$), so
$M^\moon_0$ is also an iterate
of $\Momone$ or a segment thereof. But $M^\moon_0[P]$,
a generic extension via the meas-lim extender algebra,
 has universe that of $M$, and the extenders
 in $\es_+(M^\moon_0)$ with critical point $>\delta$ are
 exactly the level-by-level restrictions of those of $\es_+^M$.
 So $M^\moon_0$ inherits all the Woodin cardinals of $M$,
 and the active sharp, and this contradicts the minimality of $M$.

 The argument for the least mouse with an active superstrong extender
 is very similar. And obviously there are many such variants.
\end{rem}

\section{$\HOD$ in non-tame mice}\label{sec:HOD_in_non-tame}

We can now begin our analysis of ordinal definability in non-tame mice.
All the results will assume STH.
Recall that \S\ref{sec:candidates} applies.

\begin{dfn}
 Let $\canN$ be a premouse satisfying ``$\ZFC^-+V=\HC$''.
 Then
 $\Lambda^\canN$ denotes the partial $(\om,\OR^\canN)$-iteration strategy
$\Lambda$
for $\canN$, defined over $\canN$ as follows. We define $\Lambda$ by induction
on the
length of trees. Let $\Tt\in\canN$. We say that $\Tt$ is \emph{necessary} iff
$\Tt$ is an iteration
tree via
$\Lambda$, of limit length, and letting $\delta=\delta(\Tt)$, either $M(\Tt)$
is
a Q-structure
for itself, or $\Tt$ is $P$-optimal for $\canN$,
with some $P\pins\canN$.
Every $\Tt\in\dom(\Lambda)$ is necessary. Let $\Tt$ be necessary,
and $P$-optimal for $\canN$ if such $P$ exists.
Then $\Lambda(\Tt)=b$ iff $b\in\canN$ and letting $Q=Q(\Tt,b)$,
if $M(\Tt)\pins Q$ then
  $Q^\star=Q^\star(\Tt,P)$ is well-defined and $Q^\star\pins\canN$.
  (Note that if $\Lambda(\Tt)=b$  then
$b,Q\in\J_\lambda(Q^\star)$, where $\J_\lambda(Q^\star)$ is admissible,
and the
assertion that ``$\Lambda(\Tt)=b$'' is uniformly
$\Sigma_1^{\J_\lambda(Q^\star)}(\{\Tt\})$,
by Lemmas \ref{lem:bh_inverts_star}, \ref{lem:compute_*_q_translation} and
 \ref{lem:Q_computes_b}. So $\Lambda$ is $\Sigma_1$-definable over
$\canN$.\footnote{Here of course we can refer to
$\es^{\canN}$. Since $\canN\sats$ ``$V=\HC$'',
we can say that ``$\delta$ is a cutpoint of $\canN$''
by just saying it is a cutpoint of some segment of $\canN$
which projects to $\om$.})

We say that $\canN$ is \emph{iterability-good} iff all trees via
$\Lambda^\canN$ have wellfounded models,
and $\Lambda^\canN(\Tt)$ is defined for all necessary $\Tt$. (Note that
\emph{iterability-good} is
expressed by a first-order formula $\varphi$ (modulo $\ZFC^-$).)
\end{dfn}

By Lemma \ref{lem:*-trans_gives_seg_of_M}, we have:

\begin{lem}\label{lem:M_is_iterability-good}
Assume STH.  Let $M\in\pm_1$ be $(0,\om_1+1)$-iterable and $\canM=\canM^M$.
Then $\Lambda^{\canM}\sub\Sigma_{\canM}$
and $\canM$ is iterability-good.
\end{lem}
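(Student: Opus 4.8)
The plan is to mirror the argument for the tame analogue (Lemma ``$\Momone^M$ is tame-iterability-good and $\Lambda_{\tame}^{\Momone^M}\sub\Sigma_{\Momone^M}$'', proved as in \cite{sile}), replacing the use of P-construction and Q-structures at limit stages with the $\star$-translation and the $\Moon$-construction, and invoking STH to guarantee these are well-defined and give the right objects. First I would show $\Lambda^{\canM}\sub\Sigma_{\canM}$ by induction on the length of trees $\Tt\in\canM$ that are necessary (in the sense of Definition \ref{dfn:iterability-good}'s analogue for $\Lambda^\canN$), noting that the two branch-selection rules match: if $M(\Tt)$ is a Q-structure for itself then the correct branch is the unique cofinal branch with wellfounded model and its Q-structure equal to $M(\Tt)$, so $\Lambda^\canM$ and $\Sigma_\canM$ agree by standard Q-structure absoluteness; and if $\Tt$ is $P$-optimal for $\canM$ with some $P\pins\canM$, then the correct branch $b=\Sigma_\canM(\Tt)$ has $Q=Q(\Tt,b)$ with $M(\Tt)\pins Q$, and by STH (part 2 of Definition \ref{dfn:Q^*_when_Q_correct}, applied since $\Tt$ is a countable tree on the $\om$-mouse $\canM$ via its correct strategy) $Q^\star=Q^\star(\Tt,P)$ is a well-defined premouse and $Q^\star\ins M$; moreover $\delta=\delta(\Tt)$ is a successor cardinal of $P$ and a strong cutpoint of $M$ with $\delta$ Woodin in $Q$, so by the consequence of STH recorded just after that definition, $Q^\star\ins M$ is an $\om$-mouse segment, and since $Q^\star$ has universe a subset of $\HC^M$ and $\canM=M|\om_1^M$, in fact $Q^\star\pins\canM$. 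Hence the $\Lambda^\canM$-rule picks exactly $b$, and uniqueness of such a branch (Lemma \ref{lem:P-optimal_uniqueness} together with Lemma \ref{lem:bh_inverts_star}) forces $\Lambda^\canM(\Tt)=\Sigma_\canM(\Tt)$.

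Next I would verify $\canM$ is iterability-good, i.e.\ that every putative tree via $\Lambda^\canM$ has wellfounded models and $\Lambda^\canM(\Tt)$ is defined for all necessary $\Tt$. Wellfoundedness is immediate from $\Lambda^\canM\sub\Sigma_\canM$ (trees via $\Sigma_\canM$ have wellfounded models) once we know $\Lambda^\canM$ is total on necessary trees. For totality: given a necessary $\Tt\in\canM$, apply $\Sigma_\canM$ to get $b$; if $M(\Tt)$ is a Q-structure for itself then $b\in\canM$ because $\canM\sats$``$V=\HC$'' and the branch is definable from $(\Tt,M(\Tt))$ over a small segment of $\canM$ by Lemma \ref{lem:Q_computes_b}; if $\Tt$ is $P$-optimal then $Q^\star\pins\canM$ by the argument above, and by Lemmas \ref{lem:bh_inverts_star} and \ref{lem:compute_*_q_translation} the branch $b$ and the Q-structure $Q$ lie in $\J_\lambda(Q^\star)$, an admissible structure inside $\canM$, so $b\in\canM$; in either case $\Lambda^\canM(\Tt)$ is defined. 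The point that $Q^\star\pins\canM$ rather than merely $Q^\star\ins M$ uses that $\Tt$, being $P$-optimal for $\canM$, has $\delta(\Tt)$ a successor cardinal of $P\pins\canM$, hence $\delta(\Tt)<\om_1^M$, so $Q^\star$, having $\delta(\Tt)$-sized levels and being an $\om$-mouse, is a proper segment of $\canM$.

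The main obstacle I anticipate is bookkeeping around the $\star$-translation at the measurable-Woodin level: ensuring that when $Q=Q(\Tt,b)$ carries $\delta$-measures (overlapping extenders with critical point $\delta$), $Q$ is $\star$-valid and $Q^\star$ is the right $\delta$-sound object, and that the ``just beyond $\delta$-projection'' subtlety of Lemma \ref{lem:compute_*_q_translation} does not break the claim that the branch is computed inside $\canM$. These are exactly the points STH and Lemmas \ref{lem:bh_inverts_star}, \ref{lem:compute_*_q_translation} are designed to handle, so modulo citing them the argument is a routine adaptation of the tame case; I would write it as ``this is proved exactly as in the tame case (the lemma before Theorem \ref{tm:tame_grounds}), using STH and Lemmas \ref{lem:bh_inverts_star} and \ref{lem:compute_*_q_translation} in place of the P-construction facts, together with Lemma \ref{lem:P-optimal_uniqueness} for uniqueness of $P$.''
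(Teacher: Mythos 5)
The paper states this lemma \emph{without proof} (it is followed immediately by Definition \ref{dfn:G^M}), evidently regarding it as the routine non-tame analogue of the unproved tame lemma before Theorem \ref{tm:tame_grounds}; so there is nothing in the text to compare your argument against line by line. Your sketch follows exactly the route the surrounding machinery is set up for, and the main steps are right: trees in $\canM$ are genuinely countable (since $\canM\sats$``$V=\HC$''), so STH applies to the correct branch $c=\Sigma_\canM(\Tt)$ of a $P$-optimal $\Tt$ to give $Q(\Tt,c)^\star\pins\canM$ (the alternative in the consequence of STH, that $\delta$ is a successor cardinal of $\canM$, is ruled out by $V=\HC$), and then $c\in\canM$ via Lemmas \ref{lem:bh_inverts_star}, \ref{lem:compute_*_q_translation} and \ref{lem:Q_computes_b}, which gives totality of $\Lambda^\canM$ on necessary trees and hence wellfoundedness once $\Lambda^\canM\sub\Sigma_\canM$ is known. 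The one place your write-up is off target is the uniqueness step: Lemma \ref{lem:P-optimal_uniqueness} gives uniqueness of $P$ given $\Tt$ (needed so that $Q^\star(\Tt,P)$, and hence $\Lambda^\canM$ itself, is well-defined), not uniqueness of the branch. For the latter you must argue that if $b\neq c$ both satisfy the $\Lambda^\canM$-condition, then $Q(\Tt,b)^\star$ and $Q(\Tt,c)^\star$ are comparable segments of $\canM$, whence by the ``Moreover'' clause of Lemma \ref{lem:bh_inverts_star} and minimality of Q-structures the two Q-structures coincide, and then $b=c$ by the uniform branch computation of Lemma \ref{lem:Q_computes_b}; note in particular that STH only asserts iterability of $Q^\star$ for the correct branch, so the usual ``compare the two Q-structures'' argument is not directly available for a putative wrong branch and must be replaced by this comparability-inside-$\canM$ argument. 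With that step spelled out, the proposal is a faithful reconstruction of the intended proof.
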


\begin{dfn}\label{dfn:G^M}
 Let $M\in\pm_1$. Then $\mathscr{G}^M$ denotes
 the set of all strong iterability-good $M$-candidates $\canN$
 such that   for every $P\pins\canN$,
 if $P$ has no largest cardinal then $P\sats$ ``I am
$\cs(P|\om_1^P)$'' (see \ref{dfn:cs}).\footnote{The clause regarding the $P\pins\canN$ and $\cs(P|\om_1^P)$
is not needed in the proof of Theorem \ref{tm:V=HOD_in_transcendent_mice}.}
\end{dfn}

\begin{proof}[Proof of Theorem \ref{tm:V=HOD_in_transcendent_mice}]
We are assuming STH  and $M\in\pm_1$ is a transcendent  strongly tractable  $\om$-mouse,
and want to see that $\Momone=\Momone^M$
is definable without parameters over $\her_\lambda^M$,
where $\lambda=\om_2^M$ (see \S\ref{subsec:terminology} and Definitions \ref{dfn:Q^*_when_Q_correct}, \ref{dfn:transcendent}, \ref{dfn:tractable}). We will show that
$\mathscr{G}^M=\{\Momone\}$, which suffices.
We will not use the assumption that $M$ is an $\om$-premouse,
nor that it is transcendent, until the very last paragraph of the proof.
So what we
establish prior to that point
(up to Claim \ref{clm:Q^star_reaches_M}, inclusive)
can and will also be used
in the proof of Theorem \ref{tm:HOD_non-tame_mouse}.

We know $\Momone\in\mathscr{G}^M$,
by Lemmas \ref{lem:iterable_tract_implies_strong} and
\ref{lem:M_is_iterability-good},
so suppose $\canN\in\mathscr{G}^M$
with $\canM\neq\canN$.
We will  form and analyse a genericity comparison of
$\canM$ with
$\canN$ to reach a contradiction.
(For the proof of Theorem \ref{tm:HOD_non-tame_mouse},
we need to adapt this
to a simultaneous
comparison
of all elements of $\mathscr{G}^M$.)

Let
$\widetilde{\canM}=\widetilde{\canM}(\canN)$
and $\widetilde{\canN}=\widetilde{\canN} (\canM)$
(see  \ref{dfn:ptilde(m)}). Recall  that  $\widetilde{\canM}\ins
\canM^+=M|\om_2^M$ and
$\widetilde{\canN}\ins\canN^+$
(and $\univ{\canN^+}=\her_\lambda^M$),
$\rho_1^{\widetilde{\canM}}=\om_1^M=\rho_1^{\widetilde{\canN}}$,
 $\univ{\widetilde{\canM}}=U=\univ{\widetilde{\canN}}$,
and there is
$\xi<\om_1^M$ such that
$\Sigma_1^{\widetilde{\canN}}(\{\xi,p_1^{\widetilde{\canN}}\})$
is recursively equivalent to
$\Sigma_1^{\widetilde{\canM}}(\{\xi,p_1^{\widetilde{\canM}}\})$,
meaning that there are recursive functions
$\varphi\mapsto\varphi'$ and $\varphi\mapsto\widehat{\varphi}$
such that for all $x\in U$ and $\Sigma_1$ formulas $\varphi$ in the passive
premouse language,
\[ \widetilde{\canM}\sats\varphi(\xi,p_1^{\widetilde{\canM}},x)\iff \widetilde{\canN}\sats\varphi'(\xi,p_1^{\widetilde{\canN}},x)\]
and
\[ \widetilde{\canN}\sats\varphi(\xi,p_1^{\widetilde{\canN}},x)\iff\widetilde{\canM}\sats\widehat{\varphi}(\xi,p_1^{\widetilde{\canM}},x).\]
We may assume that the $1$-solidity
witnesses for $\widetilde{\canM}$
are in
$\Hull_1^{\widetilde{\canM}}(p_1^{\widetilde{\canM}}\cup\xi)$,
and likewise for $\widetilde{\canN}$.
Moreover, since $M$ is strongly tractable, we in fact have $\OR^U=\OR^{\widetilde{\canM}}=\OR^{\widetilde{\canN}}<\om_2^M$,
since the definition of $\widetilde{\canM}$ and $\widetilde{\canN}$
gives  a $\bfSigma_1^{\her_\lambda^M}$ cofinal function $f:\om\to\OR^U$.
So
$\widetilde{\canM}\pins\canM^+=M|\om_2^M$ and $\widetilde{\canN}\pins\canN^+$.

Let
$t^{\widetilde{\canM}}=t_1^{\widetilde{\canM}}$ and
$t^{\widetilde{\canN}}=t_1^{\widetilde{\canN}}$.
Let $(A,B)$
be the least conflicting pair
with $A\pins\canM$ and $B\pins\canN$.
We construct a
$t^{\widetilde{\canM}}$-genericity comparison $(\Tt,\Uu)$ of
$(A,B)$, via  $(\Lambda^{\canM},\Lambda^{\canN})$,
folding in initial linear iteration past $(\xi,A,B)$,
and linear iterations past $\star$-translations of non-trivial Q-structures.
We now turn to the details.

We first set up some notation. For
$\eta\in(\xi,\om_1^M)$, let \[
H_\eta=\cHull_1^{\widetilde{\canM}}(\eta\cup\{p_1^{\widetilde{\canM}}\})\text{
and } \pi_\eta:H_\eta\to \widetilde{\canM}\text{ be the uncollapse},\]
\[
J_\eta=\cHull_1^{\widetilde{\canN}}(\eta\cup\{p_1^{\widetilde{\canN}}\})\text{
and }\sigma_\eta:J_\eta\to\widetilde{\canN}\text{ be the uncollapse}.\]
Note that $\rg(\pi_\eta)=\rg(\sigma_\eta)$ and $\OR^{H_\eta}=\OR^{J_\eta}$
and $H_\eta\pins\canM$ and $J_\eta\pins\canN$.
Let $C\sub\om_1^M$ be the club of all $\eta$
such that $\eta=\crit(\pi_\eta)=\crit(\sigma_\eta)$.
So for $\eta\in C$,  we have
$\rho_1^{H_\eta}\leq\om_1^{H_\eta}=\eta$
and $\rho_1^{J_\eta}\leq\om_1^{J_\eta}=\eta$ and
 $\pi_\eta(\eta)=\om_1^M=\sigma_\eta(\eta)$
and  $\pi_\eta(p_1^{H_\eta}\cut\eta)=p_1^{\widetilde{\canM}}$
and $\sigma_\eta(p_1^{J_\eta}\cut\eta)=p_1^{\widetilde{\canN}}$ and
\begin{equation}\label{eqn:t_rest_eta_is_theory_of_H_eta}
t^{\widetilde{\canM}}\rest\eta=\Th_1^{H_\eta}(\eta\cup\{p_1^{H_\eta}\})(p_1^{
H_\eta}/\dot{p}) \end{equation}
and likewise for $t^{\widetilde{\canN}}\rest\eta$ and $J_\eta$.
And given $\eta<\delta\leq\eta'$ with $\eta,\eta'\in C$  consecutive,
\begin{equation}\label{eqn:encoded_surj}t^{\widetilde{\canM}}\rest\delta\text{
encodes a surjection } (\eta+1)^{<\om}\to\delta.\end{equation}
If $\eta\in C$ and $\rho_1^{H_\eta}<\eta$ (equivalently,
$\rho_1^{J_\eta}<\eta$) then
(since $H_\eta$ is $1$-sound and
$\pi_\eta(p_1^{H_\eta}\cut\eta)=p_1^{\widetilde{\canM}}$),
\begin{equation}\label{eqn:encoded_surj_when_rho_1=om}t^{\widetilde{\canM}}
\rest\eta\text {
encodes a surjection } \om\to\eta.\end{equation}

We will construct a strictly increasing sequence
$\left<\eta_\beta\right>_{\beta<\om_1^M}$
and $(\Tt,\Uu)\rest(\eta_\beta+1)$, recursively in $\beta$.
The ordinals $\eta_\beta$ will be exactly those $\eta$
such that $M((\Tt,\Uu)\rest\eta)$ is not a Q-structure
for itself (and then $\eta=\delta((\Tt,\Uu)\rest\eta)$, but
$\eta$ need not be Woodin in the eventual $M(\Tt,\Uu)$).
We will see that each $\eta_\beta$
is a limit point of $C$ with $\rho_1^{H_{\eta_\beta}}=\eta_\beta$.

If we have constructed
$(\Tt,\Uu)\rest(\alpha+1)$ where $\alpha<\om_1^M$,
we let $F^\Tt_\alpha,F^\Uu_\alpha,K_\alpha$
be as usual, and will have $F^\Tt_\alpha\neq\emptyset$
or $F^\Uu_\alpha\neq\emptyset$.

We now begin the construction, considering first $\beta=0$. We construct
$(\Tt,\Uu)\rest\eta_0$
in 2 phases.  In the first phase (given $(\Tt,\Uu)\rest\alpha+1$ where
$\alpha<\eta_0$), we compare, subject to linear iteration of the least
measurable $\mu$ of $K_\alpha$,
until $\mu\geq\max(\xi,\OR^A,\OR^B)$. In the second phase, we compare,
subject to $t^{\widetilde{\Momone}}$-genericity iteration
for meas-lim extender algebra axioms of $K_\alpha$
(equivalently, $t^{\widetilde{\canN}}$-genericity).
Let $\eta_0$ be the least
 $\eta$ such that $M((\Tt,\Uu)\rest\eta)$
is not a Q-structure for itself.
The iteration strategies
$\Lambda^{\Momone},\Lambda^{\canN}$ apply trivially prior
to stage $\eta_0$,
and because $\widetilde{\canM},\widetilde{\canN}\in M$, an easy reflection argument shows that $\eta_0<\om_1^M$ exists.

Since $R=M((\Tt,\Uu)\rest\eta_0)$ is not a Q-structure for itself,
we need to see that $\Tt\in\dom(\Lambda^{\canM})$
and $\Uu\in\dom(\Lambda^{\canN})$.
Let $\delta=\delta((\Tt,\Uu)\rest\eta_0)$. So
$t^{\widetilde{\Momone}}\rest\delta$ and $t^{\widetilde{\canN}}\rest\delta$
are $\BB_{\measlim,\delta}^{\J(R)}$-generic over $\J(R)$,
and $\delta$ is regular in $\J(R)[t^{\widetilde{\Momone}}\rest\delta]$.
So by line (\ref{eqn:encoded_surj}), it follows
that $\delta$ is a limit point of $C$, so
$\delta=\om_1^{H_\delta}=\om_1^{J_\delta}$,
and by line
(\ref{eqn:encoded_surj_when_rho_1=om}), it follows
that $\rho_1^{H_\delta}=\delta$, and in fact note
$\rho_\om^{H_\delta}=\delta$ (since each $\rSigma_{n+1}$ theory
in parameters can be defined from $t^{\widetilde{\canM}}\rest\delta$).
Likewise, $\rho_1^{J_\delta}=\delta=\rho_\om^{J_\delta}$.
Note also that
$(\Tt,\Uu)\rest\eta_0\sub(H_\delta|\delta)\inter(J_\delta|\delta)$
and $(\Tt,\Uu)\rest\eta_0$ is definable from the parameter $(A,B,\xi)$
over $H_\delta$, and likewise over $J_\delta$, and so $\eta_0=\delta$ (the
most complex aspect of the definition
being the $t^{\widetilde{\canM}}$-genericity iteration,
but this is equivalent to $t^{\widetilde{\canM}}\rest\delta$ for this segment,
and that is definable over $H_\delta$ and over $J_\delta$). So $\eta_0$ is
indeed
a limit point of $C$ and $\rho_1^{H_{\eta_0}}=\eta_0=\rho_1^{J_{\eta_0}}$. Now it follows that $\canM\sats$ ``$\Tt\rest\eta_0$
is $H_{\eta_0}$-optimal'' and $\canN\sats$ ``$\Uu\rest\eta_0$ is
$J_{\eta_0}$-optimal'', and hence these trees are in the domains of our
strategies, as desired.

Now suppose we have constructed $(\Tt,\Uu)\rest(\eta_\beta+1)$
for some $\beta$, with $\delta((\Tt,\Uu)\rest\eta_\beta)=\eta_\beta$.
To reach
$(\Tt,\Uu)\rest(\eta_{\beta+1}+1)$,
we first determine whether there is $E\in\es(K_{\eta_\beta})$
which induces a bad meas-lim extender algebra axiom with $\nu(E)=\eta_\beta$.
If so, set $E^\Tt_{\eta_\beta}=E^\Uu_{\eta_\beta}=$ the least such.
After that, or otherwise, we proceed with comparison, again in two phases. The first phase is subject to iterating
the least measurable of $K_\alpha$ which is $>\eta_\beta$, to
$\geq\max(\OR(Q^\Tt)^\star),\OR((Q^\Uu)^\star))$,
where $Q^\Tt=Q(\Tt\rest\eta_\beta,[0,\eta_\beta)_\Tt)$
and likewise for $Q^\Uu$, and the superscript-$\star$ denotes the associated
$\star$-translation (using $H_{\eta_\beta}$ and $\Tt\rest\eta_\beta$
for the $\Tt$-side, and $J_{\eta_\beta}$ and $\Uu\rest\eta_\beta$ for
the $\Uu$-side). The second phase
is
subject to $t^{\widetilde{\Momone}}$-genericity iteration as before.
By induction, $(\Tt,\Uu)\rest\eta_\beta$ is definable from parameters
over $H_{\eta_\beta}$ and over $J_{\eta_\beta}$,
which are segments of $(Q^\Tt)^\star$ and $(Q^\Uu)^\star$ respectively.
So from $(Q^\Tt)^\star$ we can recover (from parameters)
first $\Tt\rest\eta_\beta$, and hence
also $\Tt\rest(\eta_\beta+1)$,
 the last step
 because
 $Q^\Tt=((Q^\Tt)^\star)^{\moon}$;
 likewise $\Uu\rest(\eta_\beta+1)$ from $(Q^\Uu)^\star$.
And the interval from $(\Tt,\Uu)\rest(\eta_\beta+1)$
through
$(\Tt,\Uu)\rest(\eta_{\beta+1}+1)$ is like for $(\Tt,\Uu)\rest(\eta_0+1)$.

Now suppose we have defined $(\Tt,\Uu)\rest\eta$ where
$\eta=\sup_{\beta<\zeta}\eta_\beta$ and $\zeta$ is a limit.
So $\eta=\delta((\Tt,\Uu)\rest\eta)$ and $\eta$ is a limit of limit points  of $C$.
Suppose first that $M((\Tt,\Uu)\rest\eta)$ is a Q-structure for itself
(hence we will set $\eta<\eta_\zeta$).
In this case we proceed directly with
 comparison subject to genericity iteration,
leading to $(\Tt,\Uu)\rest\eta_\zeta$.
We then have that $\eta_\zeta=\delta((\Tt,\Uu)\rest\eta_\zeta)$ is a limit point of $C$. We have $\Tt\rest\eta_\zeta\in\dom(\Lambda^{\Momone})$, etc,
since $(\Tt,\Uu)\rest\eta_\zeta$ is definable from $(A,B,\xi)$
over $H_{\eta_\zeta}$ and over $J_{\eta_\zeta}$,
as these structures can compute the genericity aspect as before,
and  we can uniformly recover the earlier Q-structures used to guide $(\Tt,\Uu)\rest\eta_\zeta$ by $\moon$-construction, since we inductively folded in
iteration past their $\star$-translations.
This leads to $(\Tt,\Uu)\rest(\eta_\zeta+1)$.
Finally, if $M((\Tt,\Uu)\rest\eta)$ is not a Q-structure
for itself, then $\eta_\zeta=\eta$, and
we can now proceed basically as in the previous case to see that
$\Tt\rest\eta_\zeta\in\dom(\Lambda^{\canM})$, etc, leading again to $(\Tt,\Uu)\rest(\eta_\zeta+1)$.

This completes the construction of the comparison. Note that $(\Tt,\Uu)\rest\om_1^M\in M$, since it is definable from parameters over $\widetilde{\canM}$.
So it lasts $\delta=\om_1^M$ stages, and $\eta_\beta<\om_1^M$ for each $\beta<\om_1^M$. Either $\Tt$ or $\Uu$
has no cofinal branch in $M$, as before.
 Let $b=\Sigma_A(\Tt)$ (the correct
$\Tt$-cofinal branch)
and $Q=Q(\Tt,b)$. Let $Q^\star=Q^\star(\Tt,\widetilde{\canM})$.

\begin{clmfive}\label{clm:Q^star_reaches_M}
 $Q^\star||\OR(\canM^+)=\canM^+$.\end{clmfive}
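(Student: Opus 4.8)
The plan is to recognise $\Tt$ as an optimal tree for $\widetilde{\canM}$ and then read off $Q^\star$ from the $\star$-translation consequences of STH. Put $\delta=\om_1^M$, so that $(\delta^+)^M=\OR(\canM^+)$. First I would verify that $\Tt$ is $\widetilde{\canM}$-optimal for $M$: it is $\om$-maximal on the $\om$-premouse $A\pins\widetilde{\canM}|\om_1^{\widetilde{\canM}}$, has $\lh(\Tt)=\delta(\Tt)=\delta$ with $\delta$ a successor cardinal of $\widetilde{\canM}$ (it is $\om_1^{\widetilde{\canM}}$), is definable from parameters over $\widetilde{\canM}$ (established in the construction, the definability over each $H_{\eta_\beta}$ passing to $\widetilde{\canM}=\bigcup_\beta H_{\eta_\beta}$), and satisfies $\rho_\om^{\widetilde{\canM}}=\delta$ with $t_1^{\widetilde{\canM}}=t^{\widetilde{\canM}}$ being $\BB_{\measlim,\delta}^{M(\Tt)}$-generic over $M(\Tt)$, as the folded-in genericity iteration arranges; moreover $\Tt\in M$, $\widetilde{\canM}\pins M$ and $\delta$ is a strong cutpoint of $M$. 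The one clause needing real work is that $\J(M(\Tt))\sats$``$\delta$ is Woodin'', i.e.\ that $N:=M(\Tt,\Uu)$ is not a Q-structure for itself; I would derive this from the standard genericity-iteration analysis (cf.\ the treatment of limit stages in the proof of Theorem~\ref{thm:tame_HOD_x}): $t^{\widetilde{\canM}}$ is $\BB^{N}_{\measlim,\delta}$-generic over $N$ and $\delta$ is regular in $\J(N)[t^{\widetilde{\canM}}]\in M$, so were $N$ a Q-structure for itself the genericity iteration would have to overshoot $\delta$, against $\OR^N=\delta$. (This also gives $M(\Tt)\pins Q:=Q(\Tt,b)$, hence, by $\star$-validity, $\delta$ is Woodin in $Q$.)

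Next, Lemma~\ref{lem:P-optimal_uniqueness} shows that $\widetilde{\canM}$ is the unique $P$ for which $\Tt$ is $P$-optimal, and since $b=\Sigma_A(\Tt)$ is the correct branch, the immediate consequence of STH recorded after Definition~\ref{dfn:Q^*_when_Q_correct} applies — using that $\delta$ is a successor cardinal of $\widetilde{\canM}$, a strong cutpoint of $M$, and Woodin in $Q$: $Q^\star=Q^\star(\Tt,\widetilde{\canM})$ is a well-defined premouse, and either $Q^\star\pins M$, or $M||(\delta^+)^M=Q^\star||(\delta^+)^M$ with $\delta$ a successor cardinal of $M$. Since $(\delta^+)^M=\OR(\canM^+)$, the second alternative is exactly the claim $\canM^+=Q^\star||\OR(\canM^+)$.

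It thus remains to treat the alternative $Q^\star\pins M$, where the task is to show $\canM^+\ins Q^\star$ (which then yields $\canM^+=Q^\star||\OR(\canM^+)$). Here I would use that $\star$-translation and the $\Moon$-construction are mutually inverse (Lemma~\ref{lem:bh_inverts_star}), together with Lemma~\ref{lem:compute_*_q_translation}: the construction $(\canM^+)^\moon(\Tt,\widetilde{\canM})$ is well-defined, is definable in the codes over $\canM^+$, and satisfies $\bigl((\canM^+)^\moon\bigr)^\star=\canM^+$. One then checks that $(\canM^+)^\moon$ agrees with $Q$ below $\OR^Q$, both being the correct continuation of the construction over $M(\Tt)$ determined by $\Tt$ and the iterable $M$ — the $\Moon$-construction halting precisely at the terminal model, which is the Q-structure handed back by the correct branch $b$ — so translating back gives $\canM^+\ins Q^\star$, as required. (An alternative way to close this off: if $Q^\star\pins\canM^+$ then $Q^\star\in\canM^+\pins M$, so $Q=(Q^\star)^\moon\in M$ by Lemmas~\ref{lem:compute_*_q_translation} and~\ref{lem:bh_inverts_star}, hence $b=\branch(\Tt,Q)\in M$ by Lemma~\ref{lem:Q_computes_b}, pinning down the $\Tt$-side of the comparison inside $M$ for the subsequent contradiction.)

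The main obstacle is the combination of the Woodinness clause above — that $N$ is not a Q-structure for itself — with the identification of $(\canM^+)^\moon$ with $Q$; both rest entirely on STH and demand care with the $\star$-translation bookkeeping, in particular keeping straight that $M(\Tt)^\star=\widetilde{\canM}$, that $Q^\star$ carries $\delta$ as a strong cutpoint rather than as a Woodin, and that the $\Moon$-construction performed inside $\canM^+$ recomputes precisely $Q$.
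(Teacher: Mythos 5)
There is a genuine gap, and it sits exactly where the real work of the claim lies. Your reduction via the STH dichotomy is fine: either $M||(\delta^+)^M=Q^\star||(\delta^+)^M$ (which is the claim, since $\delta=\om_1^M$ and $(\delta^+)^M=\OR(\canM^+)$), or else $Q^\star\pins M$, and since $Q^\star$ is $\delta$-sound and projects to $\delta=\om_1^M$, the latter means $Q^\star\pins\canM^+$. But your treatment of that remaining case is circular. You assert that $(\canM^+)^\moon$ "agrees with $Q$", the $\Moon$-construction "halting precisely at the terminal model, which is the Q-structure handed back by the correct branch $b$", and conclude $\canM^+\ins Q^\star$. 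The agreement is true but proves nothing in the needed direction: if the $\Moon$-construction over $\canM^+$ reaches a terminal (i.e.\ Q-structure) model at some \emph{proper} segment $R\pins\canM^+$, then by Lemma \ref{lem:bh_inverts_star} that terminal model is exactly $Q$ and $Q^\star=R\pins\canM^+$ --- which is precisely the failure of the claim. Nothing in your argument rules out early termination of the $\Moon$-construction below $\om_2^M$; that is the whole content of the claim, not a bookkeeping check. Note also that your proposal never uses $\canN$ or $\canN\neq\canM$ anywhere, whereas the claim is simply false without a second candidate in play (for a transcendent $M$ one ultimately \emph{wants} $Q^\star$ to reach $\OR^M$, but for, say, a single candidate with no conflicting $\canN$ there is no comparison at all).

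The paper's proof derives a contradiction from $Q^\star\pins\canM^+$ as follows. First, $Q=(Q^\star)^\moon$ is then computable inside $M$ (Lemmas \ref{lem:compute_*_q_translation} and \ref{lem:bh_inverts_star}), so $b\in M$ by Lemma \ref{lem:Q_computes_b}, and hence --- by the standard comparison-termination argument, which only excludes \emph{both} cofinal branches lying in $M$ --- there is no $\Uu$-cofinal branch in $M$. This alone is not yet a contradiction. One then shows: (i) $(\canN^+)^\moon_0(\Uu,\widetilde{\canN})$ is well-defined and satisfies ``$\delta$ is Woodin'', using that $\canN$ is iterability-good via a condensation/reflection argument (any defect or Q-structure reached by the $\Moon$-construction would reflect to a defect of $\Lambda^{\canN}$); (ii) $Q\npins(\canN^+)^\moon_0$, via a diagonalization showing the theory $t^Q_{q+1}(\delta)$ cannot be definable over any $R[t_1^{\widetilde{\canN}}]$ with $R\pins Q$; and (iii) there is a club $C\sub\delta$ of Woodin cardinals of $M(\Tt,\Uu)$, obtained from a continuous chain of hulls reflecting the disagreement between the two Q-structures. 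Finally, $C$ is definable over a generic extension of a segment of $(\canN^+)^\moon_0$, and the $\delta$-cc of the meas-lim extender algebra in $(\canN^+)^\moon_0$ yields a club $D\sub C$ in $(\canN^+)^\moon_0$ whose least limit point has cofinality $\om$ there, hence is not Woodin in $M(\Tt,\Uu)$ --- a contradiction. Your parenthetical ``alternative way to close this off'' gets as far as $b\in M$ and then defers to ``the subsequent contradiction'' without supplying it; that subsequent contradiction is the bulk of the proof.
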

\begin{proof}
 Suppose not. By \label{pg:using_Lemma_*-trans_gives_seg_of_M}Lemma \ref{lem:*-trans_gives_seg_of_M}, it follows
 that $Q^\star\pins\canM^+$.
 And
$Q^{\star\moon}=Q^{\star\moon}(\Tt,\widetilde{\canM})=Q$,
so by Lemmas \ref{lem:compute_*_q_translation}
and \ref{lem:Q_computes_b}, we get $b\in M$, and hence there is no
$\Uu$-cofinal branch in $M$. (Our assumptions seem to allow the possibility that $Q\notin M$,
but still the relevant theory $t$ coding $Q$ is in $M$, so $b\in M$.)

\begin{sclmfive}
$(\canN^+)_0^\moon=(\canN^+)_0^\moon(\Uu,\widetilde{\canN})$
 is well-defined and satisfies ``$\delta$ is Woodin''
 (note if $M\sats$ ``$\om_2$ exists''
 then it follows that $\canN^{+\moon}=(\canN^+)_0^\moon$).
\end{sclmfive}
 \begin{proof}
Suppose not and let $R\pins\canN^+$ be least
such that $\widetilde{\canN}\ins R$
and either  (i)
$R^\moon=R^\moon(\Uu,\widetilde{\canN})$
 is ill-defined or not a premouse,
 or (ii) it is a well-defined premouse and is a Q-structure
 for $M(\Uu)$ or projects $<\delta$.

 If (i) holds then working in $\canN^+$,
 which has universe that of $\canM^+$,
we can use condensation to find
$\bar{R}\pins\canN$ and a sufficiently elementary
$\pi:\bar{R}\to R$ with $\crit(\pi)=\bar{\delta}=\om_1^{\bar{\R}}$,
$\widetilde{\canN},\Uu\in\rg(\pi)$,
$\pi(\bar{\widetilde{\canN}})=\widetilde{\canN}$,
 $\pi(\bar{\Uu})=\Uu$ and hence $\bar{\Uu}=\Uu\rest\bar{\delta}$.
Also,
$\bar{\widetilde{\canN}}\ins\bar{R}$,
and  $\bar{\Uu}$ is $\bar{\widetilde{\canN}}$-optimal.
By Lemma \ref{lem:compute_*_q_translation},
the ill-definedness
 of $R^\moon$ reflects to $\bar{R}^\moon(\bar{\Uu},\bar{\widetilde{\canN}})$,
 contradicting that $\canN$ is iterability-good.

 So (ii) holds. But then $R^\moon$ must determine a $\Uu$-cofinal
 branch, because otherwise, we can do a similar reflection argument
 to get a Q-structure for some $M(\bar{\Uu})$ with $\bar{\Uu}\pins\Uu$,
 produced by $\Moon$-construction, which does not yield a $\bar{\Uu}$-cofinal
branch, again contradicting that $\canN$ is iterability-good.
\end{proof}

By the subclaim,
$Q\npins(\canN^+)^\moon_0$.

\begin{sclmfive}\label{sclm:club_C_of_Woodins}
In $M$ (hence also in $\canN^+$) there is a
club $C\sub\delta$
 consisting of Woodin cardinals of $M(\Tt,\Uu)$,
 hence Woodin cardinals of $(\canN^+)_0^\moon$.
\end{sclmfive}
\begin{proof}
By Lemma \ref{lem:compute_*_q_translation},
$t=t_{q+1}^Q(\delta)\in\univ{\canM^+}=\univ{\canN^+}$,
where $\rho_{q+1}^Q\leq\delta<\rho_q^Q$.
 Fix the least $N\pins\canN^+$ such that
 $\widetilde{\canN}\ins N$ and $t\in\J(N)$,
so $\rho_\om^N=\delta$.
By STH and \ref{lem:compute_*_q_translation},
$N^\moon=N^\moon(\Uu,\widetilde{\canN})\pins(\canN^+)^\moon_0$,
$\rho_\om(N^\moon)=\delta$, $(N^\moon)^\star=N$ and
$t$
is definable from parameters over $N^\moon[t_1^{\widetilde{\canN}}]$.
We claim  that $N^\moon\npins Q$.
For suppose $R\pins Q$ and $t$
is  definable from parameters over $R[t_1^{\widetilde{\canN}}]$.
We have that $t_1^{\widetilde{\canN}}$ is also
generic over $Q$ for $\BB_{\measlim,\delta}^Q$,
and from $t$ and $t_1^{\widetilde{\canN}}$
one can compute the corresponding theory of $Q[t_1^{\widetilde{\canN}}]$
which could be denoted $t_{q+1}^{Q[t_1^{\widetilde{\canN}}]}$.
But that theory is not in $Q[t_1^{\widetilde{\canN}}]$
by a standard diagonalization.

So $N^\moon\npins Q$, but $Q\nins N^\moon$.
And we have $Q^\star\pins\canM^+$ and $N\pins\canN^+$.
So working in $M$,  we can fix $P\pins M$
with $\rho_\om^{P}=\delta$ and these objects
all in $\J(P)$, and a form a continuous, increasing
 chain $\left<P'_\alpha\right>_{\alpha<\om_1^M}$
of substructures $P'_\alpha\elem_n P$, with $n<\om$ sufficiently large,
and all relevant objects definable from parameters in $P'_0$,
and a club $C=\left<\delta_\alpha\right>_{\alpha<\om_1^M}$,
such that $P'_\alpha\inter\delta=\delta_\alpha$.
Let $P_\alpha$ be the transitive collapse
of $P'_\alpha$ and $\pi_\alpha:P_\alpha\to P$
the uncollapse,
so $\crit(\pi_\alpha)=\delta_\alpha$
and $\pi_\alpha(\delta_\alpha)=\delta$. By condensation,
we have $P_\alpha\pins\canM$. Let
 $Q_\alpha$, $\widetilde{\canM}_\alpha$, $Q^{\text{s}}_\alpha$ and $N_\alpha^{\text{bh}}$,
$\widetilde{\canN}_\alpha$, $N_\alpha$,
be the resulting ``preimages'' of
$Q$, $\widetilde{\canM}$, $Q^\star$,  and $N^\moon$, $\widetilde{\canN}$,
 $N$
 respectively.\footnote{\label{ftn:clarify_sides}Note $Q$ and $N^\moon$ are outputs of black-hole constructions, whereas $\widetilde{\canM}$, $Q^\star$, $\widetilde{\canN}$ and $N$
 are  outputs of  $\star$-translations.} Then (because $n$ is large enough), condensation and elementarity
give
that $\widetilde{\canM}_\alpha\ins Q^{\text{s}}_\alpha\pins\canM$
and $\widetilde{\canN}_\alpha\ins N_\alpha\pins\canN$
and the relevant first order properties
reflect down to these models at each $\alpha$,
along with $(\Tt,\Uu)\rest\delta_\alpha$, which is the preimage of $(\Tt,\Uu)$.
It follows that the Q-structures used at stage $\delta_\alpha$
in $\Tt,\Uu$ are distinct, and therefore $\delta_\alpha$
is Woodin in $M(\Tt,\Uu)$. So $C$ is a club
of Woodins of $M(\Tt,\Uu)$.
\end{proof}

We can now easily reach a contradiction.
We have $((\canN^+)^\moon_0)^\star(\Uu,\widetilde{\canN})=\canN^+$.
Let $R'\pins\canN^+$ be least such that $C\in\J(R')$,
so $\rho_\om^{R'}=\delta$.
Let $Q'=(R')^\moon$. So $\rho_\om^{Q'}=\delta$ and
$Q'\pins\canN^{+\moon}$ and
 $R'=(Q')^\star$.
So $C$ is definable from parameters over
$Q'[t_1^{\widetilde{\canN}}]$, so
$C\in(\canN^+)_0^\moon[t_1^{\widetilde{\canN}}]$.
But since $\delta$ is Woodin in $(\canN^+)^\moon_0$,
the forcing is
$\delta$-cc in $(\canN^+)^\moon_0$,
so there is a club $D\sub C$ with $D\in(\canN^+)^\moon_0$.
Letting $\eta$ be the least limit point of $D$,
then $\cof^{(\canN^+)^\moon_0}(\eta)=\om$,
so $\eta$ is not Woodin in $(\canN^+)^\moon_0$,
hence not Woodin in $M(\Tt,\Uu)$, a contradiction,
completing the proof of the claim.\end{proof}

Now $Q^\star\ins M$, since $M$ is an $\om$-mouse and
by Lemma \ref{lem:*-trans_gives_seg_of_M}(\ref{item:if_M_om-mouse_then_Q^star_ins_M}).
So $Q^{\star}=M$, by Claim \ref{clm:Q^star_reaches_M}
and Lemma \ref{lem:*-trans_gives_seg_of_M}(\ref{item:Q^star_projects_<=delta}).
But then by STH (\ref{dfn:Q^*_when_Q_correct}) part \ref{item:STH_part_1}, this contradicts the assumption that $M$ is transcendent (\ref{dfn:transcendent}).
\end{proof}

\begin{proof}[Proof of Theorem \ref{tm:HOD_non-tame_mouse}]
We are no longer assuming that $M$ is transcendent, nor an $\om$-mouse.
But we assume  $M\sats\ZFC$ and have $H=\HOD^{\univ{M}}$.
 Suppose $H\neq\univ{M}$; we want to analyse $H$. The analysis is analogous to
that in the tame case,
 Theorem \ref{tm:HOD_tame_mouse}.
However, we will not prove that $\es^W$ is the restriction of $\es^M$ above
$\om_3^M$ (or above anywhere); we will instead get that $M$ is a
$\star$-translation of some appropriate $W$.

Let $\canN\in\mathscr{G}^M\cut\{\Momone\}$ where $\Momone=\Momone^M$.
Recall
that everything in the proof of
Theorem \ref{tm:V=HOD_in_transcendent_mice} preceding its very last  paragraph applies. So we can compare
$(\canM,\canN)$ as there, producing a comparison $(\Tt,\Uu)$ of length
 $\bar{\delta}=\om_1^M$, and either $\Tt$ or $\Uu$
has no cofinal branch in $M$. (In the current proof
we write $\delta=\om_3^M$.)
 Let $b=\Sigma_A(\Tt)$ (the correct
$\Tt$-cofinal branch)
and $Q=Q(\Tt,b)$.  By Claim
\ref{clm:Q^star_reaches_M} of the preceding proof,
$\canM^+=Q^\star||\om_2^M$.

\begin{clmsix}\label{clm:card(om_1^M)=card(om_2^M)}$\om_1^M$ and $\om_2^M$
have the same cardinality (in $V$),
and therefore $\om_2^M<\om_2$.
\end{clmsix}
\begin{proof}
Since $M|\om_2^M=\canM^+=Q^\star||\om_2^M$, and  $Q^\star$, $Q$ and $\om_1^M$ have the same cardinality (in $V$),
and of course $\om_1^M\leq\om_1$.
\end{proof}

Recall
we are now also assuming that
$M\sats\ZFC$, so $\canM^+\sats\ZFC^-$,
so we can't have $\canM^+= Q^\star$
(but it seems $Q^\star$ might be active at $\om_2^M$).
We also have
$\Momone^{+\moon}=\Momone^{+\moon}(\Tt,\widetilde{\canM})=Q||\om_2^M$ is
well-defined, and satisfies ``$\bar{\delta}$ is Woodin''.

\begin{clmsix}\label{clm:1}
$\canN^{+\moon}=\canN^{+\moon}(\Uu,\widetilde{\canN})$ is
well-defined and
$\Momone^{+\moon}=\canN^{+\moon}$.
\end{clmsix}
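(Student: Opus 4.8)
\textbf{Proof proposal for Claim \ref{clm:1}.}

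The plan is to run the same argument as in the proof of Claim \ref{clm:Q^star_reaches_M}, but from the $\canN$-side, using the symmetry between $\canM$ and $\canN$ built into the comparison, and then to identify the two black-hole constructions via the uniqueness of $\star$-translations. First I would establish that $\canN^{+\moon}_0 = \canN^{+\moon}_0(\Uu,\widetilde{\canN})$ is well-defined and satisfies ``$\bar\delta$ is Woodin''. This is exactly the content of the first subclaim in the proof of Claim \ref{clm:Q^star_reaches_M}: one argues by taking the least $R\pins\canN^+$ with $\widetilde{\canN}\ins R$ for which either $R^\moon(\Uu,\widetilde{\canN})$ is ill-defined or not a premouse, or is a well-defined premouse that is a Q-structure for $M(\Uu)$ or projects strictly below $\bar\delta$; then, working in $\canN^+$ (which has the same universe as $\canM^+$), one uses condensation to reflect to some $\bar R\pins\canN$ together with a sufficiently elementary $\pi\colon\bar R\to R$ with $\crit(\pi)=\om_1^{\bar R}$, $\widetilde{\canN},\Uu\in\rg(\pi)$, so that $\bar\Uu=\Uu\rest\crit(\pi)$ is $\bar{\widetilde{\canN}}$-optimal; by Lemma \ref{lem:compute_*_q_translation} the pathology reflects down to $\bar R^\moon(\bar\Uu,\bar{\widetilde{\canN}})$, contradicting that $\canN$ is iterability-good. (In case (ii), that $R^\moon$ is a genuine Q-structure but fails to compute a $\Uu$-cofinal branch, the same reflection yields a $\bar\Uu$-cofinal branch failure via $\Moon$-construction, again contradicting iterability-goodness.) So $\canN^{+\moon}_0$ is a well-defined premouse, does not project below $\bar\delta$, is not a Q-structure for $M(\Uu)$, and hence $M(\Uu)\pins\canN^{+\moon}_0$ with $\canN^{+\moon}_0\sats$``$\bar\delta$ is Woodin''. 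Since we are in the situation where (by Claim \ref{clm:Q^star_reaches_M} and its proof) $\Tt$ is the side with a correct branch $b$ reaching $Q$ with $Q^\star$ exhausting $\canM^+$, the remaining side $\Uu$ is the one without a cofinal branch in $M$, so this black-hole construction on the $\canN$-side is exactly what computes the missing branch, and $\canN^{+\moon}_0 = Q(\Uu,c)$ where $c=\Sigma_B(\Uu)$ is the correct $\Uu$-cofinal branch (chosen in $V$).

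Next I would show $\Momone^{+\moon}_0 = \canN^{+\moon}_0$, i.e.\ that the two computed Q-structures $Q||\om_2^M$ and $\canN^{+\moon}_0$ coincide. The key point is that $(\Tt,\Uu)$ is a genuine \emph{comparison}: by Claim \ref{clm:Q^star_reaches_M} the comparison reduces, above $\bar\delta$, to a comparison of $Q(\Tt,b)$ with $Q(\Uu,c)$, and since $\bar\delta$ is the least Woodin of each (or at least is a strong cutpoint of each, by the way the meas-lim genericity iterations were folded in), the standard comparison-termination argument forces $Q(\Tt,b)=Q(\Uu,c)$ — any least disagreement between them above $\bar\delta$ would be used on both sides and give the usual contradiction to termination, while there is no disagreement at or below $\bar\delta$ because $\bar\delta=\delta((\Tt,\Uu))$ is common and the extender algebra genericity pins down $M|\bar\delta=\canM|\bar\delta=\canN|\bar\delta$ below it. Thus $Q||\om_2^M = Q(\Tt,b)||\om_2^M = Q(\Uu,c)||\om_2^M = \canN^{+\moon}_0$; equivalently $\Momone^{+\moon} = \canN^{+\moon}$ (when $M\sats$``$\om_2$ exists'' the subscript-$0$ stage is already the full $\Moon$-construction, so this reads as claimed). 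I would also remark, as in the tame analysis, that $\star$-translation uniqueness (Lemma \ref{lem:P-optimal_uniqueness}) together with Lemma \ref{lem:bh_inverts_star} ensures this identification is canonical: $(\canN^{+\moon}_0)^\star(\Uu,\widetilde{\canN}) = \canN^+||\om_2^M$ and $(\Momone^{+\moon})^\star(\Tt,\widetilde{\Momone}) = \canM^+||\om_2^M$, and these have the same universe, so comparing their $\Moon$-constructions is forced.

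The main obstacle I anticipate is the termination/comparison argument on the $\Uu$-side in the presence of a proper class of Woodins in intermediate models and of model-drops: one must be careful that the short linear iterations folded in at successor measurables (past the $\star$-translations of Q-structures, and past the coding ordinals $\gamma$) have been arranged so that $\bar\delta$ is not itself iterated during the construction, so that $\bar\delta$ really is a strong cutpoint of both $Q(\Tt,b)$ and $Q(\Uu,c)$ and the comparison above $\bar\delta$ is a clean above-$\bar\delta$ comparison of the two Q-structures. This is exactly the phenomenon handled carefully in Case \ref{case:pc_Woodins} and Claim \ref{clm:no_pc_Woodins} of the proof of Theorem \ref{thm:tame_HOD_x} (the warning ``it is important here that we do not iterate at $\delta_{\eta_\alpha}$ itself''), and the non-tame version needs the same care, now additionally using that the $\star$-translations of the guiding Q-structures are retained along the branches (Lemma \ref{lem:bh_inverts_star}) so that $M(\Tt,\Uu)$ is correctly recomputed. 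Once that bookkeeping is in place, the coincidence $\Momone^{+\moon}=\canN^{+\moon}$ is immediate from uniqueness of the comparison, and no genuinely new ideas beyond those already deployed for Claim \ref{clm:Q^star_reaches_M} are required.
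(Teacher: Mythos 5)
Your first half (well-definedness of $\canN^{+\moon}$) follows the paper's intended route: reflect any failure of the $\Moon$-construction down into $\canN$ via condensation and contradict iterability-goodness, exactly as in the first subclaim of the proof of Claim \ref{clm:Q^star_reaches_M}. One caveat even there: in the present context you can no longer dispose of the case ``$\canN^{+\moon}$ reaches a genuine Q-structure'' by saying it must then compute a $\Uu$-cofinal branch and thereby contradict something — in Claim \ref{clm:Q^star_reaches_M} that contradiction came from the standing assumption $Q^\star\pins\canM^+$ (which gave $b\in M$, hence no $\Uu$-cofinal branch in $M$), and that assumption is exactly what has been refuted by the time Claim \ref{clm:1} is proved. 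The paper instead handles this case by the club-of-Woodins argument: a Q-structure on the $\canN$-side of height $<\om_2^M$ disagrees with $\canM^{+\moon}=Q\|\om_2^M$, and reflecting that disagreement produces club many $\delta_\alpha<\bar{\delta}$ at which the stage-$\delta_\alpha$ guiding Q-structures differ, hence club many Woodins of $M(\Tt,\Uu)$, which is impossible by the $\delta$-cc/cofinality-$\om$ computation at the end of the proof of Claim \ref{clm:Q^star_reaches_M}.

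The second half has a genuine gap. You derive $\Momone^{+\moon}=\canN^{+\moon}$ from a ``standard comparison-termination argument'' for $Q(\Tt,b)$ versus $Q(\Uu,c)$ above $\bar{\delta}$, resting on the assertion that $\bar{\delta}$ is (or can be arranged to be) a strong cutpoint of both Q-structures. In the non-tame setting this is false in general — the Q-structures can carry extenders overlapping $\bar{\delta}$ (indeed $\delta$-measures), which is precisely why the whole $\star$-translation/$\Moon$-construction apparatus of \S\ref{sec:star} is needed; if $\bar{\delta}$ were a strong cutpoint one would be back in the tame case. Moreover, no comparison above $\bar{\delta}$ is ever performed: $(\Tt,\Uu)$ stops at length $\om_1^M$, and $b,c$ are chosen in $V$, so there is no actual iteration whose termination you can invoke, and uniqueness of Q-structures in the presence of overlapping extenders is not obtained by a naive above-$\bar{\delta}$ comparison. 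The paper's proof avoids all of this: supposing $\canM^{+\moon}\neq\canN^{+\moon}$, it reflects the least disagreement down (taking hulls as in the second subclaim of Claim \ref{clm:Q^star_reaches_M}) to obtain a club of $\delta_\alpha$ at which the Q-structures guiding $\Tt$ and $\Uu$ are distinct, hence a club of Woodins of $M(\Tt,\Uu)$, and then gets the contradiction from the fact that such a club would be definable over a generic extension of $\canN^{+\moon}_0$ by a $\bar{\delta}$-cc forcing, forcing some member of the club to have countable cofinality there. You would need to replace your comparison step with this reflection argument (or supply a genuinely new uniqueness proof for overlapped Q-structures).
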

\begin{proof}
A reflection argument like before shows that
either $\canN^{+\moon}$ is well-defined
(producing a model of height $\om_2^M$)
or it reaches a Q-structure. If it reaches a Q-structure,
we can argue as above
to produce a club of Woodins $\sub\bar{\delta}$ for a contradiction.
And if it does not reach a Q-structure but $\canM^{+\moon}\neq\canN^{+\moon}$,
we can again reflect a disagreement down, noting that it also
produces a club $\left<\delta_\alpha\right>_{\alpha<\om_1^M}$
with disagreement between the Q-structures at stage $\delta_\alpha$,
and hence again a club of Woodins.
\end{proof}

\begin{clmsix}\label{clm:M^moon=N^moon}  We have:
\begin{enumerate}
 \item\label{item:claim4} $M^\moon=M^\moon(\Tt,\widetilde{\canM})$
 is well-defined,
$M^\moon|\deltabar^{+M^\moon}=(\Momone^+)^\moon$, $M^\moon\sats$ ``$\bar{\delta}$ is Woodin'',
and $M^\moon$ is $\star$-valid (and hence has no $\bar{\delta}$-measure),
\item\label{item:claim5(i)} $\cs(\canN)^{\univ{M}}$ is well-defined,
and hence is a proper class premouse $N$ with $\univ{N}=\univ{M}$,
\item\label{item:claim5(ii)}
$N^\moon=N^\moon(\Uu,\widetilde{\canN})$ is well-defined, and
\item\label{item:claim5(iii)}
$M^\moon=N^\moon$.
\end{enumerate}
\end{clmsix}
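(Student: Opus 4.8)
The plan is to build the proper-class $\Moon$-construction $W=M^\moon(\Tt,\widetilde{\canM})$ of part~(1) first, obtain $\cs(\canN)^{\univ M}$ of part~(2) as $W^\star(\Uu,\widetilde{\canN})$, and then read parts~(3) and~(4) off Lemma~\ref{lem:bh_inverts_star}. For part~(1): we already have $(\Momone^+)^\moon(\Tt,\widetilde{\canM})=Q\|\om_2^M$ well-defined and satisfying ``$\bar\delta$ is Woodin'', and $Q^\star(\Tt,\widetilde{\canM})$ well-defined (as used in Claim~\ref{clm:Q^star_reaches_M}) with $Q^\star\|\om_2^M=\canM^+$. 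By Lemma~\ref{lem:bh_inverts_star} the $\Moon$-construction of $M$ above $\widetilde{\canM}$ is defined and inverts $\star$-translation on all segments up to $Q^\star$; for segments of $M$ of height $>\om_2^M$ I would run a reflection argument just like those in the proofs of Claims~\ref{clm:Q^star_reaches_M} and~\ref{clm:1}: were the construction first ill-defined, illfounded, or terminal at some $R\pins M$ of height $>\om_2^M$, then, using that the proper segments of the tractable mouse $M$ satisfy standard condensation, I would reflect the failure to a countable $\bar R\pins\canM$ carrying preimages of $\widetilde{\canM},\Tt$ with the critical point of the collapse equal to $\om_1^{\bar R}$ and to one of the $\eta_\beta$ (so that $\bar\Tt$ is $\overline{\widetilde{\canM}}$-optimal, hence a necessary tree), and then the local definability of the $\Moon$-construction (Lemma~\ref{lem:compute_*_q_translation}), STH, and iterability-goodness of $\canM$ (Lemma~\ref{lem:M_is_iterability-good}) would give a contradiction. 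This yields $W:=M^\moon$, a proper-class premouse extending $W_0:=(\Momone^+)^\moon=Q\|\om_2^M$ and never terminal, so $W\sats$``$\bar\delta$ is Woodin'' and $W$ is $(0,\om_1+1)$-iterable (since $W^\star(\Tt,\widetilde{\canM})=M$ is, by Lemma~\ref{lem:bh_inverts_star}); and since the meas-lim extender-algebra extension of $W$ adding $\widetilde{\canM}$ has size $\bar\delta$ and is $\bar\delta$-c.c.\ in $W$, the premice $W$ and $M$ have the same cardinals $\geq\bar\delta$, whence $(\bar\delta^+)^W=\om_2^M$ and $M^\moon|(\bar\delta^+)^{M^\moon}=W_0=(\Momone^+)^\moon$.

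For part~(2), put $N:=W^\star(\Uu,\widetilde{\canN})$. By Claim~\ref{clm:1}, $\canN^{+\moon}(\Uu,\widetilde{\canN})=W_0$, so by Lemma~\ref{lem:bh_inverts_star}, $W_0^\star(\Uu,\widetilde{\canN})=\canN^+=\sJs(\canN)$, which has universe $\her_{\om_2}^M$ since $\canN$ is strong; and the $\star$-translations of the higher segments of $W$ with respect to $(\Uu,\widetilde{\canN})$ are well-defined by the same kind of reflection, now into $\canN$ (using that $\canN\in\mathscr{G}^M$ is strong and iterability-good) together with STH. Thus $N$ is a well-defined proper-class premouse with $N\|\om_2^M=\sJs(\canN)$, so $\canN\ins N$; by STH together with the iterability of $W$ its proper segments satisfy standard condensation, so $N$ is the inductive condensation stack above $\canN$. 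Its universe is $\univ M$: above $\om_2^M$, $N$ carries the canonical extensions of the extenders of $W$ above $\bar\delta$, which by part~(1) are the level-by-level restrictions of $\es^M$ above $\bar\delta$, so $\univ N$ and $\univ M$ agree there, while $N\|\om_2^M=\sJs(\canN)$ has universe $\her_{\om_2}^M$. Hence $\cs(\canN)^{\univ M}$ is well-defined and equals $N$, a proper-class premouse with $\univ N=\univ M$; this is part~(2).

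Parts~(3) and~(4) are then immediate: since $N=W^\star(\Uu,\widetilde{\canN})$ and $W$ is iterable, Lemma~\ref{lem:bh_inverts_star} gives $N^\moon(\Uu,\widetilde{\canN})=W$, so $M^\moon=W=N^\moon$. I expect the main obstacle to be the well-definedness of the proper-class objects $W=M^\moon$ and $N=W^\star(\Uu,\widetilde{\canN})$ above $\om_2^M$: a single tree's $\star$-translation only accounts for $M$ (respectively $\canN^+$) up to a bounded segment, so one must make sure that a failure of the $\Moon$-construction at a tall segment still reflects to a definable failure inside $\canM$ (respectively $\canN$) of exactly the kind ruled out by iterability-goodness. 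I believe this is handled just as in the already-completed Claims~\ref{clm:Q^star_reaches_M} and~\ref{clm:1} and needs no new ideas beyond those.
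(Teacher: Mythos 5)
Your part~(1) is essentially the paper's (the paper reflects a failure of the $\Moon$-construction at a tall segment down to $(\Momone^+)^\moon$ via condensation and quotes Lemma~\ref{lem:compute_*_q_translation} for the cardinal agreement), but for parts~(2)--(4) you take a genuinely different, top-down route: you define $N$ as $W^\star(\Uu,\widetilde{\canN})$ and try to identify it with $\cs(\canN)^{\univ M}$, whereas the paper works bottom-up, fixing a limit cardinal $\lambda$ witnessing a failure of (2)--(4) at $M|\lambda$ and reflecting it, via carefully chosen hulls $C=(C')^\moon$ with $C'\pins M$ and $C\pins\canM^{+\moon}=\canN^{+\moon}$, down below $\om_2^M$ where Claim~\ref{clm:1} applies. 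Your route has three gaps. First, the $(0,\om_1+1)$-iterability of the proper-class $W=M^\moon$ does \emph{not} follow from Lemma~\ref{lem:bh_inverts_star}, which only asserts that $\Moon$ and $\star$ are inverses; nothing in the paper establishes iterability of $W$, and the paper instead obtains standard condensation for segments of $M^\moon$ by reflecting failures to the iterable $(\Momone^+)^\moon$. Second, the well-definedness of $W^\star(\Uu,\widetilde{\canN})$ at segments of height $>\om_2^M$ is not ``the same kind of reflection, now into $\canN$'': STH only guarantees well-definedness of $\star$-translations of Q-structures of countable correct trees, and the direction that is locally definable (and hence reflectable via condensation) is the $\Moon$-construction (Lemma~\ref{lem:compute_*_q_translation}), not the $\star$-translation; you would first need segments of $W$ to condense into segments of $\canN^{+\moon}$ and an argument that ill-definedness of $\star$ is locally expressible there.

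Third, and most importantly, ``$N$ is a premouse extending $\canN$ whose proper segments satisfy standard condensation and whose universe is $\univ M$'' does not by itself yield $N=\cs(\canN)^{\univ M}$: the inductive condensation stack of \cite[Definition 3.12]{V=HODX} is characterized by a hull-embedding property (countable hulls of its segments must embed into segments of $\canN$ itself), which is exactly the delicate point in the proof of Lemma~\ref{lem:tame_es_above_om_1_def_from_Ppp}, and you never verify it for $W^\star(\Uu,\widetilde{\canN})$. The paper's reflection argument is designed to handle precisely this: after collapsing to $C=\J(K)$ with $K=(\canM^+)^\moon=(\canN^+)^\moon$, it invokes the clause in Definition~\ref{dfn:G^M} that every $P\pins\canN$ with no largest cardinal satisfies ``I am $\cs(P|\om_1^P)$'' --- the clause whose footnote says it is \emph{not} needed for Theorem~\ref{tm:V=HOD_in_transcendent_mice}, i.e.\ it is needed exactly here. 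Your proposal never uses that clause, which is a strong sign that the identification of $N$ with the condensation stack has not actually been carried out.
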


\begin{proof} Part \ref{item:claim4}: The well-definedness
follows easily
from condensation, since
$(\Momone^+)^\moon$
is
well-defined. The next two clauses are by Lemma \ref{lem:compute_*_q_translation}.
 Finally, the $\star$-validity of $M^\moon$ is by Lemma \ref{lem:bh_inverts_star}.

Parts \ref{item:claim5(i)}--\ref{item:claim5(iii)}: Suppose not.
We have $M\sats\ZFC$. So fix a limit cardinal $\lambda$
of
$M$ such that either
$\cs(\canN)^{M|\lambda}$ or $(\cs(\canN)^{M|\lambda})^\moon$ is not
well-defined,
or $(\cs(\canN)^{M|\lambda})^\moon\neq
(M|\lambda)^\moon$.
We will again reflect the failure down to a segment
of $\canM^+$, and reach a contradiction. We have to be a little
careful how we form the hull to do this, however.

Note that standard condensation
holds for all segments of $M^\moon$, since otherwise
by condensation in $M$ we could reflect the failure
down to a segment of $(\Momone^+)^\moon$, where we do have condensation.
Let $R=\J(M|\lambda)^\moon$;  because $\lambda$ is an $M$-cardinal,
we have $R=\J((M|\lambda)^\moon)$ and $\OR^R=\lambda+\om$ and
$\rho_\om^R=\lambda$ and $R\pins M^\moon$.
Let $\alpha<\om_2^M$ with $\canN\in M|\alpha$
and $\alpha=\crit(\pi_\alpha)$ where
$\pi_\alpha:C_\alpha\to R$ is the uncollapse map for
$C_\alpha=\cHull_1^{R}(\alpha\un\{\lambda\})$.
(Note that $\widetilde{\canM},\widetilde{\canN},
\Tt,\Uu\in M|\alpha$ also.)
So $C_\alpha=\J(K)$ for some $K$, and $K\elem_1 C_\alpha$ (as
$R|\lambda\elem_1 R$ as $\lambda$ is a cardinal of $R$),
so $C_\alpha$ is $\alpha$-sound, with $\rho_1^{C_\alpha}\leq\alpha$ and
$p_1^{C_\alpha}\cut\alpha=\{\pi_\alpha^{-1}(\lambda)\}$.
Therefore $C_\alpha\pins R$.
Let
\[ C=\Hull_1^R(\bar{\delta}\un\{\lambda,\alpha\}) \]
and $\pi:C\to R$ the uncollapse. Note that $C_\alpha\in\rg(\pi)$,
since $C_\alpha\ins D$ where $D$ is the least segment of $R$ projecting to
$\deltabar$
with $\alpha\leq\OR^D$. Hence $\pi(C_\alpha)=C_\alpha$. It easily follows that
$C$ is $1$-sound with $\rho_1^C=\deltabar$ and
$p_1^C=\{\pi^{-1}(\lambda),\alpha\}$,
and so $C\pins R$.

Let
\[ C'=\cHull_1^{\J(M|\lambda)}(\deltabar\un\{\lambda,\alpha\}) \]
and $\pi':C'\to\J(M|\lambda)$ the uncollapse. Note that
$\rg(\pi')\inter\OR=\rg(\pi)\inter\OR$,
since all $\Sigma_1$ facts true in $R[\widetilde{\canM}]$ are $\Sigma_1$-forced
over
$R$
and $(\deltabar+1)\sub\rg(\pi)$, and
by ´ Lemma \ref{lem:compute_*_q_translation},
 $\J(M|\lambda)$ is $\Sigma_1^{R[\widetilde{\canM}]}
(\{\Tt,\widetilde{\canM},\lambda\})$ and, conversely, $R$
is $\Sigma_1^{\J(M|\lambda)}(\{\Tt,\widetilde{\canM},\lambda\})$. Much as
above,
$C'\pins M$,
and note that $C=(C')^\moon$, by  the elementarity of $\pi,\pi'$
and the corresponding
$\Sigma_1$-definability of the
$\star$-translation/$\Moon$-construction
of $C,C'$.\footnote{Alternatively,
we have $\rho_1^{(C')^\moon}=\rho_1^{C'}=\deltabar$,
and in the $\Moon$-construction, after projecting to $\bar{\delta}$,
a segment cannot later be lost, so $C\ins(C')^\moon$ or vice versa,
but $(\deltabar^+)^{C}=(\deltabar^+)^{C'}=(\deltabar^+)^{(C')^\moon}$,
so $C=(C')^\moon$.}

Now $C\pins\canM^{+\moon}=\canN^{+\moon}$; the equality is by Claim \ref{clm:1}.
Let $K$ be such that $C=\J(K)$
and $K'$ such that $C'=\J(K')$.
By the same claim and elementarity,
$K$ is $\star$-valid with respect to  $\Tt$, and hence also with respect to $\Uu$
(since $M(\Tt)=M(\Uu)$).
Writing
$K^\star_\canM=K^\star(\Tt,\widetilde{\canM})$
and $K^\star_{\canN}=K^\star(\Uu,\widetilde{\canN})$,
we have $K^\star_{\canM}=K'\ins \canM^+$ and since $K\pins C\pins\canN^{+\moon}$ and $K$ is $\star$-valid,
$K^\star_{\canN}$ is well-defined and $K^\star_{\canN}\ins\canN^+$. Because
$K$ has no largest cardinal, $K^\star_\canM=\canM^+|\OR^K$
has universe that of $K[\widetilde{\canM}]$,
and $K^\star_\canN=\canN^+|\OR^K$ that of $K[\widetilde{\canN}]$,
but note these universes are identical.
Because $\canM,\canN\in\mathscr{G}^M$ (\ref{dfn:G^M})
and by an easy reflection below $\om_1^M$,
it follows that $\canN^+|\OR^K\sats$ ``I am $\cs(\canN)$
and $\cs(\canN)^{\moon}$ is well-defined
and equals $K$''. But since also $K=(\canM^+|\OR^K)^{\moon}$
and $\pi':C'\to\J(M|\lambda)$ is sufficiently elementary,
this gives a contradiction, establishing the claim.
\end{proof}

Now we have
$\card^M(\mathscr{G}^M)\leq\om_2^M$ and
\footnote{In the
analogous situation in the tame
case, we had $\mathscr{G}^M\sub\mathscr{P}^M$
and $\card^M(\mathscr{P}^M)\leq\om_1^M$,
but
for non-tame, as far as the author knows,
we might have
$\mathscr{G}^M\not\sub\mathscr{P}^M$.}
$\delta=\om_3^M$.
For
 $\canN\in\mathscr{G}^M$, let $\canN^{++}=\sJs(\sJs(\canN))^{\her_\delta^M}$, so $\canM^{++}=M|\delta$,
and by Claim \ref{clm:M^moon=N^moon},
$\canN^{++}$ has universe $\her_\delta^M$. Also let
$t_{\canN}=\Th_{\rSigma_2}^{\canN^{++}}(\delta)$.
Then for all $\canN_1,\canN_2\in\mathscr{G}^M$,
there is $\betavec\in(\omega_2^M)^{<\om}$ such that for all
$\alphavec\in(\omega_3^M)^{<\om}$,
\[ t_{\canN_1}\rest\{\betavec,\alphavec\}\text{ is recursively
equivalent to }
t_{\canN_2}\rest\{\betavec,\alphavec\},\text{ uniformly in }\alphavec.
\]
For this, just choose $\vec{\beta}\in(\om_2^M)^{<\om}$ such that $\{\canN_2\}$ is $\Sigma_1^{\canN_1^+}(\{\vec{\beta}\})$
and $\{\canN_1\}$ is $\Sigma_2^{\canN_2^+}(\{\vec{\beta}\})$; we can certainly do this, since $\canN_1^+$ and $\canN_2^+$ each have universe $(\her_{\om_2^M})^M$.
This suffices,
since each $\canN^{++}$ has universe $\her_\delta^M$,
and $\canN^{++}$ (including its extender sequence) is $\Sigma_2^{\her_\delta^M}(\{\canN\})$,
since it is $\Sigma_1^{\her_\delta^M}(\{\canN,\om_2^M\})$. (Using the parameters $\canN$ and $\om_2^M$, $\canN^+=\sJs(\canN)^{\her_\delta^M}$
can easily be identified, and, similarly, $(\sJs(\canN^+))^{\her_\delta^M}$
is $\Sigma_1^{\her_\delta^M}(\{\canN^+\})$.)\footnote{A more complicated alternative here is to use the claims regarding the comparison above (comparing each of
$\canN_1,\canN_2$ with
$\canM=\Momone^M$ and considering the respective common $\Moon$-constructions,
to
translate between
$t_{\canN_i}$ and $t^{\canM}$).}
So for extenders with critical points $>\omega_2^M$, $t_{\canN}$-genericity
iteration
for some $\canN$ is equivalent to simultaneous $t_\canN$-genericity iteration
for all
$\canN$.

\label{pg:explain_domain_of_Sigma}
Now  for parts \ref{item:HOD_non-tame_mouse_H[M|delta]}--\ref{item:HOD_non-tame_mouse_H=univ(W)[t]} of the theorem to be proven,
we may assume that $M$ is countable,
by passing to a sufficiently elementary countable hull if necessary. (Well, if $M$ is proper class and we can only form a partially elementary hull, this doesn't quite suffice regarding the definability of $W$. But the proof to follow will bound the level of complexity needed to define $W$, and we could anticipate this in advance when forming the hull.) And in case $M$ is countable,
our $(0,\om_1+1)$-strategy $\Sigma=\Sigma_{\Momone^M}$ suffices, as usual, for the trees on $\Momone^M$ to be considered in what follows, and in particular, in that case we have $\delta<\om_1$. On the other hand, for part \ref{item:W|delta_is_seg_of_Sigma-iterate},
the extra assumptions there give an iteration strategy $\Sigma$ for $\Momone^M$
which, by Claim \ref{clm:card(om_1^M)=card(om_2^M)}, will also be sufficient for the purposes there
(without assuming that $M$ is countable).

Now consider the simultaneous comparison of all $\canN\in\mathscr{G}^M$, as
above, first interweaving iteration at least measurables until passing $\om_2^M$, and then interweaving
$t_\canN$-genericity
iteration (for the meas-lim extender algebra, with details executed essentially as for the comparison of just two premice),
using $\Lambda^{\canN^{++}}$ to iterate $\canN$;
that is, the strategy defined like
$\Lambda^{\canN}$, but over $\canN^{++}$. (Since $\canN\elem_1\canN^{++}$,
this works.) Since $H\psub M$,
we must have $\{\Momone\}\psub\mathscr{G}^M$,
so
the comparison cannot succeed. Let $\Tt_{\canN}$ be the tree on $\canN$.

We can analyse the comparison like we analysed the comparison of two models
earlier,
and we get similar results.
It lasts exactly $\om_3^M$ steps
\label{pg:explain_domain_of_Sigma_comment}(and we will have $\Tt_{\Momone^M}$ according to $\Sigma$ and in $\dom(\Sigma)$, by Claim \ref{clm:card(om_1^M)=card(om_2^M)}),
and letting
$\canN^{+\infty}=\cs(\canN)^{\univ{M}}$,
then
$\canN'=(\canN^{+\infty})^\moon(\Tt_{\canN},\canN^{++})$
is a proper class premouse
extending $M(\Tt_{\canN})$, satisfies ``$\delta$ is Woodin'',
and is independent of
$\canN$. So
 $W=\canN'$ is definable without parameters over
$\univ{M}$,
 and each $t_{\canN}$ is
$(W,\BB_{\measlim,\delta}^W)$-generic. In particular,
$W\sub H=\HOD^{\univ{M}}$. Let
$t=\Th_{\Sigma_2}^{\her_\delta^M}(\delta)$.

\begin{clmsix}\label{clm:6} $H=\univ{W}[t]$ and
$\univ{M}=H[\Momone^M|\delta]$.\end{clmsix}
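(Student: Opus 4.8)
The plan is to mimic the proof of Theorem~\ref{tm:HOD_tame_mouse} and the remark following it, systematically replacing the P-construction by the $\star$-translation; we only point out the new ingredients.

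Consider $H=\univ{W}[t]$ first. One inclusion is immediate: $W$ is definable over $\univ{M}$ without parameters and $t$ is definable over $\univ{M}$ from the ordinal $\delta$, so $W,t\in H$, and since $t$ is $(W,\BB_{\measlim,\delta}^W)$-generic, $\univ{W}[t]$ is a well-defined inner model of $H$. For the converse the first step is to port Lemma~\ref{lem:tame_Vopenka} to the non-tame setting. Let $\PP=\Vop_{*\mathscr{G}}^M$ and let $\PP'\sub\delta$ be its ordinal-coded isomorph via ``nice codes'' $(\alpha,\beta,\varphi)$, now with $\canN\in A_{\alpha\beta\varphi}$ iff $\canN^{++}|\beta\sats\varphi(\alpha)$ and with $\Th_{\Sigma_2}$ replacing $\Th_{\Sigma_3}$. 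As in part~\ref{item:Vop_sub_delta} of Lemma~\ref{lem:tame_Vopenka}, but using Claim~\ref{clm:M^moon=N^moon} and the analysis of the simultaneous comparison in place of the tame condensation-stack uniformities, one checks: every $\OD^{\univ{M}}$ subset of $\mathscr{G}^M$ has a nice code; $\PP'$ and its ordering are $\her_\delta^M$-definable of bounded complexity, hence encoded into $t$; $\PP$ has the $\delta$-cc in $H$, so (as in the aside in the proof of Theorem~\ref{tm:HOD_tame_mouse}) $\Th_{\Sigma_n}^{\her_\delta^M}(\delta)$ is uniformly recoverable from $t$ for all $n<\om$; and for each $\canN\in\mathscr{G}^M$ the filter $G_\canN=\{p\in\PP':\canN\in A_p\}$ is $(H,\PP')$-generic with $H[G_\canN]$ of universe $\univ{M}$ (using that $\canN^{+\infty}=\cs(\canN)^{\univ{M}}$ has universe $\univ{M}$, Claim~\ref{clm:M^moon=N^moon}), while every condition of $\PP'$ lies in some such $G_\canN$. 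Next let $R$ be the non-tame analogue of the structure $\her=(H,\mathbb{F},t)$ of Theorem~\ref{tm:HOD_tame_mouse}, namely the $\star$-translation of $M$ onto the base $(\delta,t)$ (this uses the $\star$/$\Moon$-machinery, since $M$ may be non-tame above $\delta$); as there, $R$ is definable over $\univ{M}$ without parameters, so $\univ{R}\sub H$, and each $G_\canN$ is $(R,\PP')$-generic with $R[G_\canN]=R[\canN^{++}]$ of universe $\univ{M}$. Since every condition of $\PP'$ lies in some $G_\canN$ with $R[G_\canN]=\univ{M}$, no condition can force the negation of a true ordinal statement of $\univ{M}$, so for ordinals $\vec\alpha$ we get $\univ{M}\sats\varphi(\vec\alpha)$ iff $R\sats$``$\PP'$ forces $\varphi(\vec\alpha)$''; thus $R$ computes $\Th^{\univ{M}}(\OR)$, hence contains every $\OD^{\univ{M}}$ set of ordinals, so $H\sub\univ{R}$ and $\univ{R}=H$. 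Finally $W|\delta$ and $(\delta,t)$ are generically equivalent over $W$ (each $t_{\canN\delta}$ is $\BB_{\measlim,\delta}^{W|\delta}$-generic and $t$ is locally computed from any $t_{\canN\delta}$), so $R$ is also obtained by $\star$-translating $W$ onto $W|\delta$; therefore $R=W[t]$ and $\univ{W}[t]=\univ{R}=H$.

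For $\univ{M}=H[M|\delta]$ the inclusion $\supseteq$ is trivial. For $\subseteq$, recall $\Momone^M\in\mathscr{G}^M$ (Lemmas~\ref{lem:iterable_tract_implies_strong}, \ref{lem:M_is_iterability-good} and \cite{V=HODX}), so by the Vopenka facts just established $H[G_{\Momone^M}]$ has universe $\univ{M}$. But $\canM^{++}=M|\delta$, so by the nice-code presentation $(\alpha,\beta,\varphi)\in G_{\Momone^M}$ iff $M|\beta\sats\varphi(\alpha)$; hence $G_{\Momone^M}$ is outright computable from $M|\delta$, giving $\univ{M}=H[G_{\Momone^M}]\sub H[M|\delta]\sub\univ{M}$. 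Alternatively, as the remark after Theorem~\ref{tm:HOD_tame_mouse} anticipates: working in $H[M|\delta]=\univ{W}[t][M|\delta]$, since $\delta$ is Woodin in $W$ one may re-run the comparison producing $W|\delta$ --- the guiding Q-structures being $\Moon$-constructions from $M|\delta$ and from segments of $W$ --- recover the tree $\Tt_{\Momone^M}$ on $\Momone^M=M|\om_1^M$, and then, using $\canM^{+\infty}=M$ with Lemmas~\ref{lem:bh_inverts_star} and \ref{lem:compute_*_q_translation} and STH, recover $M=W^\star(\Tt_{\Momone^M},M|\delta)$, so $M\in H[M|\delta]$.

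The step I expect to be the main obstacle is the non-tame Vopenka bookkeeping of the second paragraph: now that $\mathscr{G}^M$ may have cardinality $\om_2^M$ and each $\canN^{++}$ has universe $\her_{\om_3}^M$, one must re-verify that every $\OD^{\univ{M}}$ subset of $\mathscr{G}^M$ has a nice code, that $\PP$ has the $\delta$-cc in $H$, and --- most delicately --- pin down the complexities of $\PP'$, of the relevant forcing relations, and of the Q-structure computations carefully enough that they encode into $t=\Th_{\Sigma_2}^{\her_\delta^M}(\delta)$ and into $(\delta,t)$. As always in this section, everything deeper rests on STH, which is assumed throughout.
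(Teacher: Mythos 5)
Your route is essentially the paper's for the parts that carry the real weight: the paper likewise passes to a Vopenka forcing for $\OD^{\univ{M}}$ subsets of $\mathscr{G}^M$ with ordinal conditions below $\delta$, proves as a subclaim that every such set is $\Sigma_2^{\her_\delta^M}(\{\alpha\})$ for some $\alpha<\delta$ (via hulls of $\J(\canN^{+\infty}|\lambda)$ generated by $\om_2^M\cup\{\lambda,\alpha\}$ --- this is exactly your ``every $\OD$ set has a nice code'' step, and you correctly identify it as the main obstacle), and then computes that $\canN^{++}$ and $G_\canN$ are inter-computable over $W[t]$ and that $\univ{\canN^{+\infty}}=W[t][\canN^{++}]=W[t][G_\canN]$ via $\canN^{+\infty}=W^\star(\Tt_\canN,\canN^{++})$, yielding $\univ{M}=W[t][G_{\Momone^M}]=H[G_{\Momone^M}]$; your argument for $\univ{M}=H[M|\delta]$ then agrees with the paper's. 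Where you genuinely diverge is in closing the remaining inclusion $H\sub\univ{W}[t]$: you build an auxiliary $(\delta,t)$-structure $R$ with universe $H$ and run the ``the ground computes $\Th^{\univ{M}}(\OR)$ by forcing with $\PP'$'' argument, whereas the paper simply invokes the general $\ZFC$ fact that if $N_1\sub N_2$ are proper-class models and some $G$ is simultaneously $(N_1,\PP)$- and $(N_2,\PP)$-generic with $N_1[G]=N_2[G]$, then $N_1=N_2$ (applied with $N_1=W[t]$, $N_2=H$, $G=G_{\Momone^M}$). The two are morally the same --- that $\ZFC$ fact is proved by precisely your forcing-computes-truth observation --- but the paper's version sidesteps the one soft spot in your write-up: ``the $\star$-translation of $M$ onto the base $(\delta,t)$'' is not an operation defined anywhere in the paper ($\star$-translation and the $\Moon$-construction are only set up relative to a $P$-optimal tree and its common-part model, and your phrasing moreover reverses the directions of the two operations), so $R$ as you describe it does not exist without new machinery. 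This is repairable with no loss: drop $R$ and run your forcing argument directly over $W[t]$, using that $\PP'$ is encoded into $t$ and that every condition lies in some $G_\canN$ with $W[t][G_\canN]=\univ{M}$.
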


\begin{proof}By the previous paragraph, $t$ is
$(W,\BB_{\measlim,\delta}^W)$-generic and
$W[t]\sub H$.
And letting $\QQ\in H$ be Vopenka for adding subsets of $\om_1^M$,
then
$G_{\Momone^M}$ is $(H,\QQ)$-generic. We need to examine more
closely the particular Vopenka needed to add $\Momone^M$.

\begin{sclmsix}Let $A\sub\mathscr{G}^M$ be $\OD^{\univ{M}}$.
Then $A$ is
$\Sigma_2^{\her_{\delta}^M}(\{\alpha\})$
for some $\alpha<\delta$.\end{sclmsix}

\begin{proof} Let $\lambda$ be some limit cardinal of $M$
such that $A$ is $\OD$ over $\her_\lambda^M$.
Let $\canN\in\mathscr{G}^M$ and
 choose
$\alpha<\delta$ such
that letting
\[
C_{\canN}=\cHull_1^{\J(\canN^{+\infty}|\lambda)}
(\omega_2^M\un\{\lambda,\alpha\}) \]
and $\pi_{\canN}:C_{\canN}\to\J(\canN'|\lambda)$
be the uncollapse, then $C_{\canN}$
is sound
with $\rho_1^{C_{\canN}}=\omega_2^M$ and
$p_1^{C_{\canN}}=\{(\pi_{\canN})^{-1}(\lambda),\alpha\}$
and $C_{\canN}\pins {\canN}^{++}$.
For $\canN_1,\canN_2\in\mathscr{G}^M$,
then $\canN_1^{+\infty}|\lambda$ is inter-definable with
$\canN_2^{+\infty}|\lambda$,
uniformly in  parameters $\canN_1,\canN_2\in\her_\gamma^M\sub
C_{\canN_1}\inter
C_{\canN_2}$, where $\gamma=\om_2^M$.
It follows that $\xi\eqdef\OR(C_{\canN_1})=\OR(C_{\canN_2})$ and
$C_{\canN_1},C_{\canN_2}$
have the same
universe.
But then note that $A$ is $\Sigma_2^{\her_{\delta}^M}(\{\xi,\alpha\})$
for some
 $\alpha<\xi$,
because $\mathscr{G}^M$ is definable over $\her_{\gamma}^M$ and
$\{\her_{\gamma}^M\}$ is
$\Sigma_2^{\her_\delta^M}$, so the function $\canN\mapsto C_{\canN}$ is
$\Sigma_2^{\her_\delta^M}(\{\xi\})$, and this suffices.
This proves the subclaim.\end{proof}

Let $\PP\in H$ be the Vopenka corresponding to $\OD^M$ subsets of
$\mathscr{G}^M$, taking
 ordinal codes $<\delta$  in
the natural form
given by the foregoing proof, as conditions.
Note then that $\PP$ (with its ordering) is
$\Sigma_2^{\her_\delta^M}$,
and $\PP\in\univ{W}[t]$.

Given $\canN\in\mathscr{G}^M$, note that
 $\canN^{++}$ can be computed from $(G_\canN,t)$,
 so $\canN^{++}\in W[t][G_\canN]$.
 Conversely, easily $G_\canN\in W[t][\canN^{++}]$.
Since $\canN^{+\infty}=W^\star(\Tt_{\canN},\canN^{++})$,
therefore
$\univ{\canN^{+\infty}}=W[t][\canN^{++}]=W[t][G_\canN]$.
In particular,
\[ \univ{M}=W[t][G_{\Momone^M}]=H[G_{\Momone^M}]. \]

It follows that $\univ{W}[t]=H$, just by the general $\ZFC$ fact
that if $N_1\sub N_2$ are proper class transitive models of $\ZFC$
and there is $\PP\in N_1$ and $G$ which is both $(N_1,\PP)$-generic
and $(N_2,\PP)$-generic and $N_1[G]=N_2[G]$, then $N_1=N_2$.
This
proves Claim \ref{clm:6}.
\end{proof}
We have now completed the proof
except for one more fact when below a Woodin limit of Woodins:
\begin{clmsix}
Suppose $M$ is below a Woodin limit of Woodins.
Then  there is $\alpha<\om_3^M$
such that $\univ{M}=H[M|\alpha]$,
and hence some $X\sub\om_2^M$ with $\univ{M}=H[X]$.
\end{clmsix}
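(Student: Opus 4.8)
The plan is to adapt the reconstruction sketched in the remark following Theorem~\ref{tm:HOD_tame_mouse} to the non-tame setting, exploiting the hypothesis to bound the Woodins of $W$ below $\om_3^M$. First, the final ``hence'' is routine coding: for any $\alpha<\om_3^M$ we have $\card^M(M|\alpha)\leq\om_2^M$, so fixing a bijection $b\colon\om_2^M\to\tc(\{M|\alpha\})$ in $\univ{M}$ and letting $X\sub\om_2^M$ code the membership relation pulled back along $b$, the model $H[X]$ --- a model of $\ZF$ --- recovers $M|\alpha$ by forming the Mostowski collapse of the structure coded by $X$; hence $H[X]=H[M|\alpha]$, and it suffices to exhibit $\alpha<\om_3^M$ with $\univ{M}=H[M|\alpha]$.

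Recall from Claim~\ref{clm:6} and its surrounding analysis that $H=\univ{W}[t]$, that $t=\Th_{\Sigma_2}^{\her_{\om_3^M}^M}(\om_3^M)$ is $(W,\BB_{\measlim,\om_3^M}^W)$-generic, that $W\sub H$ is definable over $\univ{M}$ without parameters, that $W|\om_3^M=M(\Tt_{\Momone})$ is the common part model of the simultaneous genericity comparison of $\mathscr{G}^M$, and that $M$ is recovered from $W$ as the $\star$-translation of $W$ along $\Tt_{\Momone}$ using the generic $t$ (together with the Q-structures guiding $\Tt_{\Momone}$). Now invoke the hypothesis: $W|\om_3^M$ is an iterate of $\Momone\pins M$ via $\Sigma_{\Momone}$, and as $\Momone$ is below a Woodin limit of Woodins and $\Sigma_{\Momone}$-iteration preserves this, while $W$ agrees with the $\Moon$-rederivation of $M$ on cardinals $\geq\om_3^M$ by Lemma~\ref{lem:compute_*_q_translation}, part~\ref{item:card_agmt}, the premouse $W$ is itself below a Woodin limit of Woodins; since $W\sats$``$\om_3^M$ is Woodin'', $\om_3^M$ is not a limit of Woodins of $W$, so I may fix $\gamma<\om_3^M$ with no Woodin cardinal of $W$ in $(\gamma,\om_3^M)$.

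This bound localizes the Q-structures along $\Tt_{\Momone}$. Consider a limit stage $\lambda<\om_3^M$ at which $M(\Tt_{\Momone}\rest\lambda)=W|\delta(\Tt_{\Momone}\rest\lambda)$ is not a Q-structure for itself. If $\delta(\Tt_{\Momone}\rest\lambda)>\gamma$ then $\delta(\Tt_{\Momone}\rest\lambda)$ is not Woodin in $W$, so the guiding Q-structure is the least proper segment of $W$ extending $W|\delta(\Tt_{\Momone}\rest\lambda)$ which is a Q-structure for it, hence is computable from $W$ (so lies in $H$); if $\delta(\Tt_{\Momone}\rest\lambda)\leq\gamma$ then the guiding Q-structure is, by the analysis in the comparison, the $\Moon$-construction of a segment of $\Momone^{++}=M|\om_3^M$ of height below some fixed $\gamma^{*}<\om_3^M$. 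Fix a cardinal $\alpha$ of $M$ with $\max(\om_2^M,\gamma^{*})<\alpha<\om_3^M$. Then $H[M|\alpha]$ contains $\Momone$, the generic $t$, the class $W$, and every Q-structure occurring along $\Tt_{\Momone}$; working in $H[M|\alpha]$, and running the comparison by recursion on its length --- the extender choices being forced by the $t$-genericity iteration rule together with the interwoven linear iterations, and the branch choices recovered from the available Q-structures via Lemma~\ref{lem:Q_computes_b} --- one reconstructs $\Tt_{\Momone}$; then, as $W$ and $t$ lie in $H$, the $\star$-translation of $W$ along $\Tt_{\Momone}$ with generic $t$ --- a well-defined premouse inverting the $\Moon$-construction, by Lemma~\ref{lem:bh_inverts_star} and STH --- can be formed in $H[M|\alpha]$, and it equals $\cs(\Momone)^{\univ{M}}$, a premouse with universe $\univ{M}$. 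Hence $\univ{M}\sub H[M|\alpha]\sub\univ{M}$, so $\univ{M}=H[M|\alpha]$, as required.

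The step I expect to be the crux is the localization in the third paragraph: one must verify that once $W$'s Woodins below $\om_3^M$ are bounded by $\gamma$, no stage $\lambda$ with $\delta(\Tt_{\Momone}\rest\lambda)\in(\gamma,\om_3^M)$ calls for a Q-structure that is a tall (cofinal-in-$\om_3^M$) $\Moon$-construction from $\Momone^{++}$ rather than a proper segment of $W$ or the common part model itself. This is precisely where ``below a Woodin limit of Woodins'' is used --- without it, $\om_3^M$ would be forced to be a limit of Woodins of $W$ and the needed Q-structures would be unbounded in $\om_3^M$ --- and it rests on a careful analysis of how the interwoven linear and meas-lim iterations interact with the $\Moon$-construction of the guiding Q-structures above the bound $\gamma$.
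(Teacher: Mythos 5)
Your overall strategy is the paper's: use the hypothesis to bound the Woodins of $W|\delta$ strictly below $\delta=\om_3^M$, absorb the early, non-uniform Q-structure data into $M|\alpha$, and then reconstruct the rest of $M$ inside $H[M|\alpha]$ from $W$ and $t$. The reduction to finding $\alpha$, the bounding of the Woodins, and the localization of the Q-structures are all fine. But there is a genuine gap in your last step. You propose to finish by forming ``the $\star$-translation of $W$ along $\Tt_{\Momone}$'' inside $H[M|\alpha]$. By Definition \ref{dfn:star}, that translation is computed over a base $P$ with $M(\Tt)^\star=P$; here $P=M|\om_3^M$. So before the translation can even be started one needs $M|\om_3^M\in H[M|\alpha]$ --- and that containment is precisely the nontrivial content of the claim (once one has it, $\univ{M}=H[M|\om_3^M]\sub H[M|\alpha]$ already follows from Claim \ref{clm:6}, with no further translation of $W$ required). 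Your write-up never establishes it; indeed the extender-selection rule itself (the interwoven linear iterations past $\OR(Q^\star)$) references segments of $M|\om_3^M$, so the reconstruction of $\Tt_{\Momone}$ and the recovery of $M|\om_3^M$ must be carried out as a single intertwined induction, not sequentially as you have it.

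The missing ingredient is the paper's key computation: for limit stages $\lambda$ past the Woodins, the common Q-structure $Q=Q(\Tt\rest\lambda,b)\ins W$ has $\star$-translation $Q^\star\pins M|\om_3^M$ with $\rho_\om(Q^\star)=\om_2^M$. (Otherwise $\delta(\Tt\rest\lambda)=\om_3^{\J(Q^\star)}$, and working inside $\J(Q^\star)$ one could compute $\Tt_{\canN}\rest\lambda$-cofinal branches for every $\canN\in\mathscr{G}^M$, contradicting non-termination of the comparison.) This is what guarantees that the segments of $M|\om_3^M$ recovered along the induction project to $\om_2^M$ and occur cofinally in $\om_3^M$, so that their union is $M|\om_3^M$ itself. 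The paragraph you flag as the crux worries about whether the Q-structures above $\gamma$ could be tall $\Moon$-constructions --- that is disposed of by the choice of $\gamma$ --- but it does not address this projectum computation, which is where the argument actually lives.
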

\begin{proof}For this, let
$\alpha_0$ be a proper limit stage of $\Tt=\Tt_{\Momone^M}$
such that the Woodins of $W|\delta$
are bounded strictly below $\delta(\Tt\rest\alpha_0)$, and let
$\alpha>\delta(\Tt\rest\alpha_0)$ be such that $\Tt\rest(\alpha_0+1)\in
M|\alpha$.
Then $M|\delta$ can be inductively recovered from $M|\alpha$
and $W|\delta$, by comparing $\Tt\rest(\alpha_0+1)$ (as a phalanx)
against $W|\delta$, using the
$\star$-translations
$Q^\star$ of the  Q-structures $Q=Q(\Tt\rest\lambda,b)\ins W$
to compute
projecting mice $N\pins M|\delta$
(noting that if $Q\neq M(\Tt\rest\lambda)$
then $\rho_\om(Q^\star)=\om_2^M$,
because otherwise $\lambda=\om_3^{\J(Q^\star)}$, and
as $Q$ is the common Q-structure
for all trees at stage $\lambda$,
working inside $\J(Q^\star)$,
we can compute $\Tt_{\canN}\rest\lambda$-cofinal branches
for all $\canN\in\mathscr{G}^M$,
which contradicts comparison termination there).
\end{proof}

This proves the theorem.
\end{proof}

\section*{Questions}

\begin{enumerate}
\item Let $M$ be a $(0,\om_1+1)$-iterable premouse modelling ZFC. Recall that $\univ{M}$ is the universe of $M$. Let $H=\HOD^{\univ{M}}$.
\begin{enumerate}
\item Is there $x\in\RR^M$ such that $H=\HOD^{\univ{M}}_{\{x\}}$?

By Corollary \ref{cor:tame_V=HODx},
if $M$ is tame, the answer is ``yes''. By \cite[Theorem 3.11]{V=HODX_pub},
if there is $x\in(\pow(\om_1^M))^M$ which satisfies the equation (but not the demand that $x\in\RR^M$); in fact $x=M|\om_1^M$ does.

\item What is the least $\alpha$
such that $\univ{M}=H[M|\alpha]$?

By Theorem \ref{tm:HOD_non-tame_mouse},
$\alpha\leq\om_3^M$,
and if $M$ is below a Woodin limit of Woodins
then $\alpha<\om_3^M$.
By Theorem \ref{tm:HOD_tame_mouse}, if $M$ is tame then $\alpha\leq\om_1^M$.
\end{enumerate}

\item Do the results of this paper extend to long extender mice?
\end{enumerate}
\section*{Acknowledgements}

Thanks to
Henri Menke for the latex code for $\moon$;
see  \url{https://www.henrimenke.de/} and \url{https://tex.stackexchange.com/questions/231517/how-can-i-write-a-spiral-symbol}.

Work partially funded by the Deutsche Forschungsgemeinschaft (DFG,
German Research Foundation) under Germany's Excellence Strategy EXC
2044 -- 390685587, Mathematics Münster: Dynamics–Geometry–Structure.

Editing funded by the Austrian Science Fund (FWF) [10.55776/Y1498].

\bibliographystyle{plain}
\bibliography{../bibliography/bibliography}

\end{document}